\newcommand{\emm}{\mathrm{1\hspace{-0.3em} I}}
\newtheorem{thm}{Theorem}[section]
\newtheorem{lem}[thm]{Lemma}
\newtheorem{cor}[thm]{Corollary}
\newtheorem{rem}[thm]{Remark}
\newtheorem{prop}[thm]{Proposition}
\newtheorem{dfn}[thm]{Definition}
\journal{Advances in Mathematics}
\begin{document} 

\begin{frontmatter}

\title{Banach limits and traces on $\mathcal L_{1,\infty}$.}

\author[label1]{Evgeniy Semenov\fnref{label3}}
\ead{semenov@math.vsu.ru}
\author[label2]{Fedor Sukochev\fnref{label4}}
\ead{f.sukochev@unsw.edu.au}
\author[label2]{Alexandr Usachev\corref{cor}\fnref{label4}}
\ead{a.usachev@unsw.edu.au}
\author[label2]{Dmitriy Zanin\fnref{label4}}
\ead{d.zanin@unsw.edu.au}

\cortext[cor]{Corresponding Author}

\address[label1]{Voronezh State University, Universitetskaja pl., 1, Voronezh, 394006, Russia.}
\address[label2]{School of Mathematics and Statistics, University of New South Wales, Sydney, 2052, Australia.}

\fntext[label3]{The research was supported by RFBR grant  14-01-00141а.}

\fntext[label4]{The research was supported by the Australian Research Council.}

\begin{abstract}
We introduce a new approach to traces on the principal ideal $\mathcal L_{1,\infty}$ generated by any positive compact operator whose singular value sequence is the harmonic sequence.
Distinct from the well-known construction of J.~Dixmier, the new approach provides the explicit construction of every trace of every operator in $\mathcal L_{1,\infty}$ 
in terms of translation invariant functionals applied to a sequence of restricted sums of eigenvalues.
The approach is based on a remarkable bijection between the set of all traces on $\mathcal L_{1,\infty}$ 
and the set of all translation invariant functionals on $l_\infty$. 
This bijection allows us to identify all known and commonly used subsets of traces 
(Dixmier traces, Connes-Dixmier traces, etc.) in terms of invariance properties of linear functionals on $l_\infty$, 
and definitively classify the measurability of operators in $\mathcal L_{1,\infty}$ in terms of qualified convergence of sums of eigenvalues. 
This classification has led us to a resolution of several open problems (for the class $\mathcal L_{1,\infty}$) from~\cite{CS}.
As an application we
extend Connes' classical trace theorem to positive normalised traces.
\end{abstract}

\begin{keyword}
Singular traces \sep Banach limits \sep Lidskii formula \sep Connes' trace theorem.

\medskip \MSC Primary: 47B10 \sep 47G30 \sep 58B34 \sep 58J42
\end{keyword}

\end{frontmatter}

\section{Introduction}\label{intro}

Let $B(H)$ denote the algebra of all bounded linear operators on a separable Hilbert space $H$.
Denote by $\{\mu(n,A)\}_{n\ge0}$ the sequence of singular values of a compact operator $A \in B(H)$.
Define the principal ideal $\mathcal L_{1,\infty}$ (also termed the weak-$\mathcal L_1$ ideal) of the algebra $B(H)$ by setting
$$\mathcal L_{1,\infty}:= \left\{A \in B(H) \text{ is compact}: \sup_{n \ge 0}  (1+n) \mu(n,A) < \infty\right\}.
$$
A trace on $\mathcal{L}_{1,\infty}$ is a unitarily invariant linear functional on $\mathcal{L}_{1,\infty}$.

We present a new approach to the construction of traces on $\mathcal L_{1,\infty}$, which, in a way, completes the original idea of J.~Dixmier, \cite{D}. Our construction was inspired by that of A.~Pietsch, \cite{P}.
Let $\{\lambda(n,A)\}_{n\ge0}$ be a sequence of eigenvalues of a compact operator $A \in B(H)$, ordered in a such way that the sequence $\{|\lambda(n,A)|\}_{n\ge0}$ is decreasing. 
Observing that, for every operator $A \in \mathcal L_{1,\infty}$, the sequence
$$\left\{ \sum_{k=2^n-1}^{2^{n+1}-2} \lambda(k,A) \right\}_{n \ge 0}$$
is bounded, we construct the functional 
\begin{equation}\label{weight}
\tau(A) = \theta\left(\frac1{\log 2} \left\{ \sum_{k=2^n-1}^{2^{n+1}-2} \lambda(k,A) \right\}_{n \ge 0} \right) , \quad A \in \mathcal{L}_{1,\infty},
\end{equation}
where $\theta$ is a linear translation invariant functional on $l_\infty$
% (the Banach space of all complex bounded sequences equipped with the uniform norm).
The remarkable fact we show is that this construction provides a linear \emph{bijective} association between traces on $\mathcal L_{1,\infty}$ and linear translation invariant functionals on $l_\infty$.

% compare it to Dixmier (why is it new)

Before continuing, let us compare the construction~\eqref{weight} with the very well known construction of J.~Dixmier. As Dixmier stated in a letter to the conference ``Singular traces and their applications'' at Luminy, 2012 (see the notes to Chapter 6 of~\cite{LSZ}),
his first idea was to construct a singular (non-normal) trace $A \mapsto t(A)$ on the ideal $\mathcal L_{1,\infty}$ by using the formula
$$t(A):= \theta (\left\{ (n+1) \lambda(n,A) \right\}_{n\ge0}), \quad 0\le A\in \mathcal L_{1,\infty},$$
where $\theta$ is an extended limit on $l_\infty$ (a Hahn-Banach extension to $l_\infty$ of the ordinary limit on the set of convergent sequences $c$). However, Dixmier ``wasn't able to prove the additivity of $t(A)$''.
Following N.~Aronszajn's advice, Dixmier changed the setting from the ideal $\mathcal L_{1,\infty}$ to the larger ideal $\mathcal M_{1,\infty}$ and succeeded.
Being more precise, for an arbitrary translation and dilation invariant extended limit $\omega$ on $l_\infty$ (a precise definition can be found in the subsequent sections), 
J. Dixmier~\cite{D} constructed the weight
\begin{equation}\label{Dix_original_construction}
{\rm Tr}_\omega (A):= \omega \left( \left\lbrace \frac{1}{\log(2+n)} \sum_{k=0}^n\mu(k,A)\right\rbrace _{n\ge0}\right), \quad 0\le A\in \mathcal M_{1,\infty}, 
\end{equation}
where
$$
\mathcal M_{1,\infty}:= \left\{A \in B(H) \text{ is compact}:  \sup_{n \ge 0}  \frac{1}{\log(2+n)} \sum_{k=0}^n\mu(k,A) < \infty\right\} .
$$
The weight ${\rm Tr}_\omega$ extends to a singular trace (called by subsequent authors a Dixmier trace) on $\mathcal M_{1,\infty}$. 
Evidently $\mathrm{Tr}_\omega$ on $\mathcal M_{1,\infty}$ restricts to a positive trace on $\mathcal L_{1,\infty}$. 
It follows directly from Definitions~\ref{Dix_M} and~\ref{Dix_L} below that 
every Dixmier trace on $\mathcal L_{1,\infty}$ extends to a Dixmier trace on $\mathcal M_{1,\infty}$. 
Here we use the term "Dixmier trace on $\mathcal L_{1,\infty}$" for the restriction of a Dixmier trace from $\mathcal M_{1,\infty}$ 
to $\mathcal L_{1,\infty}$ (see Definition~\ref{Dix_L} below).

However, not every positive trace on $\mathcal L_{1,\infty}$ is the restriction 
of a positive trace on $\mathcal M_{1,\infty}$ (see Theorem~\ref{new} below). 
Therefore Dixmier's construction does not generate all traces on $\mathcal L_{1,\infty}$. 
In addition, in Dixmier's construction~\eqref{Dix_original_construction} the extended limit $\omega$ is far from being unique (see e.g.~\cite[Theorem 40]{SUZ1}). As a consequence, we are denied a neat characterisation of traces by using known characterisations of extended limits.
We also remark that the construction~\eqref{Dix_original_construction} was given in terms of singular values, and it has been a non-trivial task to formulate traces in terms of eigenvalues (see~\cite{K} and Lidkii formulas in~\cite{SSZ,LSZ}).

% classify Dixmier traces and measurability

As stated, and shown below, the advantage of the construction~\eqref{weight} is that it is a bijective association between traces on $\mathcal L_{1,\infty}$ and linear translation invariant functionals on $l_\infty$, and that it is formulated in terms of eigenvalues.
This has fundamental consequences for the study of traces on $\mathcal L_{1,\infty}$ and allied topics 
(in particular, those parts of noncommutative geometry which employ singular traces). 
Indeed, this approach has led us to a  complete description of various sets of traces on $\mathcal{L}_{1,\infty}$ and measurable operators in $\mathcal L_{1,\infty}$ introduced by A.~Connes.

As an example, let us recall that Connes observed in~\cite{C}, that in order to ensure that the functional ${\rm Tr}_\omega$ be a trace, 
it is sufficient to only assume in~\eqref{Dix_original_construction} that $\omega$ is a dilation invariant extended limit on $l_\infty$.  
Dixmier's original construction used a dilation and translation invariant extended limit.  Later it was proved in~\cite[Theorem 2]{SUZ1} (see also~\cite[Theorem 9.6.9]{LSZ}) that the set of traces constructed by dilation invariant extended limits coincided with the set of traces constructed using translation and dilation invariant extended limits. Therefore there is no ambiguity in calling the set of all traces generated by dilation invariant extended limits the set of Dixmier traces, and we denote the set by $\mathcal{D}$.
Using~\eqref{weight} we find (see Theorem~\ref{FS1} below) that the set $\mathcal{D}$ is isometric to the set of ``factorisable'' Banach limits,~\cite{Raimi}. 
Recall a Banach limit $\theta$ on $l_\infty$ is ``factorisable'' if it is of the form $\theta = \gamma \circ C$ for an extended limit $\gamma$ and $C : l_\infty \to l_\infty$ the Cesaro operator. 
As a consequence, a compact operator $A \in \mathcal{L}_{1,\infty}$ is measurable with respect to $\mathcal{D}$ 
(that is, it has the same value for every Dixmier trace) 
if and only if its eigenvalue sequence satisfies the condition that
$$
C \left\{ \sum_{k=2^n-1}^{2^{n+1}-2} \lambda(k,A) \right\}_{n \ge 0} \text{ is convergent} .
$$

Combining this characterisation with the fact that the classes of Dixmier traces (on $\mathcal L_{1,\infty}$) 
and normalised fully symmetric functionals on $\mathcal L_{1,\infty}$ coincide (see Corollary~\ref{fs_cor1} below),
we are able to resolve (for the class $\mathcal L_{1,\infty}$) an open problem (iii) stated~\cite[p. 1061]{CS},
concerning the measurability with respect to the class of all normalised fully symmetric functionals.

A smaller subclass of Dixmier traces was suggested by A.~Connes in~\cite[Section IV, 2$\beta$]{C}. 
It was observed that for any extended limit $\gamma$ on $l_{\infty}$ the functional $\omega:=\gamma\circ M$ is dilation invariant. 
Here, the bounded operator $M:l_\infty\to l_\infty$ is given by the formula
$$(Mx)_n=\frac1{\log(2+n)}\sum_{k=0}^n \frac{x_k}{k+1},\ n\ge 1.$$
Let $\mathcal C$ denote the set of traces generated by using a dilation extended limit $\omega$ of the form $\omega:=\gamma\circ M$ in~\eqref{Dix_original_construction}.  
This set of traces was termed ``Connes-Dixmier traces'' in~\cite{LSS}. Evidently $\mathcal C$ is a subset of $\mathcal D$, i.e.~every Connes-Dixmier trace is a Dixmier trace.  
It is a strict subset since it is known that the set of Dixmier and Connes-Dixmier traces are distinct~\cite[Theorem 6.1]{P3}. 
This distinction was studied by A.~Pietsch in a series of three papers~\cite{P1,P4,P3}, where the deep techniques were developed,
which are of a wider interest in the theory of singular traces.
The inclusion $\mathcal C \subsetneq \mathcal D$ was also proved in~\cite[Theorem 2.2]{SUZ2} using a different approach.  
Using~\eqref{weight} we find (see Theorem~\ref{CD} below) the neat classification establishing the (isometric) bijection between the set $\mathcal{C}$ 
and that of Banach limits of the form $\theta = \gamma \circ C^2$ for an extended limit $\gamma$. 
Thus a compact operator $A \in \mathcal{L}_{1,\infty}$ is measurable with respect to $\mathcal{C}$ (having the same value for every Connes-Dixmier trace) 
if and only if its eigenvalue sequence satisfies the condition that
$$
C^2 \left\{ \sum_{k=2^n-1}^{2^{n+1}-2} \lambda(k,A) \right\}_{n \ge 0} \text{ is convergent} .
$$
This leads to a surprising result.  It is known that on the positive cone of $\mathcal L_{1,\infty}$ the notions of Dixmier- and Connes-Dixmier measurable coincide, 
\cite[Corollary 3.9]{LSS}.  See~\cite{CS,C,LSS,LS,LSZ,SUZ2} for properties and concrete examples of Dixmier- and Connes-Dixmier measurable operators. 
Using the above classification and a Tauberian result of G.~Hardy we show that for every operator $A \in \mathcal L_{1,\infty}$ 
(not necessarily positive) the notion of Dixmier- and Connes-Dixmier measurable coincide. 
This result resolves the problem (i) stated in~\cite[p. 1061]{CS} (in the ideal $\mathcal L_{1,\infty}$) in the affirmative.
Whether this result remains true on the larger ideal $\mathcal M_{1,\infty}$ remains unknown.  

Finally, an even smaller subclass of Dixmier traces was considered by many authors (see e.g.~\cite{BF,CPS2}).
Within the set of dilation invariant extended limits of the form $\omega:=\gamma\circ M$
there will be those that satisfy $\omega:=\omega \circ M$. For evident reasons such an extended limit is called an $M$-invariant extended limit. 
Let $\mathcal D_M$ denote the set of traces generated by using an $M$-invariant extended limit $\omega$ of the form $\omega:=\omega \circ M$ in~\eqref{Dix_original_construction}. 
Evidently $\mathcal D_M$ is a subset of $\mathcal C$. 
Dixmier traces from the set $\mathcal D_M$ are used in various important formulae in noncommutative geometry, 
such as (a) Connes' formula for a representative of the Hochschild class  of the Chern character for $(p,\infty)$-summable spectral triples 
(see e.g. \cite[Theorem~7]{CPRS1} and \cite[Theorem 6]{BF}), and (b) the formulae involving heat kernel estimates and generalised $\zeta$-functions residues 
(see e.g. \cite{CPS2,CPRS1,BF,CRSS,SZ}).

Using~\eqref{weight} we prove in Theorem~\ref{M-inv} below that the set $\mathcal{D}_M$ is isometric to the set of Banach limits of the form $\theta = \theta \circ C$ 
(the set of Cesaro invariant Banach limits). 
This is sufficient to show that $\mathcal D_M$ is a strict subset of the set $\mathcal C$ of Connes-Dixmier traces. 
It follows from some further results that a compact operator $A \in \mathcal{L}_{1,\infty}$ is measurable with respect to $\mathcal{D}_M$ 
(having the same value for every $M$-invariant Dixmier trace) if and only if its eigenvalue sequence satisfies the condition that
$$
\lim_{m\to\infty} \liminf_{n\to\infty}C^m\left\{ \sum_{k=2^n-1}^{2^{n+1}-2} \lambda(k,A) \right\}_{n \ge 0} =
 \lim_{m\to\infty} \limsup_{n\to\infty}C^m\left\{ \sum_{k=2^n-1}^{2^{n+1}-2} \lambda(k,A) \right\}_{n \ge 0}
$$
(this result should be compared with that~\cite[Theorem 5, Corollary 13]{SS_JFA}).
% The sequences above are constant and equal after $m=2$ when an operator is measurable with respect to the set of traces $\mathcal{D}$ or $\mathcal{C}$, 
% so the limits coinciding become trivial in that case. 
We show (see Theorem~\ref{dif2} below) that the set of operators that are measurable with  
respect to $\mathcal D_M$ contains as a strict subset those measurable with respect to the sets traces $\mathcal{D}$ or $\mathcal{C}$.

% can now consider positive traces and extend CTT

Going beyond Dixmier traces, the construction~\eqref{weight} gives us a clear path to study the set of all positive normalised traces on $\mathcal L_{1,\infty}$. 
Recall that a trace $\tau$ on $\mathcal{L}_{1,\infty}$ is normalised if $\tau(A) = 1$ for every $0 \leq A \in \mathcal{L}_{1,\infty}$ with $\mu(n,A) = (1+n)^{-1}$, $n \ge 0$. 
The set of positive normalised traces, denoted $\mathcal{PT}$, and the notion of operators measurable with respect to the set $\mathcal{PT}$ is studied here for the first time. 
Using~\eqref{weight} it turns out that there is an isometry between the set $\mathcal{PT}$ and the set of all Banach limits on $l_\infty$. 
This makes the set of positive normalised traces a very natural set of traces to consider. 
Further, the notion of $\mathcal{PT}$-measurability is intricately linked to the classical notion of almost convergence introduced  by G.~G.~Lorentz in~\cite{L}.  
An operator $A \in \mathcal{L}_{1,\infty}$ is measurable with respect to the set $\mathcal{PT}$ of traces (that is, all positive normalised traces 
take the same value on $A$) if and only if 
$$
\left\{ \sum_{k=2^n-1}^{2^{n+1}-2} \lambda(k,A) \right\}_{n \ge 0} \text{ is almost convergent} .
$$
We also prove that the class of Dixmier measurable operators is strictly wider than the class of $\mathcal{PT}$-measurable operators within $\mathcal L_{1,\infty}$ (see Theorem~\ref{dif1} below).

% summarise it all

To summarise the above result on traces in symbols, let us introduce
\begin{dfn}\label{meas}
Let $\mathcal A$ be a subset of traces on $\mathcal L_{1,\infty}$. 
The set $\mathcal L_{1,\infty}^{\mathcal A}$ of all $\mathcal A$-measurable elements consists of all elements 
$A\in \mathcal L_{1,\infty}$ such that $\tau(A)$ takes the same value for all $\tau \in \mathcal A$.
\end{dfn}
We have the various sets of normalised positive traces on $\mathcal{L}_{1,\infty}$ 
$$
\mathcal{D}_M \subsetneq \mathcal C \subsetneq \mathcal D \subsetneq  \mathcal{PT},
$$
where $\mathcal D$ is the traditional set of Dixmier traces and, using~\eqref{weight},
these sets are isometric (respectively) to
\begin{multline*}
\{\theta=\theta \circ C : \theta \text{ is an extended limit} \}
\subsetneq 
\{\theta=\gamma \circ C^2 : \gamma \text{ is an extended limit} \} \\
\subsetneq
\{\theta=\gamma \circ C : \gamma \text{ is an extended limit} \}
\subsetneq
\{\theta : \theta \text{ is a Banach limit} \}
\end{multline*}
(see Theorems~\ref{M-inv},~\ref{CD},~\ref{FS1} and Corollary~\ref{BL1} below).
Summing up all the results about measurability we obtain the following chain of inclusions
$$
\mathcal L_{1,\infty}^{\mathcal{PT}}\subsetneq\mathcal L_{1,\infty}^{\mathcal{D}}=\mathcal L_{1,\infty}^{\mathcal{C}}\subsetneq \mathcal L_{1,\infty}^{\mathcal{D}_M}.
$$

% CTT

As an application, the explicit form of positive normalised traces provided by~\eqref{weight} allows us to extend some of the results of \cite{KLPS}, which studied Connes' trace theorem, to all positive traces. In particular, the construction~\eqref{weight} gives us a unique formula by which we can calculate the positive normalised trace of a compactly supported pseudo-differential operator of order $-d$ using its symbol. It also provides conditions for when a pseudo-differential operator of order $-d$ may have a unique residue calculated by using a positive normalised trace on $\mathcal L_{1,\infty}$. A detailed explanation is given in Section~\ref{NCG} but we sketch the relevant ideas here.

Recall that the original statement of Connes' trace theorem, \cite{C3}, is as follows.
\begin{thm}\label{CTTstatement0}
Every compactly supported classical pseudo-differential operator $A : C_c^\infty(\mathbb R^d) \to C_c^\infty(\mathbb R^d)$ of order $-d$
extends to a compact linear operator belonging to $\mathcal L_{1,\infty}(L_2(\mathbb R^d))$ and 
$${\rm Tr}_\omega(A) = \frac{1}{d (2\pi)^d} {\rm Res}_W(A),$$
where ${\rm Res}_W(A)$ is Wodzicki's (noncommutative) residue of $A$ and 
${\rm Tr}_\omega$ is any Dixmier trace.
\end{thm}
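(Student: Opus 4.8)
The plan is to prove the two assertions separately: first that $A$ extends to an element of $\mathcal L_{1,\infty}(L_2(\mathbb R^d))$, and then the trace identity. For membership I would use the standard mapping and kernel estimates for classical pseudo-differential operators. Since $A$ is compactly supported of order $-d$, one factors it (after inserting a cutoff $M_\chi$) as a bounded operator times a fixed model of order $-d$, so that $\mu(n,A)=O\bigl(\mu(n,M_\chi(1-\Delta)^{-d/2})\bigr)=O((1+n)^{-1})$, the last singular values being read off from the Weyl asymptotics of the resolvent power $(1-\Delta)^{-d/2}$. This places $A$ in $\mathcal L_{1,\infty}$ and makes $\mathrm{Tr}_\omega(A)$ meaningful.

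For the trace formula, the first reduction is to the principal symbol. Writing the classical expansion $\sigma_A\sim\sum_{j\ge0}\sigma_{-d-j}$ and isolating the leading homogeneous term, the operator $A_0$ with principal symbol $\sigma_{-d}$ differs from $A$ by an operator of order $\le -d-1$. On a compactly supported domain such an operator is trace class, hence annihilated by every Dixmier trace (these being singular) and invisible to $\mathrm{Res}_W$. Thus both sides of the asserted identity depend only on $\sigma_{-d}$, and it suffices to compute $\mathrm{Tr}_\omega(A_0)$.

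The analytic core is the $\zeta$-function computation of the constant. With $\Lambda=(1-\Delta)^{1/2}$ the function $s\mapsto\mathrm{Tr}(A_0\Lambda^{-ds})$ is holomorphic for $\mathrm{Re}\,s>0$ and extends with a simple pole at $s=0$. Evaluating the residue through the trace--symbol formula $\mathrm{Tr}(B)=(2\pi)^{-d}\!\int\!\!\int b(x,\xi)\,d\xi\,dx$ and passing to polar coordinates, the homogeneity of $\sigma_{-d}$ produces a radial integral $\int_1^\infty r^{-1-ds}\,dr=(ds)^{-1}$, whence $\mathrm{Res}_{s=0}\mathrm{Tr}(A_0\Lambda^{-ds})=\frac{1}{d(2\pi)^d}\int_{\mathbb R^d}\int_{|\xi|=1}\sigma_{-d}(x,\xi)\,dS(\xi)\,dx=\frac{1}{d(2\pi)^d}\mathrm{Res}_W(A_0)$; note the factor $d^{-1}$ arises precisely from this radial integral. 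It remains to identify this residue with the Dixmier trace, which for a positive operator is the content of Connes' Tauberian theorem, giving $\mathrm{Tr}_\omega(A_0)=\mathrm{Res}_{s=0}\mathrm{Tr}(A_0\Lambda^{-ds})$ for every $\omega$ (see \cite{C3,KLPS,LSZ}).

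The main obstacle is this last identification in the \emph{general, non-self-adjoint} case, since the clean Tauberian theorem is stated only for positive operators and cannot be applied termwise to an arbitrary $A_0$. The cleanest route, which I would adopt, bypasses the Tauberian step by a trace-uniqueness argument: for pseudo-differential operators $P,Q$ with $\mathrm{ord}(P)+\mathrm{ord}(Q)=-d$ the commutator $[P,Q]$ has order $\le -d-1$ and is therefore trace class, so $\mathrm{Tr}_\omega([P,Q])=0$; hence $\sigma_{-d}\mapsto\mathrm{Tr}_\omega(A_0)$ descends to a trace on the algebra of principal symbols. By the uniqueness of the noncommutative residue as a trace (Wodzicki; see \cite{LSZ}) this space of traces is one-dimensional and spanned by $\mathrm{Res}_W$, forcing $\mathrm{Tr}_\omega(A)=c\,\mathrm{Res}_W(A)$ with a universal constant $c$ independent of both $A$ and $\omega$, which already yields the measurability implicit in the statement. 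The constant is then pinned to $\frac{1}{d(2\pi)^d}$ by evaluating both functionals on a single positive model, for instance $a(x)(1-\Delta)^{-d/2}$ with $0\le a\in C_c^\infty(\mathbb R^d)$, where the $\zeta$-function computation of the preceding paragraph applies directly.
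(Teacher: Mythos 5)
Your steps (1)--(4) are essentially sound: the factorisation argument for membership in $\mathcal L_{1,\infty}$, the reduction to the principal symbol modulo trace-class perturbations, the $\zeta$-function residue computation, and the Tauberian identification of ${\rm Tr}_\omega$ with the $\zeta$-residue for \emph{positive} operators are all standard and correct. For context: the paper does not reprove this statement --- it quotes it from Connes \cite{C3} --- and within the paper's own framework it is an immediate corollary of Theorem~\ref{CTTv1}(i) together with Proposition~\ref{prop:wodres} (for classical $A$ the residue ${\rm Res}(A)$ is the scalar ${\rm Res}_W(A)$, so $\omega({\rm Res}(A))={\rm Res}_W(A)$ for every $\omega$), the engine being the eigenvalue--symbol estimate of Theorem~\ref{eigen}; that route needs neither a Tauberian theorem nor any uniqueness property of the residue.

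The genuine gap is in step (5), and it is a homogeneity-degree mismatch. The commutators you control are those with ${\rm ord}(P)+{\rm ord}(Q)=-d$: then $[P,Q]$ has order $-d-1$ (since principal symbols commute), is trace class, and indeed ${\rm Tr}_\omega([P,Q])=0$. But the principal symbol of such a commutator is $\frac1i\{p,q\}$, homogeneous of degree $-d-1$, whereas your functional $L(\sigma_{-d}(A)):={\rm Tr}_\omega(A)$ lives on symbols of degree $-d$; at that degree the only relation you obtain, $L(pq)=L(qp)$, is vacuous because symbols commute, so these commutators impose no constraint whatsoever on $L$. To run a Wodzicki-type uniqueness argument you must show that $L$ kills Poisson brackets of degree exactly $-d$, i.e. $\{p,q\}$ with ${\rm ord}(p)+{\rm ord}(q)=-d+1$ (the model cases being $\{\xi_j,f\}=\partial_{x_j}f$ with $f$ of degree $-d$, and $\{g,x_j\}$ with $g$ of degree $-d+1$; it is these that span the kernel of the cosphere integral). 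For such pairs, however, either one factor has positive order and is unbounded, or both $PQ$ and $QP$ have order $-d+1$ and lie outside $\mathcal L_{1,\infty}$; in either case $[P,Q]$ has order exactly $-d$ --- it lies in $\mathcal L_{1,\infty}$ but is \emph{not} trace class --- and the vanishing ${\rm Tr}_\omega([P,Q])=0$ follows neither from trace-classness nor from the trace property of ${\rm Tr}_\omega$ on the ideal, which requires one factor bounded and the other in $\mathcal L_{1,\infty}$. Proving, say, ${\rm Tr}_\omega([D_j,Q])=0$ for $Q$ of order $-d$ is essentially the content of Connes' theorem, not an available input to it. As it stands, your scheme establishes the formula only for operators reachable from the positive cone via steps (1)--(4); the general non-self-adjoint case --- exactly the case the Laplacian-modulated machinery of \cite{KLPS} underlying Theorems~\ref{eigen} and~\ref{CTTv1} was built to handle --- remains unproved.
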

Connes' statement was given for closed manifolds, but it is equivalent to Theorem~\ref{CTTstatement0}. In~\cite{KLPS} (see also~\cite{LPS} and~\cite[Section 11]{LSZ}) generalisations of Connes' trace theorem were given. A wider class of operators, called Laplacian modulated operators, was considered in~\cite{KLPS}.  All pseudo-differential operators of order $-d$ were shown to be Laplacian modulated operators (including of course the smaller set of classical operators).
For the Laplacian modulated operators a vector valued Wodzicki's residue ${\rm Res}$ was defined. It belonged to $l_\infty / c_0$ and extended the Wodzicki's residue ${\rm Res}_W$~\cite[Proposition 6.16]{KLPS}, meaning that if $A$ is a compactly supported classical pseudo-differential operator then ${\rm Res}(A)$ is a scalar and ${\rm Res}(A)= {\rm Res}_W(A)$.

The main result of Section~\ref{NCG} complements Theorem 6.32 from~\cite{KLPS}.
\begin{thm}\label{CTTv2}
A compactly supported Laplacian modulated operator $A$ 
%with symbol $p_A \in L_2(\mathbb {R}^d, \mathbb{R}^d)$ 
extends to a compact linear operator $A \in \mathcal L_{1,\infty}(L_2(\mathbb R^d))$ and if $\tau$ is a normalised positive trace on $\mathcal L_{1,\infty}(L_2(\mathbb R^d))$, then
$$
\tau(A) = \frac1{(2\pi)^d \log 2}  B \left( \left\{ \int_{\mathbb R^d}\int_{2^{n/d}<|s|\le 2^{(n+1)/d}} p_A(x,s)ds dx \right\}_{n \ge 0} \right),
$$
for a unique Banach limit $B$ (corresponding to the trace $\tau$). Further, the equality
$$
\tau(A)= \frac1{d(2\pi)^d} {\rm Res}(A) 
$$
holds for every positive normalised trace $\tau$ on $\mathcal L_{1,\infty}$
if and only if the sequence
\begin{equation}
\left\{ \int_{\mathbb R^d} \int_{2^\frac{n}d <|s|\le 2^\frac{n+1}d} p_A(x,s) ds\,dx \right\}_{n\ge0}
\end{equation}
is almost convergent (in the sense of Definition~\ref{ac}) to the scalar value $\frac1d \log 2 \cdot {\rm Res}(A)$.  
\end{thm}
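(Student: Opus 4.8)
The plan is to deduce the theorem from the isometric bijection between the set $\mathcal{PT}$ of positive normalised traces and the set of all Banach limits (Corollary~\ref{BL1}), combined with the analytic heart of the Laplacian modulated calculus of~\cite{KLPS}, which compares the spectral data of $A$ with the phase-space integrals of its symbol $p_A$. First I would dispose of the membership statement: the fact that a compactly supported Laplacian modulated operator extends to a compact operator in $\mathcal{L}_{1,\infty}(L_2(\mathbb{R}^d))$ is established in~\cite{KLPS}, and I would simply cite it. This licenses the application of the trace formula~\eqref{weight} to $A$. By Corollary~\ref{BL1}, every positive normalised trace $\tau$ has the form~\eqref{weight} for a unique Banach limit $B$, so that
$$
\tau(A)=\frac{1}{\log 2}\,B\left(\left\{\sum_{k=2^n-1}^{2^{n+1}-2}\lambda(k,A)\right\}_{n\ge0}\right).
$$
Thus the first displayed formula reduces to a single statement: the dyadic blocks of eigenvalues and the annular symbol integrals agree up to the factor $(2\pi)^{-d}$ and a sequence annihilated by every Banach limit.

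The central step, and the one I expect to be the main obstacle, is therefore the membership
$$
\left\{\sum_{k=2^n-1}^{2^{n+1}-2}\lambda(k,A)-\frac{1}{(2\pi)^d}\int_{\mathbb{R}^d}\int_{2^{n/d}<|s|\le 2^{(n+1)/d}}p_A(x,s)\,ds\,dx\right\}_{n\ge0}\in c_0.
$$
Here the factor $(2\pi)^{-d}$ is the phase-space normalisation, while the matching of indices reflects that the annulus $2^{n/d}<|s|\le 2^{(n+1)/d}$ carries phase-space volume comparable to the $2^n$ eigenvalues indexed by the $n$-th block. This comparison is precisely the Weyl-type estimate underlying~\cite[Theorem 6.32]{KLPS}; I would extract it from the proof there (or reprove it through the Laplacian modulated calculus), passing from the partial-sum form, in which $\sum_{k<N}\lambda(k,A)-(2\pi)^{-d}\int_{\mathbb{R}^d}\int_{|s|\le N^{1/d}}p_A(x,s)\,ds\,dx$ converges, to the block form by taking successive differences and controlling the thin shells between the radii $(2^{n+1}-1)^{1/d}$ and $2^{(n+1)/d}$. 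Once this membership is in hand, every Banach limit $B$ kills the error sequence, and the first formula follows at once from the expression for $\tau(A)$ above.

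For the final equivalence I would invoke the characterisation of $\mathcal{PT}$-measurability established earlier: an operator is measurable with respect to $\mathcal{PT}$, i.e. $\tau(A)$ is constant over all positive normalised traces $\tau$, if and only if its dyadic block sequence of eigenvalues is almost convergent (Definition~\ref{ac}). Since adding a $c_0$ sequence preserves both almost convergence and the value of the almost limit, the membership proved above transfers this property verbatim to the annular symbol integrals. It remains to identify the common value: if the block sequence is almost convergent, all Banach limits return its almost limit $L$, whence $\tau(A)=(2\pi)^{-d}(\log 2)^{-1}L$ for every $\tau\in\mathcal{PT}$. Setting this equal to $\frac{1}{d(2\pi)^d}\mathrm{Res}(A)$ forces $L=\frac1d\log 2\cdot\mathrm{Res}(A)$, which is exactly the asserted scalar almost limit; the converse is immediate. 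The only delicate point is the passage between the $l_\infty/c_0$-valued residue $\mathrm{Res}(A)$ of~\cite{KLPS} and a genuine scalar almost limit, which I would reconcile using the definition of $\mathrm{Res}(A)$ in terms of the same annular symbol integrals, so that a scalar almost limit is precisely the assertion that the residue class is the constant $\frac1d\log 2\cdot\mathrm{Res}(A)$.
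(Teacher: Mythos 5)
Your overall architecture coincides with the paper's: reduce to Banach limits via Corollary~\ref{BL1}, compare dyadic eigenvalue blocks with annular symbol integrals using the analytic input of~\cite{KLPS}, and then read off both assertions from the fact that Banach limits annihilate the error and from the almost-convergence characterisation of $\mathcal{PT}$-measurability. However, your central step contains a genuine gap. You claim that the error sequence
$$
\left\{\sum_{k}\lambda(k,A)-\frac{1}{(2\pi)^d}\int_{\mathbb{R}^d}\int_{2^{n/d}<|s|\le 2^{(n+1)/d}}p_A(x,s)\,ds\,dx\right\}_{n\ge0}
$$
lies in $c_0$, and you justify this by asserting that the partial-sum comparison $\sum_{k\le N}\lambda(k,A)-(2\pi)^{-d}\int_{\mathbb{R}^d}\int_{|s|\le N^{1/d}}p_A\,ds\,dx$ \emph{converges}. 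Neither claim is available. The result you are citing (the paper's Theorem~\ref{eigen}, i.e.~\cite[Theorem 6.23]{KLPS}) gives only that the partial-sum differences are $O(1)$, i.e.\ bounded --- not convergent. Taking successive dyadic differences of a merely bounded sequence produces a bounded sequence, not a null one (think of $y_n=(-1)^n$: the differences $y_{n+1}-y_n$ do not tend to zero). So the $c_0$ membership cannot be extracted this way, and there is no reason to believe it holds for a general Laplacian modulated operator.

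What the paper does instead, and what repairs your argument with minimal change, is to observe (Lemma~\ref{aconv}) that if $\sum_{k=0}^{n}x_k=O(1)$ then the dyadic block sums of $x$ form a sequence of the type $y-Sy$ with $y\in l_\infty$, hence are annihilated by \emph{every} translation invariant functional; equivalently, the error sequence lies in $ac_0$ (almost convergent to zero), which is strictly weaker than $c_0$ but exactly sufficient. With $c_0$ replaced by $ac_0$ throughout, the rest of your proposal goes through verbatim: Banach limits kill $ac_0$, so the first displayed formula follows, and adding an $ac_0$ sequence still preserves both almost convergence and the value of the almost limit, so the transfer to the annular symbol integrals in the second part is unaffected. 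One further small point: to write $\tau(A)$ in terms of \emph{eigenvalue} blocks for a general (non-positive) $A$ you need the Lidskii formula (Theorem~\ref{Lidskii}), not just Corollary~\ref{BL1}, whose formula~\eqref{formula1} is stated in terms of singular values on the positive cone; the paper invokes both.
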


% The example given in Theorem~\ref{psdo} shows that the latter theorem is truly an extension of part (i) of Theorem~\ref{CTTv1}. 
% It also shows how different traces of compactly supported pseudo-differential operators of order $-d$ 
% are from traces of \textit{classical} compactly supported pseudo-differential operators of order $-d$ (see Remark~\ref{last_rem} for the detailed explanation). 

The paper and our methods, which are perhaps of a wider interest and applicability, are organised as follows:

1. It is standard to reduce questions concerning traces on $\mathcal L_{1,\infty}$ to questions concerning functionals on its commutative counterpart $l_{1,\infty}$.
In Section~\ref{sf} we study the symmetric and fully symmetric functionals on $l_{1,\infty}$ 
(the commutative counterparts of traces and Dixmier traces on $\mathcal{L}_{1,\infty}$, respectively). 
The new approach to traces mentioned above is initially stated for symmetric functionals. So, we also introduce a universal way to constuct symmetric functionals on $l_{1,\infty}$. 
We prove that continuous symmetric functionals on $l_{1,\infty}$ form a lattice, which nicely complements the result of H.~Lotz~\cite[Theorem 4.3]{Lotz} about symmetric functionals on the function space $L_{1,\infty}$ on the interval $(0,1)$.
Then we show the three-way bijection between: symmetric functionals on the weak-$l_1$ space $l_{1,\infty}$, traces on the corresponding ideal $\mathcal L_{1,\infty}$ of compact operators
and translation invariant linear functionals on the space $l_\infty$ of bounded sequences. 
We also specialize these bijections to the set of all positive normalised traces on $\mathcal L_{1,\infty}$ and Banach limits on $l_\infty$ in Corollary~\ref{BL1}.

2. In Section~\ref{tr} we transfer the results proved in Section~\ref{sf} for symmetric functionals to the noncommutative setting. 
The main result of this section (Theorem~\ref{one-to-one1}) 
introduces the bijection between the set $\mathcal {PT}$ of positive normalised traces on $\mathcal{L}_{1,\infty}$ and Banach limits.
Moreover, this bijection is an order isomorphism and an isometry from the set of all continuous traces on $\mathcal L_{1,\infty}$ 
to the set of all bounded translation invariant linear functionals on $l_\infty$.
Having this powerful result in hand, we %give (first of this kind, to our knowledge) the 
study geometric properties
of the set $\mathcal {PT}$. In particular, we show that the diameter of the latter set equals $2$ in Corollary~\ref{diam}.
We also characterise extreme points of the set $\mathcal {PT}$ in Theorem~\ref{ext1}. 
%Among the other results we prove in Theorem~\ref{as_far1} and~\ref{as_far2} , that none of the 

3. Section~\ref{subclasses} is devoted to the study of various subclasses of the class of positive normalised traces on $\mathcal{L}_{1,\infty}$.
We characterise Dixmier, Connes-Dixmier traces and the class $\mathcal D_M$ of Dixmier traces generated by $M$-invariant extended limits
in terms of subclasses of Banach limits, as already mentioned. 

4. In Section~\ref{lid} we establish the Lidskii formula for traces on $\mathcal L_{1,\infty}$, which allows one to evaluate a trace using eigenvalues of an operator instead of its singular values.

5. In Section~\ref{Me} we investigate the measurability of operators from $\mathcal L_{1,\infty}$  with respect to different subclasses of positive normalised traces. 
The main result of this section is Theorem~\ref{D=CD}, answering the question about the relationship between the classes of Dixmier- and Connes-Dixmier measurable operators.

6. In the last section we apply our results to pseudo-differential operators.

\section{Preliminaries}
We denote by $L_\infty := L_\infty (0,\infty)$ the space of all (equivalence classes of) real-valued
essentially bounded Lebesgue measurable functions on $(0,\infty)$ equipped with the norm
$$\|x\|_{L_\infty}:=\mathop{\rm esssup}_{t>0} |x(t)|.$$ 
Let $C_0 := C_0(0,\infty)$ denote the subspace of all bounded continuous functions on $(0,\infty)$ that vanish at infinity.

Let $\pi$ be the isometric embedding $\pi:l_{\infty}\to L_\infty$
given by 
\begin{equation}\label{pi}
 \{x_n\}_{n=0}^\infty\stackrel{\pi}{\mapsto}\sum_{n=0}^\infty x_n \chi_{[n,n+1)}.
\end{equation}
For every operator $A\in B(H)$ a generalised singular value function $\mu(A)$ is defined by the formula
$$\mu(t, A)=\inf \{\|Ap\|_\infty : \ p \ \text{is a projection in} \ B(H) \ \text{with} \ {\rm Tr}(1-p)\le t\}.$$
Since $B(H)$ is an atomic von Neumann algebra and traces of all atoms equal to 1, it follows that $\mu(A)$ is a step function for every $A\in B(H)$ (see e.g.~\cite[Chapter 2]{LSZ}).
In particular, for a compact operator $A \in B(H)$ we have that $\mu(n, A)$, $n \geq 0$, is the $n$-th singular value of the operator $A$.

We give the definition of extended limits on functions and sequences.
\begin{dfn}
  A positive linear functional on $L_\infty$ is called an extended limit if it coincides with the ordinary limit on every convergent (at $+\infty$) function.
 A positive linear functional on $l_\infty$ is called an extended limit if it coincides with the ordinary limit on every convergent sequence.
\end{dfn}

\begin{rem}\label{extended_limits}
It is well-known that for every extended limit $\gamma$ and every $x \in l_\infty$ the following inequalities hold
 $$\liminf_{n\to\infty} x_n \le \gamma(x) \le \limsup_{n\to\infty} x_n.$$
Hence, every extended limit is a positive norm-one functional on $l_\infty$.
Moreover, for every $x\in l_\infty$ the following equality holds
$$\{\gamma(x) : \gamma \ \text{is an extended limit}\} = [\liminf_{n\to\infty} x_n , \limsup_{n\to\infty} x_n].$$
Similar statements hold for extended limits on $L_\infty$.
\end{rem}

\begin{dfn}
An extended limit $\omega$ on $L_\infty$ is called dilation invariant if
$$\omega \circ \sigma_\beta  =\omega \ \text{for every} \ \beta>0.$$
Here,
$$(\sigma_\beta x)(t):=x( t/\beta), t>0.$$
\end{dfn} 

Let $S  : l_\infty \to l_\infty$ be the right shift operator defined as follows
$$S(x_0, x_1, x_2, \dots) = (0,x_0, x_1, x_2, \dots)$$
and let $T  : l_\infty \to l_\infty$ be the left shift operator defined as follows
$$T(x_0, x_1, x_2, \dots) = (x_1, x_2, \dots).$$

We also define a dilation operator $\sigma_2  : l_\infty \to l_\infty$ as follows
$$\sigma_2(x_0, x_1, x_2, \dots) = (x_0, x_0, x_1,x_1, x_2, x_2, \dots).$$

It is evident that any left-shift-invariant functional on $l_\infty$ is invariant with respect to the right shift.
The converse statement is false in general. However, the converse statement holds for bounded functionals. 

\begin{prop}\label{bf}
For every bounded linear functional $\theta$ on $l_\infty$ we have 
$\theta=\theta \circ S$ if and only if $\theta=\theta \circ T$.
\end{prop}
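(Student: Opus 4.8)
The plan is to treat the two implications separately, since only the converse will actually require the boundedness hypothesis. The whole argument rests on two elementary facts about the shifts: first, the algebraic identity $TS = I$ on $l_\infty$ (applying the left shift undoes the right shift); and second, that $ST$ differs from the identity only in the zeroth coordinate, namely, writing $e_n$ for the $n$-th standard basis vector, one has $x - STx = x_0 e_0$ for every $x = (x_0, x_1, \dots)$.

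For the implication $\theta = \theta \circ T \Rightarrow \theta = \theta \circ S$ I would simply feed the sequence $Sx$ into the hypothesis. Because $T(Sx) = x$ for every $x$, left-shift-invariance gives $\theta(Sx) = \theta(T(Sx)) = \theta(x)$, which is exactly right-shift-invariance. This direction uses neither boundedness nor any special structure, so it merely formalises the remark immediately preceding the proposition.

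For the converse, assume $\theta = \theta \circ S$. Applying invariance to $Tx$ yields $\theta(Tx) = \theta(S(Tx)) = \theta(STx)$, and since $x - STx = x_0 e_0$, linearity gives $\theta(x) - \theta(Tx) = x_0\,\theta(e_0)$. Hence the entire statement reduces to showing that $\theta(e_0) = 0$. This is the single nontrivial point, and it is precisely where boundedness must enter: without it the conclusion is false, as left and right shift invariance genuinely differ for unbounded functionals.

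To establish $\theta(e_0) = 0$ I would exploit right-shift-invariance to spread the value $\theta(e_0)$ across every coordinate. Since $S^n e_0 = e_n$, invariance gives $\theta(e_n) = \theta(e_0)$ for all $n$, so the finitely supported sequences $f_N := \sum_{n=0}^{N-1} e_n$ satisfy $\theta(f_N) = N\,\theta(e_0)$ by linearity, while $\|f_N\|_\infty = 1$ for every $N \ge 1$. Boundedness then forces $|N\,\theta(e_0)| = |\theta(f_N)| \le \|\theta\|$ for all $N$, which is possible only if $\theta(e_0) = 0$. Substituting this back into the identity of the previous paragraph yields $\theta(x) = \theta(Tx)$ for every $x$, completing the proof.
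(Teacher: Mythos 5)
Your proof is correct and follows essentially the same route as the paper: both establish $\theta(e_0)=0$ by using $S$-invariance to spread the value over the sequences $(1,\dots,1,0,\dots)$ of norm one and invoking boundedness, and both then conclude via the identity $x - STx = x_0 e_0$ together with $\theta(STx)=\theta(Tx)$. The only cosmetic differences are that you argue directly rather than by contradiction and spell out the trivial direction ($TS=I$) which the paper merely calls trivial.
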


\begin{proof}
We show the ``if'' direction; the ``only if'' direction is trivial.
Let $\theta$ be a bounded functional on $l_\infty$ such that $\theta=\theta \circ S$.
We first show that $\theta(1,0,0,\dots)=0$.
Assume contrapositively that $\theta(1,0,0,\dots)=a\neq 0$.
Since $\theta=\theta \circ S$, it follows that 
$$\theta(\underbrace{1,\dots,1}_{n \ \text{times}}, 0,0,\dots)= \sum_{k=1}^n \theta \circ S^k (1,0,0,\dots) =na.$$
Hence, the functional $\theta$ is not bounded.

The obtained contradiction shows that for every bounded functional $\theta$ on $l_\infty$ 
such that $\theta=\theta \circ S$ one has $\theta(1,0,0,\dots)=0$ or equivalently \\$\theta(x_0,0,0,\dots)=0$.

Hence,
\begin{align*}
 \theta(x) = \theta(x_0, x_1, x_2, \dots) - \theta(x_0, 0, 0, \dots) 
 = \theta(0, x_1, x_2, \dots) 
 = \theta(STx) =\theta(Tx).  
\end{align*}
\end{proof}

\begin{dfn}
A linear functional $B$ on  $l_\infty$ is called a Banach limit if
\begin{enumerate}
    \item[(i)] $B\geq0$, that is $B(x) \geq 0$ for $x \geq 0$,
    \item[(ii)] $B(\emm)=1$, where $\emm=(1,1,1,\dots)$,
    \item[(iii)] $B(Sx)=B(x)$ for all $x\in l_\infty$.
\end{enumerate} 
\end{dfn}
Note that, originally, Banach limits were defined to be $T$-invariant~\cite[Chapter II, \S 3, Example 4]{B}.
In view of Proposition~\ref{bf} our definition is equivalent to that of Banach.
We denote the set of all Banach limits by $\mathfrak B$.

% Since positive normalised traces correspond to Banach limits, 
% the important role in this section will be played by the subset 
% $ac \subset l_\infty$ on which all Banach limits coincide. 
The following concept was introduced by G.~G.~Lorentz,~\cite{L}.

\begin{dfn}\label{ac}
 A sequence $x\in l_\infty$ is said to be almost convergent (to $a\in \mathbb R$) if $Bx=a$ for every Banach limit $B$.
\end{dfn}

Denote the set of all almost convergent sequences by $ac$. 
Denote by $ac_0$ the subset of all sequences almost convergent to zero.
The following criterion of almost convergence was proved by Lorentz,~\cite{L}.
\begin{thm}\label{lorentz}
A sequence $x\in l_\infty$ is almost convergent to $a\in \mathbb R$ if and only if
$$
 \lim_{n\to\infty}\frac1n\sum_{k=m}^{m+n-1}x_k=a
$$
uniformly in $m\in{\mathbb N}$. 
\end{thm}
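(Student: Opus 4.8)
The plan is to prove the theorem through the two sublinear functionals classically associated with the window averages
$$\tau_{n,m}(x):=\frac1n\sum_{k=m}^{m+n-1}x_k,\qquad n\ge1,\ m\ge0,$$
and then to identify the full range of Banach limits on a fixed $x$. First I would introduce
$$p(x):=\inf_{n\ge1}\,\sup_{m\ge0}\tau_{n,m}(x),\qquad q(x):=\sup_{n\ge1}\,\inf_{m\ge0}\tau_{n,m}(x).$$
Splitting a window of length $n_1+n_2$ into two consecutive windows shows that $n\sup_m\tau_{n,m}(x)$ and $n\inf_m\tau_{n,m}(x)$ are respectively subadditive and superadditive in $n$, so by Fekete's lemma the infimum and supremum above are in fact the limits as $n\to\infty$, and $q(x)=-p(-x)$. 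A direct check then gives that $p$ is sublinear, that $p(\pm\emm)=\pm1$, that $p(-y)\le0$ whenever $y\ge0$, and—via the telescoping identity $\sum_{k=m}^{m+n-1}\bigl((Ty)_k-y_k\bigr)=y_{m+n}-y_m$—that $p(Ty-y)=p(y-Ty)=0$ for every $y\in l_\infty$.

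Granting this, both implications rest on the single sandwich $q(x)\le B(x)\le p(x)$, valid for every Banach limit $B$. Since Banach limits are $T$-invariant (Proposition~\ref{bf}), one has $B(x)=B\bigl(\tfrac1n\sum_{j=0}^{n-1}T^jx\bigr)$, and positivity together with $B(\emm)=1$ yields $B(y)\le\sup_m y_m$; applying this to the sequence whose $m$-th coordinate is $\tau_{n,m}(x)$ and taking the infimum over $n$ gives $B(x)\le p(x)$, and symmetrically $B(x)\ge q(x)$. For the ``if'' direction, uniform convergence of $\tau_{n,m}(x)$ to $a$ forces $\sup_m\tau_{n,m}(x)\to a$ and $\inf_m\tau_{n,m}(x)\to a$, hence $p(x)=q(x)=a$ and $B(x)=a$ for all $B$, so $x$ is almost convergent. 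For the ``only if'' direction I would show that the endpoints are attained: extend $\lambda x\mapsto\lambda p(x)$ from $\mathbb R x$ to all of $l_\infty$ by Hahn–Banach dominated by $p$, and verify the extension $B$ is a Banach limit—normalisation from $p(\pm\emm)=\pm1$, positivity from $p(-y)\le0$ for $y\ge0$, and invariance from $B(Ty-y)\le p(Ty-y)=0$ for all $y$—with $B(x)=p(x)$, and similarly for $q(x)$. By convexity of $\mathfrak B$ this gives $\{B(x):B\in\mathfrak B\}=[q(x),p(x)]$, so almost convergence of $x$ to $a$ means $p(x)=q(x)=a$, which unwinds precisely to uniform convergence of $\tau_{n,m}(x)$ to $a$.

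The sublinearity and invariance identities for $p$ are routine; the genuinely delicate step is the Hahn–Banach realisation of the extreme values, where the extension is a priori only \emph{dominated} by $p$ and one must upgrade this to all three defining properties of a Banach limit. The crucial leverage is that $p$ annihilates the coboundaries $Ty-y$ and that $p(-\emm)=-1$, $p(-y)\le0$ for $y\ge0$: these turn the one-sided bound $B\le p$ into the exact equalities $B(Ty)=B(y)$, $B(\emm)=1$ and positivity. Here Proposition~\ref{bf} is exactly what lets me argue invariance through the left shift $T$ (where telescoping is transparent) while the definition of $\mathfrak B$ is phrased through the right shift $S$. The only remaining point requiring care is the passage from $p(x)=q(x)=a$ to genuinely \emph{uniform} convergence, but this is immediate since $\inf_m\tau_{n,m}(x)\le\tau_{n,m}(x)\le\sup_m\tau_{n,m}(x)$ and both outer terms tend to $a$.
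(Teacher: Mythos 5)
Your proof is correct. Note, however, that the paper does not prove Theorem~\ref{lorentz} at all: it is stated as a classical result with a citation to Lorentz~\cite{L}, so there is no in-paper argument to compare against. What you have reconstructed is essentially Sucheston's characterisation of the range of Banach limits: your functionals $p$ and $q$, together with the statement that $\{B(x):B\in\mathfrak B\}=[q(x),p(x)]$ with both endpoints attained, is precisely the result from~\cite{S} that the authors invoke later, in the proof of Lemma~\ref{max}; Lorentz's criterion then falls out as the degenerate case $p(x)=q(x)=a$, exactly as you argue. The individual steps all check out: Fekete's lemma applied to the subadditive sequence $n\mapsto\sup_m\sum_{k=m}^{m+n-1}x_k$ legitimises replacing $\inf_n$ by $\lim_n$ (and is what makes the final passage from $p(x)=q(x)=a$ to uniform convergence immediate); the sandwich $q(x)\le B(x)\le p(x)$ follows from $T$-invariance, available for Banach limits via Proposition~\ref{bf}, plus positivity; and the Hahn--Banach extension of $\lambda x\mapsto\lambda p(x)$ is indeed dominated by $p$ on the line $\mathbb R x$ (for $\lambda<0$ this needs $-p(x)\le p(-x)$, which follows from $0=p(0)\le p(x)+p(-x)$ and is worth stating explicitly), after which your upgrade of the extension to a genuine Banach limit via $p(\pm\emm)=\pm1$, $p(-y)\le0$ for $y\ge0$, and $p(\pm(Ty-y))=0$ is exactly right. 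Compared with simply citing~\cite{L} as the paper does, your route buys a self-contained proof that also yields the stronger attainment statement $\sup_{B\in\mathfrak B}B(x)=p(x)$, which is in fact the form of the result the paper actually needs in Lemma~\ref{max}.
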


\section{Symmetric functionals on $l_{1,\infty}$}\label{sf}
% Let $l_\infty$ be the set of all bounded sequences
% equipped with the uniform norm.
Denote by $l_{1,\infty}$ the linear space (frequently called the weak $l_1$-space) of all bounded sequences for which the quasi-norm
$$\|x\|_{l_{1,\infty}} = \sup_{n \ge 0} (n+1) x_n^*$$
is finite. Recall that by $x^*$ we denote a decreasing rearrangement of $|x|$.

The following definition should be compared with a similar notion studied in~\cite{DPSSS1,DPSSS2,DPSS}.
\begin{dfn}
 A linear functional $\varphi$ on $l_{1,\infty}$ is called symmetric if
$\varphi(x)=\varphi(y)$ for every $0\le x,y \in l_{1,\infty}$ such that $x^*=y^*$.
\end{dfn}

Denote by 
$$Z := {\rm Lin} \{u-v : 0\le u,v \in l_{1,\infty},  u^*=v^*\}.$$

It should be noticed that every symmetric functional $\varphi$ on $l_{1,\infty}$ vanishes on $Z$. 
In particular, for every symmetric functional $\varphi$ we have $\varphi(x) = \frac12 \varphi(\sigma_2 x)$ for every $x \in l_{1,\infty}$.
We also have that every symmetric functional vanishes on $l_1$,~\cite[Proposition 2.6]{DPSS}.

The following lemma is proved for the case of a Banach symmetric sequence space in~\cite[Proposition 4.2.8]{LSZ}.
Although we state it for a quasi-Banach symmetric sequence space, the proof is exactly the same and is, therefore, omitted.
\begin{lem}\label{phi+}
 For every continuous symmetric functional $\varphi$ on $l_{1,\infty}$ the functional
 $$\varphi_+(x) := \sup_{0\le u \le x} \varphi(u), \ 0\le x \in l_{1,\infty}$$
 satisfies the following properties:
 
 (i) $\varphi_+(x+y) = \varphi_+(x) +\varphi_+(y)$, $x,y \ge 0$;
 
 (ii) $\varphi_+(x^*) = \varphi_+(x)$, $x \ge 0$; \\
 In particular, the functional $\varphi_+$ extends to a positive symmetric functional on $l_{1,\infty}$.
\end{lem}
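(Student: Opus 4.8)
The plan is to reproduce the classical Jordan-type construction of the positive part of an order-bounded functional, adapted to the quasi-Banach lattice $l_{1,\infty}$ with its pointwise order. There are really three things to settle: that the supremum defining $\varphi_+$ is finite, the two displayed identities (i) and (ii), and the passage from a functional on the positive cone to a genuine positive symmetric linear functional on all of $l_{1,\infty}$.

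First I would check that $\varphi_+$ is well defined, finite, and nonnegative. If $0\le u\le x$ pointwise, then the decreasing rearrangements satisfy $u^*\le x^*$, hence $\|u\|_{l_{1,\infty}}=\sup_n(n+1)u_n^*\le\sup_n(n+1)x_n^*=\|x\|_{l_{1,\infty}}$. Continuity of $\varphi$ then gives $|\varphi(u)|\le\|\varphi\|\,\|x\|_{l_{1,\infty}}$ uniformly in such $u$, so the supremum is finite; taking $u=0$ shows $\varphi_+(x)\ge0$. This is the only place continuity is used.

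For (i) I would prove the two inequalities separately. Superadditivity is immediate: if $0\le u\le x$ and $0\le v\le y$ then $0\le u+v\le x+y$ and $\varphi(u)+\varphi(v)=\varphi(u+v)\le\varphi_+(x+y)$, and taking suprema gives $\varphi_+(x)+\varphi_+(y)\le\varphi_+(x+y)$. Subadditivity uses the Riesz decomposition available since $l_{1,\infty}$ is a vector lattice under the pointwise order: given $0\le w\le x+y$, set $u:=w\wedge x$ and $v:=w-u$. Then $0\le u\le x$, while $v_n=(w_n-x_n)_+\le y_n$ because $w_n\le x_n+y_n$, so $0\le v\le y$; hence $\varphi(w)=\varphi(u)+\varphi(v)\le\varphi_+(x)+\varphi_+(y)$, and the supremum over $w$ yields the reverse inequality. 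For (ii) I would exploit symmetry of $\varphi$, namely $\varphi(u)=\varphi(u^*)$ for $u\ge0$, to write $\varphi_+(x)=\sup_{0\le u\le x}\varphi(u^*)$. The key observation is that $\{u^*:0\le u\le x\}$ equals the set of decreasing sequences $w$ with $0\le w\le x^*$: the inclusion $\subseteq$ is $u^*\le x^*$, and for $\supseteq$, choosing a bijection $\sigma$ of the index set with $x_{\sigma(n)}=x^*_n$ decreasing, the sequence $u:=w\circ\sigma^{-1}$ satisfies $u^*=w$ and $u\le x$ pointwise. Since $(x^*)^*=x^*$, the identical description applies to $\varphi_+(x^*)$, giving $\varphi_+(x)=\varphi_+(x^*)$.

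Finally I would record positive homogeneity $\varphi_+(\lambda x)=\lambda\varphi_+(x)$ for $\lambda\ge0$, immediate from $\{u:0\le u\le\lambda x\}=\lambda\{u:0\le u\le x\}$ together with linearity of $\varphi$. Combined with (i), $\varphi_+$ is additive and positively homogeneous on the cone, so it extends uniquely to a linear functional on $l_{1,\infty}$ via $\varphi_+(x):=\varphi_+(x_+)-\varphi_+(x_-)$, well-definedness following from (i) by the standard argument that $a+d=b+c$ whenever $a-b=c-d$ with all terms positive. Positivity is built in, and symmetry follows from (ii): if $x^*=y^*$ then $\varphi_+(x)=\varphi_+(x^*)=\varphi_+(y^*)=\varphi_+(y)$. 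I expect the only genuinely structural step to be the subadditivity in (i), which rests on the lattice decomposition $w=(w\wedge x)+(w-x)_+$; the rearrangement bookkeeping in (ii) is routine for sequences since every positive element of $l_{1,\infty}$ tends to zero and is therefore rearranged into decreasing order by an honest permutation of the indices.
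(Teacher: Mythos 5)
Your overall strategy --- finiteness from continuity plus solidity of the order interval, Riesz decomposition for additivity on the positive cone, a transfer of the constraint set under rearrangement for (ii), and the Jordan-type extension at the end --- is exactly the standard scheme; the paper itself omits the proof and refers to~\cite[Proposition 4.2.8]{LSZ}, where this argument is carried out. Your treatment of the finiteness of $\varphi_+$, of part (i), and of the extension step is correct as written.

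The one genuine flaw is in your justification of (ii). You claim that every positive element of $l_{1,\infty}$ ``is rearranged into decreasing order by an honest permutation of the indices'' and accordingly choose a bijection $\sigma$ with $x_{\sigma(n)}=x_n^*$. This is false in general: for $x=(1,0,\tfrac12,0,\tfrac13,0,\dots)\in l_{1,\infty}$ one has $x^*=(1,\tfrac12,\tfrac13,\dots)$, which is strictly positive in every coordinate, while $x$ vanishes on an infinite set, so no bijection of the index set carries $x$ to $x^*$. The inclusion you actually need --- every decreasing $w$ with $0\le w\le x^*$ arises as $u^*$ for some $0\le u\le x$ --- remains true, but the correct device is an injection rather than a bijection: since $x_n\to 0$, for each $t>0$ the level sets $\{n:\,x_n^*=t\}$ and $\{k:\,x_k=t\}$ are finite and of equal cardinality, so there is an injection $\sigma:\mathbb N\to\mathbb N$ with $x_{\sigma(n)}=x_n^*$ (indices where $x^*=0$, if any exist, are sent injectively into the zero set of $x$, which in that case is infinite). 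Setting $u_{\sigma(n)}:=w_n$ and $u_k:=0$ for $k\notin\sigma(\mathbb N)$ gives $0\le u\le x$ and $u^*=w^*=w$, and the rest of your argument for (ii) goes through verbatim. So the gap is local and easily repaired, but as written the key step of (ii) rests on a false statement about permutations.
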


% \begin{proof}
%  1. For every $0\le x, y \in l_{1,\infty}$ we have
%  $$\varphi_+(x+y) = \sup_{0\le u \le x+y} \varphi(u).$$
%  
%  By the F.~Riesz decomposition property (see e.g.~\cite[Theorem 1.1.1 (viii)]{MN}) there exist
%  $v,w \in l_{1,\infty}$ such that $0\le v \le x$, $0\le w \le y$ and $u=v+w$. Hence,
%  \begin{align*}
%  \varphi_+(x+y) &= \sup_{0\le v \le x, 0\le w \le y} \varphi(v+w)\\
%  &=\sup_{0\le v \le x} \varphi(v) + \sup_{0\le w \le y} \varphi(w)\\
%  &= \varphi_+(x) +\varphi_+(y).
% \end{align*}
% 
% 2. Let $0\le u \le x$. Hence, $supp(u) \subset supp(x)$ and $u=0$ outside the set $supp(x)$.
% 
% Let $p$ be a such permutation of the set of non-negative integers that $x_n = x^*_{p(n)}$. 
% Then for some $z \in l_{1,\infty}$ we have $u_n = z_{p(n)}$ for every $n \in supp(x)$.
% Hence, $0\le z \le x^*$.
% 
% Since $\varphi$ is symmetric, it follows that 
% $$\varphi(u) = \varphi(z) \le \varphi_+(x^*).$$
% 
% Taking supremum over all $0\le u \le x$ we obtain $\varphi_+(x) \le \varphi_+(x^*).$
% 
% Let now $0\le u \le x^*$ and let $p$ be a such permutation of the set of non-negative integers that $x^*_n = x_{p(n)}$.
% Then there exists such $z \in l_{1,\infty}$ that $z=0$ outside the support of $x$ and $u_n = z_{p(n)}$ for every $n \in supp(x)$.
% So, $0\le z_{p(n)} \le x_{p(n)}$. Hence, $0\le z \le x$.
% 
% Therefore,
% $$\varphi(u) = \varphi(z) \le \varphi_+(x).$$
% 
% Taking supremum over all $0\le u \le x^*$ we obtain $\varphi_+(x^*) \le \varphi_+(x).$
% \end{proof}

Denote by $l_{1,\infty}^*$ the space of all linear functionals on $l_{1,\infty}$.
For $\varphi, \psi \in l_{1,\infty}^*$ define the functionals $\varphi\vee \psi, \varphi\wedge \psi \in l_{1,\infty}^*$ 
by the following formulae
$$(\varphi\vee \psi) (x) = \sup\{\varphi(u) + \psi(v) : \ 0\le u, v \in l_{1,\infty}, x=u+v\}, \ 0\le x \in l_{1,\infty},$$
$$(\varphi\wedge \psi) (x) = \inf\{\varphi(u) + \psi(v) : \ 0\le u, v \in l_{1,\infty}, x=u+v\}, \ 0\le x \in l_{1,\infty}.$$
Observe that $\varphi\vee 0 = \varphi_+$.

% \begin{lem}
%  For every continuous symmetric functionals $\varphi, \psi$ on $l_{1,\infty}$
%  the functionals $\varphi\vee \psi$ and $\varphi\wedge \psi$ extend to continuous symmetric functionals on $l_{1,\infty}$.
% \end{lem}
% \begin{proof}
%  We have
%  \begin{equation}\label{sup}
% \begin{aligned}
% (\varphi\vee \psi) (x) &= \sup\{\varphi(u) + \psi(v) : \ 0\le u, v \in l_\infty, x=u+v\}\\
% &= \sup\{\varphi(x-v) + \psi(v) : \ 0\le u, v \in l_\infty, x=u+v\}\\
% &= \varphi(x) + \sup\{(\psi-\varphi)(v) : \ 0\le v \le x \}\\
% &= \varphi(x)  + (\psi-\varphi)_+(x).
% \end{aligned}  
%  \end{equation}
% 
%  It follows from Lemma~\ref{phi+}, that $\varphi\vee \psi$ extends to a continuous symmetric functional on $l_{1,\infty}$.
%  
%  Since $\varphi\wedge \psi = -((-\varphi)\vee (-\psi))$, it follows that $\varphi\wedge \psi$ also extends to a continuous symmetric functional on $l_{1,\infty}$.
% \end{proof}

\begin{prop}\label{lattice}
The set of all continuous symmetric functionals on $l_{1,\infty}$ is a lattice 
with respect to the operations $\vee$ and $\wedge$ defined above. 

\end{prop}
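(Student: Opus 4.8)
The plan is to recognise the continuous symmetric functionals as a sublattice of the order dual of the Riesz space $l_{1,\infty}$ (which is a vector lattice under pointwise operations, so that order intervals and the Riesz decomposition are available), the only genuinely new point being that the lattice operations $\vee,\wedge$, a priori defined on the full dual $l_{1,\infty}^*$, do not leave the symmetric class. The case $\psi=0$ is already settled, since $\varphi\vee 0=\varphi_+$ and Lemma~\ref{phi+} tells us $\varphi_+$ is additive on the cone, satisfies $\varphi_+(x^*)=\varphi_+(x)$, and extends to a positive symmetric functional. I would reduce the general case to this lemma.

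First I would record well-definedness. For continuous $\varphi,\psi$ with norm constants $C_\varphi,C_\psi$ and for $0\le x$, any admissible pair $u+v=x$ with $u,v\ge 0$ satisfies $0\le u,v\le x$, whence $u^*,v^*\le x^*$ and therefore $\|u\|_{l_{1,\infty}},\|v\|_{l_{1,\infty}}\le\|x\|_{l_{1,\infty}}$ by monotonicity of the quasi-norm. Thus $\varphi(u)+\psi(v)$ is bounded above by $(C_\varphi+C_\psi)\|x\|_{l_{1,\infty}}$, so the supremum defining $(\varphi\vee\psi)(x)$, and symmetrically the infimum defining $(\varphi\wedge\psi)(x)$, is finite.

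The crux is the identity on the positive cone
$$(\varphi\vee\psi)(x)=\psi(x)+(\varphi-\psi)_+(x),\quad 0\le x\in l_{1,\infty}.$$
I would prove it by the substitution $v=x-u$, which sets up a bijection between admissible pairs and points $u$ with $0\le u\le x$; using linearity of $\psi$ one has $\varphi(u)+\psi(v)=\psi(x)+(\varphi-\psi)(u)$, and taking the supremum over $0\le u\le x$ gives exactly $\psi(x)+(\varphi-\psi)_+(x)$ by the definition of the plus-operation in Lemma~\ref{phi+}. Since $\eta:=\varphi-\psi$ is again a continuous symmetric functional, Lemma~\ref{phi+} applies to it and $\eta_+$ extends to a positive symmetric functional; consequently $\varphi\vee\psi=\psi+\eta_+$ is a symmetric functional, continuous by the bound of the previous step, and the defining supremum, being additive on the cone, does determine a genuine element of $l_{1,\infty}^*$. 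The statement for $\wedge$ then follows from $\varphi\wedge\psi=-\big((-\varphi)\vee(-\psi)\big)$, since $-\varphi,-\psi$ are continuous and symmetric.

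Finally I would check that $\vee,\wedge$ realise the least upper and greatest lower bounds for the natural order $\varphi\le\psi$, meaning $(\psi-\varphi)(x)\ge 0$ for all $0\le x\in l_{1,\infty}$. From $\eta_+\ge 0$ and $\eta_+\ge\eta$ on the cone (take $u=0$, resp.\ $u=x$, in the supremum) one gets $\varphi\vee\psi\ge\psi$ and $\varphi\vee\psi\ge\varphi$. For minimality, if a symmetric $\chi$ dominates both $\varphi$ and $\psi$, then $\rho:=\chi-\psi\ge 0$ and $\rho\ge\eta$; since $\rho$ is positive, $\rho(u)\le\rho(x)$ whenever $0\le u\le x$, so $\eta_+(x)=\sup_{0\le u\le x}\eta(u)\le\sup_{0\le u\le x}\rho(u)\le\rho(x)$, giving $\chi\ge\psi+\eta_+=\varphi\vee\psi$; the case of $\wedge$ follows by the duality above. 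The main obstacle in this scheme is precisely the stability of the symmetric class under the lattice operations, and once the displayed identity reduces it to Lemma~\ref{phi+}, the remaining verifications are routine ordered-vector-space manipulations.
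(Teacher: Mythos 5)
Your proof is correct and follows essentially the same route as the paper: the key step in both is the identity $(\varphi\vee\psi)(x)=\psi(x)+(\varphi-\psi)_+(x)$ (the paper writes it with the roles of $\varphi$ and $\psi$ exchanged) obtained via the substitution $v=x-u$, followed by an appeal to Lemma~\ref{phi+} applied to the continuous symmetric functional $\varphi-\psi$. Your additional verifications (finiteness of the supremum, the least-upper-bound property, and the reduction of $\wedge$ to $\vee$ by negation) are routine facts about the order dual that the paper subsumes under the phrase ``sublattice in the lattice $l_{1,\infty}^*$''.
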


\begin{proof}
We shall show that the set of all continuous symmetric functionals on $l_{1,\infty}$ is a sublattice (in the lattice $l_{1,\infty}^*$), 
that is for every continuous symmetric functionals $\varphi$ and $\psi$ the functional $\varphi\vee \psi$ is continuous and symmetric. 

We have
\begin{align*}
(\varphi\vee \psi) (x) &= \sup\{\varphi(u) + \psi(v) : \ 0\le u, v \in l_{1,\infty}, x=u+v\}\\
&= \sup\{\varphi(x-v) + \psi(v) : \ 0\le u, v \in l_{1,\infty}, x=u+v\}\\
&= \varphi(x) + \sup\{(\psi-\varphi)(v) : \ 0\le v \le x \}\\
&= \varphi(x)  + (\psi-\varphi)_+(x).
\end{align*} 
% $$\varphi\vee \psi = \varphi  + (\psi-\varphi)_+.$$

By Lemma~\ref{phi+} the functional $\varphi\vee \psi$ is continuous and symmetric.
Consequently, the set of all continuous symmetric functionals is a sublattice.
\end{proof}

The following linear operator from $l_\infty$ into $l_{1,\infty}$ given by
$$D(x_0, x_1, x_2, \dots) := \log 2 \cdot (\frac{x_0}{2^0}, \underbrace{\frac{x_1}{2^1}, \frac{x_1}{2^1}}_{2 \ \text{times}}, 
\underbrace{\frac{x_2}{2^2},\frac{x_2}{2^2}, \frac{x_2}{2^2}, \frac{x_2}{2^2}}_{4 \ \text{times}}, \dots, \underbrace{\frac{x_n}{2^n}, \dots, \frac{x_n}{2^n}}_{2^n \ \text{times}}, \dots)$$
plays an important role in this paper. The concept of the operator $D$ was suggested by A.Pietsch in~\cite{P}.

It is easy to see that the operator $D$ is continuous from $l_\infty$ into $l_{1,\infty}$
 and that $$\|D\|_{l_\infty \to l_{1,\infty}} = 2 \log 2.$$

The following two lemmas are crucial technical elements in the construction of symmetric functionals on $l_{1,\infty}$.

\begin{lem}\label{aux}
 For every $0\le x, y\in l_{1,\infty}$ we have:

(i) $$\left\{ \sum_{k=2^n-1}^{2^{n+1}-2} x_k^* \right\}_{n \ge 0} \in l_\infty; $$

(ii) $$\left\{ \sum_{k=0}^{2^{n+1}-2} (x_k^*+ y_k^*-(x+y)_k^*) \right\}_{n \ge 0} \in l_\infty.$$
\end{lem}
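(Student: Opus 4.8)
The plan is to handle the two parts by quite different means: part (i) is a direct size estimate, whereas part (ii) rests on a submajorization argument whose crux is a \emph{doubling} trick. For part (i), I would simply use that $x\in l_{1,\infty}$ means $x_k^*\le \|x\|_{l_{1,\infty}}/(k+1)$. The block $\{k:2^n-1\le k\le 2^{n+1}-2\}$ consists of exactly $2^n$ indices, each at least $2^n-1$, so every summand is at most $\|x\|_{l_{1,\infty}}/2^n$ and the block sum is at most $\|x\|_{l_{1,\infty}}$, uniformly in $n$. This already yields membership in $l_\infty$, with the explicit bound $\|x\|_{l_{1,\infty}}$.

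For part (ii) the first point is that although the summand $x_k^*+y_k^*-(x+y)_k^*$ need not be nonnegative term by term, the partial sums are: by the standard subadditivity of partial sums of decreasing rearrangements (Hardy--Littlewood; see~\cite{LSZ}), $\sum_{k=0}^{M-1}(x+y)_k^*\le \sum_{k=0}^{M-1}x_k^*+\sum_{k=0}^{M-1}y_k^*$ for every $M$, so the sequence in (ii) is nonnegative and only an upper bound is needed. Writing $N:=2^{n+1}-1$ for the number of terms, I would exploit the variational characterization $\sum_{k=0}^{N-1}z_k^*=\sup\{\sum_{i\in I}z_i:|I|=N\}$, valid for nonnegative sequences tending to zero, hence for $0\le z\in l_{1,\infty}$. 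Choosing index sets $A$ and $B$ of cardinality $N$ that realize the top-$N$ sums of $x$ and of $y$, and using $|A\cup B|\le 2N$ together with $x,y\ge 0$, I obtain $\sum_{k=0}^{N-1}x_k^*+\sum_{k=0}^{N-1}y_k^*\le \sum_{i\in A\cup B}(x_i+y_i)\le \sum_{k=0}^{2N-1}(x+y)_k^*$. Subtracting $\sum_{k=0}^{N-1}(x+y)_k^*$ bounds the subadditivity defect by the tail block $\sum_{k=N}^{2N-1}(x+y)_k^*$.

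The final step is to control this tail. Since $l_{1,\infty}$ is a linear space, $x+y\in l_{1,\infty}$, so $(x+y)_k^*\le \|x+y\|_{l_{1,\infty}}/(k+1)$, and the $N$-term block from $k=N$ to $2N-1$ is at most $N\cdot \|x+y\|_{l_{1,\infty}}/(N+1)\le \|x+y\|_{l_{1,\infty}}$, again uniformly in $n$; hence the sequence in (ii) lies in $l_\infty$. I expect the main obstacle to be exactly the recognition that the defect of subadditivity over the first $N$ rearranged terms is captured by the \emph{next} $N$ terms of $(x+y)^*$ --- the union-of-supports doubling argument --- after which the $l_{1,\infty}$ decay makes the estimate immediate.
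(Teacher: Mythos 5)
Your proof is correct and follows essentially the same route as the paper: part (i) is the same block estimate (each of the $2^n$ terms bounded by $\|x\|_{l_{1,\infty}}2^{-n}$), and part (ii) uses the same doubling inequality $\sum_{k=0}^{N-1}(x_k^*+y_k^*)\le\sum_{k=0}^{2N-1}(x+y)_k^*$ followed by a bound on the tail block of $(x+y)^*$ by $\|x+y\|_{l_{1,\infty}}$. The only difference is that you prove the doubling inequality via the union-of-supports argument, while the paper invokes it directly; this is a matter of detail, not of method.
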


\begin{proof} 
(i) Since $x^*$ is decreasing, it follows that 
$$\left| \sum_{k=2^n-1}^{2^{n+1}-2} x_k^* \right| \le 2^n x_{2^n-1}^* \le \|x\|_{l_{1,\infty}}, \ n\ge 0.$$

(ii) For every $n\ge 0$ the following estimates hold
$$\sum_{k=0}^{2^{n+1}-2} (x_k^*+ y_k^*)  \le \sum_{k=0}^{2(2^{n+1}-2)+1}(x+y)_k^* \le \sum_{k=0}^{2^{n+1}-2}(x+y)_k^* + 2^{n+1}(x+y)_{2^{n+1}-1}^*.$$
Hence,
$$0\le \sum_{k=0}^{2^{n+1}-2} (x_k^*+ y_k^*-(x+y)_k^*) \le \|x+y\|_{l_{1,\infty}}.$$
\end{proof}

\begin{lem}\label{aux0}
 For every $0\le x \in l_{1,\infty}$ such that $x_n \le \frac{\alpha}{n+1}$, $n\ge0$, for some $\alpha>0$ we have
$$\left| \sum_{k=0}^{n} (x_k^*- x_k) \right| \le \alpha, \ n \ge 0.$$
\end{lem}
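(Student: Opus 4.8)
The plan is to observe that the quantity in question is automatically non-negative, so the absolute value can be dropped, and then to bound it from above by a short rearrangement argument that exploits the decay hypothesis $x_n \le \alpha/(n+1)$. The whole proof is elementary once the partial sums of the decreasing rearrangement are described via maximizing sets.

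First I would record the standard description of the partial sums of the decreasing rearrangement of a non-negative null sequence. Since $0 \le x \in l_{1,\infty}$ we have $x_n^* \le \|x\|_{l_{1,\infty}}/(n+1) \to 0$, so only finitely many entries exceed any positive threshold and the rearrangement is realised by a permutation $\sigma$ with $x_k^* = x_{\sigma(k)}$. Hence, for each $n$, setting $B_n := \{\sigma(0),\dots,\sigma(n)\}$ (a set of $n+1$ distinct indices) one has
$$\sum_{k=0}^n x_k^* = \sum_{k \in B_n} x_k = \max\Big\{ \sum_{k \in E} x_k : E \subset \mathbb{N}_0,\ |E| = n+1 \Big\}.$$
Writing $A_n := \{0,1,\dots,n\}$ we have $\sum_{k=0}^n x_k = \sum_{k \in A_n} x_k$. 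The lower bound is then immediate: $A_n$ is admissible in the maximum, so $\sum_{k=0}^n x_k^* \ge \sum_{k=0}^n x_k$, i.e. $\sum_{k=0}^n (x_k^*-x_k) \ge 0$, and the modulus may be removed.

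For the upper bound I would split the difference over the two index sets,
$$\sum_{k=0}^n (x_k^* - x_k) = \sum_{k \in B_n} x_k - \sum_{k \in A_n} x_k = \sum_{k \in B_n \setminus A_n} x_k - \sum_{k \in A_n \setminus B_n} x_k \le \sum_{k \in B_n \setminus A_n} x_k,$$
the last inequality holding because $x \ge 0$. Every index $k \in B_n \setminus A_n$ satisfies $k \ge n+1$, so the decay hypothesis yields $x_k \le \alpha/(k+1) \le \alpha/(n+2)$; since $|B_n \setminus A_n| \le |B_n| = n+1$, the right-hand side is at most $(n+1)\cdot \alpha/(n+2) < \alpha$. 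Combining this with the lower bound gives $\bigl|\sum_{k=0}^n (x_k^*-x_k)\bigr| \le \alpha$, uniformly in $n$.

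The only genuinely delicate point is the first step, namely justifying the maximizing-set (equivalently, permutation) description of $\sum_{k=0}^n x_k^*$ together with the attainment of the maximum; this rests entirely on $x$ being a non-negative null sequence, which is guaranteed by $x \in l_{1,\infty}$. Everything after that is elementary counting, and the argument in fact produces the sharper strict bound $(n+1)\alpha/(n+2) < \alpha$.
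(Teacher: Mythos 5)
Your proof is correct and follows essentially the same route as the paper: both represent $\sum_{k=0}^n x_k^*$ as a sum over a maximizing set of $n+1$ distinct indices, split that set at the index $n$, and bound the contribution of the indices beyond $n$ by the decay hypothesis plus a cardinality count (the paper writes this as $\sum_{k=n+1}^{2n+1}\frac{\alpha}{k+1}\le\alpha$, you as $(n+1)\cdot\frac{\alpha}{n+2}<\alpha$), with your explicit lower-bound step merely making precise the non-negativity that the paper leaves implicit. One cosmetic quibble: the decreasing rearrangement of a non-negative null sequence is in general realised only by an injection, not a permutation (e.g.\ when $x$ has some zero entries but infinite support), but since you only use the resulting index set $B_n$ this does not affect the argument.
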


\begin{proof} 
For every $n\ge 0$ there exists a subset $A \subset \mathbb N$, such that $|A| = n+1$ and
$$\sum_{k=0}^{n} x_k^* = \sum_{k\in A} x_k.$$
Consequently, we have
\begin{align*}
\sum_{k=0}^{n} x_k^* &= \sum_{k\in A \cap [0,n]} x_k + \sum_{k\in A \cap (n, \infty)} |x_k|\\
&\le \sum_{k=0}^{n} x_k + \sum_{k\in A \cap (n, \infty)} \frac{\alpha}{k+1}\\
&\le \sum_{k=0}^{n} x_k + \sum_{k=n+1}^{2n+1} \frac{\alpha}{k+1}
\le \sum_{k=0}^{n} x_k + \alpha.
\end{align*}
\end{proof}

The following lemma establishes the most important property of the operator $D$.

\begin{lem}\label{aux1}
 For every $0\le x\in l_{1,\infty}$ the sequence
$$x-D \left( \frac1{\log 2} \cdot \left\{ \sum_{k=2^n-1}^{2^{n+1}-2} x_k^* \right\}_{n \ge 0} \right)$$
belongs to $Z$.
\end{lem}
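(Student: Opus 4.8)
The plan is to reduce the statement to a single rearrangement-difference and then to realise that difference through a carefully chosen permutation together with its associated potential. First I would dispose of the passage from $x$ to $x^*$: since $0\le x$ and $(x^*)^*=x^*$, the pair $(x,x^*)$ is admissible in the definition of $Z$, so $x-x^*\in Z$. Writing $z:=\frac1{\log 2}\{\sum_{k=2^n-1}^{2^{n+1}-2}x_k^*\}_{n\ge0}$, it therefore suffices to prove $x^*-D(z)\in Z$. A direct computation of $D$ shows that on the $n$-th dyadic block $B_n:=\{2^n-1,\dots,2^{n+1}-2\}$ (which has exactly $2^n$ elements) the sequence $D(z)$ is constant, equal to the average $\bar a_n:=2^{-n}\sum_{k\in B_n}x_k^*$ of $x^*$ over $B_n$. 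Consequently $D(z)$ is positive and decreasing, $w:=x^*-D(z)$ satisfies $\sum_{k\in B_n}w_k=0$ for every $n$, and, since $x^*$ is decreasing, $|w_k|\le x^*_{2^n-1}\le \|x\|_{l_{1,\infty}}\,2^{-n}$ for $k\in B_n$ (cf. Lemma~\ref{aux} and the definition of $\|\cdot\|_{l_{1,\infty}}$).

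Next I would exhibit $w$ as a single generator $u-v$ of $Z$. On each block $B_n$ I order its $2^n$ indices so that the partial sums of the corresponding values $w_k$ stay bounded in absolute value by $\max_{k\in B_n}|w_k|$; this is the elementary greedy (sign-balancing) ordering of a finite family of reals of zero sum. Let $P$ be the permutation of $\mathbb N$ consisting of the resulting cycles, one per block. Because $w$ has zero sum on each cycle, the equation $(P-I)v=w$ is solvable block by block; I take $v$ to be its solution (the running sum of $w$ along each cycle), normalised by an additive constant on each block so that $v\ge 0$. Then $u:=Pv=v+w$ is again nonnegative, $u-v=w$, and $u^*=v^*$ because a permutation preserves the decreasing rearrangement. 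Combined with $x-x^*\in Z$ this gives $x-D(z)\in Z$.

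The crux is to verify that $u$ and $v$ genuinely lie in $l_{1,\infty}$. The balancing estimate together with the normalisation yields $0\le v_k\le 2\|x\|_{l_{1,\infty}}\,2^{-n}$ for $k\in B_n$; since the blocks $B_0,\dots,B_n$ exhaust the first $2^{n+1}-1$ coordinates, this forces $(k+1)v^*_k\le C\,\|x\|_{l_{1,\infty}}$, whence $v\in l_{1,\infty}$ and $u=Pv\in l_{1,\infty}$. I expect this membership to be the only delicate point of the argument: it is exactly the interplay between the geometric growth of the block lengths $2^n$ and the decay $x^*_{2^n-1}\lesssim 2^{-n}$ of the singular numbers that makes the potential decay at the rate $1/(k+1)$. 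It is worth stressing that the sign-balancing is essential here — the naive within-block potential, obtained by summing $w$ in the given order, is merely bounded and does not decay, so it would fail to lie in $l_{1,\infty}$; it is precisely the rearrangement of the order of summation that buys the extra decay and collapses what looks like an infinite sum of block contributions into a single admissible difference $u-v$.
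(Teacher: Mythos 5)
Your proof is correct, and while it follows the same overall strategy as the paper's (reduce to $x=x^*$ using $x-x^*\in Z$; note that $w:=x^*-D(z)$, in your notation, has zero sum on each dyadic block $B_n$ and satisfies $|w_k|\le x^*_{2^n-1}\le \|x\|_{l_{1,\infty}}2^{-n}$ there; then exhibit $w$ as a single difference $u-v$ with $0\le u,v$ and $u^*=v^*$), it differs at exactly one step, and the difference is essential. The paper defines $u$ as the running sums of $w$ over each block taken \emph{in the natural order} and $v$ as their cyclic shift, and then asserts that ``a direct verification shows that $0\le u,v\in l_{1,\infty}$''; but the verification it records only yields $|u_k|\le 2\|x\|_{l_{1,\infty}}$, and membership in $l_{1,\infty}$ in fact fails in general. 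For the harmonic sequence $x_k=(k+1)^{-1}$, the natural-order running sum at an index $k\approx(1+s)2^n$ of the block $B_n$ tends to $\log(1+s)-s\log 2$, which is strictly positive for $0<s<1$; hence $u$ exceeds a fixed $\delta>0$ on infinitely many indices, so $u^*\ge\delta$ everywhere and $u\notin l_{1,\infty}$ (it is not even in $c_0$). Your sign-balancing reordering of each block before taking the telescoping potential is precisely the repair: it keeps all within-block partial sums of size at most $\max_{k\in B_n}|w_k|\le\|x\|_{l_{1,\infty}}2^{-n}$ rather than merely $O(1)$, so that after the nonnegativity shift one gets $0\le u_k,v_k\le 4\|x\|_{l_{1,\infty}}(k+1)^{-1}$ pointwise, whence $u,v\in l_{1,\infty}$ with $u^*=v^*$ and $w=u-v\in Z$. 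In short, your argument proves what the paper's proof only claims, and your closing remark --- that the naive within-block potential is bounded but does not decay --- pinpoints exactly the weak step in the published argument.
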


\begin{proof} 

By the definition, for every positive $x\in l_\infty$, we have $x-x^* \in Z$. Therefore, since $Z$ is a linear space it is sufficient to prove the statement for $x=x^*$. 
In this case we have
\begin{align*}
 z&:=x-D \left( \frac1{\log 2} \cdot \left\{ \sum_{k=2^n-1}^{2^{n+1}-2} x_k \right\}_{n \ge 0} \right)\\
%&=(x_0, x_1, x_2, x_3, \dots) - (x_0; \frac{x_1+x_2}{2}, \frac{x_1+x_2}{2};\frac{x_3+x_4+x_5+x_6}{4},\dots, \frac{x_3+x_4+x_5+x_6}4; \dots)\\
&=(0, x_1-\frac{x_1+x_2}{2}, x_2-\frac{x_1+x_2}{2}, \\
& \quad \quad x_3 - \frac{x_3+x_4+x_5+x_6}{4},\dots, x_6-\frac{x_3+x_4+x_5+x_6}4, \dots).
\end{align*}

Note, that $\sum_{i=2^n-1}^{2^{n+1}-2} z_i=0$ for every $n\ge0$.
For every $2^n-1 \le k \le 2^{n+1}-2$ ($n\ge0$), since $x=x^*$, it follows that
$$|z_k|=|x_k - \frac{x_{2^n-1}+x_{2^n}+\dots+x_{2^{n+1}-2}}{2^n}|\le  x_{2^n-1}$$
and so, we also have
$$|\sum_{i=0}^k z_i| = |\sum_{i=2^n-1}^k z_i| \le \sum_{i=2^n-1}^k  x_{2^n-1} \le 2^{n} x_{2^n-1} \le 2 \|x\|_{l_{1,\infty}}.$$
For every $2^n-1 \le k \le 2^{n+1}-2$ ($n\ge0$) we set
$$u_k = \sum_{i=2^n-1}^k z_i \quad  \text{and} \quad 
v_k=\begin{cases}
	0, \ k= 2^n-1;\\
        u_{k-1}, \ 2^n \le k \le 2^{n+1}-2
        \end{cases}.$$
A direct verification shows that $0\le u, v \in l_{1,\infty}$  and
% $$u_k-v_k = 
% \begin{cases}
% 	z_{2^n-1}, \ k= 2^n-1;\\
%         u_k-u_{k-1}, \ 2^n \le k \le 2^{n+1}-2
%         \end{cases},$$
% that is, due to the definition of $u$, 
$z=u-v$.
        
Since $\sum_{i=2^n-1}^{2^{n+1}-2} z_i=0$, it follows that $u_{2^{n+1}-2} = 0$. 
Thus, the sequence $v$ is a permutation of $u$.
Hence, $u^*=v^*$  and, so, $z \in Z$.

\end{proof}

The following theorem describes a correspondence between the class of all symmetric functionals on $l_{1,\infty}$
and the class of all $S$-invariant linear functionals on $l_\infty$. 
This correspondence (in a slightly different form) was first found by A.~Pietsch in~\cite{P} (see also~\cite{P2}). 
The idea of studying symmetric functionals on $l_{1,\infty}$ via $S$-invariant linear functionals on $l_\infty$
has become an important motive for the present paper.

\begin{thm}\label{one-to-one}
 (i) For every symmetric functional $\varphi$ on $l_{1,\infty}$ there exists 
a unique $S$-invariant linear functional $\theta=\varphi\circ D$ on $l_\infty$ such that
\begin{equation}\label{formula}
 \varphi(x) = \theta\left(\frac1{\log 2} \cdot \left\{ \sum_{k=2^n-1}^{2^{n+1}-2} x_k^* \right\}_{n \ge 0} \right) , \quad x\ge0.
\end{equation}
(ii) For every linear functional $\theta$ on $l_\infty$ such that $\theta=\theta \circ S$ 
the functional $\varphi$ defined by the formula~\eqref{formula} extends by linearity to a symmetric functional on $l_{1,\infty}$.
\end{thm}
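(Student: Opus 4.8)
The plan is to build everything on the operator $D$ and the averaging map $a(x):=\frac1{\log 2}\{\sum_{k=2^n-1}^{2^{n+1}-2} x_k^*\}_{n\ge0}$, which sends $0\le x\in l_{1,\infty}$ into $l_\infty$ by Lemma~\ref{aux}(i). The bridge between the two sides is Lemma~\ref{aux1}, asserting $x-D(a(x))\in Z$, together with the two standing facts that a symmetric functional annihilates $Z$ and satisfies $\varphi=\frac12\varphi\circ\sigma_2$.

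For (i) I would put $\theta:=\varphi\circ D$ and read off~\eqref{formula} at once: since $x-D(a(x))\in Z$ and $\varphi$ vanishes on $Z$, we get $\varphi(x)=\varphi(D(a(x)))=\theta(a(x))$ for $x\ge0$. The real work is the $S$-invariance of $\theta$. After reducing to $x\ge0$ by splitting $y=y_+-y_-$ and using linearity of $D$ and $\varphi$, I would establish the identity $2D(Sx)=S\sigma_2 D(x)$ by a direct inspection of the block values and multiplicities. Because the right shift $S$ does not alter the decreasing rearrangement of a sequence in $c_0$, this makes $\sigma_2 D(x)$ and $2D(Sx)$ equimeasurable; symmetry then gives $\varphi(\sigma_2 D(x))=2\varphi(D(Sx))$, while $\varphi=\frac12\varphi\circ\sigma_2$ gives $\varphi(\sigma_2 D(x))=2\varphi(D(x))$, and comparing the two yields $\theta(x)=\theta(Sx)$. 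For uniqueness I would note that the range of $a$ already contains every $c\ge0$ in $l_\infty$ with $c_{n+1}\le 2c_n$ (realise such a $c$ as $a(x)$ with $x^*$ constant $=\frac{\log 2\,c_n}{2^n}$ on the $n$-th block), and this set spans $l_\infty$: indeed any $w$ with $\|w\|_\infty\le\frac14$ equals $(w+\frac34\emm)-\frac34\emm$ with both summands in the set. Hence any linear functional satisfying~\eqref{formula} is forced to agree with $\theta$ on a spanning set, so it equals $\theta$.

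For (ii), given $\theta=\theta\circ S$, I would define $\varphi(x):=\theta(a(x))$ for $x\ge0$. Positive homogeneity is immediate from $a(\lambda x)=\lambda a(x)$, $\lambda\ge0$. The heart of the matter is additivity. With $d_n:=\sum_{k=2^n-1}^{2^{n+1}-2}(x_k^*+y_k^*-(x+y)_k^*)$ the defect equals $a(x)+a(y)-a(x+y)=\frac1{\log 2}\{d_n\}_n$, and by Lemma~\ref{aux}(ii) its sequence of partial sums $P:=\{\sum_{k=0}^{2^{N+1}-2}(x_k^*+y_k^*-(x+y)_k^*)\}_N$ is bounded, i.e.\ $P\in l_\infty$. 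Since $\{d_n\}=(I-S)P$, the $S$-invariance of $\theta$ gives $\theta(\{d_n\})=\theta(P)-\theta(SP)=0$, whence $\varphi(x+y)=\varphi(x)+\varphi(y)$. Additivity and homogeneity then let me extend $\varphi$ to a linear functional on $l_{1,\infty}=\{x\ge0\}-\{x\ge0\}$ in the usual way, and symmetry is automatic because $a$ sees $x$ only through $x^*$.

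The main obstacle is this additivity step in (ii): one must recognise the additivity defect as a sequence whose partial sums are bounded (exactly the content of Lemma~\ref{aux}(ii)) and hence as lying in the range of $I-S$, on which every $S$-invariant functional vanishes. The verification of $S$-invariance in (i) is the other delicate point, where the nonobvious move is the identity $2D(Sx)=S\sigma_2 D(x)$, which trades the shift $S$ for the doubling $\sigma_2$ that a symmetric functional controls directly.
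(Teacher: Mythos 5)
Your proposal is correct, and most of it runs along the same lines as the paper: the representation~\eqref{formula} via Lemma~\ref{aux1} and vanishing on $Z$, and the additivity argument in (ii) via Lemma~\ref{aux}(ii) and the telescoping identity $\{d_n\}=(I-S)P$, are exactly the paper's proof. Your $S$-invariance step is also the paper's idea in a cleaner package: the paper computes $\frac12\sigma_2(Dx)-DSx$ entrywise and exhibits it as $u-v$ with $u^*=v^*$, whereas you observe the exact operator identity $2D(Sx)=S\sigma_2D(x)$ and invoke invariance of the rearrangement under prepending a zero; these are the same computation. The genuine divergence is in uniqueness. The paper takes an arbitrary $S$-invariant $\theta_1$ satisfying~\eqref{formula}, applies it to $Dx$, and shows via Lemma~\ref{aux0} that $\log 2\cdot x - \bigl\{\sum_{k=2^n-1}^{2^{n+1}-2}(Dx)_k^*\bigr\}_{n\ge0}=y-Sy$ for a bounded $y$, so that $S$-invariance of $\theta_1$ forces $\theta_1=\theta$. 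You instead show that the averaging map $a$ is surjective onto the cone $\{c\ge0: c_{n+1}\le 2c_n\}$ (your blockwise construction of $x_c$ is valid: the condition $c_{n+1}\le 2c_n$ is precisely what makes $x_c$ nonincreasing, so $x_c^*=x_c$ and $a(x_c)=c$), and that this cone spans $l_\infty$. This buys something the paper's argument does not: uniqueness among \emph{all} linear functionals satisfying~\eqref{formula}, with no invariance hypothesis on the competitor, and it avoids Lemma~\ref{aux0} entirely. Conversely, the paper's route is the one that recycles Lemma~\ref{aux0}, which it needs anyway elsewhere (e.g.\ in Theorem~\ref{FS}).
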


\begin{proof}
 (i) Let $\varphi$ be a symmetric functional on $l_{1,\infty}$. Set $\theta:=\varphi\circ D$.
Due to the linearity of $D$, the functional $\theta$ is linear on $l_\infty$. We prove that $\theta$ is invariant under the operator $S$.

We firstly prove an auxiliary fact that $\varphi(\frac12\sigma_2(Dx)-DSx)=0$. Indeed,
\begin{align*}
 y:&= \frac1{\log 2} \cdot (\frac12\sigma_2(Dx)-DSx)\\
&=(\frac{x_0}{2}, \frac{x_0}{2}; \frac{x_1}{2^2}, \frac{x_1}{2^2},\frac{x_1}{2^2},\frac{x_1}{2^2};  \dots)
-(0; \frac{x_0}{2^1}, \frac{x_0}{2^1}; \frac{x_1}{2^2}, \frac{x_1}{2^2},\frac{x_1}{2^2},\frac{x_1}{2^2}; \dots)\\
&=(\frac{x_0}{2};0,\frac{x_1}{2^2}-\frac{x_0}{2^1};0,0,0, \frac{x_2}{2^3}-\frac{x_1}{2^2}; \dots).
\end{align*}

For $u:=(\frac{x_0}{2};0,\frac{x_1}{2^2};0,0,0, \frac{x_2}{2^3}; \dots)$ and $v:=(0;0,\frac{x_0}{2^1};0,0,0, \frac{x_1}{2^2}; \dots)$
we have $y=u-v$ and $u^*=v^*$. So, $\varphi(y)=0$, since $\varphi$ is symmetric.

Next, using the fact that $\varphi(\frac12\sigma_2(Dx))=\varphi(DSx)$ and that $\varphi$ is symmetric, we have
$$ \theta(Sx)=\varphi(DSx)=\varphi(\frac12\sigma_2(Dx))=\varphi(Dx)=\theta(x).$$
Hence, $\theta$ is an $S$-invariant linear functional on $l_\infty$.

By Lemma~\ref{aux1} for every $0 \le x\in l_{1,\infty}$ we have 
$$x-D \left( \frac1{\log 2} \cdot \left\{ \sum_{k=2^n-1}^{2^{n+1}-2} x_k^* \right\}_{n \ge 0} \right) \in Z.$$

Using the fact that $\varphi$ vanishes on $Z$, we derive
$$\varphi(x)=(\varphi \circ D)\left( \frac1{\log 2} \cdot \left\{ \sum_{k=2^n-1}^{2^{n+1}-2} x_k^* \right\}_{n \ge 0} \right)$$
and, since $\theta=\varphi\circ D$, the representation~\eqref{formula} is proved. We now show that this representation is unique.

Let $\theta_1$ be an $S$-invariant linear functional on $l_\infty$ such that
$$\varphi(x) = \theta_1 \left( \frac1{\log 2} \cdot \left\{ \sum_{k=2^n-1}^{2^{n+1}-2} x_k^* \right\}_{n \ge 0} \right) , 0\le x \in l_{1,\infty}.$$

According to the definition of the functional $\theta$, we have
$$\theta(x)=\varphi(Dx) = \theta_1 \left( \frac1{\log 2} \cdot \left\{ \sum_{k=2^n-1}^{2^{n+1}-2} (Dx)_k^* \right\}_{n \ge 0} \right) , 0\le x \in l_\infty.$$

To complete the proof it is sufficient to verify that the expression on the right equals to $\theta_1(x)$. 
Set 
$$y_n := \log 2 \cdot \sum_{k=0}^n x_k - \sum_{k=0}^{2^{n+1}-2} (Dx)_k^*, \ n \ge 0.$$
By the definition of the operator $D$, we have
$$\log 2 \cdot \sum_{k=0}^n x_k = \sum_{k=0}^{2^{n+1}-2} (Dx)_k, \ \forall n\ge0$$ 
and so
$$y_n = \sum_{k=0}^{2^{n+1}-2} (Dx)_k - \sum_{k=0}^{2^{n+1}-2} (Dx)_k^*, \ n \ge 0.$$
Since $\|D\|_{l_\infty \to l_{1,\infty}} = 2 \log 2$, it follows that 
$$(Dx)_n \le \frac{2\log 2}{n+1}\|x\|_{l_\infty}, \ n \ge 0.$$
Hence, for every $0\le x \in l_\infty$ the sequence $Dx$ satisfies the assumptions of Lemma~\ref{aux0}. 
Therefore, we conclude that $y\in l_\infty.$
Obviously, we have 
$$y_{n+1}-y_n = \log 2 \cdot x_{n+1} - \sum_{k=2^{n+1}-1}^{2^{n+2}-2} (Dx)_k^*, \ n \ge 0$$
and, using the definition of the operator $S$, 
$$\log 2 \cdot x-\left\{ \sum_{k=2^n-1}^{2^{n+1}-2} (Dx)_k^* \right\}_{n \ge 0} = y-Sy.$$

Since $\theta_1$ is an $S$-invariant linear functional, it follows that $\theta_1(Sy-y)=0$ and
$$\theta_1(x)=\theta_1 \left( \frac1{\log 2} \cdot \left\{ \sum_{k=2^n-1}^{2^{n+1}-2} (Dx)_k^* \right\}_{n \ge 0} \right) , 0\le x \in l_\infty$$
and $\theta(x)=\theta_1(x)$ for every $0\le x \in l_\infty.$ Hence, $\theta_1(x)=\theta(x)$ for all $x\in l_\infty$.

(ii) Let $\theta=\theta \circ S$ be a linear functional on $l_\infty$. It is clear that the functional $\varphi$
given by~\eqref{formula}
is positive homogeneous on the positive cone of $l_{1,\infty}$. We shall prove its additivity on the positive cone of $l_{1,\infty}$. 
For $0\le x,y \in l_{1,\infty}$
we set
$$z_n := \sum_{k=0}^{2^{n+1}-2} (x_k^*+ y_k^*-(x+y)_k^*), \ n\ge 0.$$
By Lemma~\ref{aux}, $z \in l_\infty$. An immediate computation yields
$$(z-Sz)_n = \sum_{k=2^n-1}^{2^{n+1}-2} (x_k^*+ y_k^*-(x+y)_k^*), \ n\ge 0.$$
Due to $S$-invariance of $\theta$ we have $\theta(Sz-z)=0$ and hence
\begin{equation}\label{lin}
 \theta\left( \left\{ \sum_{k=2^n-1}^{2^{n+1}-2} (x_k^*+ y_k^*-(x+y)_k^*)\right\}_{n \ge 0} \right)=0.
\end{equation}

Using the definition~\eqref{formula}, the equality~\eqref{lin} and the linearity of $\theta$, for every $0\le x,y \in l_{1,\infty}$ we obtain
\begin{align*}
\varphi(x+y)&=\theta\left( \frac1{\log 2} \cdot \left\{ \sum_{k=2^n-1}^{2^{n+1}-2} (x+y)_k^* \right\}_{n \ge 0} \right)\\
&=\theta\left( \left\{ \frac1{\log 2} \cdot \sum_{k=2^n-1}^{2^{n+1}-2} (x_k^*+ y_k^*)\right\}_{n \ge 0} \right)\\
&=\theta\left( \left\{ \frac1{\log 2} \cdot \sum_{k=2^n-1}^{2^{n+1}-2} x_k^*\right\}_{n \ge 0} \right)+ 
\theta\left( \left\{ \frac1{\log 2} \cdot \sum_{k=2^n-1}^{2^{n+1}-2}y_k^*)\right\}_{n \ge 0} \right)\\
&=\varphi(x)+\varphi(y). 
\end{align*}

So, the functional $\varphi$ given by~\eqref{formula} is positive homogeneous and additive on the positive cone of $l_{1,\infty}$.
Hence, it extends by linearity to the whole space $l_{1,\infty}$.

It is clear, that for every $0\le x,y \in l_{1,\infty}$ such that $x^*=y^*$ we have $\varphi(x)=\varphi(y)$.
Hence, the formula~\eqref{formula} defines a symmetric functional on $l_{1,\infty}$.
\end{proof} 

Now we specialise the result of Theorem~\ref{one-to-one} to the case of positive symmetric functionals and
to the case of continuous symmetric functionals.

\begin{cor}\label{pos}
(i) For every positive symmetric functional $\varphi$ on $l_{1,\infty}$ there exists 
a unique positive linear functional $\theta=\theta \circ S$ on $l_\infty$ such that~\eqref{formula}
 holds.

(ii) For every positive linear functional $\theta=\theta \circ S$ on $l_\infty$ 
the functional $\varphi$ defined by the formula~\eqref{formula} extends by linearity to a positive symmetric functional on $l_{1,\infty}$.
\end{cor}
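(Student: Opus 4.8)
The plan is to deduce the corollary directly from Theorem~\ref{one-to-one}, observing that the bijection $\varphi \leftrightarrow \theta=\varphi\circ D$ established there preserves positivity in both directions. Since existence, the explicit formula~\eqref{formula}, $S$-invariance, and uniqueness of $\theta$ are all already granted by Theorem~\ref{one-to-one}, the only new content here is the two positivity assertions. Thus the whole corollary reduces to checking that the two maps intertwining $l_\infty$ and $l_{1,\infty}$ (the operator $D$ in one direction, and the block-sum-of-rearrangement map in the other) both send positive cones to positive cones.

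For part (i) I would start with a positive symmetric functional $\varphi$ and take the functional $\theta=\varphi\circ D$ furnished by Theorem~\ref{one-to-one}(i). To verify $\theta\ge0$, fix $0\le x\in l_\infty$. By the explicit definition of $D$, every coordinate of $Dx$ is a nonnegative multiple of some $x_n$, so $Dx\ge0$ in $l_{1,\infty}$; positivity of $\varphi$ then yields $\theta(x)=\varphi(Dx)\ge0$. Uniqueness of this positive representing functional is inherited at no extra cost from the uniqueness already proved in Theorem~\ref{one-to-one}(i), since any positive $S$-invariant functional satisfying~\eqref{formula} is in particular an $S$-invariant one, hence must coincide with $\theta$.

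For part (ii) I would take a positive $S$-invariant functional $\theta$ and form $\varphi$ via~\eqref{formula}. Theorem~\ref{one-to-one}(ii) already guarantees that $\varphi$ extends by linearity to a symmetric functional on $l_{1,\infty}$, so it remains only to check $\varphi\ge0$. Fix $0\le x\in l_{1,\infty}$. Then $x_k^*\ge0$ for all $k$, so the sequence $\left\{\sum_{k=2^n-1}^{2^{n+1}-2} x_k^*\right\}_{n\ge0}$ lies in the positive cone of $l_\infty$ (it belongs to $l_\infty$ by Lemma~\ref{aux}(i)). Since $\theta$ is positive, applying it to this nonnegative sequence gives $\varphi(x)\ge0$, which is precisely positivity of $\varphi$.

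The argument is routine once Theorem~\ref{one-to-one} is in hand, and I anticipate no serious obstacle. The single point requiring a moment's care is conceptual rather than technical: confirming that the notion of positivity in the two settings matches up correctly under the correspondence, i.e. that $D$ and the block-sum map respect the respective positive cones. Both facts are immediate from the explicit definitions of the maps, so the proof is short.
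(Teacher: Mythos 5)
Your proposal is correct and follows essentially the same route as the paper: both reduce the corollary to Theorem~\ref{one-to-one} and then check that $\varphi\ge0$ if and only if $\theta=\varphi\circ D\ge0$, one direction via the positivity of the operator $D$ and the other directly from the nonnegativity of the block sums in~\eqref{formula}. Your write-up merely spells out these two positivity checks in slightly more detail than the paper does.
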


\begin{proof}
In view of Theorem~\ref{one-to-one} we only need to show that the functional $\varphi$ is positive if and only if $\theta= \varphi \circ D$ is positive.
If $\varphi\ge0$, then $\theta=\varphi \circ D\ge 0$, since $D$ is a positive operator from $l_\infty$ into $l_{1,\infty}$.
If $\theta\ge 0$, then the positivity of $\varphi$ follows from~\eqref{formula}.
\end{proof}

\begin{cor}\label{cont}
(i) For every continuous symmetric functional $\varphi$ on $l_{1,\infty}$ there exists 
a unique continuous linear functional $\theta=\theta \circ S$ on $l_\infty$ such that~\eqref{formula}
 holds.

(ii) For every continuous linear functional $\theta=\theta \circ S$ on $l_\infty$ 
the functional $\varphi$ defined by the formula~\eqref{formula} extends by linearity to a continuous symmetric functional on $l_{1,\infty}$.
\end{cor}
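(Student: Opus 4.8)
The plan is to follow verbatim the strategy used for Corollary~\ref{pos}. Theorem~\ref{one-to-one} already establishes the bijection $\varphi \leftrightarrow \theta = \varphi \circ D$ between symmetric functionals on $l_{1,\infty}$ and $S$-invariant linear functionals on $l_\infty$, together with the representation~\eqref{formula}, and Theorem~\ref{one-to-one} also supplies the uniqueness of $\theta$. Hence the only new point to settle is that $\varphi$ is continuous if and only if $\theta = \varphi \circ D$ is continuous; uniqueness of the continuous $\theta$ is then inherited for free, since uniqueness already holds among all $S$-invariant functionals.

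The forward implication is immediate. If $\varphi$ is continuous, then, because $D : l_\infty \to l_{1,\infty}$ is bounded with $\|D\|_{l_\infty \to l_{1,\infty}} = 2 \log 2$, the composition $\theta = \varphi \circ D$ is a continuous (equivalently bounded) linear functional on $l_\infty$.

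For the reverse implication I would first bound $\varphi$ on the positive cone, and then extend to the whole space. Suppose $\theta$ is bounded with norm $\|\theta\|$. For $0 \le x \in l_{1,\infty}$, the estimate from the proof of Lemma~\ref{aux}(i), namely $\sum_{k=2^n-1}^{2^{n+1}-2} x_k^* \le 2^n x_{2^n-1}^* \le \|x\|_{l_{1,\infty}}$, shows that the sequence occurring in~\eqref{formula} has $l_\infty$-norm at most $\|x\|_{l_{1,\infty}}$, whence
\begin{equation*}
|\varphi(x)| = \left|\theta\left(\frac{1}{\log 2}\left\{\sum_{k=2^n-1}^{2^{n+1}-2} x_k^*\right\}_{n\ge0}\right)\right| \le \frac{\|\theta\|}{\log 2}\,\|x\|_{l_{1,\infty}}, \quad 0 \le x \in l_{1,\infty}.
\end{equation*}
To pass to an arbitrary $x \in l_{1,\infty}$, I would decompose $x = x_+ - x_-$ with $x_\pm \ge 0$; since $x_\pm \le |x|$ pointwise, the decreasing rearrangements satisfy $x_\pm^* \le x^*$ and therefore $\|x_\pm\|_{l_{1,\infty}} \le \|x\|_{l_{1,\infty}}$. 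Using the linearity of $\varphi$ and the bound just obtained,
\begin{equation*}
|\varphi(x)| \le |\varphi(x_+)| + |\varphi(x_-)| \le \frac{\|\theta\|}{\log 2}\bigl(\|x_+\|_{l_{1,\infty}} + \|x_-\|_{l_{1,\infty}}\bigr) \le \frac{2\|\theta\|}{\log 2}\,\|x\|_{l_{1,\infty}},
\end{equation*}
so that $\varphi$ is bounded, hence continuous, on the quasi-Banach space $l_{1,\infty}$.

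The step requiring the most care is the reverse implication, precisely because the representation~\eqref{formula} is a priori available only on the positive cone, the map $x \mapsto x^*$ being nonlinear. The splitting into positive and negative parts, together with the monotonicity $x_\pm^* \le x^*$ of the rearrangement, is exactly what bridges this gap; the remaining ingredients are the elementary estimate from Lemma~\ref{aux}(i) and the boundedness of $D$ already recorded above.
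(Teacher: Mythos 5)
Your proposal is correct, but for the nontrivial direction it takes a genuinely different route from the paper. The paper handles the forward implication exactly as you do (boundedness of $D$), but for the converse it argues structurally: by Proposition~\ref{lattice} the continuous symmetric functionals form a lattice, every vector lattice is the linear hull of its positive elements, and so the claim is reduced to the positive case already settled in Corollary~\ref{pos} --- in effect, a decomposition of \emph{functionals} into differences of positive ones. You instead decompose the \emph{element}: you bound $\varphi$ on the positive cone via the estimate $\sum_{k=2^n-1}^{2^{n+1}-2}x_k^*\le 2^n x_{2^n-1}^*\le\|x\|_{l_{1,\infty}}$ from Lemma~\ref{aux}(i), and then pass to arbitrary $x$ by writing $x=x_+-x_-$ and using the monotonicity $x_\pm^*\le x^*$ of the rearrangement. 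Your argument is more elementary and quantitative: it produces the explicit bound $|\varphi(x)|\le \frac{2\|\theta\|}{\log 2}\,\|x\|_{l_{1,\infty}}$, and it avoids two points that the paper's soft argument leaves implicit --- namely that positive symmetric functionals on the quasi-Banach lattice $l_{1,\infty}$ are automatically continuous, and that a continuous $S$-invariant functional on $l_\infty$ splits into positive $S$-invariant parts (the lattice property on the $l_\infty$ side is only recorded later, in Lemma~\ref{lattice2}). What the paper's route buys is brevity and reusability: the same lattice reduction is invoked again, essentially verbatim, for traces in Corollary~\ref{cont1}. One small caveat for your version: if $l_{1,\infty}$ is taken over complex scalars, the splitting $x=x_+-x_-$ must be preceded by a decomposition into real and imaginary parts, which only worsens the constant.
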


\begin{proof}
 In view of Theorem~\ref{one-to-one} we only need to show that the functional $\varphi$ is continuous if and only if $\theta= \varphi \circ D$ is continuous.

If $\varphi$ is continuous on $l_{1,\infty}$, then $\theta= \varphi \circ D$ is continuous on $l_\infty$, 
since $D : l_\infty \to l_{1,\infty}$ is continuous.

% To prove the ``only if part'' we shall show that the set of all continuous symmetric functionals is a sublattice. 
By Proposition~\ref{lattice} the set of all continuous symmetric functionals is a lattice. 
% Hence, it is sufficient to show that
% for every continuous symmetric functionals $\varphi$ and $\psi$ the functionals $\varphi\vee \psi$ and $\varphi\wedge \psi$ are continuous. 
% 
% From~\eqref{sup} we have
% $$\varphi\vee \psi = \varphi  + (\psi-\varphi)_+.$$
% 
% Since the functional $(\psi-\varphi)_+$ is positive, it is continuous.
% Hence, $\varphi\vee \psi$ is continuous and so is
% $\varphi\wedge \psi = -((-\varphi)\vee (-\psi))$.
% 
% Consequently, the set of all continuous symmetric functionals is a sublattice. 
Since every vector lattice is a linear hull of its positive elements (see e.g.~\cite[Theorem 1.1.1]{MN}), the statement follows from Corollary~\ref{pos}.
\end{proof}

% \begin{prop}\label{el}
%  For every positive symmetric functional $\varphi$ on $l_{1,\infty}$ 
%  the functional $\theta=\varphi \circ D$ is a translation invariant extended limit.
%  That is, the functional $\theta$ is proportional to a Banach limit.
% \end{prop}
% 
% \begin{proof}
%  By Theorem~\ref{pos}, $\theta=\varphi \circ D$ is positive $S$-invariant linear functional.
%  We only need to show that $\theta$ is zero on all vanishing sequences. 
%  Since 
%  $$\theta(x) \le \|\theta\| \sup_n x_n,$$
%  it is sufficient to prove that $\theta$ is zero on all sequences with finite support.
%  
% Since $\theta$ is $S$-invariant, it is sufficient to prove that $\theta(1,0,0,0,\dots)=0$,
% which follows from the proof of Proposition~\ref{bf}.
% 
% %  We have $$x=(1,0,0,0,\dots) \in l_{1,\infty}$$ 
% %  
% %  and $$\left\{ \sum_{k=2^n-1}^{2^{n+1}-2} x_k^* \right\}_{n \ge 0} =x.$$ 
% %  
% % Therefore, by the formula~\eqref{formula} we have $\theta(x)= \log 2 \cdot \varphi(x).$
% % Since $\varphi$ is symmetric, it vanishes on $l_1$ and $\theta(x)=\log 2 \cdot \varphi(x)=0$.
% % Hence, $\theta$ is an extended limit. 
% %  
% %  Let $x=(x_0, x_1, x_2, \dots) \in l_\infty$ and $y=(x_1, x_2, \dots)$. We have
% % \begin{align*}
% %  \theta(x) &= \theta(x_0, x_1, x_2, \dots) - \theta(x_0, 0, 0, \dots) & \text{(since} \ \theta(1,0,0,0,\dots)=0)\\
% %  &= \theta(0, x_1, x_2, \dots) & \text{(linearity)}\\
% %  &= \theta(Sy) =\theta(y).  & (\theta=\theta \circ S)
% % \end{align*}
% % Observing that $y=Tx$, we conclude that the functional $\theta$ invariant with respect to the left shift.
	% \end{proof}

Recall that a symmetric functional on $l_{1,\infty}$ is normalised if
$\varphi(\{\frac1{n+1}\}_{n \ge 0}) = 1$.
The following corollary describes the correspondence between the set of all positive normalised symmetric functionals on $l_{1,\infty}$ 
and the set $\mathfrak B$ of all Banach limits.
 
\begin{thm}\label{BL}
 A linear functional $\varphi$ is a positive normalised symmetric functional on $l_{1,\infty}$ 
 if and only if $B=\varphi \circ D \in \mathfrak B$.
\end{thm}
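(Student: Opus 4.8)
The plan is to lean on Corollary~\ref{pos}, which already gives that $\varphi \mapsto \theta := \varphi \circ D$ is a bijection between the positive symmetric functionals on $l_{1,\infty}$ and the positive $S$-invariant functionals $\theta = \theta \circ S$ on $l_\infty$. Since the paper's definition of a Banach limit is exactly a positive $S$-invariant functional $B$ carrying the single extra normalisation $B(\emm)=1$, the whole theorem collapses to matching the two normalisations. So, after quoting Corollary~\ref{pos} for positivity and $S$-invariance in each direction, the only thing left to prove is the equivalence
$$
\varphi\Big(\Big\{\tfrac1{n+1}\Big\}_{n\ge0}\Big) = 1 \iff \theta(\emm) = 1.
$$

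To get this I would feed the harmonic sequence $x=\{\frac1{n+1}\}_{n\ge0}$ into the representation formula~\eqref{formula}. As $x$ is already decreasing we have $x^*=x$, and reindexing by $j=k+1$ yields
$$
\varphi(x) = \theta\Big(\tfrac1{\log 2}\Big\{\textstyle\sum_{j=2^n}^{2^{n+1}-1}\tfrac1{j}\Big\}_{n\ge0}\Big) =: \theta(c).
$$
The elementary input is that the dyadic harmonic block sums satisfy $\sum_{j=2^n}^{2^{n+1}-1}\frac1j \to \log 2$ as $n\to\infty$ (comparison with $\int_{2^n}^{2^{n+1}} dt/t = \log 2$), so $c_n\to1$, i.e.\ $c-\emm\in c_0$. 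It then remains to show $\theta(c)=\theta(\emm)$, and this is the only point requiring genuine work: one must verify that a positive $S$-invariant functional annihilates $c_0$. For that I would observe that positivity forces $\theta$ to be bounded with $\|\theta\|=\theta(\emm)$, so Proposition~\ref{bf} applies and, as in its proof, $\theta(x_0,0,0,\dots)=0$; together with $S$-invariance this makes $\theta$ vanish on every finitely supported sequence, and a truncation-plus-boundedness argument extends this to all of $c_0$. Hence $\theta(c)=\theta(c-\emm)+\theta(\emm)=\theta(\emm)$, which is precisely the equivalence above.

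Assembling the pieces gives both implications. For the forward direction, if $\varphi$ is positive normalised symmetric then $\theta=\varphi\circ D$ is positive and $S$-invariant by Corollary~\ref{pos}(i), and the computation turns normalisation into $\theta(\emm)=1$, so $B=\theta\in\mathfrak B$. For the converse, if $B=\varphi\circ D\in\mathfrak B$ then $B$ is positive and $S$-invariant, so by Corollary~\ref{pos}(ii) the functional reconstructed from $B$ via~\eqref{formula} is positive symmetric and, by the uniqueness in Theorem~\ref{one-to-one}, equals $\varphi$; the condition $B(\emm)=1$ then gives $\varphi(\{\frac1{n+1}\}_{n\ge0})=1$, i.e.\ $\varphi$ is normalised. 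I expect the only mildly delicate step to be the annihilation of $c_0$ by a positive $S$-invariant functional; the rest is bookkeeping within the correspondence already established.
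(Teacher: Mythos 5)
Your proposal is correct and follows essentially the same route as the paper: both reduce the theorem to Corollary~\ref{pos} plus the single computation that feeding the harmonic sequence into~\eqref{formula} turns the normalisation $\varphi(\{\frac1{n+1}\}_{n\ge0})=1$ into $B(\emm)=1$, using that the dyadic harmonic block sums converge to $\log 2$. The only difference is one of detail: the paper silently invokes that a positive $S$-invariant functional is proportional to a Banach limit (hence kills $c_0$), whereas you prove the $c_0$-annihilation directly via boundedness, Proposition~\ref{bf} and truncation, which is a welcome but not essentially different elaboration.
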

\begin{proof}
For every positive symmetric functional $\varphi$ on $l_{1,\infty}$ by Corollary~\ref{pos} 
we obtain that the functional $B=\varphi \circ D$ is a positive translation invariant functional on $l_\infty$,
that is $B$ proportional to a Banach limit. We only need to check that $B(\emm)=1$.
Indeed,
\begin{align*}
 1&= \varphi(\{\frac1{n+1}\}_{n \ge 0})
 =B\left(\frac1{\log 2} \cdot \left\{ \sum_{k=2^n-1}^{2^{n+1}-2} \frac1{k+1} \right\}_{n \ge 0} \right)\\
 &=B\left(\frac1{\log 2} (\log 2 +o(1)) \right)
 =B(\emm).
\end{align*}
The assertion has been proved. The proof of the ``only if'' part is similar.
\end{proof}

The rest of the section is devoted to the study of a subset of all symmetric functionals.
The following notion has been studied in many papers including~\cite{DPSS,DPSSS1,DPSSS2,KS,KSS}.
\begin{dfn}
 A linear functional $\varphi$ on $l_{1,\infty}$ is called fully symmetric if
$\varphi(x) \le \varphi(y)$ for every $0\le x,y \in l_{1,\infty}$ such that $x\prec\prec y$, 
that is $\sum_{k=0}^n x_k^* \le \sum_{k=0}^n y_k^*$ for every $n\ge 0$.
\end{dfn}

Observe that every fully symmetric functional is automatically positive and symmetric.

Define the Ces\`{a}ro operator $C: l_\infty \to l_\infty$ as  follows
$$(Cx)_n = \frac1{n+1} \sum_{k=0}^{n} x_k, \ n \in \mathbb N.$$

Before we state the main result, describing fully symmetric functionals on $l_{1,\infty}$ 
in terms of factorisable Banach limits (that is a functional of the form $\gamma \circ C$ for some extended limit $\gamma$~\cite{Raimi})
, we prove an auxiliary technical lemma.

\begin{lem}\label{aux2}
 Let $x \in l_{1,\infty}$ be such that $|x_n| \le \frac{\alpha}{n+1}$, $n\ge 0$ (for some $\alpha>0$).
 If $\sum_{k=0}^n x_k \le 0$, $n\ge 0$, then $\varphi(x) \le 0$ for every fully symmetric functional $\varphi$ on $l_{1,\infty}$.
\end{lem}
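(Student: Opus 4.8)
The plan is to reduce this signed statement to a genuine submajorization between two positive sequences and then invoke full symmetry. I would first write $x = x_+ - x_-$, with $x_+ = \max(x,0)$ and $x_- = \max(-x,0)$. Both are positive, both inherit the pointwise bound $(x_\pm)_n \le |x_n| \le \alpha/(n+1)$, and by linearity $\varphi(x) = \varphi(x_+) - \varphi(x_-)$, so it suffices to prove $\varphi(x_+) \le \varphi(x_-)$. The hypothesis $\sum_{k=0}^n x_k \le 0$ translates immediately into the partial-sum domination $\sum_{k=0}^n (x_+)_k \le \sum_{k=0}^n (x_-)_k$ for every $n$.

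The difficulty is that domination of the partial sums in the given order does \emph{not} imply submajorization of the decreasing rearrangements (a single misplaced large coordinate breaks it), so full symmetry cannot be applied directly to $x_+$ and $x_-$. This is exactly where Lemma~\ref{aux0} enters: since both $x_+$ and $x_-$ obey the bound $\alpha/(n+1)$, passing to the decreasing rearrangement perturbs each partial sum by at most $\alpha$. Concatenating the two resulting estimates with the partial-sum domination yields the approximate submajorization
$$\sum_{k=0}^n (x_+)_k^* \le \sum_{k=0}^n (x_-)_k^* + 2\alpha, \qquad n \ge 0.$$

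The key remaining step, which I expect to be the main obstacle, is to remove the additive error $2\alpha$: it is a fixed constant and cannot be shrunk by rescaling $x$, since the bound $\alpha$ scales with it. The trick I would use is to absorb it \emph{exactly} into a single coordinate. Set $r := (x_-)^* + 2\alpha\cdot(1,0,0,\dots)$. Because $(x_-)^*$ is decreasing, $r$ is again positive and decreasing, hence $r = r^*$ and $\sum_{k=0}^n r_k = \sum_{k=0}^n (x_-)_k^* + 2\alpha$. The displayed inequality then says precisely that $x_+ \prec\prec r$, with both sequences positive. Full symmetry gives $\varphi(x_+) \le \varphi(r)$, and since $\varphi$ is symmetric and vanishes on $l_1$ — so that $\varphi((x_-)^*) = \varphi(x_-)$ while $\varphi(2\alpha\cdot(1,0,0,\dots)) = 0$ — linearity yields $\varphi(r) = \varphi(x_-)$. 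Combining these, $\varphi(x_+) \le \varphi(x_-)$, whence $\varphi(x) = \varphi(x_+) - \varphi(x_-) \le 0$, as required.
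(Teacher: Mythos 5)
Your proof is correct, and it reaches the conclusion by a genuinely different decomposition than the paper's. The shared machinery is Lemma~\ref{aux0} (under the bound $\alpha/(n+1)$, passing to the decreasing rearrangement changes partial sums by at most a fixed constant) together with the fact that symmetric functionals vanish on finitely supported sequences, which is what absorbs that constant. The paper, however, never splits $x=x_+-x_-$: it adds the harmonic sequence, setting $y_n:=x_n+\alpha/(n+1)\ge0$, uses the hypothesis and Lemma~\ref{aux0} to show $\sum_{k=0}^n y_k^*\le\sum_{k=0}^n\bigl(\alpha/(k+1)+z_k\bigr)$ with $z=\alpha(1,0,0,\dots)$, and then concludes by full symmetry and singularity that $\varphi(y)\le\alpha$, hence $\varphi(x)=\varphi(y)-\alpha\varphi(\{1/(n+1)\}_{n\ge0})\le0$. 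Because the harmonic sequence is the comparison object, the paper must first normalise (``without loss of generality $\varphi(\{1/(n+1)\}_{n\ge0})=1$''), a reduction that tacitly relies on the observation that a fully symmetric functional vanishing on the harmonic sequence vanishes identically on $l_{1,\infty}$. Your route --- comparing $x_+$ directly against $(x_-)^*$ with the error $2\alpha$ parked in the leading coordinate --- needs neither the harmonic sequence nor any normalisation, so positivity and normalisation of $\varphi$ never enter and the argument is marginally more self-contained; what the paper's version buys is that it keeps the harmonic sequence in view, which is exactly the form in which the lemma is subsequently applied inside Theorem~\ref{FS}. (Incidentally, both proofs are equally loose about the precise constant: in the paper $y_n\le2\alpha/(n+1)$, so Lemma~\ref{aux0} strictly yields the error $2\alpha$ rather than $\alpha$; this is harmless in either argument, since any fixed constant is absorbed into a finitely supported sequence.)
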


\begin{proof} Let $\varphi$ be a fully symmetric functional on $l_{1,\infty}$. 
Without loss of generality, $\varphi(\{\frac1{n+1}\}_{n \ge 0})=1.$

 Set $y_n =x_n +\frac{\alpha}{n+1}$. Observe that $y_n\ge 0$ for every $n\ge 0$.
 By Lemma~\ref{aux0}, we have
 $$\sum_{k=0}^n y_k^* \le \sum_{k=0}^n y_k +\alpha, \ n\ge0.$$
 Since $\sum_{k=0}^n x_k \le 0$, it follows that
 $$\sum_{k=0}^n y_k^* \le \sum_{k=0}^n \frac{\alpha}{k+1}+\alpha.$$
 Setting $z_0 := \alpha$, $z_k := 0$ ($k\ge1$), we obtain
 $$\sum_{k=0}^n y_k^* \le \sum_{k=0}^n (\frac{\alpha}{k+1}+z_k).$$
Hence, using the fact that $\varphi(\{\frac1{n+1}\}_{n\ge0})=1$ we have
 $$\varphi(y) \le \varphi(\{\frac{\alpha}{n+1}\}_{n\ge0}) + \varphi(z) = \alpha.$$
To obtain the last equality we used the fact that every symmetric functional on $l_{1,\infty}$ is singular, 
that is it vanishes on finitely supported sequences 
(alternatively, one can use Theorem~\ref{BL} and the fact that Banach limits vanish on finitely supported sequences).

Hence, $\varphi(x) \le 0$.
\end{proof}

% 
% The following theorem should be compared with the main result of~\cite{KSS} (Theorem 11),
% proving that every normalised fully symmetric functional can be written in the form~\eqref{Dix_original_construction},
% that is the set of fully symmetric functionals coincide with the set of Dixmier traces.

It was established in~\cite[Theorem 11]{KSS} that every normalised fully symmetric functional can be written in the form~\eqref{Dix_original_construction}
with some dilation invariant extended limit $\omega$.
The following theorem shows that every normalised fully symmetric functional can be written in the form~\eqref{formula}
with some factorisable Banach limit $\theta$.

\begin{thm}\label{FS}
(i) For every normalised fully symmetric functional $\varphi$ on $l_{1,\infty}$ there exists 
an extended limit $\gamma$ on $l_\infty$ such that~\eqref{formula}
 holds for $\theta=\gamma \circ C$

(ii) For every extended limit $\gamma$ on $l_\infty$ and $\theta=\gamma \circ C$ 
the functional $\varphi$ defined by the formula~\eqref{formula} extends by linearity to a normalised fully symmetric functional on $l_{1,\infty}$.
\end{thm}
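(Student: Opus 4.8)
The plan is to handle the two directions separately, with direction (ii) being essentially a positivity computation and direction (i) requiring the construction of the extended limit $\gamma$ by a domination argument. For (ii), given an extended limit $\gamma$ and $\theta=\gamma\circ C$, I would first record that $\theta$ is $S$-invariant: since $(Cx)_n-(CSx)_n=x_n/(n+1)\to 0$, the operator $C-CS$ maps $l_\infty$ into $c_0$, which $\gamma$ annihilates. Together with positivity and $\theta(\emm)=\gamma(C\emm)=\gamma(\emm)=1$ this shows $\theta\in\mathfrak B$, so by Theorem~\ref{BL} the functional $\varphi$ attached to $\theta$ via~\eqref{formula} is positive, normalised and symmetric, and only full symmetry remains. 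For $0\le x,y$ with $x\prec\prec y$, set $a:=\frac1{\log 2}\{\sum_{k=2^n-1}^{2^{n+1}-2}x_k^*\}_{n\ge0}$ and define $b$ analogously from $y$, so that $\varphi(x)=\theta(a)=\gamma(Ca)$ and $\varphi(y)=\gamma(Cb)$. The decisive identity is the telescoping $(Ca)_n=\frac1{(n+1)\log2}\sum_{k=0}^{2^{n+1}-2}x_k^*$, and likewise for $b$; submajorisation then gives $(Ca)_n\le(Cb)_n$ for every $n$, and positivity of $\gamma$ yields $\varphi(x)\le\varphi(y)$.

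For direction (i), Theorem~\ref{BL} already tells me that $\theta:=\varphi\circ D$ is a Banach limit, so the entire task is to show that $\theta$ \emph{factorises} as $\gamma\circ C$ for some extended limit $\gamma$. My plan is to produce $\gamma$ from the sandwich estimate $\liminf_n(Cz)_n\le\theta(z)\le\limsup_n(Cz)_n$, valid for every $z\in l_\infty$. Because $z\mapsto\limsup_n(Cz)_n$ is sublinear while $\theta$ is linear, applying the upper bound to $-z$ recovers the lower bound, so it suffices to prove $\theta(z)\le\limsup_n(Cz)_n$.

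This upper bound is the heart of the argument and the step I expect to be the main obstacle, since it is exactly where full symmetry must be converted into a statement about Cesàro means. I would argue as follows. Put $L=\limsup_n(Cz)_n$, fix $\epsilon>0$, and set $z'=z-(L+\epsilon)\emm$, so that $\sum_{m=0}^n z'_m\le0$ for all large $n$ and, using $\varphi(D\emm)=\theta(\emm)=1$, one has $\varphi(Dz')=\theta(z)-(L+\epsilon)$. The sequence $x:=Dz'$ satisfies $|x_k|\le\frac{2\log2}{k+1}\|z'\|_\infty$, and since $D$ sends block-endpoint partial sums to $\log2\cdot\sum_{m\le n}z'_m$, all block-endpoint partial sums of $x$ are eventually $\le0$; as $x$ is constant on each dyadic block, its partial sum is affine between consecutive endpoints and so remains $\le0$ there too. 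The only defect is a finite initial segment, which I would repair by subtracting a single-coordinate correction from $z'$ (its $D$-image lives on the one entry of block $0$, and symmetric functionals, being singular, ignore it). Lemma~\ref{aux2} then gives $\varphi(x)\le0$, i.e. $\theta(z)\le L+\epsilon$, and letting $\epsilon\to0$ proves the bound.

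With the sandwich established, I would define $\gamma_0$ on the range $C(l_\infty)$ by $\gamma_0(Cz)=\theta(z)$; this is well defined because $C$ is injective, and the upper bound reads $\gamma_0(w)\le\limsup_n w_n$ on $C(l_\infty)$. Dominating by the sublinear functional $w\mapsto\limsup_n w_n$ and invoking Hahn--Banach extends $\gamma_0$ to a functional $\gamma$ on $l_\infty$ with $\gamma(w)\le\limsup_n w_n$ for all $w$; applied to $-w$ this gives $\liminf_n w_n\le\gamma(w)$, so $\gamma$ is an extended limit by Remark~\ref{extended_limits}. By construction $\gamma\circ C=\theta=\varphi\circ D$, and Theorem~\ref{one-to-one} then delivers the representation~\eqref{formula} with $\theta=\gamma\circ C$, completing (i).
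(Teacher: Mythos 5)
Your proposal is correct, and its skeleton coincides with the paper's: part (ii) is essentially verbatim the paper's argument (the $S$-invariance of $\gamma\circ C$, the telescoping identity $(Ca)_n=\frac1{(n+1)\log 2}\sum_{k=0}^{2^{n+1}-2}x_k^*$, and positivity of $\gamma$), and part (i) runs through the same chain: Theorem~\ref{BL} to get the Banach limit $\theta=\varphi\circ D$, the block partial-sum estimate for $D$ with affine interpolation inside dyadic blocks, Lemma~\ref{aux2}, the definition of $\gamma$ on the range $C(l_\infty)$ via $\theta\circ C^{-1}$, and Hahn--Banach.

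The one genuine deviation is the choice of sublinear majorant in part (i), and it is worth recording. The paper proves only $\theta(z)\le\sup_n(Cz)_n$: for that bound one works with $z$ satisfying $Cz\le 0$, so \emph{all} partial sums are nonpositive and no initial-segment defect ever arises; the price is paid later, since a Hahn--Banach extension dominated by $\sup$ need not be an extended limit, and the paper must additionally check that $C^{-1}$ preserves finite supports and that the Banach limit $\theta$ kills finitely supported sequences. You instead prove the sharper bound $\theta(z)\le\limsup_n(Cz)_n$, which creates the defect of finitely many possibly positive partial sums of $z-(L+\epsilon)\emm$; your repair --- subtract a suitable multiple of $(1,0,0,\dots)$, observe that its $D$-image is a single coordinate, and use that symmetric functionals vanish on $l_1$ (the singularity the paper records, citing~\cite{DPSS}) --- is sound, and Lemma~\ref{aux2} then applies exactly as in the paper. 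What this buys is that the extension dominated by $\limsup$ is automatically an extended limit, so the paper's closing finite-support verification disappears. The two proofs simply relocate the same small amount of work; both are complete.
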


\begin{proof} 
(i) Let $\varphi$ be a normalised fully symmetric functional on $l_{1,\infty}$. 
Hence, $\varphi$ is a normalised positive symmetric functional on $l_{1,\infty}$.
By Theorem~\ref{BL}, there exists a unique Banach limit $B$ on $l_\infty$ given by the formula $B = \varphi \circ D$. 
We need to show that $B$ can be expressed as $B =\gamma \circ C$ for some extended limit $\gamma$ on $l_\infty$.

For every $x\in l_\infty$ and $2^m-1 \le n \le 2^{m+1}-2$ ($m \ge 0$) it follows from the definition of the operator $D$ that
\begin{align*}
\sum_{k=0}^n (Dx)_k &= \sum_{k=0}^{2^m-2} (Dx)_k + \sum_{k=2^m-1}^n (Dx)_k = \sum_{k=0}^{m-1} x_k + \frac{n-2^m+2}{2^m}x_m \\
&\le \max\left\{ \sum_{k=0}^{m-1} x_k, \sum_{k=0}^{m} x_k\right\}. 
\end{align*}

In particular, it follows from the above, that for every $x\in l_\infty$ such that $Cx \le 0$, we have $CDx \le 0$ and, by Lemma~\ref{aux2} 
(we can apply this lemma since $\|D\|_{l_\infty \to l_{1,\infty}} = 2 \log 2$ and so, $(Dx)_n \le \frac{2\log 2}{n+1}\|x\|_{l_\infty}$), 
since $\varphi$ is positive, it follows that $\varphi(Dx) \le 0$.
Consequently, $B(x) =\varphi(Dx) \le 0$ for every $x\in l_\infty$ such that $Cx \le 0$.

We have $C(x - \sup_{n\ge0} (Cx)_n)\le 0$ for every $x\in l_\infty$. 
Hence,
$B(x- \sup_{n\ge0} (Cx)_n)\le 0$ and 
\begin{equation}\label{eq11}
B(x) \le \sup_{n\ge0} (Cx)_n 
\end{equation}
 for every $x \in l_\infty$.

Denote by $C(l_\infty)$ the range of the Ces\`aro operator $C : l_\infty \to l_\infty$.
Since $Cx=Cy \Leftrightarrow x=y$, the operator $C : l_\infty \to C(l_\infty)$ is a bijection.

Define a linear functional $\gamma$ on $C(l_\infty)$ by the formula 
$$\gamma(x):= B(C^{-1}x), \ x \in C(l_\infty).$$

For every $x \in C(l_\infty)$, using~\eqref{eq11}, we have that 
$$\gamma(x) = B(C^{-1}x) \le \sup_{n\ge0} (CC^{-1}x)_n  = \sup_{n\ge0} x_n.$$
Using Hahn-Banach theorem we extend $\gamma$ from $C(l_\infty)$ to $l_\infty$ preserving the inequality $\gamma \le \sup.$
For every $x\in l_\infty$, we have 
$$-\gamma(x) =\gamma(-x) \le \sup_n (-x_n) = -\inf_n x_n.$$
Hence,
\begin{equation}\label{gl}
 \inf_n x_n \le \gamma(x) \le \sup_n x_n.
\end{equation}

Hence $\gamma$ is a positive norm-one functional on $l_\infty$.

It remains to show that $\gamma$ is an extended limit.
Due to~\eqref{gl}, it is sufficient to show that $\gamma$ vanishes on every sequence with finite support.

%We clearly have that $C^{-1}x=(1, -1, 0, 0, \dots)$.
 For every sequence $x\in l_\infty$ a direct verification shows that $(C^{-1}x)_0 =x_0$ and
 $$(C^{-1}x)_n = (n+1) x_n - nx_{n-1}, \ n \ge 1.$$
Hence, if $x\in l_\infty$ is a sequence with finite support we conclude, 
 that $C^{-1}x \in l_\infty$ is also a sequence with finite support. 
% and, so, $C^{-1}x \in l_\infty$.
% Therefore, $x \in C(l_\infty)$ and there exists a finite supported sequence $y=C^{-1}x\in l_\infty$ such that $x=Cy$.
% Since $\varphi$ is fully symmetric, it follows from Proposition~\ref{el} that 
% $\theta=\varphi \circ D$ is an extended limit on $l_\infty$. In particular, 
Hence, $\gamma(x) = B(C^{-1}x)=0$, since $B$ is a Banach limit.

Consequently, $\gamma$ is an extended limit.
Finally, for every $x\in l_\infty$ we trivially have $Cx \in C(l_\infty)$ and
$$B(x) = B(C^{-1}Cx)=\gamma(Cx).$$

(ii) Let $\theta=\gamma \circ C$ for some extended limit $\gamma$ on $l_\infty$. 
Since $C\circ S- C : l_\infty \to c_0$ and since $\gamma$ is an extended limit, it follows that $\theta$ is $S$-invariant.
Hence, by Theorem~\ref{one-to-one} the formula~\eqref{formula} defines a symmetric functional $\varphi$ on $l_{1,\infty}$.
We only need to show that the functional $\varphi$ is fully symmetric.

Let $0\le x,y \in l_{1,\infty}$ be such that $x\prec\prec y$.
We have 
$$\left(C\left\{ \sum_{k=2^n-1}^{2^{n+1}-2} x_k^* \right\}_{n \ge 0}\right)_m 
= \frac1{m+1} \sum_{n=0}^m \left\{ \sum_{k=2^n-1}^{2^{n+1}-2} x_k^* \right\}_{n \ge 0}
= \frac1{m+1} \sum_{n=0}^{2^{m+1}-2} x_n^*$$

and (since $x\prec\prec y$ )
$$\left(C\left\{ \sum_{k=2^n-1}^{2^{n+1}-2} x_k^* \right\}_{n \ge 0}\right)_m \le \left(C\left\{ \sum_{k=2^n-1}^{2^{n+1}-2} y_k^* \right\}_{n \ge 0}\right)_m, \ \forall \ m\ge0.$$

Next by~\eqref{formula}, using the linearity of $\varphi$ and since $\theta=\gamma \circ C$, we write
\begin{align*}
\varphi(x-y) &=  \frac1{\log 2} \theta\left(\left\{ \sum_{k=2^n-1}^{2^{n+1}-2} x_k^* \right\}_{n \ge 0} - \left\{ \sum_{k=2^n-1}^{2^{n+1}-2} y_k^* \right\}_{n \ge 0}\right)\\
&= \frac1{\log 2} \gamma \left(C\left\{ \sum_{k=2^n-1}^{2^{n+1}-2} x_k^* \right\}_{n \ge 0} - C\left\{ \sum_{k=2^n-1}^{2^{n+1}-2} y_k^* \right\}_{n \ge 0}\right)\\
&\le 0,
\end{align*}
where the latter inequality is due to the positivity of $\gamma$ (see Remark~\ref{extended_limits}).
Consequently, $\varphi(x) \le \varphi(y)$, that is, $\varphi$ is a fully symmetric functional. 
\end{proof}

\section{Traces on $\mathcal L_{1,\infty}$}\label{tr}

% Let $B(H)$ be an algebra of all bounded linear operators on a separable Hilbert space
% $H$ equipped with the uniform norm and let ${\rm Tr}$ be the standart trace.
% 
% By $\mu(n, A)$ we denote the $n$-th singular value of an operator $A\in B(H)$, 
% that is the $n$-th largest eigenvalue of an operator $|A|$, $n \ge 0$.
% 
% Consider the space $\mathcal L_{1,\infty}$ of compact operators in $B(H)$ given by 
% $$\mathcal L_{1,\infty}:= \left\{A: \{ \mu(n,A) \}_{n \ge 0} \in l_{1,\infty}\right\}$$
% 
% and define an operator $D_1 : l_\infty \to \mathcal L_{1,\infty}$
% % such that $\mu(n, D_1(x))=(Dx)_n$, $n\ge0$.
% as follows
% $$D_1(x) = {\rm diag} (Dx),$$
% where ${\rm diag}$ denotes the diagonal operator with respect to any fixed basis in $H$.

In this section we extend the construction in Section~\ref{sf} 
to the case of traces on the ideal $\mathcal L_{1,\infty}$.

By ${\rm diag}$ we denote the diagonal operator in $B(H)$ with respect to any fixed basis in $H$.
The following theorem constructs traces on $\mathcal L_{1,\infty}$,
using translation invariant functionals on $l_\infty$.
As for the commutative counterpart, this construction was suggested by A.Pietsch in~\cite{P}.

\begin{thm}\label{one-to-one1}
(i) For every trace $\tau$ on $\mathcal L_{1,\infty}$ there exists 
a unique $S$-invariant linear functional $\theta=\tau \circ {\rm diag} \circ D$ on $l_\infty$ such that
\begin{equation}\label{formula1}
 \tau(A) = \theta\left(\frac1{\log 2} \left\{ \sum_{k=2^n-1}^{2^{n+1}-2} \mu(k,A) \right\}_{n \ge 0} \right) , A\ge0.
\end{equation}
(ii) For every linear functional  $\theta$ on $l_\infty$ such that $\theta=\theta \circ S$ 
the functional $\tau$ defined by the formula~\eqref{formula1} extends by linearity to a trace on $\mathcal L_{1,\infty}$.

In particular, there exists a bijective linear correspondence between the set of all traces on $\mathcal L_{1,\infty}$ and
all $S$-invariant linear functionals on $l_\infty$.
\end{thm}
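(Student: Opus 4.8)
The plan is to derive this statement from its commutative counterpart, Theorem~\ref{one-to-one}, by factoring it through the standard reduction between traces on $\mathcal L_{1,\infty}$ and symmetric functionals on $l_{1,\infty}$. I would use three elementary facts about singular values: the map ${\rm diag}:l_{1,\infty}\to\mathcal L_{1,\infty}$ is linear; a positive compact operator $A$ is unitarily equivalent to ${\rm diag}(\mu(A))$; and two positive diagonal operators whose diagonals have the same decreasing rearrangement are permutation, hence unitarily, equivalent.

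First I would treat (i). Given a trace $\tau$, set $\varphi:=\tau\circ{\rm diag}$, a linear functional on $l_{1,\infty}$. It is symmetric: if $0\le x,y$ and $x^*=y^*$, then ${\rm diag}(x)$ and ${\rm diag}(y)$ are unitarily equivalent, so $\varphi(x)=\varphi(y)$ by unitary invariance of $\tau$. Theorem~\ref{one-to-one}(i) then furnishes a unique $S$-invariant $\theta=\varphi\circ D=\tau\circ{\rm diag}\circ D$ satisfying~\eqref{formula} for $\varphi$. Since $\tau(A)=\tau({\rm diag}(\mu(A)))=\varphi(\mu(A))$ for $0\le A$ and $\mu(A)^*=\mu(A)$, the identity~\eqref{formula} for $\varphi$ is exactly~\eqref{formula1} for $\tau$, and uniqueness of $\theta$ is inherited from Theorem~\ref{one-to-one}(i).

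For (ii), starting from an $S$-invariant $\theta$, Theorem~\ref{one-to-one}(ii) produces a symmetric functional $\varphi$ on $l_{1,\infty}$, and I would set $\tau(A):=\varphi(\mu(A))$ for $0\le A$. Positive homogeneity is clear, and $\mu(UAU^*)=\mu(A)$ gives unitary invariance on the positive cone. The hard part is additivity, $\tau(A+B)=\tau(A)+\tau(B)$ for $0\le A,B$, which is the only genuinely noncommutative input. I would obtain it by establishing the operator analogue of Lemma~\ref{aux}(ii),
$$\left\{\sum_{k=0}^{2^{n+1}-2}\bigl(\mu(k,A)+\mu(k,B)-\mu(k,A+B)\bigr)\right\}_{n\ge0}\in l_\infty,$$
whose lower bound is the Ky Fan inequality $\sum_{k=0}^{N}\mu(k,A+B)\le\sum_{k=0}^{N}\mu(k,A)+\sum_{k=0}^{N}\mu(k,B)$ and whose upper bound follows from the reverse estimate $\sum_{k=0}^{N}\mu(k,A)+\sum_{k=0}^{N}\mu(k,B)\le\sum_{k=0}^{2N+1}\mu(k,A+B)$. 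The latter I would prove from the variational identity $\sum_{k=0}^{N}\mu(k,X)=\sup\{{\rm Tr}(XP):P=P^*=P^2,\ {\rm rank}\,P=N+1\}$ for $X\ge0$: if $P_A,P_B$ are rank-$(N+1)$ optimisers for $A,B$ and $P:=P_A\vee P_B$ has rank $\le 2N+2$, then ${\rm Tr}((A+B)P)\ge{\rm Tr}(AP_A)+{\rm Tr}(BP_B)$ because $A,B\ge0$ and $P\ge P_A,P_B$. Writing $z$ for the bracketed sequence of partial sums above, one has $(z-Sz)_n=\sum_{k=2^n-1}^{2^{n+1}-2}(\mu(k,A)+\mu(k,B)-\mu(k,A+B))$, so $S$-invariance forces $\theta(z-Sz)=0$; this is the operator version of~\eqref{lin}, and additivity then follows exactly as in the proof of Theorem~\ref{one-to-one}(ii).

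The real- and then complex-linear extension of $\tau$ to $\mathcal L_{1,\infty}$ is routine, and $\tau(UAU^*)=\tau(A)$ holds for all $A$ since both sides are linear and agree on the positive cone, which spans the ideal; thus $\tau$ is a trace. The two constructions are mutually inverse: this is immediate from the bijectivity of the commutative correspondence in Theorem~\ref{one-to-one} together with the identity $\tau(A)=\varphi(\mu(A))$ for positive $A$ established in (i). Hence $\tau\mapsto\tau\circ{\rm diag}\circ D$ is the asserted bijective linear correspondence. The main obstacle is the noncommutative additivity, i.e. the singular-value sum inequalities above; everything else is a formal transfer along the diagonal embedding.
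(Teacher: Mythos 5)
Your overall strategy --- reducing to the commutative Theorem~\ref{one-to-one} via $\varphi:=\tau\circ\mathrm{diag}$ --- is exactly the paper's, and your part (ii) is sound: the two singular-value inequalities you prove (Ky Fan's inequality plus the reverse estimate via the rank-$(2N+2)$ projection $P_A\vee P_B$) are precisely the noncommutative analogue of Lemma~\ref{aux}(ii) whose proof the paper omits as ``similar''. The gap is in part (i), in the two facts you call ``elementary''. It is \emph{not} true that a positive compact operator is unitarily equivalent to $\mathrm{diag}(\mu(A))$, nor that $0\le x,y$ with $x^*=y^*$ force $\mathrm{diag}(x)$ and $\mathrm{diag}(y)$ to be permutation equivalent: take $x=(1,\tfrac12,\tfrac13,\dots)$ and $y=(1,0,\tfrac12,0,\tfrac13,0,\dots)$; then $x^*=y^*$, but $\mathrm{diag}(x)$ is injective while $\mathrm{diag}(y)$ has infinite-dimensional kernel, so no unitary (in particular no permutation operator) conjugates one into the other. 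The same kernel mismatch defeats $A\cong\mathrm{diag}(\mu(A))$ whenever $A$ has infinite rank and nontrivial kernel. Since a trace is only assumed invariant under \emph{unitary} conjugation, neither the symmetry of $\tau\circ\mathrm{diag}$ nor the identity $\tau(A)=\tau(\mathrm{diag}\,\mu(A))$ follows from what you wrote --- and that identity is exactly the nontrivial input for which the paper cites Dykema--Figiel--Weiss--Wodzicki~\cite{DFWW} (see \cite[Theorem 4.4.1 and p.~26]{LSZ}). Your remark that ``the main obstacle is the noncommutative additivity'' has it backwards: additivity you handled correctly, while the diagonalisation step is where the real noncommutative content of part (i) sits.

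The gap is repairable without the full strength of \cite{DFWW}, but it needs an argument, not an appeal to unitary equivalence. One route: (1) unitary invariance together with the fact that every bounded operator is a linear combination of four unitaries gives $\tau(XB)=\tau(BX)$ for all $X\in\mathcal L_{1,\infty}$, $B\in B(H)$, i.e.\ $\tau$ vanishes on commutators; (2) for any isometry $V$ and any $T\in\mathcal L_{1,\infty}$ one has $T-VTV^*=[TV^*,V]$, hence $\tau(VTV^*)=\tau(T)$; (3) for $0\le A$ there is an isometry $V$ (carrying the fixed basis onto an orthonormal eigenbasis of $\overline{\mathrm{ran}}\,A$) with $A=V\,\mathrm{diag}(\mu(A))\,V^*$, and likewise $\mathrm{diag}(x)=V\,\mathrm{diag}(x^*)\,V^*$ for an isometry built from an enumeration of the support of $x$. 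With invariance under conjugation by isometries in hand, both your symmetry claim and the identity $\tau(A)=\varphi(\mu(A))$ become correct, and the rest of your argument (including the uniqueness transfer and the mutual inversion of the two constructions) goes through.
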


\begin{proof}
(i) Let $\tau$ be a trace on $\mathcal L_{1,\infty}$. It follows from the results of Dykema, Figiel, Weiss and Wodzicki~\cite{DFWW} 
(see~\cite[Theorem 4.4.1]{LSZ} and~\cite[p. 26]{LSZ} for the detailed explanation) that 
 $$\tau(A) = \tau({\rm diag} (\mu(A)))$$
for every trace on $\mathcal L_{1,\infty}$ and every $0 \le A \in \mathcal L_{1,\infty}$.
 
 Since the functional $\tau \circ {\rm diag}$ is a  symmetric functional on $l_{1,\infty}$,
 it follows from Theorem~\ref{one-to-one} that there exist a unique $S$-invariant linear functional $\theta=(\tau \circ {\rm diag}) \circ D$ on $l_\infty$ such that
\begin{equation}
 (\tau \circ {\rm diag})(x) = \theta\left(\frac1{\log 2} \left\{ \sum_{k=2^n-1}^{2^{n+1}-2} x_k^* \right\}_{n \ge 0} \right) , 0\le x \in l_{1,\infty}.
\end{equation}

Since for every $A \in \mathcal L_{1,\infty}$ the sequence $\{ \mu(n,A) \}_{n \ge 0} \in l_{1,\infty}$ we obtain the assertion of the first part of the theorem.

% Since for every $x\in l_{1,\infty}$ there is an operator $A \in \mathcal L_{1,\infty}$ such that $x^*=\mu(A)$,
% we can consider the weight
% $$
% \tau({\rm diag} (\mu(A))) =\theta\left(\frac1{\log 2} \left\{ \sum_{k=2^n-1}^{2^{n+1}-2} \mu(k,A) \right\}_{n \ge 0} \right)
% $$
% on a positive cone of $l_{1,\infty}$ (the proof that this functional is a weight is similar to that of Theorem~\ref{one-to-one}). 
% By Theorem~\ref{one-to-one} this weight extends to a symmetric functional on $l_{1,\infty}$.
% By~\cite[Theorem 2.7.4]{LSZ} every symmetric functional is a trace.

(ii) The proof that the functional given by~\eqref{formula1} is a weight is similar to that of Theorem~\ref{one-to-one} and therefore omitted.
The functional~\eqref{formula1} is obviously symmetric on $\mathcal L_{1,\infty}$ and, therefore, is a trace on $\mathcal L_{1,\infty}$ by~\cite[Theorem 2.7.4]{LSZ}.
\end{proof}

Recall that $\mathcal L_{1,\infty}$ is equipped with the quasi-norm
$$
\| A \|_{\mathcal L_{1,\infty}} := \sup_{n \geq 0} (n+1)\mu(n,A) , \quad A \in \mathcal L_{1,\infty} .
$$
Now we specialize Theorem~\ref{one-to-one1} to the cases of positive and (quasi-norm) continuous traces.
It will be done in a similar fashion to that of Section~\ref{sf}, that is we first prove the result for positive traces,
then using the lattice property of the set of all continuous traces we deduce the ``continuous'' case from the ``positive'' one.  Recall that a trace $\tau$ on $\mathcal L_{1,\infty}$ is normalised if $\tau \circ {\rm diag} ( \{ (n+1)^{-1} \}_{n\ge 0} = 1$.

\begin{cor}\label{BL1}
(i) For every positive normalised trace $\tau$ on $\mathcal L_{1,\infty}$ there exists 
a unique Banach limit $B=\tau \circ {\rm diag} \circ D$ such that~\eqref{formula1} holds with $\theta=B$.

(ii) For every Banach limit $B$ 
the functional $\tau$ defined by the formula~\eqref{formula1} (with $\theta=B$) extends by linearity to a positive normalised trace on $\mathcal L_{1,\infty}$.
\end{cor}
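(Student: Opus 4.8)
The plan is to obtain Corollary~\ref{BL1} by specialising the general bijection of Theorem~\ref{one-to-one1} to the positive normalised case, feeding it through the commutative correspondence already recorded in Theorem~\ref{BL}. The bridge between the two settings is the identity $\tau(A) = (\tau \circ {\rm diag})(\mu(A))$ for $0 \le A \in \mathcal L_{1,\infty}$, which comes from~\cite{DFWW} and was used in the proof of Theorem~\ref{one-to-one1}: it shows that $\tau \circ {\rm diag}$ is a symmetric functional on $l_{1,\infty}$ and thereby reduces every assertion about traces to the corresponding assertion about symmetric functionals.

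For part (i), given a positive normalised trace $\tau$, I would first observe that $\varphi := \tau \circ {\rm diag}$ is a \emph{positive} symmetric functional on $l_{1,\infty}$: positivity is inherited from $\tau$ and symmetry from the trace property together with the $\cite{DFWW}$ identity. The normalisation hypothesis $\tau \circ {\rm diag}(\{(n+1)^{-1}\}_{n\ge0}) = 1$ says precisely that $\varphi$ is normalised in the sense preceding Theorem~\ref{BL}. Applying Theorem~\ref{BL} then yields that $B := \varphi \circ D = \tau \circ {\rm diag} \circ D$ is a Banach limit, and the representation~\eqref{formula1} with $\theta = B$ is just~\eqref{formula} precomposed with $\mu$ and ${\rm diag}$. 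Uniqueness of $B$ is inherited directly from the uniqueness of the $S$-invariant functional in Theorem~\ref{one-to-one1}(i).

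For part (ii), given a Banach limit $B$, the $S$-invariance axiom~(iii) makes $B$ an $S$-invariant functional, so Theorem~\ref{one-to-one1}(ii) already guarantees that~\eqref{formula1} with $\theta = B$ extends by linearity to a trace $\tau$ on $\mathcal L_{1,\infty}$. It remains only to verify positivity and normalisation. Positivity follows from Corollary~\ref{pos}(ii): the symmetric functional $\varphi$ associated with $\theta = B$ is positive because $B \ge 0$, whence $\tau$ is positive on the positive cone. For normalisation I would reuse the computation from the proof of Theorem~\ref{BL}: since $\frac{1}{\log 2}\{\sum_{k=2^n-1}^{2^{n+1}-2}(k+1)^{-1}\}_{n \ge 0}$ equals $\emm$ up to a null sequence, the Banach limit evaluates it to $B(\emm)=1$, giving $\tau \circ {\rm diag}(\{(n+1)^{-1}\}_{n\ge0}) = 1$.

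I do not expect a genuine obstacle here, as the content is entirely a transfer of Theorem~\ref{BL} across the noncommutative bijection. The only point demanding care is the bookkeeping of the three normalisation conventions — the trace condition $\tau \circ {\rm diag}(\{(n+1)^{-1}\}_{n\ge0}) = 1$, the symmetric-functional condition $\varphi(\{(n+1)^{-1}\}_{n\ge0}) = 1$, and the Banach-limit condition $B(\emm) = 1$ — together with the observation that positivity and $S$-invariance already force $B$ to be a positive multiple of a Banach limit, the normalisation then fixing that multiple to be $1$.
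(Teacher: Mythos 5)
Your proposal is correct and takes essentially the same route as the paper's own proof: part (i) by applying Theorem~\ref{BL} to the symmetric functional $\tau \circ {\rm diag}$, and part (ii) by invoking Theorem~\ref{one-to-one1} for the trace structure, reading positivity off from the positivity of $B$ in~\eqref{formula1}, and verifying normalisation through the same computation $B\left(\frac1{\log 2}(\log 2 + o(1))\right) = B(\emm) = 1$. The only cosmetic difference is that you cite Corollary~\ref{pos}(ii) for positivity where the paper simply notes it follows from $B \ge 0$, and you make the uniqueness transfer from Theorem~\ref{one-to-one1}(i) explicit where the paper leaves it implicit.
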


\begin{proof}
(i) Since for every trace $\tau$ on $\mathcal L_{1,\infty}$ the functional $\tau \circ {\rm diag}$ is a  symmetric functional on $l_{1,\infty}$,
 it follows from Theorem~\ref{BL} that the  functional 
 $$B=\tau \circ {\rm diag} \circ D$$ 
 is a Banach limit.
 
(ii) Let $B$ be a Banach limit, that is a positive normalised $S$-invariant functional on $l_\infty$.
Hence, by Theorem~\ref{one-to-one1} the functional $\tau$ defined by the formula~\eqref{formula1} extends by linearity to a trace on $\mathcal L_{1,\infty}$.
Its positivity clearly follows from the positivity of $B$. We also have
\begin{align*}
 \tau({\rm diag}(\{\frac1{n+1}\}_{n \ge 0}))&=B\left(\frac1{\log 2} \cdot \left\{ \sum_{k=2^n-1}^{2^{n+1}-2} \frac1{k+1} \right\}_{n \ge 0} \right)\\
 &=B\left(\frac1{\log 2} (\log 2 +o(1)) \right)
 =B(\emm)=1.
\end{align*}
\end{proof}

The proof of the following theorem is similar to that of Proposition~\ref{lattice} and therefore omitted. 
It can be consider as a noncommutative counterpart of the result of H.~Lotz~\cite[Theorem 4.3]{Lotz} 
concerning symmetric functionals on the function space $L_{1,\infty}$ on the interval $(0,1)$
\begin{thm}\label{lattice1}
The set of all continuous traces on $\mathcal L_{1,\infty}$ is a lattice. 
\end{thm}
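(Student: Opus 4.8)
The plan is to prove Theorem~\ref{lattice1} by transporting the lattice structure already established in Proposition~\ref{lattice} along the canonical identification of traces with symmetric functionals, exactly mirroring the commutative argument. Concretely, consider the map $\Phi(\tau):=\tau\circ{\rm diag}$ sending a continuous trace $\tau$ on $\mathcal L_{1,\infty}$ to a functional on $l_{1,\infty}$. By the Dykema--Figiel--Weiss--Wodzicki result~\cite{DFWW} (used already in the proof of Theorem~\ref{one-to-one1}), $\tau(A)=\tau({\rm diag}(\mu(A)))$ for $0\le A\in\mathcal L_{1,\infty}$, so $\Phi(\tau)$ is a symmetric functional and $\tau$ is completely determined by $\Phi(\tau)$ on the positive cone, hence on all of $\mathcal L_{1,\infty}$; thus $\Phi$ is injective. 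Surjectivity onto continuous symmetric functionals is the content of Theorem~\ref{one-to-one1}(ii) together with the continuity half of the correspondence (the analogue of Corollary~\ref{cont}): given a continuous symmetric $\varphi$, the trace reconstructed via~\eqref{formula1} from $\theta=\varphi\circ D$ is continuous and satisfies $\Phi(\tau)=\varphi$. Since ${\rm diag}$ is an isometry $l_{1,\infty}\to\mathcal L_{1,\infty}$ (as $\mu({\rm diag}(x))=x^*$), $\Phi$ is an isometric linear bijection from continuous traces onto continuous symmetric functionals.

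The key remaining point is that $\Phi$ is an \emph{order} isomorphism: $\tau\ge0$ if and only if $\Phi(\tau)\ge0$. One implication is immediate since ${\rm diag}$ is positive; for the converse, if $\tau\circ{\rm diag}\ge0$ then $\tau(A)=(\tau\circ{\rm diag})(\mu(A))\ge0$ for every $0\le A$. Consequently the ordered vector space of continuous traces (with cone the positive traces) is order-isomorphic to the ordered vector space of continuous symmetric functionals on $l_{1,\infty}$. By Proposition~\ref{lattice} the latter is a lattice, and since a lattice structure on an ordered vector space is determined solely by the order — suprema and infima, when they exist, are unique and are preserved in both directions by any order isomorphism — every pair of continuous traces admits a supremum and an infimum within the continuous traces. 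This establishes that the set of all continuous traces on $\mathcal L_{1,\infty}$ is a lattice, with $\Phi(\tau_1\vee\tau_2)=\Phi(\tau_1)\vee\Phi(\tau_2)$ and similarly for $\wedge$.

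If instead one wishes to imitate the proof of Proposition~\ref{lattice} directly, one defines on the dual of $\mathcal L_{1,\infty}$ the operations
$$(\tau_1\vee\tau_2)(A)=\sup\{\tau_1(B)+\tau_2(C):0\le B,C\in\mathcal L_{1,\infty},\ A=B+C\},\quad 0\le A,$$
and the corresponding infimum, computes the identity $\tau_1\vee\tau_2=\tau_1+(\tau_2-\tau_1)_+$ with $(\tau_2-\tau_1)_+(A)=\sup_{0\le B\le A}(\tau_2-\tau_1)(B)$, and invokes a noncommutative analogue of Lemma~\ref{phi+} asserting that $\tau_+$ is additive and extends to a positive trace. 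The main obstacle lies precisely in the additivity of $\tau_+$: the inequality $\tau_+(A_1+A_2)\ge\tau_+(A_1)+\tau_+(A_2)$ is trivial from $\tau(B_1+B_2)=\tau(B_1)+\tau(B_2)$, but the reverse inequality cannot be obtained by an operator Riesz decomposition of $0\le B\le A_1+A_2$, since the self-adjoint part of $B(H)$ is not a lattice (Kadison's anti-lattice theorem). The clean way around this is the reduction $\tau_+(A)=\varphi_+(\mu(A))$ for $\varphi=\tau\circ{\rm diag}$ — valid because $\tau_+$ is unitarily invariant and $A$ is unitarily equivalent to ${\rm diag}(\mu(A))$ — which transfers the delicate additivity back to Lemma~\ref{phi+}. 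I expect the order-isomorphism route of the first two paragraphs to be the more economical presentation, as it absorbs this difficulty entirely into the already-proved commutative lattice property.
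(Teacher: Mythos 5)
Your transference argument is correct, but it is not the proof the paper intends: the authors omit the proof of Theorem~\ref{lattice1} on the grounds that it is ``similar to that of Proposition~\ref{lattice}'', i.e.\ the direct Riesz--Kantorovich computation $\tau_1\vee\tau_2=\tau_1+(\tau_2-\tau_1)_+$ combined with a trace analogue of Lemma~\ref{phi+}. You instead push the whole problem through the Calkin-type correspondence $\Phi(\tau)=\tau\circ{\rm diag}$: by the DFWW identity $\tau(A)=\tau({\rm diag}\,\mu(A))$, $\Phi$ is a linear bijection, positive in both directions, between continuous traces and continuous symmetric functionals, and an ordered vector space order-isomorphic to a lattice is a lattice. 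This is a genuinely different (and sound) route, and it is in fact the mechanism the paper itself exploits later in Theorem~\ref{i}. The trade-off is instructive: the direct imitation yields the explicit formulas for $\vee$ and $\wedge$ on traces, but --- as you rightly observe --- it is not as routine as ``similar'' suggests, because the positive cone of $B(H)$ fails the Riesz decomposition property, so additivity of $\tau_+$ cannot be obtained by splitting $0\le B\le A_1+A_2$ pointwise as in the sequence case; one needs either an argument specific to traces (write $B^{1/2}=W(A_1+A_2)^{1/2}$ with $W$ a contraction, so $B=W(A_1+A_2)W^*$, and use the tracial property to replace $\tau(WA_iW^*)$ by $\tau\bigl(A_i^{1/2}W^*WA_i^{1/2}\bigr)$ with $A_i^{1/2}W^*WA_i^{1/2}\le A_i$), or a reduction to singular values. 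Your route absorbs this difficulty entirely into the already-proved commutative Lemma~\ref{phi+}, which is what makes it economical.

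Two repairs are needed in your write-up. First, for surjectivity of $\Phi$ you must not appeal to ``the analogue of Corollary~\ref{cont}'' (i.e.\ Corollary~\ref{cont1}): in the paper's architecture that corollary is deduced \emph{from} Theorem~\ref{lattice1}, so the appeal is circular. It is also unnecessary: the trace $\tau$ built from $\theta=\varphi\circ D$ via~\eqref{formula1} satisfies $\tau(A)=\varphi(\mu(A))$ for $0\le A$, hence $|\tau(A)|\le\|\varphi\|\,\|A\|_{\mathcal L_{1,\infty}}$ on the positive cone, and continuity on all of $\mathcal L_{1,\infty}$ follows by decomposing an arbitrary operator into four positive parts of controlled quasi-norm. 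Second, in your alternative route the claim that $A$ is unitarily equivalent to ${\rm diag}(\mu(A))$ is not quite accurate (kernel dimensions may differ); the correct ingredients are that $0\le B\le A$ implies $\mu(B)\le\mu(A)$ together with the DFWW identity, which yield $\tau_+(A)=\varphi_+(\mu(A))$ directly. (Your isometry claim for $\Phi$ is likewise not needed --- and proving it properly would itself use the lattice structure, as in Theorem~\ref{i}; only the order isomorphism enters your argument.)
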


The proof of the following corollary is similar to that of Corollary~\ref{cont}. 

\begin{cor}\label{cont1}
(i) For every continuous trace $\tau$ on $\mathcal L_{1,\infty}$ there exists 
a unique continuous linear functional $\theta=\theta \circ S$ on $l_\infty$ such that~\eqref{formula1}
 holds.

(ii) For every continuous linear functional $\theta=\theta \circ S$ on $l_\infty$ 
the functional $\tau$ defined by the formula~\eqref{formula1} extends by linearity to a trace on $\mathcal L_{1,\infty}$.
\end{cor}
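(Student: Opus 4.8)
The plan is to transfer the proof of Corollary~\ref{cont} verbatim through the symmetric functional $\tau\circ\mathrm{diag}$ on $l_{1,\infty}$. Theorem~\ref{one-to-one1} already supplies the linear bijection $\tau\mapsto\theta=\tau\circ\mathrm{diag}\circ D$ between traces on $\mathcal L_{1,\infty}$ and $S$-invariant functionals on $l_\infty$, together with the representation~\eqref{formula1} and its uniqueness among all $S$-invariant functionals. Consequently the only thing left to establish is that, under this correspondence, $\tau$ is quasi-norm continuous on $\mathcal L_{1,\infty}$ if and only if $\theta$ is norm continuous on $l_\infty$. This single equivalence yields both assertions: part (i) follows because a continuous $\tau$ forces $\theta$ continuous, with uniqueness inherited from Theorem~\ref{one-to-one1}, and part (ii) follows because a continuous $\theta$ forces the trace $\tau$ defined by~\eqref{formula1} to be continuous.

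The forward implication is immediate and I would dispose of it first. Since $\mathrm{diag}:l_{1,\infty}\to\mathcal L_{1,\infty}$ is an isometry and $D:l_\infty\to l_{1,\infty}$ is bounded with $\|D\|_{l_\infty\to l_{1,\infty}}=2\log 2$, the functional $\theta=\tau\circ\mathrm{diag}\circ D$ is a composition of bounded maps, so continuity of $\tau$ passes to $\theta$ at once.

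The reverse implication is where the work lies, and I would route it through positivity and the lattice structure. The first step is the remark that every positive trace is automatically continuous: if $0\le A$ with $\|A\|_{\mathcal L_{1,\infty}}\le 1$ then $\mu(k,A)\le(k+1)^{-1}$, so by Lemma~\ref{aux}(i) the argument of $\theta$ in~\eqref{formula1} is bounded in $l_\infty$ by $\|A\|_{\mathcal L_{1,\infty}}$, whence $|\tau(A)|\le(\log 2)^{-1}\|\theta\|\,\|A\|_{\mathcal L_{1,\infty}}$. The second step invokes Theorem~\ref{lattice1}: the continuous traces form a lattice and hence coincide with the linear hull of their positive cone, so any continuous trace is a difference $\tau=\tau_1-\tau_2$ of positive (and therefore continuous) traces. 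Transporting this decomposition back through the bijection, the positive pieces correspond to positive $S$-invariant functionals exactly as in the positive analogue of Theorem~\ref{one-to-one1} obtained by applying Corollary~\ref{pos} to $\tau\circ\mathrm{diag}$, which closes the equivalence in parallel with Corollary~\ref{cont}.

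The main obstacle is precisely this reverse step: upgrading the scalar bound, which is available only on the positive cone, to genuine continuity on all of $\mathcal L_{1,\infty}$. The delicate point is that one cannot bound the trace of a difference of operators by the traces of its parts without knowing the functional respects lattice operations, and it is exactly Theorem~\ref{lattice1} (the noncommutative counterpart of Proposition~\ref{lattice}) that guarantees the positive and negative components of the decomposition are again continuous traces. An alternative that sidesteps the functional lattice is to decompose a general $A$ into at most four positive operators via its self-adjoint real and imaginary parts, each controlled by $\|A\|_{\mathcal L_{1,\infty}}$ up to the quasi-norm modulus of concavity, and to apply the positive-cone estimate to each; this gives continuity directly, at the cost of a worse constant. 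Either way, once the decomposition is in hand the verification that the positive summands correspond under~\eqref{formula1} to positive $S$-invariant functionals is routine and follows Corollary~\ref{pos} without change.
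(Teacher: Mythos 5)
Your proposal follows the paper's own route: the paper proves this corollary simply by declaring it ``similar to that of Corollary~\ref{cont}'', which is exactly your scheme --- reduce via Theorem~\ref{one-to-one1} to the equivalence ``$\tau$ continuous iff $\theta=\tau\circ{\rm diag}\circ D$ continuous'', obtain the forward implication from boundedness of ${\rm diag}$ and $D$, and obtain the reverse implication from a lattice decomposition into positive parts together with the positive case (Corollary~\ref{pos}, respectively Corollary~\ref{BL1}).

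One point deserves attention: in the reverse direction your appeal to Theorem~\ref{lattice1} is aimed the wrong way. Part (ii) asks you to show that the trace $\tau$ built from a continuous $S$-invariant $\theta$ is continuous; decomposing $\tau$ as a difference of positive traces via Theorem~\ref{lattice1} presupposes that $\tau$ is already a continuous trace, which is precisely what is to be proved. What the lattice argument really requires is a decomposition of $\theta$ itself into positive $S$-invariant functionals, i.e.\ the sublattice property on the $l_\infty^*$ side --- in the paper this is Lemma~\ref{lattice2}, which appears only \emph{after} this corollary (the paper's own terse proof of Corollary~\ref{cont} shares this looseness, citing Proposition~\ref{lattice} where the $\theta$-side lattice is what is needed). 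Your alternative closes the gap cleanly and without any lattice theory: write $A=(\Re A)_+-(\Re A)_-+i(\Im A)_+-i(\Im A)_-$, note that each piece is positive with quasi-norm at most a constant multiple of $\|A\|_{\mathcal L_{1,\infty}}$, and apply to each piece the positive-cone estimate $|\tau(B)|\le\frac1{\log 2}\|\theta\|_{l_\infty^*}\|B\|_{\mathcal L_{1,\infty}}$ (from Lemma~\ref{aux}(i)) together with the linearity of $\tau$ already granted by Theorem~\ref{one-to-one1}(ii). So your proposal is sound, provided the operator decomposition (or Lemma~\ref{lattice2}) is used in place of the circular reading of Theorem~\ref{lattice1}; in that form it is arguably cleaner than the argument the paper points to.
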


Now we show that not every trace on $\mathcal L_{1,\infty}$ extends to a trace on $\mathcal M_{1,\infty}$.  We recall that $\mathcal M_{1,\infty}$ is a Banach space when equipped with the norm
$$
\| A \|_{\mathcal M_{1,\infty}} := \sup_{n \geq 0} \frac1{\log(n+2)} \sum_{k=0}^n \mu(k,A) , \quad A \in \mathcal M_{1,\infty} .
$$
For a given $n \in \mathbb N$ we consider a compact operator $A_n$ such that 
$$\mu(A_n) := \sup 2^{-j-n(i+2)^2} \chi_{[0, 2^{j+n(i+2)^2})},$$
where the supremum is taken over all $i, j \in \mathbb N$ such that $j\le i$.

\begin{lem}\label{lm}
 For every $n \in \mathbb N$ we have $A_n \in \mathcal L_{1,\infty}$. 
 Moreover, $\|A_n\|_{\mathcal L_{1,\infty}} \le 1$, however $\|A_n\|_{\mathcal M_{1,\infty}} \le 2/n$.
\end{lem}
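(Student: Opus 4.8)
The plan is to make the step function $\mu(A_n)$ completely explicit and then read off both norms. First I would set $E:=\{\,j+n(i+2)^2:\ i\ge 0,\ 0\le j\le i\,\}$ and list its elements increasingly as $m_1<m_2<\cdots$, with the convention $2^{m_0}:=0$. Since each building block $2^{-m}\chi_{[0,2^m)}$ equals $2^{-m}$ on $[0,2^m)$ and vanishes afterwards, for every $t>0$ one has $\mu(t,A_n)=\sup\{2^{-m}:m\in E,\ 2^m>t\}$; thus $\mu(\cdot,A_n)$ is the decreasing step function equal to $2^{-m_l}$ on $[2^{m_{l-1}},2^{m_l})$, with jumps at the integers $2^{m_l}$. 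The point to keep in mind throughout is that the constraint $j\le i$ makes $E$ sparse: the block indexed by $i$ is $\{n(i+2)^2,\dots,n(i+2)^2+i\}$ and contains only $i+1$ exponents, all near the value $n i^2$.

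For the $\mathcal L_{1,\infty}$-bound the computation is immediate: for $t>0$,
$$t\,\mu(t,A_n)=\sup\{t\,2^{-m}:m\in E,\ 2^m>t\}\le 1,$$
because $2^m>t$ forces $t\,2^{-m}<1$, while letting $t\uparrow 2^{m_l}$ shows the supremum equals $1$. In the discrete formulation this reads $(k+1)\mu(k,A_n)\le 2^{m_l}2^{-m_l}=1$ for $k\in[2^{m_{l-1}},2^{m_l})$, so $\|A_n\|_{\mathcal L_{1,\infty}}=1$.

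The substance of the lemma is the $\mathcal M_{1,\infty}$-bound, and its engine is the counting estimate
$$|\{m\in E:\ m\le M\}|\le\frac{M}{2n},\qquad M>0.$$
This I would prove by letting $I$ be the largest index with $n(I+2)^2\le M$ and bounding the count by $\sum_{i=0}^{I}(i+1)=\tfrac12(I+1)(I+2)\le\tfrac12(I+2)^2\le\tfrac{M}{2n}$. Because $\mu(\cdot,A_n)$ is constant $2^{-m_l}$ on $[2^{m_{l-1}},2^{m_l})$, each such interval contributes at most $1$ to any partial sum, whence $\sum_{k=0}^{N}\mu(k,A_n)\le L$, where $L\le 1+|\{m\in E:m\le\log_2 N\}|\le 1+\tfrac{\log_2 N}{2n}$ counts the intervals meeting $[0,N]$.

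The last and most delicate step is to divide by $\log(N+2)$ and take the supremum over $N$; the additive $1$ in the bound for $L$ is harmless for large $N$ but would destroy the $2/n$ estimate for small $N$, so I would split into two regimes. For $N<2^{4n}$ only the first interval (of height $2^{-4n}$) is seen, so $\sum_{k=0}^N\mu(k,A_n)=2^{-4n}(N+1)\le 1$ and the ratio is at most $\tfrac{1}{\log(2^{4n}+1)}\le\tfrac1{4n\log 2}\le\tfrac2n$. For $N\ge 2^{4n}$ one has $\tfrac{\log_2 N}{2n}\ge 2$, so $L\le\tfrac32\cdot\tfrac{\log_2 N}{2n}$ and
$$\frac{1}{\log(N+2)}\sum_{k=0}^{N}\mu(k,A_n)\le\frac{3\log_2 N}{4n\log(N+2)}\le\frac{3}{4n\log 2}\le\frac2n.$$
This yields $\|A_n\|_{\mathcal M_{1,\infty}}\le 2/n$. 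The main obstacle is precisely this separation of scales: the sparsity of $E$ supplies the correct asymptotic density $1/(2n)$, but one must treat the first dyadic block by hand, where it is the tiny common height $2^{-4n}$ rather than the counting bound that keeps the quotient small.
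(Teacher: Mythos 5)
Your proof is correct and follows essentially the same route as the paper: both norms come from observing that each dyadic interval of constancy of $\mu(\cdot,A_n)$ contributes at most $1$ to any partial sum, and then counting those intervals via the sparsity of the exponent set $E$ (your bound $\sum_{i\le I}(i+1)\le \tfrac12(I+2)^2\le M/(2n)$ is the paper's estimate $1+\sum_{i\le k} i\le (k+2)^2\le \tfrac1n\log_2(m+2)$ in slightly sharper form). The only notable difference is that you treat the regime $N<2^{4n}$ explicitly, whereas the paper's step ``select $k\ge 0$ with $m\in[2^{n(k+2)^2},2^{n(k+3)^2})$'' silently skips those small $m$ --- a welcome extra bit of care rather than a different method.
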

\begin{proof} Since the sequence $\{\mu(m,A_n)\}_{m\ge 0}$ is piecewise constant, it follows that the supremum of $m \cdot \mu(m,A_n)$ attains at the right endpoints of the intervals of constancy. That is,
$$\sup_m m \cdot \mu(m,A_n) = \sup_{i, j \in \mathbb N : j\le i} 2^{j+n(i+2)^2} \mu(2^{j+n(i+2)^2},A_n) =1$$
%We clearly have that $\mu(m,A_n)\leq 1/(m+1)$ for all $m\geq0$ 
and, so, $\|A_n\|_{\mathcal{L}_{1,\infty}}\leq 1$.

For every $m\in \mathbb N$ select $k\ge0$ such that 
$m\in[2^{n(k+2)^2},2^{n(k+3)^2}),$ then
\begin{align*}
\sum_{l=0}^m\mu(l,A_n)&\leq\sum_{l=0}^{2^{n(k+3)^2}-1}\mu(l,A_n)= 1+\sum_{i=0}^k \sum_{l=2^{n(i+2)^2}}^{2^{n(i+3)^2}-1}\mu(l,A_n) \\
&= 1+\sum_{i=0}^k i \leq (k+2)^2. 
\end{align*}

% For every $m\in \mathbb N$ select $0\le j\le i$ such that
% $$m \in [a_i + (2^j-1)2^{n(i+2)^2}, a_i + (2^{j+1}-1)2^{n(i+2)^2}).$$
% 
% We have $\mu(m,A_n) = 2^{-j-n(i+2)^2}$. From~\eqref{a2} we have that $m \le 2^{j+1+n(i+2)^2}.$ Thus, $\mu(m,A_n) \le 2/m$.
% 
% For every $m\in \mathbb N$ such that $m > 2^{4n}$ we have that $m \in [a_k, a_{k+1})$ with $k\ge 1$.
% We obtain
% \begin{align*}
%  \sum_{r=0}^m \mu(r,A_n) &\le \sum_{r=0}^{a_{k+1}} \mu(r,A_n) = \sum_{0\le j\le i \le k} 1 = \frac{(k+1)(k+2)}2
% \le (k+2)^2.
% \end{align*}

Due to the choice of $k$ we have that $(k+2)^2 \le \frac1n \log_2 (m+2)$.
Hence,
$$ \sum_{l=0}^m \mu(l,A_n) \le \frac1n \log_2 (m+2) \le \frac2n \log (2+m).$$
\end{proof}

Recall that $\mathcal {PT}$ denotes the set of all positive normalised traces on $\mathcal L_{1,\infty}$.

\begin{lem}\label{max}  We have
$$\sup_{\tau\in\mathcal{PT}}\tau(A_n)\geq\frac1{2\log 2}.$$
\end{lem}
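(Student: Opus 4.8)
The plan is to transport the supremum through the bijection of Corollary~\ref{BL1} and then estimate an explicit sequence of block sums of the singular values of $A_n$. Since $A_n\ge0$, Corollary~\ref{BL1} gives $\tau(A_n)=B(b)$ for the Banach limit $B=\tau\circ{\rm diag}\circ D$ associated with $\tau$, where
$$b:=\frac1{\log 2}\left\{\sum_{k=2^p-1}^{2^{p+1}-2}\mu(k,A_n)\right\}_{p\ge0}\ge0,$$
and as $\tau$ runs over $\mathcal{PT}$ the functional $B$ runs over all of $\mathfrak B$. Hence $\sup_{\tau\in\mathcal{PT}}\tau(A_n)=\sup_{B\in\mathfrak B}B(b)$, and it suffices to exhibit one Banach limit $B$ with $B(b)\ge\frac1{2\log 2}$.

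First I would record the step structure of $\mu(A_n)$ underlying the proof of Lemma~\ref{lm}: the value $\mu(t,A_n)$ jumps down precisely at $t=2^m$ for $m$ in one of the clusters $\{n(i+2)^2,\dots,n(i+2)^2+i\}$, $i\ge0$. Fix $i\ge1$ and $p$ with $n(i+2)^2\le p\le n(i+2)^2+i-1$, so that $p$ and $p+1$ are consecutive members of the $i$-th cluster. Directly from the definition of $\mu(A_n)$ one gets $\mu(2^p-1,A_n)=2^{-p}$ and $\mu(k,A_n)=2^{-(p+1)}$ for $2^p\le k\le 2^{p+1}-2$, so that
$$\sum_{k=2^p-1}^{2^{p+1}-2}\mu(k,A_n)=2^{-p}+(2^p-1)2^{-(p+1)}=\frac12+2^{-(p+1)}\ge\frac12.$$
Hence $b_p\ge\frac1{2\log 2}$ for every such $p$; that is, for each $i\ge1$ the sequence $b$ is at least $\frac1{2\log 2}$ on a block of $i$ consecutive indices, and these runs are arbitrarily long.

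Finally I would convert these arbitrarily long runs into the required Banach limit. Consider the sublinear, left-shift invariant functional $q(x):=\inf_{N\ge1}\sup_{m\ge0}\frac1N\sum_{k=0}^{N-1}x_{m+k}$ on $l_\infty$. Given any $N$, choosing $i\ge N$ and $m=n(i+2)^2$ makes all $N$ terms of the inner average at least $\frac1{2\log 2}$, so $\sup_m\frac1N\sum_{k=0}^{N-1}b_{m+k}\ge\frac1{2\log 2}$ for every $N$ and thus $q(b)\ge\frac1{2\log 2}$. By the Hahn--Banach theorem there is a linear functional $B$ on $l_\infty$ with $B\le q$ and $B(b)=q(b)$. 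The domination $B\le q$ forces $B\ge0$ and $B(\emm)=1$, and since $q(x-Tx)\le\inf_N 2\|x\|_{l_\infty}/N=0$ (and likewise for $Tx-x$) it forces $B=B\circ T$; by Proposition~\ref{bf} such a $B$ is a Banach limit. This $B$ witnesses $\sup_{\tau\in\mathcal{PT}}\tau(A_n)\ge B(b)\ge\frac1{2\log 2}$.

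The main obstacle I anticipate is the last step: a single extended limit typically cannot ``see'' the runs, so one must average along long shifted windows before passing to a limit. Packaging this as the sublinear functional $q$ and invoking Hahn--Banach is the cleanest route, and it is exactly here that the \emph{lengthening} of the plateaus (the factor $i$ in cluster $i$) is used rather than merely their height. By contrast, the plateau computation in the second paragraph is routine once the cluster description of $\mu(A_n)$ from Lemma~\ref{lm} is in hand.
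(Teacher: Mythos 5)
Your proof is correct, and its skeleton matches the paper's: both arguments transport the problem through Corollary~\ref{BL1} to computing $\sup_{B\in\mathfrak B}B(b)$ for the block-sum sequence $b$, and both exploit the same structural feature of $A_n$, namely that on the $i$-th cluster of exponents $\{n(i+2)^2,\dots,n(i+2)^2+i\}$ the singular values produce block averages of about $\frac12$ over arbitrarily long stretches. The difference is in how those stretches are converted into a Banach limit. The paper simply quotes Sucheston's theorem~\cite{S}, $\sup_{B\in\mathfrak B}Bx=\lim_{k\to\infty}\sup_{m\ge0}\frac1k\sum_{i=m}^{m+k-1}x_i$ (with the supremum attained), and evaluates the window averages of $b$ by telescoping them into the single sums $\frac1k\sum_{l=2^m-1}^{2^{m+k}-2}\mu(l,A_n)$ with $m=n(k+2)^2$, which equal roughly $\frac{k}2$. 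You instead compute each block sum individually, getting $b_p\ge\frac1{2\log2}$ pointwise on runs of length $i$ (the per-block version of the paper's telescoped computation, and equivalent to it), and then you essentially reprove the needed half of Sucheston's theorem: Hahn--Banach domination by the sublinear functional $q(x)=\inf_N\sup_m\frac1N\sum_{k=0}^{N-1}x_{m+k}$ yields a Banach limit with $B(b)=q(b)\ge\frac1{2\log2}$. What your route buys is self-containedness — no external result is invoked — at the cost of having to verify that $q$ really is subadditive, which you assert but do not prove; it does hold, via the standard observation that $\phi_{NM}\le\phi_N$ pointwise, where $\phi_N(x)=\sup_m\frac1N\sum_{k=0}^{N-1}x_{m+k}$, because a window of length $NM$ is an average of $M$ windows of length $N$, whence $q(x+y)\le\phi_{NM}(x)+\phi_{NM}(y)\le\phi_N(x)+\phi_M(y)$ for all $N,M$. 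With that point filled in, your remaining verifications (positivity, $B(\emm)=1$, $T$-invariance via $q(x-Tx)\le0$, and Proposition~\ref{bf}) are all sound, so the proposal is a complete and slightly more elementary proof; the paper's is shorter by citation.
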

\begin{proof} 
It was proved in~\cite{S} that for every $x\in l_\infty$, we have
$$\sup_{B\in \mathfrak B} Bx = \lim_{k\to \infty} \sup_{m\ge 0} \frac1k \sum_{i=m}^{k+m-1} x_i, \ x\in l_\infty.$$
Moreover, the supremum on the left-hand side is attained.

From Theorem~\ref{BL1} for every positive operator $A\in \mathcal L_{1,\infty}$ we have
\begin{align*}
\sup_{\tau\in \mathcal{PT}} \tau(A) &= \sup_{B\in \mathfrak B} B\left(\frac1{\log 2} \left\{ \sum_{i=2^n-1}^{2^{n+1}-2} \mu(i,A) \right\}_{n \ge 0} \right)\\
 &= \frac1{\log 2} \lim_{k\to \infty} \sup_{m\ge 0} \frac1k \sum_{i=2^m-1}^{2^{k+m}-2} \mu(i,A). 
\end{align*}

% For every positive $A\in\mathcal{L}_{1,\infty},$ it follows from {\bf REF} that
% $$\sup_{\tau\in\mathcal{PT}}\tau(A)=\frac1{\log(2)}\sup_{B\in\mathfrak{B}}B(\Big\{\sum_{l=2^m-1}^{2^{m+1}-2}\mu(l,A)\Big\})
%=\frac1{\log(2)}\lim_{k\to\infty}\sup_{m\geq0}\frac1k\sum_{l=2^m-1}^{2^{k+m}-2}\mu(l,A).$$
For fixed $n\in \mathbb N$ and for every $k\in \mathbb N$ we set $m=n(k+2)^2$. We obtain
$$\sup_{\tau\in\mathcal{PT}}\tau(A)\geq\frac1{\log 2}\liminf_{k\to\infty}\frac1k\sum_{l=2^{n(k+2)^2}}^{2^{k+n(k+2)^2-1}}\mu(l,A).$$

Due to the definition of $\mu(A_n)$ we have that
$$\sum_{l=2^{n(k+2)^2}}^{2^{k+n(k+2)^2-1}}\mu(l,A_n)=\frac{k}{2}.$$
The assertion follows immediately.
\end{proof}

The following result shows that the set of all traces on $\mathcal L_{1,\infty}$ is strictly larger 
than that of the restrictions to $\mathcal L_{1,\infty}$ of traces on $\mathcal M_{1,\infty}$.

\begin{thm}\label{new}
 There is a positive trace on $\mathcal L_{1,\infty}$ which does not extend to a positive trace on $\mathcal M_{1,\infty}$.
\end{thm}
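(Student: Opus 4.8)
The plan is to construct an explicit positive trace on $\mathcal L_{1,\infty}$ that assigns a strictly positive value to each operator $A_n$, while simultaneously showing that any positive trace on $\mathcal M_{1,\infty}$ must assign values to the $A_n$ that tend to zero. Since Lemma~\ref{lm} establishes $\|A_n\|_{\mathcal M_{1,\infty}} \le 2/n$, and any positive trace on $\mathcal M_{1,\infty}$ is automatically continuous with respect to the $\mathcal M_{1,\infty}$-norm (being a positive functional on a Banach lattice, hence bounded), any positive trace $\rho$ on $\mathcal M_{1,\infty}$ satisfies $\rho(A_n) \le \|\rho\| \cdot \|A_n\|_{\mathcal M_{1,\infty}} \to 0$. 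The obstruction to extension then comes from producing a single positive trace $\tau$ on $\mathcal L_{1,\infty}$ with $\inf_n \tau(A_n) > 0$, which cannot agree on all $A_n$ with the restriction of any fixed positive $\mathcal M_{1,\infty}$-trace.

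The construction of the witnessing trace $\tau$ would proceed through the bijection of Corollary~\ref{BL1}: it suffices to exhibit a single Banach limit $B$ for which the associated quantities $B\bigl(\frac{1}{\log 2}\{\sum_{i=2^n-1}^{2^{n+1}-2}\mu(i,A_n)\}_{n\ge 0}\bigr)$ are bounded below uniformly in $n$. Here Lemma~\ref{max} is the key input: it shows that $\sup_{\tau\in\mathcal{PT}}\tau(A_n) \ge \frac{1}{2\log 2}$ for every $n$, and moreover the supremum over Banach limits is attained (by the cited result of~\cite{S}). First I would fix, for each $n$, a Banach limit $B_n$ realizing $\tau_{B_n}(A_n)\ge \frac{1}{2\log 2}$. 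The difficulty is that these are \emph{different} Banach limits for different $n$, whereas a single positive trace corresponds to a single $B$. So the real task is to amalgamate this countable family into one Banach limit that retains a uniform lower bound on all the $A_n$ simultaneously.

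The hard part will be this amalgamation step. The natural approach is to take a suitable ``limit'' of the $B_n$ along an ultrafilter on $\mathbb N$, or to take a convex/Banach-limit combination, exploiting the fact (from Proposition~\ref{bf} and the definition of $\mathfrak B$) that the set of Banach limits is convex and weak-$*$ compact in $l_\infty^*$. One would want to choose $B$ so that $\tau_B(A_n)$ is bounded below by a fixed positive constant, say $\frac{1}{4\log 2}$, for all sufficiently large $n$; the geometry of the $A_n$ (with their singular values structured on dyadic scales $2^{j+n(i+2)^2}$ that separate as $n$ grows) should allow the relevant averaging windows for distinct $n$ to be chosen essentially disjointly, so that a diagonal Banach limit can ``see'' all of them at once. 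Once such a $B$ is secured, $\tau := \tau_B \in \mathcal{PT}$ satisfies $\inf_n \tau(A_n)>0$, and comparing with the bound $\rho(A_n)\to 0$ forced on every positive $\mathcal M_{1,\infty}$-trace completes the proof by contradiction.
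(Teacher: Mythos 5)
Your first half is correct and is exactly what the paper does: a positive trace on the Banach ideal $\mathcal M_{1,\infty}$ is automatically norm-bounded, so by Lemma~\ref{lm} every positive trace $\rho$ on $\mathcal M_{1,\infty}$ satisfies $\rho(A_n)\le \|\rho\|\cdot\|A_n\|_{\mathcal M_{1,\infty}}\le 2\|\rho\|/n\to0$. The genuine gap is in your second half. You reduce the theorem to producing a single Banach limit $B$ with $\inf_n\tau_B(A_n)>0$, and the ``amalgamation'' step that would produce it is precisely the hard point, which you assert rather than prove. Weak-$*$ compactness of $\mathfrak B$ does not hand this to you: writing $x^{(n)}:=\bigl\{\sum_{i=2^m-1}^{2^{m+1}-2}\mu(i,A_n)\bigr\}_{m\ge0}$, you would need the finite intersection property for the sets $\{B\in\mathfrak B : B(x^{(n)})\ge c\}$, and (by Sucheston-type arguments) this amounts to exhibiting arbitrarily long windows of indices on which \emph{all} of $x^{(n_1)},\dots,x^{(n_r)}$ simultaneously have averages $\ge c$. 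The structure of $A_n$ makes this delicate: $x^{(n)}$ equals $\tfrac12$ exactly on the plateaus $[n(k+2)^2,\,n(k+2)^2+k)$ and is geometrically small in between, so common windows must substantially overlap plateaus of every $x^{(n_i)}$ at once; since the plateau positions are the quadratic progressions $n_i(k+2)^2$ with different multipliers $n_i$, this is a nontrivial simultaneous Diophantine/equidistribution problem (Pell-type approximation plus Weyl equidistribution of the fractional parts of $a\sqrt{n_j/n_i}$), not a matter of windows being ``essentially disjoint''. Your two fallback mechanisms also fail as stated: an ultrafilter limit of the $B_n$ can vanish on every fixed $x^{(m)}$, because for $n\neq m$ the quantity $B_n(x^{(m)})$ has no lower bound (generically the windows on which $B_n$ concentrates miss all plateaus of $x^{(m)}$); and a countable convex combination $\sum_n\lambda_nB_n$ only gives $B(x^{(m)})\ge\lambda_m/2$, which is \emph{not} uniformly bounded below.

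The paper's proof shows that the uniform lower bound is unnecessary, and this is the idea you are missing: it suffices that the lower bound decay \emph{more slowly} than the $\mathcal M_{1,\infty}$-norms. Concretely, with $\tau_{n}\in\mathcal{PT}$ chosen by Lemma~\ref{max} so that $\tau_{n}(A_{n})\ge\frac1{2\log2}$, the paper sets $\tau:=\sum_{n\ge1}n^{-2}\tau_{n^3}$, a positive trace on $\mathcal L_{1,\infty}$. Positivity of each summand and of $A_{n^3}$ gives $\tau(A_{n^3})\ge n^{-2}\tau_{n^3}(A_{n^3})\ge \frac{n^{-2}}{2\log2}$, while any positive (hence bounded) extension to $\mathcal M_{1,\infty}$ would force $\tau(A_{n^3})\le \|\tau\|_{\mathcal M_{1,\infty}\to\mathbb C}\cdot\|A_{n^3}\|_{\mathcal M_{1,\infty}}\le 2n^{-3}\|\tau\|_{\mathcal M_{1,\infty}\to\mathbb C}$; combining the two bounds yields $\frac1{2\log2}\le\frac2n\|\tau\|_{\mathcal M_{1,\infty}\to\mathbb C}$, a contradiction as $n\to\infty$. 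Note the role of the subsequence $n^3$: the weights $n^{-2}$ are summable yet decay strictly more slowly than $\|A_{n^3}\|_{\mathcal M_{1,\infty}}\le 2n^{-3}$. To repair your proposal, either replace the uniform-bound goal by this weighted-sum comparison of decay rates, or actually carry out the simultaneous-window equidistribution argument --- the former is a few lines, the latter is a real project.
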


\begin{proof}
 Consider the operators $A_n$ constructed above. By Lemma~\ref{lm} all $A_n$ belong to $\mathcal L_{1,\infty}$, however $\|A_n\|_{\mathcal M_{1,\infty}} \le 2/n$.
 By Lemma~\ref{max} there are positive normalised traces $\tau_n$ on $\mathcal L_{1,\infty}$ such that $\tau_n(A_n)\ge \frac1{2\log 2}$.
 
 Set $\tau = \sum_{n=1}^\infty n^{-2}\tau_{n^3}$. It is clear, that $\tau$ is a positive trace on $\mathcal L_{1,\infty}$.
 Suppose that $\tau$ extends to a positive trace on $\mathcal M_{1,\infty}$ 
 (by $\|\tau\|_{\mathcal M_{1,\infty} \to \mathbb C}$ we denote its norm) , then
 $$\frac1{2\log 2} \le \tau_{n^3}(A_{n^3}) \le n^2 \tau(A_{n^3}) 
 \le n^2 \|\tau\|_{\mathcal M_{1,\infty} \to \mathbb C} \cdot \|A_{n^3}\|_{\mathcal M_{1,\infty}}
 \le \frac2n \|\tau\|_{\mathcal M_{1,\infty} \to \mathbb C}.
$$

Letting $n\to\infty$, we obtain a contradiction. 
Thus, $\tau$ does not extend to a positive trace on $\mathcal M_{1,\infty}$.
\end{proof}

% 
% 
% The following Theorem describes the correspondence
%  between extreme points of $K_S$ and extreme point of $\mathfrak B$.
% \begin{thm}\label{ext}
%  The functional $\varphi \in ext  K_S$ if and only if 
% $B= \varphi \circ D \in ext \mathfrak B$.
% \end{thm}
% 

Now we are about to characterise the bijection between the set of traces on $\mathcal L_{1,\infty}$ 
and the set of shift invariant functionals on $l_\infty$ established in Theorem~\ref{one-to-one1}.
We first need the following lemma, which is of interest in its own right.

\begin{lem}\label{lattice2}
The set of all continuous $S$-invariant linear functionals on $l_\infty$ is a sublattice in the lattice $l_\infty^*$
with respect to operations $\vee$ and $\wedge$ defined 
by the following formulas
$$(f\vee g) (x) = \sup\{f(u) + g(v) : \ 0\le u, v \in l_\infty, x=u+v\}, \ 0\le x \in l_\infty,$$
$$(f\wedge g) (x) = \inf\{f(u) + g(v) : \ 0\le u, v \in l_\infty, x=u+v\}, \ 0\le x \in l_\infty.$$
\end{lem}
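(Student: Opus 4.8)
The plan is to mirror the argument of Proposition~\ref{lattice}, with $S$-invariance playing the role that symmetry played there. Since $l_\infty$ is a Banach lattice and every norm-bounded functional on a Banach lattice is order bounded, $l_\infty^*$ is a (Dedekind complete) vector lattice whose lattice operations are exactly the two formulas in the statement; this is the Riesz--Kantorovich theorem. The continuous $S$-invariant functionals clearly form a linear subspace of $l_\infty^*$, so it suffices to prove this subspace is closed under $\vee$ and $\wedge$. Using the vector-lattice identities $f\vee g = f + (g-f)\vee 0$ and $f\wedge g = -\big((-f)\vee(-g)\big)$, everything reduces to a single claim: if $h$ is a continuous $S$-invariant functional, then its positive part $h^+:=h\vee 0$ is again continuous and $S$-invariant.

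Continuity of $h^+$ is standard Banach-lattice duality: from $h^+\le |h|$ one gets $\|h^+\|\le\|h\|$, so $h^+$ is bounded. The substance of the lemma is the $S$-invariance of $h^+$, and here I use the explicit formula $h^+(x)=\sup_{0\le u\le x}h(u)$, valid for $x\ge 0$. The key observation is that $S$ induces a bijection between the relevant order intervals: if $0\le u\le Sx$ then $u_0=0$ (because $(Sx)_0=0$), whence $u=Sw$ with $w:=Tu$ satisfying $0\le w\le x$, and conversely every such $w$ yields $Sw\in[0,Sx]$. Combining this with $h\circ S=h$ gives, for every $x\ge 0$,
\[
h^+(Sx)=\sup_{0\le u\le Sx}h(u)=\sup_{0\le w\le x}h(Sw)=\sup_{0\le w\le x}h(w)=h^+(x).
\]

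It then remains to propagate $S$-invariance from the positive cone to all of $l_\infty$. As $h^+$ is additive and positively homogeneous on the cone (Riesz--Kantorovich again), it extends to the linear functional $h^+(x)=h^+(x_+)-h^+(x_-)$; since $S$ is positive and satisfies $(Sx)_\pm=S(x_\pm)$, the identity on the cone yields $h^+(Sx)=h^+(x)$ for all $x\in l_\infty$. Consequently $(g-f)\vee 0$ is continuous and $S$-invariant whenever $f,g$ are, and therefore so are $f\vee g$ and $f\wedge g$, which establishes that the continuous $S$-invariant functionals form a sublattice of $l_\infty^*$. I expect the only delicate points to be the order-interval bijection under $S$ and the verification that the linear extension preserves $S$-invariance; the remaining ingredients are routine facts about the dual of a Banach lattice.
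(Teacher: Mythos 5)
Your proof is correct, but it takes a genuinely different route from the paper's. The paper's own proof is a two-step reduction: by Proposition~\ref{bf}, a \emph{continuous} functional on $l_\infty$ is $S$-invariant if and only if it is $T$-invariant, and then it invokes~\cite[Lemma 1]{SS1}, which says that the adjoint $T^*$ of the left shift preserves the operations $\vee$ and $\wedge$; invariance of $f\vee g$ then follows in one line from $T^*(f\vee g)=T^*f\vee T^*g=f\vee g$. You instead work directly with $S$ and give a self-contained argument that mirrors the paper's proof of Proposition~\ref{lattice}: you reduce everything to positive parts via $f\vee g=f+(g-f)\vee 0$ (exactly the decomposition used there), and your claim that $h^+$ is $S$-invariant plays the role that Lemma~\ref{phi+} plays in the symmetric-functional setting. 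The heart of your argument, the bijection $w\mapsto Sw$ from $[0,x]$ onto $[0,Sx]$ (valid because $(Sx)_0=0$ forces $u_0=0$ for any $0\le u\le Sx$), is precisely the statement that $S$ is interval preserving, which is the structural fact dual to the lattice-homomorphism property of the shift's adjoint that the paper cites; your final step propagating invariance from the cone to all of $l_\infty$ via $(Sx)_\pm=S(x_\pm)$ is also sound. What your approach buys is independence from Proposition~\ref{bf} and from the external reference~\cite{SS1}, at the cost of a longer argument; what the paper's approach buys is brevity, by outsourcing the order-theoretic work to a known lemma about $T^*$.
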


\begin{proof}
By Proposition~\ref{bf} a continuous functional on $l_\infty$ is $S$-invariant if and only if it is $T$-invariant.
So, it is sufficient to prove that the set of all continuous $T$-invariant linear functionals on $l_\infty$ is a sublattice in $l_\infty^*$.
Note that $l_\infty^*$ is lattice, since it is a dual of a Banach lattice (see~e.g.~\cite[Section 1.a]{LT2}).
 It is shown in~\cite[Lemma 1]{SS1} that the operator $T^*$ preserves the operation $\vee$ ans, similarly, the operation $\wedge$.
Hence, the set of all continuous $T$-invariant linear functionals on $l_\infty$ is a lattice. 
\end{proof}

Corollary~\ref{cont1} yields that there is bijection between the set of all continuous traces on $\mathcal L_{1,\infty}$
and the set of all continuous $S$-invariant linear functionals on $l_\infty$. 
The following theorem specifies this correspondence.

\begin{thm}\label{i}
The mapping $i$ from the set of all continuous traces on $\mathcal L_{1,\infty}$ 
to the set of all continuous $S$-invariant linear functionals on $l_\infty$ given by
$$ i(\tau) = \tau \circ {\rm diag} \circ D$$
is an order isomorphism and isometry.
\end{thm}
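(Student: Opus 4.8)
The plan is to leverage what is already available: by Corollary~\ref{cont1} the map $i$ is a linear bijection between the continuous traces on $\mathcal L_{1,\infty}$ and the continuous $S$-invariant functionals on $l_\infty$; the former set is a lattice by Theorem~\ref{lattice1} (in fact a sublattice of $\mathcal L_{1,\infty}^*$) and the latter a sublattice of $l_\infty^*$ by Lemma~\ref{lattice2}. It therefore suffices to check two things: that $i$ carries the positive cone onto the positive cone, and that $i$ preserves the functional norm. For the first, I would note that a trace $\tau$ is positive if and only if the symmetric functional $\tau\circ{\rm diag}$ is positive (using $\tau(A)=\tau({\rm diag}\,\mu(A))$ for $0\le A$), which by Corollary~\ref{pos} holds if and only if $\theta:=\tau\circ{\rm diag}\circ D=i(\tau)$ is positive. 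Since a linear bijection between vector lattices that maps the positive cone onto the positive cone is automatically a lattice isomorphism, this already shows $i$ is an order isomorphism and, in particular, $i(|\tau|)=|i(\tau)|$ for every continuous trace $\tau$.

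For the isometry I would first settle the positive cone, where the norm can be computed explicitly. Write $e:={\rm diag}(\{(n+1)^{-1}\}_{n\ge0})$, so that $\|e\|_{\mathcal L_{1,\infty}}=1$. For a positive trace $\tau$ and any $A\in\mathcal L_{1,\infty}$ with $\|A\|_{\mathcal L_{1,\infty}}\le1$ one has $\mu(A)\le\mu(e)$ pointwise, whence $|\tau(A)|\le\tau(|A|)=\tau({\rm diag}\,\mu(A))\le\tau(e)$, with equality at $A=e$; thus $\|\tau\|=\tau(e)$, the norm being attained at the harmonic element. On the other side, every positive $S$-invariant functional $\theta$ satisfies $\|\theta\|_{l_\infty^*}=\theta(\emm)$. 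These two quantities agree: by~\eqref{formula1} and the computation $\sum_{k=2^n-1}^{2^{n+1}-2}(k+1)^{-1}=\log2+o(1)$ (exactly as in Theorem~\ref{BL1}), together with the fact that a bounded $S$-invariant functional annihilates $c_0$, one obtains $\tau(e)=\theta(\emm)$. Hence $i$ is isometric on positive elements.

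To pass to arbitrary continuous traces I would establish the lattice identity $\|\tau\|=\||\tau|\|$ and then chain it with the positive case via $\|\tau\|=\||\tau|\|=\||i(\tau)|\|=\|i(\tau)\|$, where the middle equality uses $i(|\tau|)=|i(\tau)|$ and the isometry of $i$ on the positive functional $|\tau|$, and the last uses $\|f\|=\||f|\|$ in the Banach lattice $l_\infty^*$. The inequality $\|\tau\|\le\||\tau|\|$ is routine from $|\tau(A)|\le|\tau|(|A|)$ and $\||A|\|=\|A\|$. The reverse inequality is the heart of the matter. Writing $|\tau|=\tau_++\tau_-$ for the disjoint positive parts in the trace lattice, the positive case gives $\||\tau|\|=|\tau|(e)=\|\tau_+\|+\|\tau_-\|$. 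Disjointness $\tau_+\wedge\tau_-=0$ means $(\tau_+\wedge\tau_-)(e)=0$, so for each $\varepsilon>0$ there exist positive $u,v\in\mathcal L_{1,\infty}$ with $u+v=e$ and $\tau_+(u)+\tau_-(v)<\varepsilon$. Setting $A:=v-u=e-2u$ and using $0\le u,v\le e$ one gets $-e\le A\le e$, hence $\|A\|_{\mathcal L_{1,\infty}}\le1$, while $\tau(A)=\tau_+(v)-\tau_-(v)-\tau_+(u)+\tau_-(u)>\|\tau_+\|+\|\tau_-\|-4\varepsilon$; letting $\varepsilon\to0$ yields $\|\tau\|\ge\||\tau|\|$.

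I expect the genuine obstacle to sit in this last step: producing, from the mere disjointness of $\tau_+$ and $\tau_-$, a single operator of norm at most one on which $\tau$ nearly attains $\|\tau_+\|+\|\tau_-\|$. This forces one to confirm that the lattice operations on continuous traces are those inherited from $\mathcal L_{1,\infty}^*$ (so that $\tau_+\wedge\tau_-=0$ is genuine functional disjointness and the defining infimum for $\wedge$ supplies a decomposition $u+v=e$), and to ensure that $u,v$ may be taken with $0\le u,v\le e$ as operators, so that $\|v-u\|_{\mathcal L_{1,\infty}}\le\|e\|_{\mathcal L_{1,\infty}}=1$. Together, the positive-cone computation and this disjointness realisation are precisely what upgrade the bijection of Corollary~\ref{cont1} into an order isometry.
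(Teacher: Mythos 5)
Your first two paragraphs reproduce the paper's own argument: the order isomorphism comes from positivity of $i$ and of its inverse, and the isometry on positive traces is exactly the paper's computation $\|\tau\|_{\mathcal L_{1,\infty}^*}=\tau({\rm diag}\{(n+1)^{-1}\}_{n\ge0})=i(\tau)(\emm)=\|i(\tau)\|_{l_\infty^*}$. Where you depart from the paper is the reduction to the positive case: the paper merely asserts that $i(|\tau|)=|i(\tau)|$ makes the positive case sufficient, while you try to prove the underlying identity $\|\tau\|=\||\tau|\|$. That is the right thing to worry about, but your proof of $\|\tau\|\ge\||\tau|\|$ fails at the step ``$0\le u,v\le e$ gives $-e\le A\le e$, hence $\|A\|_{\mathcal L_{1,\infty}}\le1$''. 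The $\mathcal L_{1,\infty}$ quasi-norm is not monotone on operator order intervals. Concretely, let $A$ act as the matrix $2^{-1/2}\begin{pmatrix}0&1\\1&0\end{pmatrix}$ on the span of the first two basis vectors and as $0$ elsewhere. Then $e\pm A\ge0$, because the block $\begin{pmatrix}1&\pm2^{-1/2}\\ \pm2^{-1/2}&1/2\end{pmatrix}$ has zero determinant and positive trace; so $-e\le A\le e$ (with $u=\frac12(e-A)$, $v=\frac12(e+A)$ an admissible decomposition of $e$), yet $\mu(0,A)=\mu(1,A)=2^{-1/2}$ and $\|A\|_{\mathcal L_{1,\infty}}=\sqrt2>1$. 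What survives of your estimate is only $\mu(2n+1,v-u)\le\max\bigl(\mu(n,(v-u)_+),\mu(n,(v-u)_-)\bigr)\le(n+1)^{-1}$, i.e.\ $\|v-u\|_{\mathcal L_{1,\infty}}\le2$, which yields $\|\tau\|\ge\frac12\||\tau|\|$ and not the isometry; no choice of constants in this operator-interval argument recovers the factor.

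The repair is to keep the norming element diagonal, since for diagonal operators (equivalently, in $l_{1,\infty}$) pointwise domination does control singular values. Because $i$ is a lattice isomorphism, $\theta_\pm:=i(\tau_\pm)$ are disjoint in $l_\infty^*$, so for $\varepsilon>0$ there are $0\le x,y\in l_\infty$ with $x+y=\emm$ and $\theta_+(y)+\theta_-(x)<\varepsilon$. Put $c:=x-y$ and $A:={\rm diag}(w)$, where $w_k:=c_n(k+1)^{-1}$ for $2^n-1\le k\le2^{n+1}-2$; then $\|A\|_{\mathcal L_{1,\infty}}=\sup_k(k+1)|w_k|\le1$. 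Since $0\le w_\pm\le\{(k+1)^{-1}\}_{k\ge0}$, Lemma~\ref{aux0} gives $\sum_{k=0}^n\bigl((w_\pm)^*_k-(w_\pm)_k\bigr)=O(1)$, so by Lemma~\ref{aconv} the $S$-invariant functional $\theta:=i(\tau)$ takes the same value on the dyadic block sums of $(w_\pm)^*$ and of $w_\pm$; as $\theta$ also vanishes on $c_0$, formula~\eqref{formula1} applied to ${\rm diag}(w_\pm)$ yields $\tau(A)=\theta(\{(c_n)_+\}_n)-\theta(\{(c_n)_-\}_n)=\theta(x)-\theta(y)\ge\theta_+(\emm)+\theta_-(\emm)-4\varepsilon=\|\theta\|_{l_\infty^*}-4\varepsilon$. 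Letting $\varepsilon\to0$ gives $\|\tau\|\ge\|i(\tau)\|$. The converse inequality follows from $|\tau(A)|\le\tau_+(|A|)+\tau_-(|A|)\le(\|\tau_+\|+\|\tau_-\|)\|A\|=\|i(\tau)\|\,\|A\|$, where the fact $|\sigma(A)|\le\sigma(|A|)$ for a positive trace $\sigma$ must be quoted (it is not routine, since $A$ need not be normal). With these two changes your outline closes the gap that the paper's proof leaves implicit.
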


\begin{proof}
Due to the positivity of an operator $D$, it is clear that $i$ is positive.
It is also clear that the inverse of $i$ given by~\eqref{formula1} is positive.
Hence, $i$ is a positive bijection with the positive inverse, 
that is $i$ is an order isomorphism (see e.g.~\cite[1.a]{LT2}).

Every order isomorphism preserves the lattice structure (see e.g.~\cite[1.a]{LT2}), that is
$$i : \tau_1 \vee \tau_2 \to i(\tau_1) \vee i(\tau_2).$$

In particular, $i(|\tau|)= |i(\tau)|.$ Hence, it is sufficient to prove that $i$ preserves the norm of every positive trace.

If $\tau$ is a positive trace on $\mathcal L_{1,\infty}$, 
then $i(\tau) = \tau \circ {\rm diag} \circ D$ is positive $S$-invariant linear functionals on $l_\infty$. 
In particular, $\tau \circ {\rm diag} \circ D$ is proportional to a Banach limits and, so, vanishes on $c_0$.

By Theorem~\ref{one-to-one1}, we have
$$\tau(A) = (\tau \circ {\rm diag} \circ D) \left(\frac1{\log 2} \left\{ \sum_{k=2^n-1}^{2^{n+1}-2} \mu(k,A) \right\}_{n \ge 0} \right), 
\ 0\le A \in \mathcal L_{1,\infty}.$$

Hence,
\begin{align*}
\|\tau \|_{\mathcal L_{1,\infty}^*} &=  \left|\tau({\rm diag} \{\frac1{n+1}\}_{n \ge 0}) \right|\\
 &=\left| (\tau \circ {\rm diag} \circ D) \left(\frac1{\log 2} \cdot \left\{ \sum_{k=2^n-1}^{2^{n+1}-2} \frac1{k+1} \right\}_{n \ge 0} \right) \right|\\
 &=\left| (\tau \circ {\rm diag} \circ D) \left(\frac1{\log 2} (\log 2 +o(1)) \right)\right|\\ 
 &=| (\tau \circ {\rm diag} \circ D) (\emm) |= \|i(\tau)\|_{l_\infty^*}.
\end{align*}

%  Next, for every continuous trace $\tau$ using the fact that $\|D\|_{l_\infty \to l_{1,\infty}} = 1$ we have
%  $$\|i(\tau)\|_{l_\infty^*} = \|\tau \circ {\rm diag} \circ D\|_{l_\infty^*}
%  \le \|\tau \|_{\mathcal L_{1,\infty}^*} \cdot \|D\|_{l_\infty \to l_{1,\infty}} 
%  = \|\tau \|_{\mathcal L_{1,\infty}^*}.$$
%  
%  On the other hand,
%   \begin{align*}
%   \|\tau \|_{\mathcal L_{1,\infty}^*} &= \sup_{A \in \mathcal L_{1,\infty}, \|A\|=1}  |\tau(A)|\\
%   &= \sup_{A \in \mathcal L_{1,\infty}, \|A\|=1}  \left|(\tau\circ {\rm diag} \circ D)\left(\left\{ \sum_{k=2^n-1}^{2^{n+1}-2} \mu(k,A) \right\}_{n \ge 0} \right)\right|\\
%   &\le \sup_{x \in l_\infty, \|x\|=1}  |(\tau\circ {\rm diag} \circ D)(x)| = \|i(\tau)\|_{l_\infty^*}.
%  \end{align*}
Hence, $\|i(\tau)\|_{l_\infty^*} = \|\tau \|_{\mathcal L_{1,\infty}^*}$ for every continuous trace $\tau$.
Consequently, the mapping $i$ is an isometry.
\end{proof} 

%  Similarly to Theorem~\ref{ext} describing the correspondence between extreme points of the set of all
%  normalised positive symmetric functionals on $l_{1,\infty}$ and extreme point of $\mathfrak B$, 
 The following theorem describes the correspondence
 between extreme points of $\mathcal {PT}$ and extreme points of $\mathfrak B$.

\begin{thm}\label{ext1}
 A trace $\tau \in ext(\mathcal {PT})$ if and only if 
$B = \tau \circ {\rm diag} \circ D \in ext\mathfrak B$.
\end{thm}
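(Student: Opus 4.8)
The plan is to exploit the isometric order isomorphism $i$ from Theorem~\ref{i}, which restricts to a bijection between $\mathcal{PT}$ and $\mathfrak{B}$ via $i(\tau) = \tau \circ {\rm diag} \circ D$. By Corollary~\ref{BL1}, $i$ maps the set $\mathcal{PT}$ of positive normalised traces exactly onto the set $\mathfrak{B}$ of Banach limits, and its inverse is given explicitly by formula~\eqref{formula1}. The key structural fact is that $i$ is an \emph{affine} bijection between the convex sets $\mathcal{PT}$ and $\mathfrak{B}$: linearity of $i$ is immediate from the definition (composition with the fixed linear operators ${\rm diag}$ and $D$), and linearity automatically preserves convex combinations. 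Extreme points are a purely affine-convex notion, so any affine bijection carries extreme points to extreme points in both directions.

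The main steps I would carry out are as follows. First I would verify that both $\mathcal{PT}$ and $\mathfrak{B}$ are convex sets, which is routine: a convex combination of positive normalised traces is again positive and normalised, and similarly a convex combination of Banach limits satisfies properties (i)--(iii) of the definition. Second, I would record that $i$ restricted to $\mathcal{PT}$ is a bijection onto $\mathfrak{B}$ (this is exactly Corollary~\ref{BL1}) and that $i$ is affine. Third, I would invoke the general principle that an affine bijection $i : \mathcal{PT} \to \mathfrak{B}$ between convex sets satisfies $\tau \in \mathrm{ext}(\mathcal{PT})$ if and only if $i(\tau) \in \mathrm{ext}(\mathfrak{B})$. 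For completeness one shows both directions: if $\tau = \tfrac12(\tau_1 + \tau_2)$ with $\tau_1, \tau_2 \in \mathcal{PT}$, then applying $i$ gives $B = \tfrac12(i(\tau_1) + i(\tau_2))$, so a nontrivial decomposition of $\tau$ produces a nontrivial decomposition of $B$ (using injectivity of $i$ to ensure $i(\tau_1) \neq i(\tau_2)$ when $\tau_1 \neq \tau_2$); and conversely, any decomposition of $B$ pulls back through $i^{-1}$ to a decomposition of $\tau$.

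Honestly, I expect there to be no serious obstacle here, since the statement is essentially a formal consequence of $i$ being an affine bijection, and all the substantive analytic content has already been established in Theorem~\ref{i} and Corollary~\ref{BL1}. The only point requiring a moment's care is ensuring that nontriviality of decompositions is preserved in both directions, which is exactly where injectivity and surjectivity of $i$ are used: injectivity guarantees that distinct traces map to distinct Banach limits (so an extreme point cannot acquire a spurious decomposition), and surjectivity guarantees that every decomposition of $B$ within $\mathfrak{B}$ arises as the image of a decomposition of $\tau$ within $\mathcal{PT}$ (so $\mathrm{ext}(\mathcal{PT})$ is not undercounted). Thus the proof reduces to assembling Corollary~\ref{BL1}, the affineness of $i$, and this elementary extreme-point transfer lemma.
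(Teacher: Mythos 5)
Your proposal is correct and takes essentially the same approach as the paper: the paper's proof is exactly your extreme-point transfer argument written out explicitly in both directions, pushing a decomposition $\tau=\tfrac12(\tau_1+\tau_2)$ forward through the linear map $\tau\mapsto\tau\circ{\rm diag}\circ D$ and pulling a decomposition $B=\tfrac12(B_1+B_2)$ back via formula~\eqref{formula1}, with Corollary~\ref{BL1} supplying the bijection between $\mathcal{PT}$ and $\mathfrak{B}$. The general affine-bijection principle you invoke is precisely what the paper verifies by hand, so there is no gap.
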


\begin{proof}
 Let $\tau \in \mathcal {PT}$ and let $B= \tau \circ {\rm diag} \circ D \in ext \mathfrak B$. Suppose that 
 \begin{equation}\label{e1}
 \tau = \frac12(\tau_1+\tau_2) \ \text{ on } \mathcal L_{1,\infty},
 \end{equation}
for some $\tau_1, \tau_2 \in \mathcal {PT}$.

It follows from Corollary~\ref{BL1}, 
that $B := \tau \circ {\rm diag} \circ D$, $B_1 := \tau_1 \circ {\rm diag} \circ D$, $B_2 := \tau_2 \circ {\rm diag} \circ D$ are Banach limits.
Moreover, we obtain from~\eqref{e1} that $B = \frac12(B_1+B_2)$ on $l_\infty$ and, due to the assumption $B\in ext \mathfrak B$ we have $B = B_1=B_2$.
Hence, formula~\eqref{formula1} yields $\tau = \tau_1 = \tau_2$ and $\tau \in ext (\mathcal {PT})$.

Let now $\tau \in ext (\mathcal {PT})$ and let $B= \tau \circ {\rm diag} \circ D \in \mathfrak B$. Suppose 
 \begin{equation}\label{e2}
  B = \frac12(B_1+B_2) \ \text{ on } l_\infty,
 \end{equation}
for some $B_1, B_2 \in \mathfrak B$.

Due to Corollary~\ref{BL1} applying the formula~\eqref{formula1} to 
Banach limits $B_1$ and $B_2$ yields positive normalised traces $\tau_1,\tau_2$.
It follows from~\eqref{e2} that $\tau = \frac12(\tau_1+\tau_2)$ on $l_{1,\infty}$ and, so $\tau = \tau_1 = \tau_2$.
Hence, $B = B_1=B_2$ and $B\in ext \mathfrak B$.
\end{proof}

It was shown in~\cite{SS1} that every sequence $B_i$, $i\ge 1$, of distinct extreme points of $\mathfrak B$ spans the space $l_1$ of all summable sequences.
The following theorem is an analogue of this result for positive normalised traces on $\mathcal L_{1,\infty}$.
\begin{thm}\label{4.11}
 Let $\tau_k \in ext(\mathcal {PT})$, $k \in\mathbb N$, be a sequence of distinct elements.
 For every $\{c_k\}_{k\ge0} \in l_1$ we have
 $$\|\sum_{k=0}^\infty c_k \tau_k\|_{\mathcal L_{1,\infty}^*} = \sum_{k=0}^\infty |c_k|.$$
\end{thm}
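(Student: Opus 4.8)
The plan is to transport the statement from traces on $\mathcal L_{1,\infty}$ to Banach limits on $l_\infty$ via the isometry $i$ of Theorem~\ref{i}, and then to invoke the quoted $l_1$-span property of extreme Banach limits from~\cite{SS1}. The essential point is that Theorem~\ref{i} provides an isometry on \emph{all} continuous traces (not merely positive ones), so that norm computations for signed combinations $\sum_k c_k\tau_k$ can be carried out entirely on the $l_\infty^*$ side.

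First I would check that the series $\sum_k c_k\tau_k$ genuinely defines a continuous trace. Since each $\tau_k$ is a positive normalised trace it has norm one in $\mathcal L_{1,\infty}^*$, and $\mathcal L_{1,\infty}^*$ (the space of continuous linear functionals on the quasi-normed space $\mathcal L_{1,\infty}$) is a Banach space under the operator norm; hence for $\{c_k\}\in l_1$ the series converges absolutely and defines a continuous trace. Next I would apply $i(\tau)=\tau\circ{\rm diag}\circ D$. By Theorem~\ref{i} this is a linear isometry onto the continuous $S$-invariant functionals; in particular it is norm-continuous, so it commutes with the absolutely convergent sum, giving
$$ i\Big(\sum_k c_k\tau_k\Big)=\sum_k c_k\, i(\tau_k)=\sum_k c_k B_k, $$
where $B_k:=\tau_k\circ{\rm diag}\circ D$. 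By Theorem~\ref{ext1} each $B_k$ is an extreme point of $\mathfrak B$, and since $i$ is a bijection the distinctness of the $\tau_k$ forces the $B_k$ to be distinct.

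Using that $i$ is an isometry, I would then write
$$ \Big\|\sum_k c_k\tau_k\Big\|_{\mathcal L_{1,\infty}^*}=\Big\|\sum_k c_k B_k\Big\|_{l_\infty^*}, $$
and finally invoke the result of~\cite{SS1}: any sequence of distinct extreme points of $\mathfrak B$ spans an isometric copy of $l_1$ inside $l_\infty^*$. Applied to $\{B_k\}$ this yields $\big\|\sum_k c_k B_k\big\|_{l_\infty^*}=\sum_k|c_k|$, and combining with the previous display completes the proof.

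The main obstacle, and essentially the only nontrivial ingredient, is the quoted $l_1$-span property of extreme Banach limits from~\cite{SS1}; everything else is a routine transfer through $i$. The two places that warrant care are the interchange of $i$ with the infinite sum (justified by norm-continuity of $i$ together with the absolute convergence of $\sum_k c_k\tau_k$) and the verification that $i$ is an isometry on signed combinations of traces — which is precisely what Theorem~\ref{i} supplies, via $i(|\tau|)=|i(\tau)|$.
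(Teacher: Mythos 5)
Your proposal is correct and follows essentially the same route as the paper: the paper's (one-line) proof likewise reduces the statement, via the order isomorphism and isometry $i$ of Theorem~\ref{i} together with the extreme-point correspondence of Theorem~\ref{ext1}, to the result of~\cite{SS1} that distinct extreme Banach limits span an isometric copy of $l_1$ in $l_\infty^*$. The additional details you supply (absolute convergence of $\sum_k c_k\tau_k$ in the Banach space $\mathcal L_{1,\infty}^*$, the interchange of $i$ with the sum, and the distinctness of the $B_k$) are exactly the routine verifications the paper leaves implicit.
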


\begin{proof} 
 The assertion follows from the fact that the lattices $\mathcal {PT}$ and $\mathfrak B$ are order isomorphic and isometric
 (Theorem~\ref{i}) and the corresponding result for Banach limits~\cite[Theorem 4]{SS1}.
\end{proof}

Define the diameter of the set $\mathcal {PT}$ in $\mathcal L_{1,\infty}^*$ as follows
$$
 d(\mathcal {PT},\mathcal L_{1,\infty}^*)=\sup\limits_{\tau_1, \tau_2\in \mathcal {PT}}\|\tau_1 - \tau_2 \|_{\mathcal L_{1,\infty}^*}.
$$

The set $\mathcal {PT}$ is a subset of the unit sphere of $\mathcal L_{1,\infty}^*$ and
so, its diameter can not exceed $2$. 
The following result shows, in particular, that $d(\mathcal {PT},\mathcal L_{1,\infty}^*)=2$.
It is a straightforward consequence of Theorem~\ref{4.11}.

\begin{cor}\label{diam}
 For every $\tau_1, \tau_2 \in ext(\mathcal {PT})$ such that $\tau_1 \neq \tau_2$
 we have $$\|\tau_1 - \tau_2 \|_{\mathcal L_{1,\infty}^*}=2.$$
\end{cor}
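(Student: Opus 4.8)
The plan is to reduce Corollary~\ref{diam} to the corresponding statement about Banach limits via the isometric order isomorphism $i$ established in Theorem~\ref{i}. The key observation is that Theorem~\ref{4.11} already furnishes the sharp norm estimate for finite combinations of distinct extreme traces, so the diameter statement is really just the two-point ($k=2$) instance of that theorem combined with a careful choice of coefficients.

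First I would take two distinct extreme points $\tau_1, \tau_2 \in \mathrm{ext}(\mathcal{PT})$ and form the difference $\tau_1 - \tau_2$. Since this is the element $c_1 \tau_1 + c_2 \tau_2$ with $c_1 = 1$, $c_2 = -1$ and the sequence $(1,-1,0,0,\dots)$ lies in $l_1$, Theorem~\ref{4.11} applies directly and gives
\begin{equation*}
\|\tau_1 - \tau_2\|_{\mathcal L_{1,\infty}^*} = |c_1| + |c_2| = 1 + 1 = 2.
\end{equation*}
This is the entire content of the proof: the hard analytic work has been offloaded to Theorem~\ref{4.11}, whose proof in turn rests on the isometry $i$ (Theorem~\ref{i}) and the corresponding span result for extreme Banach limits from~\cite{SS1}.

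Alternatively, and equivalently, I could argue through the isometry directly: under $i$, the extreme traces $\tau_1, \tau_2$ map to distinct extreme Banach limits $B_1 = i(\tau_1)$ and $B_2 = i(\tau_2)$ by Theorem~\ref{ext1}, and since $i$ is an isometry we have $\|\tau_1 - \tau_2\|_{\mathcal L_{1,\infty}^*} = \|B_1 - B_2\|_{l_\infty^*}$. The norm of the difference of two distinct extreme Banach limits equals $2$ by the two-point case of~\cite[Theorem 4]{SS1}, which yields the claim. For the diameter remark preceding the corollary, I would simply note that $\mathcal{PT}$ sits inside the unit sphere of $\mathcal L_{1,\infty}^*$ (each positive normalised trace has norm $1$ by the computation in Theorem~\ref{i}), so $d(\mathcal{PT}, \mathcal L_{1,\infty}^*) \le 2$ trivially, while the corollary supplies a pair achieving the value $2$, forcing equality.

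The only point requiring any care is confirming that distinct extreme traces remain distinct after applying $i$ and that the coefficient sequence genuinely lies in $l_1$ so that Theorem~\ref{4.11} is applicable; both are immediate, the former from the bijectivity of $i$ and the latter because a two-term sequence is trivially summable. I do not anticipate any real obstacle here, as the substantive difficulty has already been absorbed into the proof of Theorem~\ref{4.11}; this corollary is a clean specialisation.
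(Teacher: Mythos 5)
Your proposal is correct and takes essentially the same route as the paper: the paper derives Corollary~\ref{diam} precisely as a straightforward consequence of Theorem~\ref{4.11}, which is your application with the coefficient sequence $(1,-1,0,0,\dots)\in l_1$. Your alternative argument via the isometry $i$ and the two-point case of the Banach-limit result in~\cite{SS1} is simply an unfolding of the proof of Theorem~\ref{4.11} itself, so it introduces nothing genuinely different.
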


Although the norm of a difference of two exteme points of $ \mathcal {PT}$ equals 2, it is not always attained.
We state this fact rigorously in Corollary~\ref{c5} below. To this end we need some preparations.

It is well-known (see e.g.~\cite[Theorem 9]{Kakutani} and \cite[Chapter 16]{Carothers}) that by the Riesz representation theorem every positive normalised linear functional $l$ on $l_\infty$ 
can be be written in the following form
$$l(x) = \int_{\beta \mathbb N} x(p) d\nu(p), \ \forall \ x \in l_\infty$$
where $\nu$ is a measure on $\beta \mathbb N$, the Stone-\v{C}ech compactification of $\mathbb N$.
Moreover, according to~\cite[436J, 436K]{Fremlin_MT} this measure is Radon.

It was proved in~\cite[Proposition 2.5]{CC} that there exist $B_1, B_2 \in ext(\mathfrak B)$ (with corresponding probability measures $\nu_1$ and $\nu_2$)
such that ${\rm supp} \ \nu_1 \subset {\rm supp} \ \nu_2$ (the support of $\nu$ is well-defined, since the measures are Radon).

\begin{thm}\label{non}
There exist $B_1, B_2 \in ext(\mathfrak B)$ such that 
$|B_1x - B_2x| < 2$ for every $x \in l_\infty$ with $\|x\|_{l_\infty} \le 1$.
\end{thm}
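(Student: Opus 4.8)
The plan is to exploit the measure-theoretic representation of positive normalised functionals on $l_\infty$ together with the nesting of supports supplied by~\cite[Proposition 2.5]{CC}. First I would fix extreme Banach limits $B_1,B_2 \in ext(\mathfrak B)$ whose representing Radon probability measures $\nu_1,\nu_2$ on $\beta\mathbb N$ satisfy ${\rm supp}\,\nu_1 \subset {\rm supp}\,\nu_2$. Under the Gelfand identification $l_\infty \cong C(\beta\mathbb N)$, every $x \in l_\infty$ with $\|x\|_{l_\infty}\le 1$ corresponds to a continuous function $\hat x$ on $\beta\mathbb N$ with $|\hat x|\le 1$ everywhere, and $B_i x = \int_{\beta\mathbb N} \hat x\, d\nu_i$ for $i=1,2$. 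Since each $B_i$ has norm one, $|B_1x-B_2x|\le 2$ automatically, so it suffices to rule out equality.

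Next I would argue by contradiction. Suppose $|B_1 x - B_2 x| = 2$ for some $x$ in the unit ball; replacing $x$ by $-x$ if necessary, assume $B_1 x - B_2 x = 2$. Since $\hat x \le 1$ and $\nu_1$ is a probability measure we have $\int \hat x\, d\nu_1 \le 1$, and since $\hat x \ge -1$ and $\nu_2$ is a probability measure we have $\int \hat x\, d\nu_2 \ge -1$. The equation $\int \hat x\, d\nu_1 - \int \hat x\, d\nu_2 = 2$ then forces the extremal values $\int \hat x\, d\nu_1 = 1$ and $\int \hat x\, d\nu_2 = -1$.

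The crucial step is to promote these integral equalities to pointwise statements on the supports. Because $\hat x$ is continuous, $|\hat x|\le 1$, and $\nu_1$ is Radon, the equality $\int \hat x\, d\nu_1 = 1$ forces $\hat x \equiv 1$ on ${\rm supp}\,\nu_1$: if $\hat x(p_0)<1$ at some $p_0\in {\rm supp}\,\nu_1$, continuity provides an open neighbourhood $U$ of $p_0$ on which $\hat x < 1-\varepsilon$, and $\nu_1(U)>0$ by the very definition of the support, whence $\int \hat x\, d\nu_1 \le (1-\varepsilon)\nu_1(U)+\nu_1(U^c)<1$. The identical reasoning applied to $\int \hat x\, d\nu_2 = -1$ yields $\hat x \equiv -1$ on ${\rm supp}\,\nu_2$.

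Finally, since ${\rm supp}\,\nu_1 \subset {\rm supp}\,\nu_2$ and ${\rm supp}\,\nu_1$ is non-empty (a probability measure on the compact Hausdorff space $\beta\mathbb N$ has non-empty support), any $p \in {\rm supp}\,\nu_1$ would satisfy both $\hat x(p)=1$ and $\hat x(p)=-1$, a contradiction. Hence $|B_1 x - B_2 x| < 2$ for every $x$ in the unit ball. I expect the only genuinely delicate point to be the support argument of the third paragraph: it is precisely there that the Radon (hence support-admitting) nature of the measures and the nesting ${\rm supp}\,\nu_1\subset{\rm supp}\,\nu_2$ from~\cite{CC} are indispensable, while everything else is bookkeeping with the Gelfand transform and elementary bounds.
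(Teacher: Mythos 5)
Your proof is correct and follows essentially the same route as the paper: both arguments fix the pair $B_1, B_2 \in ext(\mathfrak B)$ with nested supports supplied by~\cite[Proposition 2.5]{CC}, represent them by Radon probability measures on $\beta\mathbb N$, and derive a contradiction from $|B_1x - B_2x| = 2$ using the inclusion ${\rm supp}\,\nu_1 \subset {\rm supp}\,\nu_2$. The only difference is bookkeeping: you promote the extremal equalities to $\hat x \equiv 1$ on ${\rm supp}\,\nu_1$ and $\hat x \equiv -1$ on ${\rm supp}\,\nu_2$ and contradict at a single point, whereas the paper uses the minimality of the support among closed sets of full measure (applied to the sets $\{x\ge 0\}$ and $\{x\le 0\}$) to conclude $x \equiv 0$ on ${\rm supp}\,\nu_1$, hence $B_1x=0$.
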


\begin{proof}
% According to Riesz representation theorem there exist unique Radon measures $\nu_1$ and $\nu_2$ on $\beta \mathbb N$
% such that 
% $$B_1x = \int_{\beta \mathbb N} x(p) d\nu_1, \ B_2x = \int_{\beta \mathbb N} x(p) d\nu_2 \ \forall \ x \in l_\infty.$$

By~\cite[Proposition 2.5]{CC} there exist $B_1, B_2 \in ext(\mathfrak B)$ such that ${\rm supp} \ \nu_1 \subset {\rm supp} \ \nu_2$, 
where $\nu_1$ and $\nu_2$ are the probability measures on $\beta \mathbb N$ corresponding to $B_1$ and $B_2$, that is
$$B_1x = \int_{\beta \mathbb N} x(p) d\nu_1(p), \ B_2x = \int_{\beta \mathbb N} x(p) d\nu_2(p), \  \ x \in l_\infty.$$

Denote $S_1 := {\rm supp} \ \nu_1$ and $S_2 := {\rm supp} \ \nu_2$. We have $S_1 \subset S_2$.
Fix $x \in l_\infty$ with $\|x\|_{l_\infty} \le 1$. Define the closed sets 
$$C := \{p \in \beta \mathbb N : x(p) \ge0\}, \ D := \{p \in \beta \mathbb N : x(p) \le0\}.$$

We have
\begin{align*}B_1x - B_2x &= \int_{S_1 \cap C} x(p) d\nu_1(p) + \int_{S_1 \cap D} x(p) d\nu_1(p) \\
&- \int_{S_2 \cap C} x(p) d\nu_2(p) - \int_{S_2 \cap D} x(p) d\nu_2(p).
\end{align*}

Assume that $B_1x - B_2x=2$. We obtain
\begin{align*}
 2 &= B_1x - B_2x \le \int_{S_1 \cap C} x(p) d\nu_1(p) - \int_{S_2 \cap D} x(p) d\nu_2(p)\\
  &\le \|x\|_{l_\infty} (\nu_1(S_1 \cap C)+\nu_2(S_2 \cap D)) \le 2,
\end{align*}
since $\nu_1$ and $\nu_2$ are probability measures.

Therefore, $\nu_1(S_1 \cap C)=\nu_2(S_2 \cap D)=1$. 
Since the support of a measure is the smallest closed set of a full measure, it follows that $S_1 \cap C=S_1$ and $S_2 \cap D=S_2$.

Since $S_1 \subset S_2$, it follows that 
$$(S_1 \cap C) \cap D = S_1 \cap D = (S_1 \cap S_2) \cap D = S_1 \cap (S_2 \cap D) =S_1 \cap S_2 = S_1.$$

Consequently, $x=0$ on $S_1$, the support of $\nu_1$. Hence, $B_1x=0$ and $B_1x - B_2x < 2$,
which contradicts the assumption.
\end{proof}

\begin{cor}\label{c5}
 There exist $\tau_1, \tau_2 \in ext(\mathcal {PT})$ such that 
$|\tau_1(A) - \tau_2(A)| < 2$ for every $A \in \mathcal L_{1,\infty}$ with $\|A\|_{\mathcal L_{1,\infty}} \le 1$.
\end{cor}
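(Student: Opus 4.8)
The plan is to transport Theorem~\ref{non} across the bijection of Corollary~\ref{BL1}. Take $B_1,B_2\in ext(\mathfrak B)$ as provided by Theorem~\ref{non}, so that $|B_1x-B_2x|<2$ whenever $\|x\|_{l_\infty}\le 1$, and let $\tau_1,\tau_2$ be the positive normalised traces associated to $B_1,B_2$ through formula~\eqref{formula1}. By Theorem~\ref{ext1} they lie in $ext(\mathcal{PT})$, and they are distinct since $B_1\neq B_2$. Writing $\psi:=\tau_1-\tau_2$, the entire task reduces to showing $|\psi(A)|<2$ for every $A$ with $\|A\|_{\mathcal L_{1,\infty}}\le 1$. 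The guiding idea is to realise $\psi(A)$ as $(B_1-B_2)$ evaluated at an $l_\infty$-sequence which, after discarding a $c_0$-summand (annihilated by Banach limits), has sup-norm at most $1$, so that Theorem~\ref{non} applies directly.

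First I would settle the self-adjoint case, which is the substantive computation. For self-adjoint $A=A_+-A_-$ with $\|A\|_{\mathcal L_{1,\infty}}\le 1$, applying formula~\eqref{formula1} to each positive part gives $\psi(A)=(B_1-B_2)(u-v)$, where $u_n=\frac1{\log 2}\sum_{k=2^n-1}^{2^{n+1}-2}\mu(k,A_+)$ and $v_n$ is the analogous sum for $A_-$. Since the positive and negative eigenvalue subsequences satisfy $\mu(k,A_\pm)\le\mu(k,A)\le\frac1{k+1}$, the harmonic-sum estimate already used in Theorem~\ref{BL} yields $u_n,v_n\le 1+o(1)$. Hence the truncations $\tilde u:=\min(u,\emm)$ and $\tilde v:=\min(v,\emm)$ differ from $u,v$ by $c_0$-sequences, so $B_j(u)=B_j(\tilde u)$ and $B_j(v)=B_j(\tilde v)$, while $0\le\tilde u,\tilde v\le\emm$ forces $\|\tilde u-\tilde v\|_{l_\infty}\le 1$. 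Theorem~\ref{non} then gives $|\psi(A)|=|(B_1-B_2)(\tilde u-\tilde v)|<2$.

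For a general $A$ I would first rotate by a phase: choosing $\theta$ with $e^{-i\theta}\psi(A)=|\psi(A)|$ and putting $A'=e^{-i\theta}A$ (so $\|A'\|_{\mathcal L_{1,\infty}}=\|A\|_{\mathcal L_{1,\infty}}$ and $\psi(A')=|\psi(A)|\ge 0$ is real), it suffices to bound $\psi(A')$. Here I would use the eigenvalue (Lidskii) representation~\eqref{weight}, namely $\psi(A')=(B_1-B_2)(w)$ with $w_n=\frac1{\log 2}\sum_{k=2^n-1}^{2^{n+1}-2}\lambda(k,A')$; since $\lambda(k,e^{-i\theta}A)=e^{-i\theta}\lambda(k,A)$ preserves the modulus-ordering, $w$ is an honest bounded sequence with $\limsup_n|w_n|\le 1$. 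Truncating $w$ radially into the closed unit disc alters it only by a $c_0$-sequence, after which, using that $\psi(A')$ is real and the $B_j$ are real functionals, $\psi(A')=(B_1-B_2)(\Re\tilde w)$ with $\|\Re\tilde w\|_{l_\infty}\le 1$, and Theorem~\ref{non} finishes the argument.

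The main obstacle is exactly this last, non-self-adjoint step. Passing from $A$ to a self-adjoint operator through real/imaginary or positive/negative parts inflates the quasi-norm: the $\mathcal L_{1,\infty}$ quasi-norm obeys only a constant-$2$ triangle inequality, and $\|\Re A'\|_{\mathcal L_{1,\infty}}$ can be as large as $2\|A'\|_{\mathcal L_{1,\infty}}$, so the resulting $l_\infty$-sequence would only have norm $\le 2$ and Theorem~\ref{non} would give the useless bound $<4$. What restores the sharp constant is the cancellation inside the eigenvalue block sums, which forces $\limsup_n|w_n|\le 1$ from $\|A'\|_{\mathcal L_{1,\infty}}\le 1$; this is strictly finer than the per-eigenvalue estimate $|\lambda(k,A')|\le\frac{e+o(1)}{k+1}$ coming from Weyl's multiplicative inequality (which only yields $\limsup|w_n|\le e$), and it is precisely the delicate input furnished by the Lidskii theory of Section~\ref{lid}. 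I therefore expect the crux of a clean write-up to be the verification that the normalisation of $A$ controls the modulus of the eigenvalue-block-sum sequence by $1$ up to a $c_0$-perturbation.
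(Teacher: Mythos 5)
Your first two paragraphs are, in substance, the paper's own proof, and they are correct: the paper likewise takes $B_1,B_2$ from Theorem~\ref{non}, passes to $\tau_1,\tau_2$ via Corollary~\ref{BL1}, gets extremality from Theorem~\ref{ext1}, and then bounds $|(B_1-B_2)(\cdot)|$ on a block-sum sequence. Your self-adjoint case is in fact handled more carefully than in the paper: the paper's proof applies \eqref{formula1}, which is stated only for $A\ge0$, to an arbitrary element of the unit ball, and asserts the bound $\|\frac1{\log 2}\{\sum_{k=2^n-1}^{2^{n+1}-2}\mu(k,A)\}_{n\ge0}\|_{l_\infty}\le 1$, whose $n=0$ entry can actually equal $\frac1{\log 2}>1$ (only the $\limsup$ is $\le 1$). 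Your decomposition $A=A_+-A_-$ with $\mu(k,A_\pm)\le\mu(k,A)$, together with truncation modulo $c_0$ (legitimate because Banach limits annihilate $c_0$), repairs both points and settles every self-adjoint $A$ with $\|A\|_{\mathcal L_{1,\infty}}\le1$.

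The genuine gap is your third step, exactly where you place the ``crux'': the claim that $\|A\|_{\mathcal L_{1,\infty}}\le1$ forces $\limsup_n|w_n|\le1$, where $w_n=\frac1{\log 2}\sum_{k=2^n-1}^{2^{n+1}-2}\lambda(k,A)$. This claim is false, so the proposed route cannot be completed. Eigenvalues are tied to singular values only through Weyl's logarithmic majorisation, and that constraint tolerates block sums approaching $\frac e2>\log 2$. Concretely, let $m_j\uparrow\infty$ rapidly, set $N_0=0$, $N_j=2^{m_j+1}-1$, $d_j=N_j-N_{j-1}$ and $c_j=(N_{j-1}!/N_j!)^{1/d_j}$. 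The geometric mean of the $k$ largest of the numbers $\{1/i\}_{N_{j-1}<i\le N_j}$ decreases in $k$ down to $c_j$, so Horn's theorem (the converse of Weyl's inequality) provides a $d_j\times d_j$ matrix $T_j$ whose singular values are $\{1/i\}_{N_{j-1}<i\le N_j}$ and whose eigenvalues all equal $c_j$. The operator $A=\bigoplus_j T_j$ then satisfies $\mu(k,A)=\frac1{k+1}$, hence $\|A\|_{\mathcal L_{1,\infty}}=1$, and its eigenvalue sequence is forced (all block-$j$ eigenvalues equal $c_j\sim e/N_j$, strictly decreasing in $j$): it is constantly $c_j$ on positions $N_{j-1},\dots,N_j-1$. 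The dyadic block $[2^{m_j}-1,2^{m_j+1}-2]$ therefore consists of $2^{m_j}$ copies of $c_j$, giving $w_{m_j}\to\frac{e}{2\log 2}\approx 1.96$. Thus $w$ is not a $c_0$-perturbation of any sequence of sup-norm at most $1$, and Theorem~\ref{non} cannot be brought to bear.

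What does survive for a general element of the unit ball is only the weaker information $\sup_{B\in\mathfrak B}|B(w)|\le1$ (combine Theorem~\ref{Lidskii} with the isometry of Theorem~\ref{i}); to finish along your lines one would need a strengthening of Theorem~\ref{non} in which the hypothesis $\|x\|_{l_\infty}\le1$ is replaced by $\sup_{B\in\mathfrak B}|B(x)|\le1$, and no such statement is proved in the paper or in your proposal. To be fair, the paper's own one-line proof does not cross this hurdle either: as written it is justified only for positive $A$, since for $A$ that is not positive the trace is computed from eigenvalues (Theorem~\ref{Lidskii}), not from $\mu(A)$. But that observation does not close the hole in your argument; the non-self-adjoint case remains open in your write-up, and the specific claim you propose to verify for it is not merely unproved but false.
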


\begin{proof}
By Theorem~\ref{non} there exist $B_1, B_2 \in ext(\mathfrak B)$ such that 
$|B_1x - B_2x| < 2$ for every $x \in l_\infty$ with $\|x\|_{l_\infty} \le 1$.

Let $\tau_1, \tau_2 \in \mathcal {PT}$ be traces corresponding to $B_1$ and $B_2$ (by Theorem~\ref{one-to-one1}).
Theorem~\ref{ext1} yields $\tau_1, \tau_2 \in ext(\mathcal {PT})$.

By formula~\eqref{formula1} we obtain
\begin{align*}
|\tau_1(A) - \tau_2(A)|= \left| (B_1-B_2)\left(\frac1{\log 2} \left\{ \sum_{k=2^n-1}^{2^{n+1}-2} \mu(k,A) \right\}_{n \ge 0} \right)\right|.
\end{align*}

Since 
$$\left\| \frac1{\log 2} \left\{ \sum_{k=2^n-1}^{2^{n+1}-2} \mu(k,A) \right\}_{n \ge 0} \right\|_{l_\infty} \le 1$$
for every $\|A\|_{\mathcal L_{1,\infty}} \le 1$,
the assertion follows from Theorem~\ref{non}.

\end{proof}

It follows from~\cite{Talagrand} (see also~\cite{KM,Nillsen}) that the set $ext(\mathfrak B)$ is not $\sigma(l_\infty^*, l_\infty)$-closed.
As a straightforward consequence of Theorem~\ref{ext1} we obtain that the set $ext(\mathcal {PT})$  is not $\sigma(\mathcal L_{1,\infty}^*, \mathcal L_{1,\infty})$-closed.
It is known that the set $\mathfrak B$ is convex and $\sigma(l_\infty^*, l_\infty)$-compact. Hence, by the Krein-Milman theorem we have
$$\mathfrak B = \overline{conv (ext(\mathfrak B))}^{\sigma(l_\infty^*, l_\infty)},$$
where $conv$ denotes the convex hull of the set. 
It is also easy to see that the set $\mathcal {PT}$ is convex and $\sigma(\mathcal L_{1,\infty}^*, \mathcal L_{1,\infty})$-compact
and by the Krein-Milman theorem
$$\mathcal {PT} = \overline{conv (ext(\mathcal {PT}))}^{\sigma(\mathcal L_{1,\infty}^*, \mathcal L_{1,\infty})},$$
that is for every $\tau \in \mathcal {PT}$ there is a net $\tau_\alpha \in conv (ext(\mathcal {PT}))$ 
such that $\tau_\alpha \mathop{\longrightarrow}\limits^{w^*} \tau.$
The following theorem show that the previous statement fails if one changes nets to sequences.
\begin{thm}\label{as_far1}
 Let $\tau \in \mathcal D_M$. 
 The set $conv(ext(\mathcal {PT}))$ does not contain a sequence which is $\sigma(\mathcal L_{1,\infty}^*, \mathcal L_{1,\infty})$-convergent to $\tau$.
\end{thm}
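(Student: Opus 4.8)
The plan is to push the whole question across the isometric order isomorphism $i(\tau)=\tau\circ{\rm diag}\circ D$ of Theorem~\ref{i}. Since $i$ is linear, it maps $conv(ext(\mathcal{PT}))$ bijectively onto $conv(ext(\mathfrak B))$ (matching extreme points by Theorem~\ref{ext1} and matching $\mathcal{PT}$ with $\mathfrak B$ by Corollary~\ref{BL1}), and it carries $\mathcal D_M$ onto the set of Cesàro-invariant Banach limits $\{B=B\circ C\}$ (Theorem~\ref{M-inv}). Moreover $i$ is sequentially continuous from $\sigma(\mathcal L_{1,\infty}^*,\mathcal L_{1,\infty})$ to $\sigma(l_\infty^*,l_\infty)$: for $x\in l_\infty$ one has $i(\tau_n)(x)=\tau_n({\rm diag}(Dx))$ with ${\rm diag}(Dx)\in\mathcal L_{1,\infty}$, so $\tau_n\to\tau$ weakly$^*$ forces $i(\tau_n)\to i(\tau)$ weakly$^*$. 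Hence it suffices to prove the sequence-space statement: if $B$ is a Banach limit with $B=B\circ C$, then no sequence $f_n\in conv(ext(\mathfrak B))$ converges to $B$ in $\sigma(l_\infty^*,l_\infty)$.

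Suppose such a sequence exists. Only countably many distinct extreme points $\{G_i\}_{i\ge1}$ occur in the finite convex representations of the $f_n$; by Theorem~\ref{ext1} each $G_i=i(\tau_i)$ for distinct $\tau_i\in ext(\mathcal{PT})$, so since $i$ is an isometry, Theorem~\ref{4.11} yields $\|\sum_i c_i G_i\|_{l_\infty^*}=\sum_i|c_i|$ for all $\{c_i\}\in l_1$. Thus their closed linear span $X$ is isometric to $l_1$, and each $f_n$ corresponds to a coefficient vector $c^{(n)}\ge0$ with $\|c^{(n)}\|_1=1$. The key step is to upgrade weak$^*$ to norm convergence: because $l_\infty$ is a Grothendieck space, the weak$^*$-convergent sequence $f_n$ is in fact weakly convergent in $l_\infty^*$, necessarily to $B$; since $X$ is norm-closed it is weakly closed, so $B\in X$ and $f_n\to B$ weakly inside $X\cong l_1$; finally the Schur property of $l_1$ converts this into norm convergence, so $f_n\to B$ in norm.

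It follows that $B=\sum_i b_i G_i$ with $b_i\ge0$ and $\sum_i b_i=1$, i.e. $B$ lies in the norm-closure of $conv(ext(\mathfrak B))$. To obtain a contradiction I would establish the separation statement that a Cesàro-invariant Banach limit is at a positive (in fact maximal) distance from $conv(ext(\mathfrak B))$: for every finite convex combination $\phi=\sum_{j\le k}\lambda_j G_j$ of distinct extreme points there is a norm-one test sequence $x$ on which $\phi(x)$ is close to $-1$ while $B(x)$ is close to $1$. Here one exploits $B=B\circ C^m$ for every $m$: the isometric $l_1$-independence of the $G_j$ (again Theorem~\ref{4.11}) permits the choice of a highly lacunary $x$ making all $G_j(x)$ near $-1$, while a slowly varying perturbation that is invisible to the finitely many $G_j$ but preserved under iterated Cesàro averaging drives $B(x)=B(C^m x)$ towards $1$. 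This forces $\|B-\phi\|_{l_\infty^*}\ge c>0$ uniformly over $\phi$, contradicting $B\in\overline{conv(ext(\mathfrak B))}^{\,\|\cdot\|}$.

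The formal part of the argument is the transfer through $i$ together with the Grothendieck–Schur upgrade, which reduces matters to a norm question. The genuine obstacle is the separation statement of the last paragraph, namely the explicit construction of the norm-one test sequence that simultaneously pushes every member of a finite family of extreme Banach limits toward $-1$ and, through its Cesàro smoothing, pushes the $C$-invariant limit $B$ toward $+1$. This is precisely where the defining feature of $\mathcal D_M$ — invariance under $C$, rather than mere membership in $\mathcal D$ or $\mathcal C$ — is used in an essential way, and it is the step I expect to require the most care.
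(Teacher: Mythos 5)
Your transfer step is exactly the paper's: the paper also passes to Banach limits via $B=\tau\circ{\rm diag}\circ D$, $B_n=\tau_n\circ{\rm diag}\circ D$ (Corollary~\ref{BL1}, Theorems~\ref{ext1} and~\ref{M-inv}) and the observation that $|B_n(x)-B(x)|=|\tau_n({\rm diag}(Dx))-\tau({\rm diag}(Dx))|\to 0$ because ${\rm diag}(Dx)\in\mathcal L_{1,\infty}$. At that point the paper simply invokes \cite[Theorem 12]{SS1}, which \emph{is} the sequence-space statement you then set out to prove from scratch. Your Grothendieck--Schur reduction of that statement is correct and is a genuinely different (and instructive) route: $l_\infty$ is a Grothendieck space, so $f_n\to B$ weakly in $l_\infty^*$; the closed linear span of the countably many extreme points $G_i$ occurring in the $f_n$ is isometric to $l_1$ (Theorem~\ref{4.11} transported by the isometry of Theorem~\ref{i}, or \cite[Theorem 4]{SS1}); that span is convex and norm-closed, hence weakly closed, so $B$ lies in it; and the Schur property of $l_1$ upgrades weak to norm convergence, forcing $B=\sum_i b_iG_i$ with $b_i\ge 0$, $\sum_i b_i=1$.

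The gap is your final ``separation statement''. What you need there is that a Ces\`aro-invariant Banach limit cannot lie in the norm closure of $conv\{G_i\}$, equivalently cannot be written as a countable convex combination of extreme points of $\mathfrak B$. Your sketch --- a lacunary $x$ making all $G_j(x)$ near $-1$, plus a ``slowly varying perturbation invisible to the finitely many $G_j$'' that drives $B(x)=B(C^mx)$ toward $1$ --- is not an argument: nothing in it explains how a single perturbation can be made simultaneously invisible to finitely many \emph{arbitrary} extreme Banach limits while keeping $\|x\|_{l_\infty}\le1$ and prescribing the value of $B$; this is precisely the hard content of \cite[Theorem 13]{SS1}, a nontrivial theorem in its own right. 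Fortunately you do not need to reprove it: the fact you need is already available in the paper, namely the strengthening of Chou's theorem \cite[Proposition 3.2]{CC} quoted just before Theorem~\ref{as_far2} (no Ces\`aro-invariant Banach limit is a convex combination of countably many extreme points of $\mathfrak B$), or Theorem~\ref{as_far2} itself (distance $2$ from $conv\{\tau_k\}$), and neither of these depends on the statement being proved, so there is no circularity. Citing either of them after your Schur step closes the argument; as written, however, the decisive step of your proof is missing.
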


\begin{proof}
 Assume the contrary, that is, there exists a sequence \\$\tau_n \in conv(ext(\mathcal {PT}))$ such that 
 $$\tau_n \mathop{\longrightarrow}\limits^{w^*} \tau.$$
 
 Corollary~\ref{BL1} yields that the functionals $B=\tau \circ {\rm diag} \circ D$ and $B_n=\tau_n \circ {\rm diag} \circ D$ are Banach limits.
 By Theorem~\ref{ext1} we also have that $B_n \in conv(ext \mathfrak B)$.
 
 For every $x\in l_\infty$ we have
 $$|B_n(x)-B(x)| = |\tau_n ({\rm diag} (Dx))-\tau ({\rm diag} (Dx))|.$$
 Since ${\rm diag} (Dx) \in \mathcal L_{1,\infty}$, the right-hand side tends to zero.
 Hence,
 $$B_n \mathop{\longrightarrow}\limits^{w^*} B$$ which contradicts the result of~\cite[Theorem 12]{SS1} (in the view of Theorem~\ref{M-inv} below).
\end{proof}

Answering the question of R.~G.~Douglas, C.~Chou proved that there exists a Banach limit which is not representable 
as a convex linear combination of countably many elements from ${\rm ext} \mathfrak B$~\cite[Proposition 3.2]{CC}.
This result was strengthened in~\cite{SS1}, by showing that any Ces\`aro invariant Banach limit ($B=B\circ C$) has this property.
The following theorem is an analogue of this result for Dixmier traces generated by $M$-invariant extended limits (that is, for traces from $\mathcal D_M$).
% The following result shows that Dixmier traces generated by $M$-invariant extended limits are located 
% ``as far as possible'' from the extreme points of the set of all positive normalised traces on $\mathcal L_{1,\infty}$.
\begin{thm}\label{as_far2}
 Let $\tau_k \in ext(\mathcal {PT})$, $k \in\mathbb N$.
 For every $\tau \in \mathcal D_M$
 we have
 $${\rm dist} (\tau, conv \{ \tau_k \}) =2.$$
\end{thm}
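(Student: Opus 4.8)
The plan is to transfer the entire problem from traces on $\mathcal L_{1,\infty}$ to Banach limits on $l_\infty$ by means of the isometry $i$ of Theorem~\ref{i}, and then to invoke the corresponding quantitative result for Banach limits from~\cite{SS1}. First I would dispose of the trivial upper bound: since $\mathcal{PT}$ is convex and every element of $conv\{\tau_k\}$ is again a positive normalised trace, it lies on the unit sphere of $\mathcal L_{1,\infty}^*$; hence $\|\tau-\sigma\|_{\mathcal L_{1,\infty}^*}\le\|\tau\|+\|\sigma\|=2$ for every $\sigma\in conv\{\tau_k\}$, so that ${\rm dist}(\tau,conv\{\tau_k\})\le 2$. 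All the content lies in the reverse inequality.

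For the reverse inequality I would set $B:=i(\tau)=\tau\circ{\rm diag}\circ D$ and $B_k:=i(\tau_k)$. By Theorem~\ref{M-inv} the hypothesis $\tau\in\mathcal D_M$ is equivalent to $B$ being a Ces\`aro invariant Banach limit, that is $B=B\circ C$; by Theorem~\ref{ext1} each $B_k$ is an extreme point of $\mathfrak B$. Since $i$ is a linear bijection onto the continuous $S$-invariant functionals, it carries finite convex combinations to finite convex combinations, so $i(conv\{\tau_k\})=conv\{B_k\}$; and since $i$ is an isometry from $\mathcal L_{1,\infty}^*$ onto $l_\infty^*$ (Theorem~\ref{i}), distances are preserved:
$${\rm dist}(\tau,conv\{\tau_k\})={\rm dist}(B,conv\{B_k\}),$$
the distance on the right being measured in $l_\infty^*$.

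It then remains to show that a Ces\`aro invariant Banach limit sits at distance exactly $2$ from the convex hull of any countable family of extreme points of $\mathfrak B$. This is precisely the analogous statement for Banach limits established in~\cite{SS1} (the quantitative strengthening of Chou's non-representability theorem~\cite[Proposition 3.2]{CC}, already used in the proof of Theorem~\ref{as_far1}). Combining this with the displayed equality of distances and the elementary upper bound yields ${\rm dist}(\tau,conv\{\tau_k\})=2$, as claimed.

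I expect the only real obstacle here to be bookkeeping rather than analysis: one must be careful that $i$ genuinely maps $conv\{\tau_k\}$ onto $conv\{B_k\}$ (immediate from the linearity of $i$ restricted to finite convex combinations) and that the norm in which the distance is computed is exactly the one preserved by $i$. The substantive analytic work---that Ces\`aro invariance forces the maximal distance---is entirely encapsulated in the cited result from~\cite{SS1}, so no new estimate is required, and the argument is a clean transfer along the isometry $i$.
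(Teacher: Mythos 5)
Your proposal is correct and follows essentially the same route as the paper: transfer the problem along the isometry $i$ of Theorem~\ref{i}, identify $\tau\in\mathcal D_M$ with a Ces\`aro invariant Banach limit via Theorem~\ref{M-inv} and the $\tau_k$ with extreme points of $\mathfrak B$ via Theorem~\ref{ext1}, and then invoke the corresponding distance result for Banach limits in~\cite[Theorem 13]{SS1}. The only cosmetic caveat is that $i$ is an isometry on the subspace of continuous traces (not on all of $\mathcal L_{1,\infty}^*$), which suffices since $\tau$ and the elements of $conv\{\tau_k\}$, together with their differences, all lie in that subspace.
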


\begin{proof} 
 The assertion follows from the fact that lattices $\mathcal {PT}$ and $\mathfrak B$ are order isomorphic and isometric
 (Theorem~\ref{i}) and the corresponding result for Banach limits~\cite[Theorem 13]{SS1} (in the view of Theorem~\ref{M-inv} below).
\end{proof}

\section{Subclasses of positive normalised traces}\label{subclasses}

In this section we describe various classes of Dixmier traces on $\mathcal L_{1,\infty}$, which are of importance in noncommutative geometry (see e.g.~\cite{BF,CPRS1,CPS2,SZ},
in terms of the functional $\theta$ from~\eqref{formula1}.

\begin{dfn}\label{Dix_M}
 A trace $\tau$ on $\mathcal M_{1,\infty}$ is called a Dixmier trace if
it is a linear extension of the weight
$${\rm Tr}_\omega (A):= \omega \left( t \mapsto \frac{1}{\log(1+t)} \int_0^t \mu(s,A) ds \right), \quad 0\le A\in \mathcal M_{1,\infty}, $$
for some dilation invariant extended limit $\omega$ on $L_\infty$.
\end{dfn}

% \begin{rem}
% The construction from the latter theorem is equivalent to that in~\eqref{Dix_original_construction}, 
% since the generalized singular value function has the following form
% $$\mu(t,A)=\sum_{n=0}^\infty \mu(n,A) \chi_{(n,n+1]}(t),\ t>0.$$
% \end{rem}

This definition is equivalent to the original construction~\eqref{Dix_original_construction} of Dixmier.  

\begin{dfn}\label{Dix_L}
A trace $\tau$ on $\mathcal L_{1,\infty}$ is called a Dixmier trace if 
it is a restriction of a Dixmier trace on $\mathcal M_{1,\infty}$, 
that is linear extension of a weight
$${\rm Tr}_\omega (A):= \omega \left( t \mapsto \frac{1}{\log(1+t)} \int_0^t \mu(s,A) ds \right), \quad 0\le A\in \mathcal L_{1,\infty}, $$
for some dilation invariant extended limit $\omega$ on $L_\infty$.
\end{dfn}

Despite the fact (Theorem~\ref{new}) that not every positive trace on $\mathcal L_{1,\infty}$ extends to a positive trace on $\mathcal M_{1,\infty}$,
it follows directly from Definitions~\ref{Dix_M} and~\ref{Dix_L} that 
every Dixmier trace on $\mathcal L_{1,\infty}$ extends to a Dixmier trace $\tau$ on $\mathcal M_{1,\infty}$.

In Section~\ref{sf} we have discussed fully symmetric functionals on $l_{1,\infty}$.
Now we define fully symmetric functionals on $\mathcal L_{1,\infty}$ 
and study their relation to Dixmier traces (on $\mathcal L_{1,\infty}$).

\begin{dfn}
 A linear functional $\varphi$ on $\mathcal L_{1,\infty}$ is called fully symmetric if
$\varphi(A) \le \varphi(B)$ for every $0\le A,B \in \mathcal L_{1,\infty}$ such that $A\prec\prec B$, 
that is $\sum_{k=0}^n \mu(k,A) \le \sum_{k=0}^n \mu(k,B)$ for every $n\ge 0$.
\end{dfn}

The following interesting result is proved in~\cite{KSS}.

\begin{thm}\label{41}
 A trace on $\mathcal M_{1,\infty}$ is a Dixmier trace if and only if it is a normalised fully symmetric functional on $\mathcal M_{1,\infty}$.
\end{thm}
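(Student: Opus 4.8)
The plan is to establish both implications after passing to the commutative model. Since every trace on $\mathcal M_{1,\infty}$ is unitarily invariant, the result of~\cite{DFWW} gives $\tau(A)=\tau(\mathrm{diag}\,\mu(A))$, a fully symmetric functional depends on $A$ only through $\mu(A)$ by its very definition, and a Dixmier trace is manifestly a function of $\mu(A)$. Hence I would reduce the whole statement to an equivalence, on the commutative space $M_{1,\infty}(0,\infty)$, between normalised fully symmetric functionals $\psi$ and the functionals $x\mapsto\omega(F(x))$, where $F(x)(t):=\frac{1}{\log(1+t)}\int_0^t x^*(s)\,ds$ and $\omega$ is a dilation invariant extended limit on $L_\infty(0,\infty)$. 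The direction ``Dixmier $\Rightarrow$ fully symmetric'' is then routine: if $x\prec\prec y$ then $F(x)\le F(y)$ pointwise, so positivity of $\omega$ yields $\omega(F(x))\le\omega(F(y))$, while for the normalising element $h$ with $h^*(s)=(1+s)^{-1}$ one has $F(h)\equiv 1$, giving $\omega(F(h))=1$.

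For the converse I would first prove the sandwich estimate
\[
\liminf_{t\to\infty}F(x)(t)\ \le\ \psi(x)\ \le\ \limsup_{t\to\infty}F(x)(t),\qquad 0\le x\in M_{1,\infty}.
\]
Both bounds follow from full symmetry by comparing $x$ with scalar multiples of $h$: for any $\varepsilon>0$ the definition of $\limsup$ gives $\int_0^t x^*\le(\limsup F(x)+\varepsilon)\int_0^t h^*$ for all large $t$, so after correcting on a finite interval (an $L_1$-perturbation, on which every symmetric functional vanishes) one has $x\prec\prec(\limsup F(x)+\varepsilon)h$ and hence $\psi(x)\le(\limsup F(x)+\varepsilon)\psi(h)$; the lower bound is symmetric. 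This already shows all normalised fully symmetric functionals agree on measurable operators, but does not yet produce a single $\omega$.

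The substance of the theorem is the linearisation: producing one dilation invariant extended limit $\omega$ with $\omega\circ F=\psi$ on the whole positive cone. Here I would exploit the asymptotic additivity of $F$ coming from the majorisation inequalities behind Lemma~\ref{aux}(ii): for $x,y\ge 0$,
\[
\int_0^t(x+y)^*\ \le\ \int_0^t x^*+\int_0^t y^*\ \le\ \int_0^{2t}(x+y)^*,
\]
which, after dividing by $\log(1+t)$ and using $\log(1+2t)/\log(1+t)\to1$ together with dilation invariance, forces $\omega(F(x))+\omega(F(y))=\omega(F(x+y))$ for \emph{every} dilation invariant extended limit $\omega$. Consequently $F$ is additive and positively homogeneous modulo the subspace $C_0$ and modulo dilations, so I would define $\omega$ on $\mathrm{span}\{F(x):x\ge0\}$ by $\omega(F(x)):=\psi(x)$, extend it by the Hahn--Banach theorem dominated by the sublinear, dilation invariant envelope $p(g):=\sup\{\omega_0(g):\omega_0\text{ a dilation invariant extended limit}\}$ (which, by the formula of~\cite{S} used in Lemma~\ref{max}, is the logarithmic Ces\`aro upper mean of $g$), and finally average the extension over the dilation semigroup $(0,\infty)$ --- amenable as $(\mathbb R,+)$ in logarithmic coordinates --- to render it dilation invariant without changing its values on the $F(x)$.

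The main obstacle is precisely this last construction: checking that $\omega(F(x)):=\psi(x)$ is well defined on $\mathrm{span}\{F(x)\}$ and is dominated by $p$, i.e.\ the refined estimate $\psi(x)\le p(F(x))$ that strengthens the crude $\limsup$ bound above. This is the only place where full symmetry (rather than mere positivity) enters essentially, through the two-sided majorisation inequality; once it is in hand the Hahn--Banach extension and the dilation averaging are standard, and reversing the commutative reduction returns the statement for $\mathcal M_{1,\infty}$.
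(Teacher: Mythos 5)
Your overall strategy is, in outline, the right one: note first that the paper itself does not prove Theorem~\ref{41} at all, but quotes it from~\cite{KSS}, and the argument there is of exactly the shape you describe (reduction to the commutative Marcinkiewicz space, the routine implication from Dixmier to fully symmetric, and a Hahn--Banach extension dominated by the upper envelope over dilation invariant extended limits). Your commutative reduction via~\cite{DFWW}, the easy direction, and the sandwich estimate are all sound.

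There is, however, a genuine gap at the step you yourself flag as ``the main obstacle'', and the difficulty is not merely left unexecuted --- it is misidentified. What the Hahn--Banach argument needs is domination of the prospective functional on all of ${\rm span}\{F(x):x\ge0\}$; its essential case (and also exactly what well-definedness amounts to) is the \emph{two-variable} inequality $\psi(u)-\psi(v)\le p\bigl(F(u)-F(v)\bigr)$ for all $0\le u,v$. You instead name as the key point the one-variable bound $\psi(x)\le p(F(x))$, but that bound is no stronger than the sandwich estimate you already have: for $0\le x$ one has $p(F(x))=\limsup_{t\to\infty}F(x)(t)$, because $t\mapsto\int_0^t x^*$ is increasing, hence $F(x)(s)\ge F(x)(t)\log(1+t)/\log(1+s)\ge(1+2\delta)^{-1}F(x)(t)$ for $s\in[t,t^{1+\delta}]$, and dilation invariant extended limits built from averages over such forward windows (whose multiplicative length tends to infinity) recover the $\limsup$. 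Moreover, the two-variable inequality cannot be derived formally from the one-variable one: sublinearity gives $p(F(u)-F(v))\ge p(F(u))-p(F(v))\ge\psi(u)-p(F(v))$, and full symmetry supplies only $\psi(v)\le p(F(v))$, which is the wrong direction to close the chain; forcing it closed would require $\psi(v)=\limsup_t F(v)(t)$ for every $v$, i.e.\ uniqueness of the normalised fully symmetric functional, which is false. Controlling $\psi$ on \emph{differences} by the dilation-averaged behaviour of $F(u)-F(v)$ --- a function which, unlike $F(x)$ for $x\ge0$, genuinely oscillates --- is where full symmetry must be exploited through a real construction (in~\cite{KSS}, a splicing of $v^*$ and multiples of $h$ at suitable scales so as to submajorise $u$ up to terms annihilated by $\psi$), and your proposal contains no argument for it. Two minor remarks: any functional dominated by $p$ is automatically dilation invariant, since $p(\pm(g-\sigma_\beta g))=0$ for every $g$ and $\beta$, so your final averaging step is redundant; and full symmetry is already used in the sandwich estimate, so it is not accurate that it enters only at the last step.
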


A natural question arises: Does a similar result hold for the ideal $\mathcal L_{1,\infty}$?

In Theorem~\ref{fs_ext} we show that every fully symmetric functional on $\mathcal L_{1,\infty}$
extends to a fully symmetric functional on $\mathcal M_{1,\infty}$.
Using this powerful result we show below that the class of Dixmier traces on $\mathcal L_{1,\infty}$ 
coincides with the class of all normalised fully symmetric functionals on $\mathcal L_{1,\infty}$.

We shall use the classical G.~G.~Lorentz and T.~Shimogaki result~\cite[Theorem 1]{LorShi}.

\begin{thm}\label{Lor-Shi}
 If $g_1, g_2, f$ are positive locally integrable functions such that $g_1 +g_2 \prec \prec f$, then  
there exist positive functions $f_1$ and $f_2$
such that $f =f_1+f_2$ and $g_1 \prec \prec f_1$, $g_2 \prec \prec f_2$.
\end{thm}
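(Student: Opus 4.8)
The plan is to translate everything into partial integrals of decreasing rearrangements and then to recognise the desired decomposition as a transportation (feasibility) problem. Throughout write $F(t)=\int_0^t f^*$ and $G_i(t)=\int_0^t g_i^*$, so that $g\prec\prec f$ means $\int_0^t g^*\le\int_0^t f^*$ for all $t>0$; the hypothesis reads $\int_0^t(g_1+g_2)^*\le F(t)$, and the goal is a pointwise splitting $f=f_1+f_2$ with $G_i(t)\le\int_0^t f_i^*$. First I would reduce to the case $f=f^*$. Since submajorisation depends on $f$ only through $f^*$, the hypothesis is unchanged on replacing $f$ by $f^*$; and once $f^*=h_1+h_2$ with $g_i\prec\prec h_i$ is known, I pick a measure-preserving transformation $\sigma$ of $(0,\infty)$ with $f=f^*\circ\sigma$ (decreasing rearrangement on a non-atomic space is realised by such a $\sigma$) and set $f_i:=h_i\circ\sigma$. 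Then $f_1+f_2=(h_1+h_2)\circ\sigma=f$ pointwise, while each $f_i$ is equimeasurable with $h_i$, so $f_i^*=h_i^*$ and $g_i\prec\prec f_i$. Hence it suffices to treat decreasing $f$.

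The key is to distil from the hypothesis the family of \emph{feasibility inequalities}
\[
G_1(t_1)+G_2(t_2)\le F(t_1+t_2),\qquad t_1,t_2\ge0,
\]
and to show that they are equivalent to solvability. They follow from the $K$-functional identity $\int_0^t g^*=\inf_{\lambda\ge0}\bigl(\lambda t+\int(g-\lambda)_+\bigr)$ together with the elementary pointwise bound $(g_1-\lambda)_++(g_2-\lambda)_+\le(g_1+g_2-\lambda)_+$, valid for $g_1,g_2,\lambda\ge0$: for each $\lambda$ the identity gives $G_1(t_1)+G_2(t_2)\le\lambda(t_1+t_2)+\int\bigl((g_1-\lambda)_++(g_2-\lambda)_+\bigr)\le\lambda(t_1+t_2)+\int(g_1+g_2-\lambda)_+$, and minimising over $\lambda$ yields $G_1(t_1)+G_2(t_2)\le\int_0^{t_1+t_2}(g_1+g_2)^*\le F(t_1+t_2)$, the last step being the hypothesis. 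Conversely these inequalities are necessary: for any admissible splitting, choosing sets $E_i$ of measure $t_i$ with $\int_{E_i}f_i=\int_0^{t_i}f_i^*$ and using $f_i\le f$ together with $f_1+f_2=f$ gives $G_1(t_1)+G_2(t_2)\le\int_{E_1}f_1+\int_{E_2}f_2\le\int_{E_1\cup E_2}f\le F(t_1+t_2)$. Thus the feasibility inequalities are exactly the obstruction to the decomposition.

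The remaining, and hardest, step is to \emph{build} $f_1,f_2$ from these inequalities. I would approximate the decreasing $f$ by simple functions and view the finite problem as a two-colouring of the region under the graph of $f$, the requirement being that the decreasing rearrangement of each colour dominate $g_i^*$ in partial sums. This is a transportation (assignment) feasibility problem whose cut/defect conditions are precisely the inequalities $G_1(t_1)+G_2(t_2)\le F(t_1+t_2)$, so a feasible colouring exists by the max-flow--min-cut theorem (equivalently, a continuous Hall-type marriage argument). One then passes to the limit: the order interval $\{(f_1,f_2):f_i\ge0,\ f_1+f_2=f\}$ is weakly compact (Dunford--Pettis, via $0\le f_i\le f$), and each constraint set $\{h:g_i\prec\prec h\}$ is weakly closed and convex because $h\mapsto\int_0^t h^*=\inf_\lambda(\lambda t+\int(h-\lambda)_+)$ is concave and weakly upper semicontinuous, so the limiting pair still satisfies $g_i\prec\prec f_i$. \textbf{The main obstacle} is exactly this sufficiency: proving that the feasibility inequalities guarantee a feasible splitting requires the careful transportation argument together with the measure-theoretic bookkeeping in the approximation and limit, whereas the reduction to decreasing $f$ and the derivation of the inequalities are comparatively routine.
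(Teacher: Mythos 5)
The paper does not prove this statement at all: it is imported verbatim as the classical Lorentz--Shimogaki theorem (cited there as \cite[Theorem 1]{LorShi}), so your argument must stand entirely on its own, and it does not. The decisive error is in your limit passage. You claim that $h\mapsto\int_0^t h^*$ is concave and weakly upper semicontinuous, so that each constraint set $\{h:g_i\prec\prec h\}$ is convex and weakly closed. Both claims are false, and in the direction that matters: since $\int_0^t h^*=\sup_{|E|=t}\int_E h$ is a supremum of linear functionals, it is \emph{convex} and weakly \emph{lower} semicontinuous, and its superlevel sets need be neither convex nor weakly closed. Concretely, for $g=2\chi_{(0,1)}$ both $h_1=2\chi_{(0,1)}$ and $h_2=2\chi_{(1,2)}$ satisfy $g\prec\prec h_i$, but their midpoint $\chi_{(0,2)}$ does not (at $t=1$ one has $1<2$), so the constraint set is not convex; and if $A_n\subset(0,1)$ are sets of measure $\tfrac12$ that equidistribute in $(0,1)$, then $\chi_{A_n}\to\tfrac12\chi_{(0,1)}$ weakly in $L_1(0,1)$, each $\chi_{A_n}$ satisfies $\chi_{(0,1/2)}\prec\prec\chi_{A_n}$, yet the weak limit fails this (at $t=\tfrac12$ one has $\tfrac14<\tfrac12$), so the constraint set is not weakly closed. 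Constraints of the form $h\prec\prec g$ do survive weak limits; your constraints $g_i\prec\prec f_i$ point the other way, so the approximation-and-limit scheme collapses exactly where you need it.

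Beyond this, the step you yourself identify as the heart of the matter --- that the inequalities $G_1(t_1)+G_2(t_2)\le F(t_1+t_2)$ force the existence of a splitting --- is precisely the content of the theorem and is never proved: the condition $g_i\prec\prec f_i$ fixes no marginals and no bipartite structure (it is a supremum over all sets of prescribed measure), so the finite problem is not literally a transportation problem; you neither set up the discretisation nor verify that its min-cut condition coincides with your inequality family. There is also a gap in the preliminary reduction: the identity $f=f^*\circ\sigma$ with $\sigma$ measure preserving can fail on $(0,\infty)$ whenever a level set $\{f>\lambda\}$ has infinite measure. For instance, if $f=2\chi_A+\chi_B$ with $A\sqcup B=(0,\infty)$ and $|A|=|B|=\infty$, then $f^*\equiv 2$, and no measure-preserving map, even restricted to the support, recovers $f$ from $f^*$; such $f$ are admissible in the statement (positive, locally integrable), so this case needs a separate argument. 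In short, the derivation of the feasibility inequalities via the $K$-functional is correct, but the two supporting pillars (rearrangement realisation, weak-limit argument) are wrong as stated and the central pillar is missing; what you have is a program, not a proof.
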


\begin{thm}\label{fs_ext}
 Every fully symmetric functional on $\mathcal L_{1,\infty}$
extends to a fully symmetric functional on $\mathcal M_{1,\infty}$.
\end{thm}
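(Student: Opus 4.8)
The plan is to realize the extension through an explicit infimum over submajorizing elements and to extract its additivity from the Lorentz--Shimogaki splitting theorem (Theorem~\ref{Lor-Shi}). Since a symmetric functional depends only on singular values and Theorem~\ref{Lor-Shi} is a statement about functions, I would first transfer the problem to the commutative level: a fully symmetric functional $\varphi$ on $\mathcal L_{1,\infty}$ gives the fully symmetric functional $\phi:=\varphi\circ{\rm diag}$ on $l_{1,\infty}$, and operator submajorization $\prec\prec$ corresponds to submajorization of the associated (step) singular value functions, so that Theorem~\ref{Lor-Shi} becomes applicable after the routine identification via $\pi$. The task is thereby reduced to extending $\phi$ to a fully symmetric functional on the commutative counterpart of $\mathcal M_{1,\infty}$, and then lifting the result back to the operator ideal.

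For a positive $f$ in the commutative counterpart of $\mathcal M_{1,\infty}$ I would set
$$\psi(f):=\inf\{\phi(g):0\le g\in l_{1,\infty},\ f\prec\prec g\}.$$
First I would check this is well defined and finite. The admissible set is nonempty: taking $g=\|f\|_{\mathcal M_{1,\infty}}\cdot\{1/(n+1)\}_{n\ge0}\in l_{1,\infty}$, the definition of the Marcinkiewicz norm together with $\sum_{k=0}^n 1/(k+1)\ge\log(n+2)$ gives $f\prec\prec g$, which also yields $0\le\psi(f)\le\|f\|_{\mathcal M_{1,\infty}}\,\phi(\{1/(n+1)\}_{n\ge0})$. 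Positivity and positive homogeneity are immediate, and $f_1\prec\prec f_2$ enlarges the admissible set, so $\psi$ is monotone with respect to submajorization. Agreement with $\phi$ on $l_{1,\infty}$ follows from full symmetry of $\phi$: for $f\in l_{1,\infty}^+$ the choice $g=f$ gives $\psi(f)\le\phi(f)$, while every admissible $g$ satisfies $\phi(f)\le\phi(g)$, whence $\psi(f)=\phi(f)$.

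The heart of the argument is additivity of $\psi$ on the positive cone. For subadditivity, given $g_i\succ\succ f_i$ I may replace each $g_i$ by its decreasing rearrangement $g_i^*$ without changing $\phi(g_i)$ or the relation $f_i\prec\prec g_i$; since a sum of decreasing functions is decreasing, $\int_0^t(f_1+f_2)^*\le\int_0^t f_1^*+\int_0^t f_2^*\le\int_0^t(g_1^*+g_2^*)^*$, so $f_1+f_2\prec\prec g_1^*+g_2^*$, giving $\psi(f_1+f_2)\le\phi(g_1)+\phi(g_2)$ and, after taking infima, $\psi(f_1+f_2)\le\psi(f_1)+\psi(f_2)$. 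Superadditivity is exactly where Theorem~\ref{Lor-Shi} enters: for any admissible $g$ the hypothesis is $f_1+f_2\prec\prec g$, so Lorentz--Shimogaki produces a splitting $g=g_1+g_2$ with $f_1\prec\prec g_1$ and $f_2\prec\prec g_2$ (and $0\le g_i\le g$, hence $g_i\in l_{1,\infty}$); linearity of $\phi$ then gives $\phi(g)=\phi(g_1)+\phi(g_2)\ge\psi(f_1)+\psi(f_2)$, and the infimum over $g$ yields $\psi(f_1+f_2)\ge\psi(f_1)+\psi(f_2)$. Thus $\psi$ is additive and positively homogeneous, hence extends to a linear, monotone, fully symmetric functional.

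Finally I would lift $\psi$ to the operator ideal. By the same commutative-to-noncommutative correspondence used in Theorem~\ref{one-to-one1}(ii) (the results of Dykema--Figiel--Weiss--Wodzicki, cf.~\cite[Theorem 2.7.4]{LSZ}), the fully symmetric functional $\psi$ determines a fully symmetric functional on $\mathcal M_{1,\infty}$ through $B\mapsto\psi(\mu(B))$, which restricts to $\varphi$ on $\mathcal L_{1,\infty}$. I expect the main obstacle to be superadditivity, that is, the correct application of Lorentz--Shimogaki. It is essential to perform the whole construction at the commutative level first: for operators the submajorization defect $\mu(B_1+B_2)\prec\prec\mu(B_1)+\mu(B_2)$ would block a direct splitting of a submajorizing operator, so genuine operator additivity cannot be read off the infimum formula directly and must instead be obtained via the cited noncommutative reduction.
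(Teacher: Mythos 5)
Your construction is correct, and at its core it is the same as the paper's: the paper also extends $\varphi$ by the infimum formula $\varphi'(A):=\inf\{\varphi(B):0\le B\in\mathcal L_{1,\infty},\ A\prec\prec B\}$ on the positive cone of $\mathcal M_{1,\infty}$, and also extracts superadditivity from Theorem~\ref{Lor-Shi}. Where you genuinely differ is in how noncommutativity is neutralised. You descend to $l_{1,\infty}$ at the outset, prove additivity of $\psi$ there (your subadditivity argument, via the fact that a sum of decreasing rearrangements is decreasing, is a pleasant one-liner), and lift back at the end. The paper never leaves the operator side: it proves $\varphi'(A_1+A_2)=\varphi'(A_1\oplus A_2)$ from the sandwich $A_1\oplus A_2\prec\prec A_1+A_2\prec\prec 2\sigma_{1/2}\mu(A_1\oplus A_2)$ together with the dilation property of symmetric functionals, and then applies Lorentz--Shimogaki to the \emph{direct sum}, whose singular value function is the rearrangement of a disjointly supported sum of $\mu(A_1)$ and $\mu(A_2)$, so that the commutative hypothesis of Theorem~\ref{Lor-Shi} is met. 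This shows that your closing claim --- that operator-level additivity ``cannot be read off the infimum formula directly'', so that the commutative detour is essential --- is too strong: replacing $+$ by $\oplus$ is exactly the device that makes the operator-level argument work. Moreover, your route does not really avoid that device. The final lifting step, additivity of $B\mapsto\psi(\mu(B))$ on the positive cone of $\mathcal M_{1,\infty}$, is not covered by the results you cite (DFWW and \cite[Theorem 2.7.4]{LSZ} concern traces on $\mathcal L_{1,\infty}$, and they do give you that the restriction is $\varphi$); for a fully symmetric $\psi$ it is proved precisely by the two-sided submajorization $\mu(B_1\oplus B_2)\prec\prec\mu(B_1+B_2)\prec\prec\mu(B_1)+\mu(B_2)$ combined with $\psi(\mu(B_1\oplus B_2))=\psi(\mu(B_1))+\psi(\mu(B_2))$, i.e.\ by the same direct-sum trick, transplanted to the end of the proof. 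So the two arguments are permutations of one another, and each buys what the other pays for.

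One shared subtlety deserves a word; since the paper glosses over it in exactly the same way, I do not count it as a gap in your proposal. Theorem~\ref{Lor-Shi} is stated for functions on the half-line, while both proofs need its atomic (sequence, equivalently operator) version. Applying it to $\pi(f_1),\pi(f_2),\pi(g)$ produces positive \emph{functions} $h_1+h_2=\pi(g)$ which need not be step functions, and the obvious repair --- averaging each $h_i$ over the intervals $[n,n+1)$ --- can destroy the relation $\pi(f_i)\prec\prec h_i$: for instance $2\chi_{[0,1)}\prec\prec 2\chi_{[0,1/2)}+2\chi_{[1,3/2)}$, yet the interval-averaging of the right-hand side is $\chi_{[0,2)}$, which no longer submajorizes $2\chi_{[0,1)}$. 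The sequence version is nevertheless true and elementary: writing $U_n,V_n,G_n$ for the partial sums of $f_1^*,f_2^*,g^*$ and setting $G'_n:=G_n+\sup_{m\ge n}(U_m-G_m)$, one checks $U_n\le G'_n\le G_n-V_n$ and $0\le G'_{n+1}-G'_n\le g^*_n$, so the increments of $G'$ and of $G-G'$ give the desired splitting of $g^*$. Either this remark, or a reference to the sequence form of the Lorentz--Shimogaki decomposition, should accompany the phrase ``routine identification via $\pi$''.
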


\begin{proof}
 Let $\varphi$ be a fully symmetric functional on $\mathcal L_{1,\infty}$.
 Set
\begin{equation}\label{def}
 \varphi'(A) := \inf \left\{ \varphi(B) : 0\le B \in \mathcal L_{1,\infty}, A \prec \prec B \right\} , \ 0 \le A \in \mathcal M_{1,\infty}.
\end{equation}

It is clear that $\varphi'$ is a positive homogeneous functional on the positive cone of $\mathcal M_{1,\infty}$.
We shall show that $\varphi'$ is additive on the positive cone of $\mathcal M_{1,\infty}$.

By Theorems 3.3.3 and 3.3.4 from~\cite{LSZ} we have
\begin{equation}\label{sum}
A_1 \oplus A_2 \prec \prec A_1 + A_2 \prec \prec 2 \sigma_{1/2} \mu(A_1 \oplus A_2), 
\end{equation}
where $\oplus$ is the direct sum operation defined as in~\cite[Definition 2.4.3]{LSZ} 
and $\sigma_{1/2}$ is a dilation operator. 
Due to the definition of the direct sum operation we have
$$\varphi (B_1 + B_2)=\varphi (B_1 \oplus B_2)$$
every positive $B_1, B_2 \in \mathcal L_{1,\infty}$.

Hence, using the properties of fully symmetric functionals and~\eqref{sum} we infer that
$$\varphi' (A_1 + A_2)=\varphi' (A_1 \oplus A_2).$$
for every positive $A_1, A_2 \in \mathcal M_{1,\infty}$.

It should be pointed out that the Lorentz-Shimogaki's result is ``commutative''
and can not be applied in the noncommutative setting.
However, it can be applied if one changes the sum of operators to their direct sum.

Fix positive $A_1, A_2 \in \mathcal M_{1,\infty}$.
By Lorentz-Shimogaki's result for positive operators $A_1, A_2, B$ such that $A_1 \oplus A_2 \prec \prec B$ 
there exist positive operators $B_1$ and $B_2$
such that $B =B_1+B_2$ and $A_1 \prec \prec B_1$, $A_2 \prec \prec B_2$.

Hence, due to the property of infimum we obtain
\begin{align*}
\varphi'(A_1 + A_2) &=\varphi'(A_1 \oplus A_2)\\
&= \inf \left\{ \varphi(B) : 0\le B \in \mathcal L_{1,\infty}, A_1 \oplus A_2 \prec \prec B \right\}\\
&\ge \inf \left\{ \varphi(B_1+B_2) : 0\le B_1, B_2 \in \mathcal L_{1,\infty}, A_1 \prec \prec B_1, A_2 \prec \prec B_2 \right\}\\
&= \inf \left\{ \varphi(B_1) : 0\le B_1 \in \mathcal L_{1,\infty}, A_1 \prec \prec B_1 \right\}\\
&+\inf \left\{ \varphi(B_2) : 0\le B_2 \in \mathcal L_{1,\infty}, A_2 \prec \prec B_2 \right\}\\
&=\varphi'(A_1) + \varphi'(A_2).
\end{align*}

Now we prove the converse inequality.
% we shall use the following result due to G.~G.~Lorentz and T.~Shimogaki~\cite[Theorem 1]{LorShi}.
% For positive operators such that $A_1 +A_2 \prec \prec B$ there exist positive operators $B_1$ and $B_2$
% such that $B =B_1+B_2$ and $A_1 \prec \prec B_1$, $A_2 \prec \prec B_2$.
% Due to the definition of $\varphi'$~\eqref{def} for every $A_1, A_2 \in \mathcal M_{1,\infty}$ and every $\varepsilon>0$
% there is $0 \le B \in \mathcal L_{1,\infty}$ such that $ A_1 + A_2 \prec \prec B$ and
% $$\varphi(B) \le \varphi'(A_1 + A_2) + \varepsilon.$$

Due to the definition of $\varphi'$~\eqref{def} for every $A_1, A_2 \in \mathcal M_{1,\infty}$ and every $\varepsilon>0$
there are $0 \le B_1, B_2 \in \mathcal L_{1,\infty}$ such that $A_1 \prec \prec B_1$, $A_2 \prec \prec B_2$ and
$$\varphi(B_1) \le \varphi'(A_1) + \varepsilon \ \text{and} \ \varphi(B_2) \le \varphi'(A_2) + \varepsilon.$$

Due to the choice of $B_1, B_2$ and~\eqref{sum}, we obtain
$$A_1 \oplus A_2 \prec \prec B_1 \oplus B_2 \prec \prec B_1 + B_2.$$

Hence,
$$\varphi'(A_1 + A_2) = \varphi'(A_1 \oplus A_2) \le \varphi(B_1 + B_2) \le \varphi'(A_1) + \varphi'(A_2) + 2 \varepsilon.$$

% Again by Lorentz-Shimogaki's result~\cite[Theorem 1]{LorShi} there exist positive operators $B_1$ and $B_2$
% such that $B =B_1+B_2$ and $A_1 \prec \prec B_1$, $A_2 \prec \prec B_2$.
% 
% Hence, $\varphi'(A_1) \le \varphi(B_1)$ and $\varphi'(A_2) \le \varphi(B_2)$. Thus,
% $$\varphi'(A_1) + \varphi'(A_2) \le \varphi(B) \le \varphi'(A_1 + A_2) + \varepsilon.$$

Since $\varepsilon>0$ can be chosen arbitrary small, it follows that 
$$\varphi'(A_1 + A_2) \le \varphi'(A_1) + \varphi'(A_2).$$

Consequently, the functional $\varphi'$ is positive homogeneous and additive on the positive cone of $\mathcal M_{1,\infty}$.
So, it extends to a linear functional of $\mathcal M_{1,\infty}$.
Due to the construction~\eqref{def} $\varphi'$ is a fully symmetric functional on $\mathcal M_{1,\infty}$
which coincides with $\varphi$  on $\mathcal L_{1,\infty}$.
\end{proof}

\begin{cor}\label{fs_cor1}
 The class of all Dixmier traces on $\mathcal L_{1,\infty}$ and that of all normalised fully symmetric functionals on $\mathcal L_{1,\infty}$ coincide.
\end{cor}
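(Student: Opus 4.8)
The plan is to deduce the equality of the two classes from the corresponding statement on $\mathcal M_{1,\infty}$ (Theorem~\ref{41}), transporting functionals between the two ideals by means of Theorem~\ref{fs_ext} in one direction and the restriction clause of Definition~\ref{Dix_L} in the other. Throughout I would use the normalising element $R := {\rm diag}(\{(n+1)^{-1}\}_{n\ge0})$, which lies in $\mathcal L_{1,\infty}$, so that on either ideal normalisation is tested by the single equation $\tau(R)=1$; and I would use that a fully symmetric functional is automatically positive and symmetric, hence a trace by~\cite[Theorem 2.7.4]{LSZ}, which is what allows Theorem~\ref{41} to be applied to it.

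First I would show that every Dixmier trace on $\mathcal L_{1,\infty}$ is a normalised fully symmetric functional. Let $\tau$ be such a trace. As noted directly after Definition~\ref{Dix_L}, $\tau$ is by construction the restriction of a Dixmier trace $\widetilde\tau$ on $\mathcal M_{1,\infty}$, and by Theorem~\ref{41} the functional $\widetilde\tau$ is a normalised fully symmetric functional on $\mathcal M_{1,\infty}$. Restricting back: if $0\le A,B\in\mathcal L_{1,\infty}$ satisfy $A\prec\prec B$, then $A,B\in\mathcal M_{1,\infty}$ as well, and full symmetry of $\widetilde\tau$ gives $\tau(A)=\widetilde\tau(A)\le\widetilde\tau(B)=\tau(B)$; moreover $\tau(R)=\widetilde\tau(R)=1$. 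Hence $\tau$ is a normalised fully symmetric functional on $\mathcal L_{1,\infty}$.

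Conversely, I would show that every normalised fully symmetric functional on $\mathcal L_{1,\infty}$ is a Dixmier trace. Let $\varphi$ be such a functional. By Theorem~\ref{fs_ext} it extends to a fully symmetric functional $\varphi'$ on $\mathcal M_{1,\infty}$, and since $R\in\mathcal L_{1,\infty}$ we have $\varphi'(R)=\varphi(R)=1$, so $\varphi'$ is normalised; being fully symmetric it is also a trace. Theorem~\ref{41} then identifies $\varphi'$ as a Dixmier trace on $\mathcal M_{1,\infty}$, and its restriction to $\mathcal L_{1,\infty}$, which is exactly $\varphi$, is by Definition~\ref{Dix_L} a Dixmier trace on $\mathcal L_{1,\infty}$.

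The two inclusions together yield the asserted coincidence. I expect no substantive obstacle here: the genuine content has already been absorbed into Theorem~\ref{fs_ext} (whose proof rests on the Lorentz--Shimogaki decomposition), and once fully symmetric functionals can be extended from $\mathcal L_{1,\infty}$ to $\mathcal M_{1,\infty}$ while remaining fully symmetric, the corollary reduces to the bookkeeping above. The only points requiring a word of care are the stability of full symmetry under restriction, which is immediate because majorisation between elements of $\mathcal L_{1,\infty}$ is tested inside $\mathcal L_{1,\infty}$, and the preservation of normalisation under both extension and restriction, which is immediate because the normalising operator $R$ belongs to $\mathcal L_{1,\infty}$ and the two functionals agree there.
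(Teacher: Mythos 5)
Your proof is correct and follows essentially the same route as the paper: the substantive direction rests on Theorem~\ref{fs_ext} to extend a normalised fully symmetric functional from $\mathcal L_{1,\infty}$ to $\mathcal M_{1,\infty}$, then invokes Theorem~\ref{41} and the restriction clause of Definition~\ref{Dix_L}, exactly as the paper does. The only cosmetic difference is in the easy direction, where the paper observes full symmetry directly from the defining formula of a Dixmier trace while you deduce it by restricting the $\mathcal M_{1,\infty}$ statement of Theorem~\ref{41}; both are valid, and your explicit check that normalisation is preserved via the element $R\in\mathcal L_{1,\infty}$ is a detail the paper leaves implicit.
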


\begin{proof}
It follows directly from the construction of Dixmier trace (on $\mathcal L_{1,\infty}$) 
that every Dixmier trace is a normalised fully symmetric functional on $\mathcal L_{1,\infty}$.

Conversely, by Theorem~\ref{fs_ext} every normalised fully symmetric functional $\varphi$ on $\mathcal L_{1,\infty}$ 
extends to a normalised fully symmetric functional on $\mathcal M_{1,\infty}$,
that is, due to Theorem~\ref{41}, to Dixmier trace, say ${\rm Tr}_\omega$, on $\mathcal M_{1,\infty}$.
The restriction of the Dixmier trace ${\rm Tr}_\omega$ to $\mathcal L_{1,\infty}$ coincide with the original functional $\varphi$.
However, this restriction is a Dixmier trace on $\mathcal L_{1,\infty}$, by Definition~\ref{Dix_L}.
\end{proof}

Due to Corollary~\ref{fs_cor1} we can use Theorem~\ref{FS} to completely characterise 
Dixmier traces on $\mathcal L_{1,\infty}$ in terms of factorisable Banach limits, 
which were introduced by Raimi in 1980,~\cite{Raimi}.

\begin{thm}\label{FS1}
(i) For every Dixmier trace $\tau$ on $\mathcal L_{1,\infty}$ there exists a factorizable Banach limit $\theta$ such that~\eqref{formula1} holds. 
In other words, there exists 
an extended limit $\gamma$ on $l_\infty$ such that~\eqref{formula1}
 holds for $\theta=\gamma \circ C$

(ii) For every extended limit $\gamma$ on $l_\infty$ and $\theta=\gamma \circ C$ 
the functional $\tau$ defined by the formula~\eqref{formula1} extends by linearity to a Dixmier trace on $\mathcal L_{1,\infty}$.
\end{thm}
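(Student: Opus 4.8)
The plan is to transfer the commutative statement Theorem~\ref{FS} to the operator setting, using Corollary~\ref{fs_cor1} to replace ``Dixmier trace on $\mathcal L_{1,\infty}$'' by ``normalised fully symmetric functional on $\mathcal L_{1,\infty}$'', and then passing through the diagonal operator. The bridge is that both associations $\tau \mapsto \tau\circ{\rm diag}\circ D$ (of Theorem~\ref{one-to-one1}) and $\varphi \mapsto \varphi\circ D$ (of Theorem~\ref{one-to-one}) produce the \emph{same} $S$-invariant functional $\theta$ on $l_\infty$, because for $0\le A\in\mathcal L_{1,\infty}$ the singular value sequence of ${\rm diag}(\mu(A))$ is exactly $\mu(A)$, so formulae~\eqref{formula} and~\eqref{formula1} coincide once we set $\varphi = \tau\circ{\rm diag}$.

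For part (i), I would start with a Dixmier trace $\tau$. By Corollary~\ref{fs_cor1} it is a normalised fully symmetric functional on $\mathcal L_{1,\infty}$. I would then check that $\varphi := \tau\circ{\rm diag}$ is a normalised fully symmetric functional on $l_{1,\infty}$: since $\mu(n,{\rm diag}(x)) = x_n^*$, the submajorisation $x\prec\prec y$ of sequences coincides with $A\prec\prec B$ for the corresponding diagonal operators, and normalisation transfers because ${\rm diag}(\{(n+1)^{-1}\})$ has singular values $(n+1)^{-1}$. Theorem~\ref{FS}(i) then supplies an extended limit $\gamma$ with $\varphi\circ D = \gamma\circ C =: \theta$. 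Since $\theta = \tau\circ{\rm diag}\circ D$ is precisely the $S$-invariant functional attached to $\tau$ by Theorem~\ref{one-to-one1}, formula~\eqref{formula1} holds with this factorisable Banach limit.

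For part (ii), given an extended limit $\gamma$ and $\theta = \gamma\circ C$, I would first recall (as in the proof of Theorem~\ref{FS}(ii)) that $\theta$ is $S$-invariant because $C\circ S - C$ maps $l_\infty$ into $c_0$ and $\gamma$ is an extended limit. Hence Theorem~\ref{one-to-one1}(ii) guarantees that the functional $\tau$ defined by~\eqref{formula1} is a trace on $\mathcal L_{1,\infty}$. Restricting to diagonal operators gives $\tau\circ{\rm diag} = \varphi$, the normalised fully symmetric functional produced by Theorem~\ref{FS}(ii); transferring full symmetry back through the diagonal identification shows $\tau$ is itself a normalised fully symmetric functional on $\mathcal L_{1,\infty}$, whence a Dixmier trace by Corollary~\ref{fs_cor1}.

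The genuinely substantive input is entirely contained in the already-proved Corollary~\ref{fs_cor1} and Theorem~\ref{FS}, so the only point that needs care is the compatibility of the two bijections: namely, verifying that the diagonal embedding intertwines submajorisation in $l_{1,\infty}$ with submajorisation in $\mathcal L_{1,\infty}$ and carries the same $S$-invariant functional $\theta$ across both Theorem~\ref{one-to-one} and Theorem~\ref{one-to-one1}. Once this identification is in place, no further estimates are required and the theorem follows by assembling the cited results.
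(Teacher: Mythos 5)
Your proposal is correct and takes essentially the same approach as the paper: part (i) identifies $\tau\circ\mathrm{diag}$ as a normalised fully symmetric functional on $l_{1,\infty}$ and invokes Theorem~\ref{FS}(i), while part (ii) obtains a trace from the bijection, shows it is a normalised fully symmetric functional, and concludes via Corollary~\ref{fs_cor1}. The only cosmetic difference is in (ii): the paper re-verifies full symmetry of $\tau$ at the operator level by a direct computation using the positivity of $\gamma$, whereas you pull it back from Theorem~\ref{FS}(ii) through the diagonal identification $\tau(A)=\varphi(\mu(A))$ --- the two arguments amount to the same estimate.
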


\begin{proof}
(i) Let $\tau$ be a trace on $\mathcal L_{1,\infty}$. It follows from the results of Dykema, Figiel, Weiss and Wodzicki~\cite{DFWW} 
(see~\cite[p. 26]{LSZ} for the detailed explanation) that 
 $$\tau(A) = \tau({\rm diag} (\mu(A)))$$
 for every trace on $\mathcal L_{1,\infty}$ and every $0 \le A \in \mathcal L_{1,\infty}$.
 
 Since for every Dixmier trace $\tau$ on $\mathcal L_{1,\infty}$ the functional $\tau \circ {\rm diag}$ is a normalised fully symmetric functional on $l_{1,\infty}$,
 it follows from Theorem~\ref{FS} that there exist an extended limit $\gamma$ on $l_\infty$ such that for $\theta=\gamma \circ C$ one has
\begin{equation*}
 (\tau \circ {\rm diag})(x) = \theta\left(\frac1{\log 2} \left\{ \sum_{k=2^n-1}^{2^{n+1}-2} x_k^* \right\}_{n \ge 0} \right) , 0\le x \in l_{1,\infty}.
\end{equation*}

Since for every positive $A \in \mathcal L_{1,\infty}$ the sequence $\{ \mu(n,A) \}_{n \ge 0} \in l_{1,\infty}$, we obtain 
$$\tau(A) = \tau({\rm diag} (\mu(A))) = \theta\left(\frac1{\log 2} \left\{ \sum_{k=2^n-1}^{2^{n+1}-2} \mu(k,A) \right\}_{n \ge 0} \right),$$
which proves the assertion of the first part of the theorem.

(ii) A direct verification shows that for every extended limit $\gamma$ on $l_\infty$ 
the functional $\theta=\gamma \circ C$ is a Banach limit. 
Hence, by Corollary~\ref{BL1} the functional $\tau$ defined by the formula~\eqref{formula1} 
extends by linearity to a positive normalised trace on $\mathcal L_{1,\infty}$.
We have
\begin{equation}\label{eq56}
\begin{aligned}
\tau(A) &= (\gamma \circ C)\left(\frac1{\log 2} \left\{ \sum_{k=2^n-1}^{2^{n+1}-2} \mu(k,A) \right\}_{n \ge 0} \right)\\
&= \frac1{\log 2} \gamma \left( \left\{ \frac1{n+1} \sum_{k=0}^{2^{n+1}-2} \mu(k,A) \right\}_{n \ge 0} \right).
\end{aligned}
\end{equation}

Let $0\le A,B \in \mathcal L_{1,\infty}$ be such that 
$$\sum_{k=0}^n \mu(k,A) \le \sum_{k=0}^n \mu(k,B)$$ 
for every $n\ge 0$.
Due to the positivity of $\gamma$, formula~\eqref{eq56} yields $\tau(A) \le \tau(B)$.
Therefore $\tau$ is a fully symmetric functional on $\mathcal L_{1,\infty}$. 
The assertion follows from Theorem~\ref{fs_cor1}.
% 
% Since for every $x\in l_{1,\infty}$ there is an operator $A \in \mathcal L_{1,\infty}$ such that $x^*=\mu(A)$,
% we can consider the weight
% $$
% \tau({\rm diag} (\mu(A))) =\theta\left(\frac1{\log 2} \left\{ \sum_{k=2^n-1}^{2^{n+1}-2} \mu(k,A) \right\}_{n \ge 0} \right)
% $$
% on a positive cone of $l_{1,\infty}$. 
% By Theorem~\ref{FS} this weight (for $\theta=\gamma \circ C$) extends to a fully symmetric functional on $l_{1,\infty}$.
% Consequently, 
% $$ \tau(A) = (\tau \circ {\rm diag})(\mu(A)) \le (\tau \circ {\rm diag})(\mu(B))=\tau(B).$$
\end{proof}

Next we characterise the Connes-Dixmier traces and the class $\mathcal D_M$, in terms of Banach limits of special types.
To this end we need some preparations.

Recall that $\pi$ denotes the isometric embedding $\pi$ of $l_{\infty}$ into $L_\infty$ given by the formula~\eqref{pi}.
The following result is straightforward. 

The following two lemmas characterise the relation between the extended limits on a sequence space $l_\infty$
and the extended limits on a function space $L_\infty$.

\begin{lem}\label{el1}
(i) For every extended limit $L$ on $L_\infty$ the functional $l$ defined by the formula
 $$l(x) = L(\pi(x)), \ x \in l_\infty$$
 is an extended limit on  $l_\infty$.
 
(ii) For every extended limit $l$ on  $l_\infty$ there exists an extended limit $L$ on $L_\infty$ such that 
 $$l(x) = L(\pi(x)), \ x \in l_\infty.$$
\end{lem}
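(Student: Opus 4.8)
The plan is to treat the two parts separately; part (i) is essentially a direct verification, while part (ii) requires producing a suitable left inverse of $\pi$.

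For (i), I would first note that $\pi$ is a positive operator: if $x=\{x_n\}\ge 0$ then $\pi(x)=\sum_n x_n\chi_{[n,n+1)}\ge 0$ as an element of $L_\infty$. Consequently $l=L\circ\pi$ is a composition of two positive linear maps and is therefore a positive linear functional on $l_\infty$. It remains to check that $l$ agrees with the ordinary limit on convergent sequences. If $x_n\to a$, then for $t\in[n,n+1)$ one has $\pi(x)(t)=x_n$, so $\pi(x)(t)\to a$ as $t\to+\infty$; that is, $\pi(x)$ is convergent at $+\infty$ with limit $a$. Since $L$ is an extended limit on $L_\infty$, we get $l(x)=L(\pi(x))=a=\lim_n x_n$, which proves (i).

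For (ii), the key idea is that $\pi$ admits a canonical positive, limit-preserving left inverse obtained by averaging over unit intervals. Concretely, I would define $E:L_\infty\to l_\infty$ by
$$E(f)_n:=\int_n^{n+1} f(s)\,ds,\quad n\ge 0,$$
and then set $L:=l\circ E$. One checks at once that $E$ is well defined and bounded, since $|E(f)_n|\le\|f\|_{L_\infty}$, and that $E$ is linear and positive. Hence $L=l\circ E$ is a positive linear functional on $L_\infty$, by the same reasoning as in (i).

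The three remaining verifications are routine. First, $E\circ\pi=\mathrm{id}_{l_\infty}$: for $x\in l_\infty$ we have $\int_n^{n+1}\pi(x)(s)\,ds=x_n$, so $E(\pi(x))=x$ and hence $L(\pi(x))=l(x)$, giving the required identity. Second, $L$ is an extended limit: if $f\in L_\infty$ converges to $a$ at $+\infty$, then $|E(f)_n-a|\le\int_n^{n+1}|f(s)-a|\,ds\to 0$, so $E(f)$ is a convergent sequence with limit $a$, and because $l$ is an extended limit on $l_\infty$ we obtain $L(f)=l(E(f))=a$. The only point requiring care — the mild ``obstacle'' — is to recognise that what is needed is \emph{not} a Hahn--Banach extension of $l$ off the range of $\pi$ (which would not automatically preserve positivity or the limit condition), but rather a concrete positive retraction $E$ onto $\pi(l_\infty)$; the averaging operator supplies exactly this, and its positivity together with its continuity at $+\infty$ transfers the two defining properties of an extended limit from $l$ to $L$.
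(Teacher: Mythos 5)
Your proof is correct, and in fact the paper offers no argument for this lemma at all: it is stated with the remark that ``the following result is straightforward,'' so your write-up supplies exactly the verification the authors left to the reader. The comparison worth making is with the paper's proof of the companion result, Lemma~\ref{inv}, which treats the $H$-invariant/Ces\`aro-invariant case: there, in part (ii), the authors define the functional only on the subspace $\pi(l_\infty)+C_0$ and invoke the invariant form of the Hahn--Banach theorem, because they must preserve $H$-invariance under the extension. Your averaging operator $E(f)_n=\int_n^{n+1}f(s)\,ds$ gives a cleaner, fully constructive route for the present lemma: $E$ is positive, satisfies $E\circ\pi=\mathrm{id}_{l_\infty}$, and sends functions convergent at $+\infty$ to sequences convergent to the same limit, so $L=l\circ E$ inherits positivity and the extended-limit property with no appeal to any extension theorem. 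Your closing remark is also on point: a bare Hahn--Banach extension of $l$ off $\pi(l_\infty)$ would not automatically be positive or vanish on functions tending to zero at infinity, which is precisely why the paper's proof of Lemma~\ref{inv} has to work on the enlarged subspace $\pi(l_\infty)+C_0$ and keep track of a dominating sublinear bound. Incidentally, your operator satisfies $E\circ H-C\circ E:L_\infty\to c_0$ (average $\tfrac1t\int_0^t f$ over $t\in[n,n+1)$ and compare with $\tfrac1{n+1}\int_0^{n+1}f$), so the same retraction argument would even yield an alternative, Hahn--Banach-free proof of Lemma~\ref{inv}(ii).
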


Define the Hardy (or, integral Ces\`{a}ro) operator $H : L_\infty \to L_\infty$ by the following formula
$$(Hx)(t):=\frac1t \int_0^t x(s)\,ds, \ t>0.$$
Recall that $C_0$ denotes the space of all continuous functions 
from $L_\infty$ vanishing at infinity.

\begin{lem}\label{inv}
(i) For every extended limit $\gamma$ on $L_\infty$ such that $\gamma= \gamma \circ H$ the functional $B$ defined by the formula
\begin{equation}\label{eq1}
B(x) = \gamma(\pi(x)), \ x \in l_\infty 
\end{equation}
 is an extended limit on $l_\infty$ satisfying $B=B\circ C$.
 
(ii) For every extended limit $B$ on $l_\infty$ such that $B=B\circ C$ there exists an extended limit $\gamma$ on $L_\infty$ such that $\gamma= \gamma \circ H$ and
 $$B(x) = \gamma(\pi(x)), \ x \in l_\infty.$$
\end{lem}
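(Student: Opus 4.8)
The plan is to exploit a single structural fact: the Hardy operator $H$ on $L_\infty$ and the Ces\`aro operator $C$ on $l_\infty$ are intertwined by the embedding $\pi$ (and by a one-sided inverse of it) up to perturbations that vanish at infinity. First I would record two elementary estimates. (a) For every $x\in l_\infty$ the function $\pi(Cx)-H\pi(x)$ tends to $0$ at infinity: indeed $H\pi(x)$ is absolutely continuous with $(H\pi x)'(t)=O(1/t)$ for bounded $x$, and at the integer point $t=m+1$ it equals $\frac1{m+1}\sum_{k=0}^m x_k=(Cx)_m$, which is exactly the constant value of $\pi(Cx)$ on $[m,m+1)$; hence the two functions differ by $O(1/m)$ on that interval. (b) Introducing the averaging map $P:L_\infty\to l_\infty$, $(Pf)_n:=\int_n^{n+1}f(s)\,ds$, one has $PHf-CPf\in c_0$, because $(CPf)_m=\frac1{m+1}\int_0^{m+1}f=(Hf)(m+1)$, while $(PHf)_m=\int_m^{m+1}(Hf)(t)\,dt$ differs from $(Hf)(m+1)$ by $O(1/m)$, again by slow variation of $Hf$. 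These two estimates are routine and constitute the only analytic input.

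For part (i), let $\gamma$ be an extended limit on $L_\infty$ with $\gamma=\gamma\circ H$, and set $B=\gamma\circ\pi$. That $B$ is an extended limit on $l_\infty$ is precisely Lemma~\ref{el1}(i). To verify $B=B\circ C$, I would compute, for arbitrary $x\in l_\infty$,
$$B(Cx)=\gamma(\pi(Cx))=\gamma(H\pi(x))=\gamma(\pi(x))=B(x),$$
where the second equality uses estimate (a) together with the fact that an extended limit annihilates every function converging to $0$ at infinity, and the third uses the invariance $\gamma=\gamma\circ H$. This settles (i).

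For part (ii), given an extended limit $B$ on $l_\infty$ with $B=B\circ C$, I would simply define $\gamma:=B\circ P$ with $P$ the averaging map above. Since $P$ is positive and bounded and $P\pi=\mathrm{id}_{l_\infty}$, positivity of $\gamma$ and the identity $\gamma(\pi(x))=B(P\pi(x))=B(x)$ are immediate; moreover, if $f$ converges at infinity to $c$ then $Pf\to c$, so $\gamma(f)=B(Pf)=c$, which shows $\gamma$ is an extended limit. Finally, for the $H$-invariance I would use estimate (b): for every $f\in L_\infty$,
$$\gamma(Hf)=B(PHf)=B(CPf)=B(Pf)=\gamma(f),$$
the middle equality holding because $B$ annihilates $c_0$, and the following one because $B=B\circ C$. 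This yields $\gamma=\gamma\circ H$ and completes (ii).

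I expect the only real work to be the two vanishing-at-infinity estimates (a) and (b); once the errors $\pi C-H\pi$ and $PH-CP$ are shown to be null at infinity and in $c_0$ respectively, both directions reduce to formal manipulations with the defining properties of extended limits (positivity, agreement with the ordinary limit, and annihilation of null functions/sequences). The conceptual point worth emphasising is that the nontrivial invariances $B=B\circ C$ and $\gamma=\gamma\circ H$ transfer to one another \emph{for free}, precisely because $\pi$ and $P$ intertwine $C$ and $H$ modulo these negligible perturbations; no Hahn--Banach extension or averaging over iterates of $H$ is needed, which is why the explicit formulas $B=\gamma\circ\pi$ and $\gamma=B\circ P$ already suffice.
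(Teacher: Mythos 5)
Your proof is correct, and while part (i) coincides with the paper's argument (the same estimate $H\pi(x)-\pi(Cx)\in C_0$ followed by the same three-line computation), your part (ii) takes a genuinely different route. The paper defines $\gamma$ only on the subspace $\pi(l_\infty)+C_0$ by the formula $\gamma(\pi(x)+\alpha)=B(x)$, checks $H$-invariance there, and then invokes the invariant form of the Hahn--Banach theorem to extend $\gamma$ to an $H$-invariant functional on all of $L_\infty$; it must then argue separately that the extension is still an extended limit. You instead produce $\gamma$ explicitly as $B\circ P$, where $(Pf)_n=\int_n^{n+1}f(s)\,ds$, and reduce everything to the two intertwining relations $P\pi=\mathrm{id}_{l_\infty}$ and $PH-CP:L_\infty\to c_0$ (your estimate for the latter, via $|(Hf)'(t)|\le 2\|f\|_{L_\infty}/t$, is sound). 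This buys you something real: positivity, the extended-limit property, and $H$-invariance of $\gamma$ all come for free from the corresponding properties of $B$ and $P$, with no appeal to Hahn--Banach and no gap about whether an abstract extension stays positive --- a point the paper's proof passes over rather quickly. What the paper's method buys in exchange is generality: the invariant-extension technique works whenever one has invariance on a subspace, even in situations where no bounded positive left inverse of $\pi$ intertwining the two averaging operators is available; here, however, such an operator exists and your argument is the more economical one.
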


\begin{proof}
First of all, for every $x \in l_\infty$ and $t \in (n, n+1]$, $n\ge0$ we have
\begin{align*}
 (H\pi(x))(t) &= \frac1t \left( \int_0^{n+1} \pi(x)(s) \ ds - \int_t^{n+1} \pi(x)(s) \ ds \right)\\
 &= \frac1t \left( \int_0^{n+1} \sum_{k=0}^\infty x_k \chi_{(k,k+1]}(s) \ ds - x_n(n+1-t) \right)\\
 &= \frac1t \left( \sum_{k=0}^{n} x_k  + O(1) \right)
 = \frac1{n+1} \sum_{k=0}^{n} x_k +o(1)
 = (Cx)_n  +o(1).
\end{align*}
Hence, 
\begin{equation}\label{eq2}
H\pi(x) =\pi(Cx)  +o(1). 
\end{equation}

(i) Let $\gamma \in L_\infty^*$ be such an extended limit that $\gamma= \gamma \circ H$. Using~\eqref{eq1} and~\eqref{eq2} we obtain
$$B(Cx) = \gamma(\pi(Cx))=\gamma(H\pi(x))=\gamma(\pi(x))=B(x).$$

Hence, the functional $B \in \mathfrak B$ is such that $B=B\circ C$.

(ii) Let $B$ be an extended limit on $l_\infty$ such that $B=B\circ C$ (in fact, $B \in \mathfrak B$). We set $E:= \pi(l_\infty)$ and
define $\gamma$ on the subspace $E+C_0$ of $L_\infty$ by setting
\begin{equation}\label{eq78}
 \gamma(\pi(x) +\alpha) = B(x)
\end{equation}
for every $x \in l_\infty$ and $\alpha \in C_0$.
It follows from the linearity of $B$ that $\gamma$ is linear on $E+C_0$.
Moreover, for every $x \in l_\infty$ and $\alpha \in C_0$ we obtain
$$\gamma(H(\pi(x)+\alpha)) \stackrel{\eqref{eq2}}{=} \gamma(\pi(Cx)+o(1)) \stackrel{\eqref{eq78}}{=} B(Cx)=B(x)= \gamma(\pi(x) +\alpha).$$
Hence, $\gamma$ is an $H$-invariant linear functional on $E+C_0$.

By the invariant form of the Hahn-Banach theorem~\cite[Theorem 3.3.1]{Edwards} the functional $\gamma$ extends to an $H$-invariant linear functional on $L_\infty$.
Due to construction, $\gamma$ vanishes on $C_0$, that is $\gamma$ is an extended limit on $L_\infty$ and
$$B(x) = \gamma(\pi(x)), \ x \in l_\infty.$$
\end{proof}

\begin{rem}
 The existence of Ces\`aro invariant Banach limits is proved in~\cite{Eberlein}.
 For the extensive study of Ces\`aro invariant Banach limits we refer to~\cite{SS_JFA}.
 For further information on Banach limits with additional invariance properties see~\cite{DPSSS2,SS_JFA,SSU}.
\end{rem}

Define the logarithmic Hardy operator $M$ by the following formula
$$(Mx)(t):=\frac1{\log t} \int_1^t x(s)\,\frac{ds}s, \ x\in L_\infty.$$

\begin{dfn}\label{CD_dfn}
 A trace $\tau$ on $\mathcal L_{1,\infty}$ is said to be a Connes-Dixmier trace, if there exists 
 an extended limit $\gamma$ on $L_\infty$ such that 
 $$\tau(A)= {\rm Tr}_{\gamma \circ M}(A)=(\gamma \circ M)\left( t \mapsto \frac1{\log(1+t)} \int_0^t \mu(s,A)\ ds\right), \ 0 \le A \in \mathcal L_{1,\infty}.$$
\end{dfn}

For technical purposes we introduce the semigroup $P_a, \ a>0$, acting by the formula
$$(P_ax)(t)=x(t^a), \ a>0, \ x\in L_\infty,$$
which is related to the dilation operator $\sigma$ as follows (see~\cite[Proposition 1.3]{CPS2})
\begin{equation}\label{Ps}
 \log\circ \sigma_a - P_a\circ \log \ : \ L_\infty \to C_0, \ a>0.
\end{equation}

Recall that Corollary~\ref{BL1} establishes the linear bijection between the set $\mathcal {PT}$ of all positive normalised traces on $\mathcal L_{1,\infty}$
and the set $\mathfrak B$ of all Banach limits.
The following theorem characterises the class of Connes-Dixmier traces on $\mathcal L_{1,\infty}$ stating
the correspondence between the set $\mathcal C$ and a proper subset of factorisable Banach limits.
\begin{thm}\label{CD}
A trace $\tau$ on $\mathcal L_{1,\infty}$ is a Connes-Dixmier trace if and only if the corresponding  
 Banach limit $B$ (given by Corollary~\ref{BL1}) is of the form $B=\theta \circ C^2$ for some extended limit $\theta$ on $l_\infty$.
\end{thm}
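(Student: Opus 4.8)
The plan is to push everything to the sequence side via \eqref{formula1} and Corollary~\ref{BL1}, and to show that replacing a general dilation invariant parameter $\omega$ by a Connes--Dixmier parameter $\omega=\gamma_0\circ M$ is precisely what produces a \emph{second} Ces\`aro factor. Recall from \eqref{eq56} that every Dixmier trace already has the form $\tau(A)=\frac1{\log2}\gamma\big(Ca(A)\big)$, where $a(A):=\{\sum_{k=2^n-1}^{2^{n+1}-2}\mu(k,A)\}_{n\ge0}$ and $\gamma$ is an extended limit on $l_\infty$; the single factor $C$ encodes the logarithmic averaging $\frac1{\log(1+t)}\int_0^t$ built into the Dixmier weight. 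Since the logarithmic Hardy operator $M$ is a second logarithmic averaging, I expect it to contribute a second factor $C$, so that $\gamma=\theta\circ C$ and hence $B=\theta\circ C^2$.

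For the forward implication, fix an extended limit $\gamma_0$ on $L_\infty$, put $F_A(t):=\frac1{\log(1+t)}\int_0^t\mu(s,A)\,ds$, so that $\tau(A)=\gamma_0(MF_A)$, and introduce the logarithmic sampling map $\Lambda:l_\infty\to L_\infty$, $(\Lambda y)(t):=y_{\lfloor\log_2 t\rfloor}$ for $t\ge1$. The key computational lemma I would establish is
$$MF_A=\tfrac1{\log2}\,\Lambda\!\left(C^2a(A)\right)+o(1),\qquad t\to\infty.$$
One first records, as in the proof of Theorem~\ref{FS1}, that $F_A=\frac1{\log2}\Lambda(Ca(A))+o(1)$ (from the step-function nature of $\mu$ and $\log(1+t)\sim(\log_2 t)\log2$), and then decomposes the integral defining $M$ over the dyadic blocks $[2^m,2^{m+1})$: on each such block $\int_{2^m}^{2^{m+1}}F_A(s)\,\frac{ds}{s}=(Ca(A))_m+o(1)$ because $\frac{ds}{s}$ carries mass $\log2$ there, and summing over $m<N$ gives exactly $\frac1{\log2}\cdot\frac1N\sum_{m=0}^{N-1}(Ca(A))_m$ at $t=2^N$, i.e.\ the second Ces\`aro mean. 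Because every extended limit on $L_\infty$ annihilates $C_0$, the $o(1)$ term drops out and $\tau(A)=\frac1{\log2}\theta\big(C^2a(A)\big)$, where $\theta:=\gamma_0\circ\Lambda$; a short check (using $\Lambda(y)\in C_0\Leftrightarrow y\in c_0$ together with $\inf\le\gamma_0\le\sup$) shows $\theta$ is an extended limit on $l_\infty$. Comparing with \eqref{formula1} and Corollary~\ref{BL1} gives $B=\theta\circ C^2$.

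For the converse, given an extended limit $\theta$ on $l_\infty$ and $B=\theta\circ C^2$, I would reverse the transfer exactly as in Lemma~\ref{inv}(ii): on the subspace $\Lambda(l_\infty)+C_0$ of $L_\infty$ set $\gamma_0(\Lambda y+\alpha):=\theta(y)$, which is well defined since $\Lambda y\in C_0$ forces $y\in c_0$ and hence $\theta(y)=0$, and then extend $\gamma_0$ by the Hahn--Banach theorem to a functional dominated by $\limsup$, i.e.\ to an extended limit on $L_\infty$ vanishing on $C_0$. Running the forward computation backwards shows that the Connes--Dixmier trace ${\rm Tr}_{\gamma_0\circ M}$ has associated Banach limit $\theta\circ C^2=B$, as required. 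Equivalently, one may reduce $M$ to the ordinary Hardy operator $H$ through the conjugation $M=V^{-1}HV$ with $(Vf)(v)=f(e^v)$, a reformulation of \eqref{Ps}, and invoke the $H\leftrightarrow C$ correspondence of Lemma~\ref{inv} to extract the extra Ces\`aro factor directly.

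The main obstacle is the computational lemma identifying $M$ with a second dyadic Ces\`aro mean, together with the bookkeeping required to pass between the additive unit-interval scale on which $C$ and $\pi$ live and the multiplicative dyadic scale on which $M$ acts, while controlling the $o(1)$ errors. Verifying that the transferred functional $\theta=\gamma_0\circ\Lambda$ is a genuine extended limit — in particular that it kills finitely supported sequences and descends to the $c_0$ quotient — is the most delicate point, and it is exactly what makes both directions of the equivalence fit together.
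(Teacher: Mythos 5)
Your proposal is correct, and its skeleton is the same as the paper's: transfer the problem to $L_\infty$, recognise that on the dyadic scale the logarithmic mean $M$ acts as a second Ces\`aro average, and use a Hahn--Banach transfer (in the spirit of Lemma~\ref{inv}(ii)) to come back for the converse. In fact your sampling map $\Lambda$ is precisely the paper's composition $P_{1/\log 2}\circ \log \circ\, \pi$, so the extended limit $\theta=\gamma_0\circ\Lambda$ you construct is literally the same functional the paper constructs. The differences are in execution. Where you prove a single asymptotic lemma, $MF_A=\frac1{\log 2}\Lambda\bigl(C^2 a(A)\bigr)+o(1)$ with $a(A)=\{\sum_{k=2^n-1}^{2^{n+1}-2}\mu(k,A)\}_{n\ge0}$, by direct dyadic-block estimation, the paper avoids these estimates by invoking the exact operator identities of \cite[Proposition 1.3]{CPS2} (that $M$ commutes with $P_a$, and that $\log\circ H-M\circ\log$ maps $L_\infty$ into $C_0$) together with $H\pi(x)=\pi(Cx)+o(1)$; your route is more elementary and self-contained, at the price of carrying the $o(1)$ bookkeeping by hand. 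The second difference matters more: the paper evaluates $B(x)=\tau({\rm diag}(Dx))$ for arbitrary $x\in l_\infty$, so $B=\theta\circ C^2$ is read off directly, whereas you establish $\tau(A)=\frac1{\log 2}(\theta\circ C^2)(a(A))$ for all $A\ge0$ and then ``compare with \eqref{formula1}''. That comparison is really an appeal to the uniqueness clause of Theorem~\ref{one-to-one1}(i)/Corollary~\ref{BL1}, and that uniqueness holds only within the class of $S$-invariant functionals; so you must add the one-line observation that $\theta\circ C^2$ is $S$-invariant --- since $C\circ S-C$ maps $l_\infty$ into $c_0$ and $C$ preserves $c_0$, one gets $C^2\circ S-C^2:l_\infty\to c_0$, and $\theta$ annihilates $c_0$. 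With that sentence added, both directions are complete: your converse (well-definedness of $\gamma_0$ on $\Lambda(l_\infty)+C_0$ via $\Lambda y\in C_0\Leftrightarrow y\in c_0$, Hahn--Banach domination by $\limsup$, then rerunning the key lemma) matches the paper's argument modulo the same substitution of direct estimates for the \cite{CPS2} calculus.
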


\begin{proof}
 Let $\tau$ be a Connes-Dixmier trace on $\mathcal L_{1,\infty}$ and let $B$ be its corresponding Banach limit given by Corollary~\ref{BL1}.
 By Theorem~\ref{one-to-one1} and Definition~\ref{CD_dfn} we have
 $$B(x) = (\tau \circ {\rm diag})(D x) = 
 (\gamma \circ M)\left( t \mapsto \frac1{\log(1+t)} \int_0^t \pi(Dx)(s) \ ds\right), \ x \in l_\infty.$$
 
A direct verification shows that for every $x \in l_\infty$ and every $t>0$, we have
\begin{equation}\label{dv1}
\int_0^t \pi(Dx)(s) \ ds = \log 2 \cdot \int_0^{\log_2 t} \pi(x)(s) \ ds + O(1). 
\end{equation}

Hence, for every $x \in l_\infty$ the following chain of equalities holds
\begin{align*}
B(x) &=  (\gamma \circ M)\left( t \mapsto \frac{\log 2}{\log t} \int_0^{\log_2 t} \pi(x)(s) \ ds\right) \\
&=  (\gamma \circ M)\left(t \mapsto \frac1{\log t^{\frac1{\log 2}}} \int_0^{\log t^{\frac1{\log 2}}} \pi(x)(s) \ ds\right) \\
&=  (\gamma \circ M\circ P_{\frac1{\log 2}}\circ \log )\left( t \mapsto \frac1t \int_0^t \pi(x)(s) \ ds\right)\\
&= (\gamma \circ M \circ P_{\frac1{\log 2}}\circ \log\circ H)(\pi(x)).
\end{align*}

Since by~\cite[Proposition 1.3]{CPS2} the operators $M$ and $P_a$, $a>0$ commute and 
the operator $\log\circ H - M\circ \log$ maps $L_\infty$ to $C_0$, it follows that
\begin{align*}
\gamma \circ M\circ P_{\frac1{\log 2}} \circ \log \circ H
&= \gamma \circ P_{\frac1{\log 2}} \circ M\circ \log\circ H 
= \gamma \circ P_{\frac1{\log 2}} \circ \log\circ H^2. \\
\end{align*}

Therefore, for every $x \in l_\infty$ we obtain
\begin{equation}\label{123}
B(x)= (\gamma \circ P_{\frac1{\log 2}} \circ \log)(H^2\pi(x))=(\gamma \circ P_{\frac1{\log 2}} \circ \log)(\pi(C^2x)), 
\end{equation}
where the second equality is due to~\eqref{eq2}.

Setting 
$$\theta(y) := (\gamma \circ P_{\frac1{\log 2}} \circ \log)(\pi(y)), \ y\in l_\infty.$$ 
By Lemma~\ref{el1} we see that $\theta$ is an extended limit on $l_\infty$.
By~\eqref{123}, we have $B(x) = \theta(C^2x)$ for every $x\in l_\infty$ and the first assertion is proved.

Suppose now that $B=\theta \circ C^2$ for some extended limit $\theta$ on $l_\infty$. 
By Theorem~\ref{one-to-one1} for every positive $A \in \mathcal L_{1,\infty}$ we obtain
\begin{align*}
 \tau(A)&= B\left( \frac1{\log 2} \cdot \left\{ \sum_{k=2^n-1}^{2^{n+1}-2} \mu(k,A) \right\}_{n \ge 0} \right) \\
 &= \frac1{\log 2} \cdot (\theta \circ C^2) \left(\left\{ \sum_{k=2^n-1}^{2^{n+1}-2} \mu(k,A) \right\}_{n \ge 0} \right) \\
 &= \frac1{\log 2} \cdot (\theta \circ C)\left(\left\{ \frac1n \sum_{k=0}^{2^n} \mu(k,A) \right\}_{n \ge 0} \right).
\end{align*}
By Lemma~\ref{inv}(ii) there exists an extended limit $\gamma_1$ on $L_\infty$ such that \\$\gamma_1(\pi(y))=\theta(y)$ for every $y\in l_\infty$.
Hence,
$$\tau(A)=\frac1{\log 2} \cdot \gamma_1\left(\pi\left(C\left\{ \frac1n \sum_{k=0}^{2^n} \mu(k,A) \right\}_{n \ge 0}\right) \right), \ 0\le A \in \mathcal L_{1,\infty}.$$

By~\eqref{eq2} for every $y \in l_\infty$ we have that $H\pi(y) =\pi(Cy)  +o(1)$. Therefore,
$$\tau(A)=\frac1{\log 2} \cdot (\gamma_1 \circ H)\left(\pi\left(\left\{ \frac1n \sum_{k=0}^{2^n} \mu(k,A) \right\}_{n \ge 0}\right) \right), \ 0\le A \in \mathcal L_{1,\infty}.$$

A direct verification shows that 
\begin{equation}\label{dv2}
\pi\left(\left\{ \frac1n \sum_{k=0}^{2^n} \mu(k,A) \right\}_{n \ge 0}\right) = \left( t \mapsto \frac1t \int_0^{2^t} \mu(s,A) \ ds \right) +o(1), \ 0\le A \in \mathcal L_{1,\infty}. 
\end{equation}

Hence, for every positive $A \in \mathcal L_{1,\infty}$ we obtain
\begin{align*}
\tau(A)&=\frac1{\log 2} \cdot (\gamma_1 \circ H)\left(t \mapsto \frac1t \int_0^{2^t} \mu(s,A) \ ds \right)\\
&=\frac1{\log 2} \cdot (\gamma_1 \circ H\circ \exp) \left(t \mapsto \frac1{\log t} \int_0^{2^{\log t}} \mu(s,A) \ ds \right)\\
&=(\gamma_1 \circ H\circ \exp) \left(t \mapsto \frac1{\log t^{\log 2}} \int_0^{t^{\log 2}} \mu(s,A) \ ds \right)\\
&=(\gamma_1 \circ H\circ \exp \circ P_{\log 2}) \left(t \mapsto \frac1{\log t} \int_0^t \mu(s,A) \ ds \right).
\end{align*}

To finish the proof of this theorem, it suffices to show that there exists an extended limit $\gamma$ on $L_\infty$ such that
$\tau = {\rm Tr}_{\gamma \circ M}$ on the positive cone of $\mathcal L_{1,\infty}$. To this end, we shall show that 
$$\gamma_1 \circ H\circ \exp \circ P_{\log 2} = \gamma \circ M$$
for some extended limit $\gamma$ on $L_\infty$.

Indeed, using~\eqref{Ps} and since $H$ and $\sigma$ commute, it follows that
$$\gamma_1 \circ H\circ \exp \circ P_{\log 2} = \gamma_1 \circ \sigma_{\log 2}\circ H\circ \exp.$$

Since the operator $\exp\circ M - H\circ \exp$ maps $L_\infty$ to $C_0$ (see~\cite[Proposition 1.3]{CPS2}),
if follows that
$$\gamma_1 \circ \sigma_{\log 2}\circ H\circ \exp = \gamma_1 \circ \sigma_{\log 2}\circ \exp \circ M.$$

Setting $\gamma =\gamma_1\circ \sigma_{\log 2}\circ \exp$, we see that $\gamma$ is an extended limit on $L_\infty$ and we obtain that
\begin{align*}
\tau(A) &= (\gamma \circ M) \left(t \mapsto \frac1{\log t} \int_0^t \mu(s,A) \ ds \right)
={\rm Tr}_{\gamma\circ M}(A),
\end{align*}
that is $\tau$ is a Connes-Dixmier trace on $\mathcal L_{1,\infty}$.
\end{proof}

The following subclass of Dixmier traces has been studied in many papers, including~\cite{BF}-\cite{CS} (see also~\cite{SUZ1}).

\begin{dfn}
 A Dixmier trace $\tau$ on $\mathcal L_{1,\infty}$ is said to be generated by a $M$-invariant extended limit (that is, $\tau \in \mathcal D_M$), if there exists 
 an extended limit $\omega$ on $L_\infty$ such that $\omega=\omega\circ M$ and
 $$\tau(A)= {\rm Tr}_\omega(A)=\omega\left( t \mapsto \frac1{\log(1+t)} \int_0^t \mu(s,A)\ ds\right), \ 0 \le A \in \mathcal L_{1,\infty}.$$
\end{dfn}

The following theorem shows that the subclass $\mathcal D_M$ of Dixmier traces, generated by $M$-invariant extended limits, 
corresponds to the set of Ces\`aro invariant Banach limits.

\begin{thm}\label{M-inv}
A Dixmier trace $\tau$ on $\mathcal L_{1,\infty}$ is generated by an $M$-invariant extended limit if and only if the corresponding 
 Banach limit $B$ (given by Corollary~\ref{BL1}) is Ces\`aro invariant, that is $B=B\circ C$.
\end{thm}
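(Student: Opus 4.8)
The plan is to transfer everything to the function space $L_\infty$ and exploit the dictionary between Ces\`aro-invariant Banach limits on $l_\infty$ and $H$-invariant extended limits on $L_\infty$ furnished by Lemma~\ref{inv}. The starting point is the intermediate identity established in the proof of Theorem~\ref{CD}: for a Dixmier trace generated by an extended limit of the form $\gamma\circ M$, the associated Banach limit $B=\tau\circ{\rm diag}\circ D$ satisfies $B(x)=(\gamma\circ M\circ P_{\frac1{\log 2}}\circ\log\circ H)(\pi(x))$ for every $x\in l_\infty$. Since $\omega=\omega\circ M$ means $\mathrm{Tr}_\omega=\mathrm{Tr}_{\omega\circ M}$ is in particular a Connes-Dixmier trace (take $\gamma=\omega$), this identity is available in both directions of the present proof.

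For the ``only if'' direction I would start from an $M$-invariant extended limit $\omega=\omega\circ M$ and set $\gamma'':=\omega\circ P_{\frac1{\log 2}}\circ\log$, so that $B(x)=\gamma''(H\pi(x))$. The key computation is to show that $\gamma''$ is $H$-invariant: using that $\log\circ H-M\circ\log$ maps into $C_0$ (on which $\omega$ vanishes), that $M$ and $P_{\frac1{\log 2}}$ commute, and finally that $\omega\circ M=\omega$, one obtains $\gamma''\circ H=\omega\circ P_{\frac1{\log 2}}\circ M\circ\log=\omega\circ M\circ P_{\frac1{\log 2}}\circ\log=\gamma''$. As $\gamma''$ is a composition of the extended limit $\omega$ with the substitution operators $P_{\frac1{\log 2}}$ and $\log$, it is itself an extended limit, and $H$-invariance gives $B(x)=\gamma''(H\pi(x))=\gamma''(\pi(x))$. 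Lemma~\ref{inv}(i) then yields $B=B\circ C$.

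For the ``if'' direction I would reverse this construction. Since the Banach limit $B$ of $\tau$ is Ces\`aro-invariant, Lemma~\ref{inv}(ii) produces an $H$-invariant extended limit $\gamma$ on $L_\infty$ with $B(x)=\gamma(\pi(x))$. I then define $\omega:=\gamma\circ\exp\circ P_{\log 2}$ and check that it is an $M$-invariant extended limit: indeed $\omega\circ M=\gamma\circ\exp\circ M\circ P_{\log 2}=\gamma\circ H\circ\exp\circ P_{\log 2}=\gamma\circ\exp\circ P_{\log 2}=\omega$, where I use the commutation of $M$ and $P_{\log 2}$, the fact that $\exp\circ M-H\circ\exp$ maps into $C_0$, and $\gamma\circ H=\gamma$. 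To close the argument I would apply the already-proven ``only if'' direction to $\mathrm{Tr}_\omega\in\mathcal D_M$: its Banach limit equals $\gamma''(\pi(\cdot))$ with $\gamma''=\omega\circ P_{\frac1{\log 2}}\circ\log=\gamma\circ\exp\circ P_{\log 2}\circ P_{\frac1{\log 2}}\circ\log=\gamma$, hence it coincides with $B$. By the bijection of Corollary~\ref{BL1}, $\mathrm{Tr}_\omega=\tau$, so $\tau\in\mathcal D_M$.

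The routine but delicate part, and the step most likely to hide an error, is the bookkeeping of these operator identities on $L_\infty$: several equalities hold only modulo $C_0$, the order of composition matters, and I must repeatedly verify that composing an extended limit with the substitution operators $\exp$, $\log$ and $P_a$ preserves positivity and the vanishing on functions convergent at infinity. Once the commutation relations from~\cite[Proposition 1.3]{CPS2} together with Lemma~\ref{inv} are in hand, however, the algebra is forced, and the single $M$-invariance of $\omega$ matches precisely the extra Ces\`aro invariance of $B$ beyond the generic $C^2$-invariance of Connes-Dixmier traces obtained in Theorem~\ref{CD}.
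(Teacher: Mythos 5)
Your proof is correct and runs on the same engine as the paper's: Lemma~\ref{inv} as the dictionary between Ces\`aro-invariant Banach limits and $H$-invariant extended limits, the commutation relations of \cite[Proposition 1.3]{CPS2}, and the same two constructions $\gamma''=\omega\circ P_{1/\log 2}\circ\log$ and $\omega=\gamma\circ\exp\circ P_{\log 2}$. The differences are organizational, but they do buy you something. In the forward direction the paper rederives $B(x)=(\omega\circ P_{1/\log2}\circ\log\circ H)(\pi(x))$ from scratch via the approximation~\eqref{dv1}, whereas you import the intermediate identity from the proof of Theorem~\ref{CD}; this is legitimate, since $\omega=\omega\circ M$ makes ${\rm Tr}_\omega$ a Connes-Dixmier trace with $\gamma=\omega$. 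In the backward direction the paper verifies directly, using the second approximation~\eqref{dv2} and another chain of substitution identities, that the weight defined by $B$ coincides with ${\rm Tr}_\omega$; you instead close by a round trip: ${\rm Tr}_\omega\in\mathcal D_M$ by construction, its Banach limit is $\gamma''\circ\pi$ with $\gamma''=\omega\circ P_{1/\log2}\circ\log=\gamma\circ\exp\circ P_{\log2}\circ P_{1/\log2}\circ\log=\gamma$ (note that $P_{\log 2}\circ P_{1/\log 2}$ and $\exp\circ\log$ are the identity exactly, not merely modulo $C_0$), hence it equals $B$, and the uniqueness half of Corollary~\ref{BL1} forces ${\rm Tr}_\omega=\tau$. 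This replaces the paper's analytic verification with the injectivity of the trace--Banach-limit correspondence, which is cleaner and spares you~\eqref{dv2} entirely; the small price is that the backward direction becomes logically dependent on the forward one and identifies $\tau$ with ${\rm Tr}_\omega$ abstractly rather than by explicit computation --- both harmless.
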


\begin{proof}
 Let $\tau \in \mathcal D_M$,  that is for every positive $A \in \mathcal L_{1,\infty}$ we have
 $$\tau(A)= \omega\left( t \mapsto \frac1{\log(1+t)} \int_0^t \mu(s,A)\ ds\right),$$
 for some extended limit $\omega$ on $L_\infty$ satisfying $\omega=\omega \circ M$.
 By Corollary~\ref{BL1} the functional $B = \tau \circ {\rm diag}\circ D$ is a Banach limit. We shall show that $B$ is Ces\`aro invariant.
 We have
 $$B(x) = (\tau \circ {\rm diag})(D x)= \omega\left( t \mapsto \frac1{\log(1+t)} \int_0^t \pi(Dx)(s) \ ds\right), \ x \in l_\infty.$$
 
Using~\eqref{dv1} we obtain
\begin{align*}
B(x) &=  \omega\left( t \mapsto \frac{\log 2}{\log t} \int_0^{\log_2 t} \pi(x)(s) \ ds\right) \\
&=  \omega\left( t \mapsto \frac1{\log t^{\frac1{\log 2}}} \int_0^{\log t^{\frac1{\log 2}}} \pi(x)(s) \ ds\right) \\
&= (\omega \circ P_{\frac1{\log 2}}\circ \log \circ H)(\pi(x)).
\end{align*}

Again by~\cite[Proposition 1.3]{CPS2} we see that the operator $\log\circ H - M\circ \log$ maps $L_\infty$ to $C_0$.
Since the extended limit $\omega$ is $M$-invariant and operators $M$ and $P_a$ commute, it follows that
$$\omega\circ P_{\frac1{\log 2}} \circ \log \circ H 
= \omega\circ P_{\frac1{\log 2}} \circ M\circ \log 
=\omega \circ P_{\frac1{\log 2}} \circ \log,$$
that is the extended limit $\omega\circ P_{\frac1{\log 2}} \circ \log$ on $L_\infty$ is $H$-invariant.

Hence,
$B(x) = (\omega \circ P_{\frac1{\log 2}} \circ \log)(\pi(x))$, $x\in l_\infty$ and
by Lemma~\ref{inv}(i) we see that $B$ is a Ces\`aro invariant Banach limit (that is $B=B\circ C$ on $l_\infty$).
The ``if'' part of the theorem has proved.

Let now $B$ be an extended limit on $l_\infty$ such that $B=B\circ C$. 
% By Theorem~\ref{one-to-one1} we obtain that the functional
We shall show that the weight
$$\tau(A)= B\left(\frac1{\log 2} \cdot \left\{ \sum_{k=2^n-1}^{2^{n+1}-2} \mu(k,A) \right\}_{n \ge 0} \right)$$
defined on the positive cone of $\mathcal L_{1,\infty}$ extends to an element of $\mathcal D_M$.

Since 
\begin{align*}
C \left\{ \sum_{k=2^n-1}^{2^{n+1}-2} \mu(k,A) \right\}_{n \ge 0}
&= \left\{ \frac1{n} \sum_{i=0}^n \sum_{k=2^i-1}^{2^{i+1}-2} \mu(k,A) \right\}_{n \ge 0}\\
&= \left\{ \frac1{n} \sum_{i=0}^{2^{n+1}-2} \mu(k,A) \right\}_{n \ge 0},
\end{align*}
using the Ces\`aro invariance of $B$ we obtain
\begin{align*}
 \tau(A) &= \frac1{\log 2} \cdot (B\circ C) \left(\left\{ \sum_{k=2^n-1}^{2^{n+1}-2} \mu(k,A) \right\}_{n \ge 0} \right) \\
 &= \frac1{\log 2} \cdot B\left(\left\{ \frac1{n} \sum_{k=0}^{2^n} \mu(k,A) \right\}_{n \ge 0} \right), \ 0\le A \in \mathcal L_{1,\infty}.
\end{align*}
By Lemma~\ref{inv}(i) there exists an $H$-invariant extended limit $\gamma$ on $L_\infty$ such that $\gamma(\pi(x))=B(x)$ for every $x\in l_\infty$.
Hence,
$$\tau(A)= \frac1{\log 2} \cdot \gamma\left(\pi\left(\left\{ \frac1{n} \sum_{k=0}^{2^n} \mu(k,A) \right\}_{n \ge 0}\right) \right), 0\le A \in \mathcal L_{1,\infty}.$$

Using~\eqref{dv2} we obtain
\begin{align*}
\tau(A)&=\frac1{\log 2} \cdot \gamma\left(t \mapsto \frac1t \int_0^{2^t} \mu(s,A) \ ds \right)\\
&=\frac1{\log 2} \cdot (\gamma \circ \exp) \left(t \mapsto \frac1{\log t} \int_0^{2^{\log t}} \mu(s,A) \ ds \right)\\
&=(\gamma \circ \exp) \left(t \mapsto \frac1{\log t^{\log 2}} \int_0^{t^{\log 2}} \mu(s,A) \ ds \right)\\
&=(\gamma \circ \exp \circ P_{\log 2}) \left(t \mapsto \frac1{\log t} \int_0^t \mu(s,A) \ ds \right)\\
\end{align*}

Next, we shall prove that the functional $\omega:=\gamma\circ \exp \circ P_{\log 2}$ is $M$-invariant.
Again, we will use the facts (proved in~\cite[Proposition 1.3]{CPS2}) that the operator $\exp\circ M - H\circ \exp$ maps $L_\infty$ to $C_0$
and that the operators $M$ and $P_a$, $a>0$ commute.
Since $\gamma$ is an $H$-invariant extended limit, it follows that
$$\omega\circ M = \gamma\circ \exp \circ P_{\log 2} \circ M = \gamma\circ H\circ \exp \circ P_{\log 2}= \gamma\circ \exp \circ P_{\log 2}= \omega,$$
that is $\omega$ is $M$-invariant.

Since
$$\tau(A)=(\gamma \circ \exp \circ P_{\log 2}) \left(t \mapsto \frac1{\log t} \int_0^t \mu(s,A) \ ds \right) = {\rm Tr}_\omega (A),$$
we conclude that $\tau$ belongs to $\mathcal D_M$.
\end{proof}

\section{Lidskii Formula}\label{lid}

In the present section we first prove the Lidskii formula for self-adjoint operators $A\in \mathcal L_{1,\infty}$, 
then, using Ringrose's representation~\cite[Theorems 1,6,7]{R} of compact operators, we extend the formula to an arbitrary $A\in \mathcal L_{1,\infty}$.

The following elementary lemma will be frequently used in this and subsequent sections.

\begin{lem}\label{aconv}
 For every $x \in l_\infty$ such that $\sum_{k=0}^n x_k =O(1)$, there exists a sequence $y \in l_\infty$ such that
$$\left\{ \sum_{k=2^n-1}^{2^{n+1}-2} x_k \right\}_{n \ge 0}= \{y - Sy\}_{n \ge 0}.$$
In particular, every translation invariant functional on $l_\infty$ vanishes on the sequence $\left\{ \sum_{k=2^n-1}^{2^{n+1}-2} x_k \right\}_{n \ge 0}$.
\end{lem}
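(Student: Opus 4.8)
The plan is to exploit the telescoping structure hidden in the dyadic block sums. Write $b_n := \sum_{k=2^n-1}^{2^{n+1}-2} x_k$ for the $n$-th term of the sequence in question, and recall that $(y-Sy)_0 = y_0$ while $(y-Sy)_n = y_n - y_{n-1}$ for $n \ge 1$. Thus finding the desired $y$ amounts to solving the first-order difference equation $y_n - y_{n-1} = b_n$ with initial condition $y_0 = b_0$, whose unique solution is $y_n = \sum_{m=0}^n b_m$.

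The key observation is that the dyadic blocks $\{2^m-1, \dots, 2^{m+1}-2\}$, $m \ge 0$, partition the non-negative integers: the right endpoint $2^{m+1}-2$ of the $m$-th block is immediately followed by the left endpoint $2^{m+1}-1$ of the $(m+1)$-st. Consequently the partial sum of block sums collapses to an ordinary partial sum of $x$,
$$\sum_{m=0}^n b_m = \sum_{k=0}^{2^{n+1}-2} x_k,$$
so I would simply set $y_n := \sum_{k=0}^{2^{n+1}-2} x_k$. This $y$ lies in $l_\infty$ because, by hypothesis, the partial sums $\sum_{k=0}^N x_k$ are bounded in $N$, and $y$ is merely the subsequence of these partial sums taken along the indices $N = 2^{n+1}-2$. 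A direct check then confirms $(y-Sy)_0 = y_0 = x_0 = b_0$ and, for $n \ge 1$, $(y-Sy)_n = y_n - y_{n-1} = \sum_{k=2^n-1}^{2^{n+1}-2} x_k = b_n$, as required.

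For the final clause, if $\theta$ is any $S$-invariant functional on $l_\infty$, then $\theta(Sy) = \theta(y)$, whence $\theta$ annihilates the sequence $\{b_n\}_{n \ge 0} = \{y - Sy\}_{n \ge 0}$. There is no serious obstacle here; the only points demanding care are the bookkeeping at the boundary index $n = 0$ (where the block is the singleton $\{0\}$) and the verification that the blocks tile $\mathbb{N}$ without gaps or overlaps, which is precisely what makes the telescoping identity above valid.
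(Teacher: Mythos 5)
Your proof is correct and takes essentially the same route as the paper: define $y$ as the partial sums of $x$ along dyadic indices, observe that the blocks tile $\mathbb{N}$, and telescope. In fact your indexing choice $y_n=\sum_{k=0}^{2^{n+1}-2}x_k$ reproduces the stated identity $b_n=(y-Sy)_n$ exactly, whereas the paper's choice $y_n=\sum_{k=0}^{2^{n}-2}x_k$ gives $b_n=(y-Sy)_{n+1}$, i.e.\ the identity only up to a shift (which is harmless for the ``in particular'' conclusion but less tidy than your version).
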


\begin{proof}
Setting $y_n= \sum_{k=0}^{2^{n}-2} x_k$, we have
$$
\sum_{k=2^n-1}^{2^{n+1}-2} x_k  =\sum_{k=0}^{2^{n+1}-2} x_k-\sum_{k=0}^{2^{n}-2} x_k= y_{n+1}-y_n = (y-Sy)_{n+1}.
$$
% 
% Hence, 
% $$
%  \lim_{n\to\infty}\frac1n\sum_{i=m}^{m+n-1}\left(\sum_{k=2^i-1}^{2^{i+1}-2} x_k \right)=0
% $$
% uniformly in $m\in{\mathbb N}$ and the assertion follows from the Lorentz' Theorem~\ref{lorentz}.
\end{proof}

The following theorem is a Lidskii formula for traces on $\mathcal L_{1,\infty}$ and for self-adjoint operators.

\begin{thm}\label{self-adjoint}
 Let $A=A^* \in \mathcal L_{1,\infty}$. For every trace $\tau$ on $\mathcal L_{1,\infty}$
 with the corresponding (by Theorem~\ref{one-to-one1}) $S$-invariant functional $\theta$ on $l_\infty$ the following identity holds
 $$\tau(A) = \theta \left( \frac1{\log 2} \left\{ \sum_{k=2^n-1}^{2^{n+1}-2} \lambda(k,A) \right\}_{n \ge 0} \right).$$
\end{thm}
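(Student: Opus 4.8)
The plan is to reduce the eigenvalue formula to the singular value formula~\eqref{formula1} of Theorem~\ref{one-to-one1} by means of the Jordan decomposition. Writing $A = A_+ - A_-$ with $A_\pm \ge 0$ the positive and negative parts of the self-adjoint operator $A$, we have $A_\pm \in \mathcal L_{1,\infty}$ since $0 \le A_\pm \le |A|$ and $\mu(|A|) = \mu(A)$. By the spectral theorem the nonzero eigenvalues of $A_+$ are precisely the positive eigenvalues of $A$, so that $\mu(k,A_+)$ is the $k$-th largest positive eigenvalue of $A$; likewise $\mu(k,A_-)$ is the $k$-th largest among the absolute values of the negative eigenvalues. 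Linearity of $\tau$ together with Theorem~\ref{one-to-one1} applied to $A_+$ and $A_-$, and the linearity of $\theta$, then give
\[
\tau(A) = \tau(A_+) - \tau(A_-) = \theta\left(\frac1{\log 2}\left\{\sum_{k=2^n-1}^{2^{n+1}-2}\bigl(\mu(k,A_+) - \mu(k,A_-)\bigr)\right\}_{n\ge0}\right).
\]

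Comparing this with the desired identity, it remains to prove that $\theta$ takes the same value on the dyadic-block sequence built from $\{\lambda(k,A)\}_{k\ge0}$ as on the one built from $\{\mu(k,A_+)-\mu(k,A_-)\}_{k\ge0}$. Setting $x_k := \lambda(k,A) - \mu(k,A_+) + \mu(k,A_-)$, this amounts to showing that $\theta$ annihilates $\{\sum_{k=2^n-1}^{2^{n+1}-2} x_k\}_{n\ge0}$. Since $\theta$ is $S$-invariant, Lemma~\ref{aconv} reduces the whole matter to the single estimate $\sum_{k=0}^m x_k = O(1)$; here $x \in l_\infty$ because $|\lambda(k,A)| = \mu(k,A) \le \|A\|_{\mathcal L_{1,\infty}}/(k+1)$ and the same bound holds for $\mu(k,A_\pm)$.

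I expect the bound $\sum_{k=0}^m x_k = O(1)$ to be the main obstacle, and I would establish it by a counting argument. Among the $m+1$ eigenvalues of largest absolute value let $P(m)$ be the number that are positive and $N(m) = m+1-P(m)$ the number that are negative; these are exactly the $P(m)$ largest positive eigenvalues and the $N(m)$ largest (in modulus) negative ones, so that $\sum_{k=0}^m \lambda(k,A) = \sum_{j=0}^{P(m)-1}\mu(j,A_+) - \sum_{j=0}^{N(m)-1}\mu(j,A_-)$. Subtracting $\sum_{k=0}^m\mu(k,A_+) - \sum_{k=0}^m\mu(k,A_-)$ gives
\[
\sum_{k=0}^m x_k = -\sum_{j=P(m)}^{m}\mu(j,A_+) + \sum_{j=N(m)}^{m}\mu(j,A_-).
\]
Every positive eigenvalue beyond the top $m+1$ has value at most $\mu(m,A) \le \|A\|_{\mathcal L_{1,\infty}}/(m+1)$, and the first sum on the right has exactly $N(m) \le m+1$ terms, each so bounded; hence it is at most $\|A\|_{\mathcal L_{1,\infty}}$. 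The symmetric argument bounds the second sum, yielding $|\sum_{k=0}^m x_k| \le 2\|A\|_{\mathcal L_{1,\infty}}$ uniformly in $m$. Ties in absolute value only shift $P(m)$ and $N(m)$ by a bounded amount and do not affect the estimate.

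With the estimate in hand, Lemma~\ref{aconv} gives $\theta(\{\sum_{k=2^n-1}^{2^{n+1}-2} x_k\}_{n\ge0}) = 0$, which is precisely the equality of the two dyadic-block values of $\theta$. Combining this with the displayed expression for $\tau(A)$ and the linearity of $\theta$ completes the proof.
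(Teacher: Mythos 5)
Your proposal is correct and takes essentially the same route as the paper: Jordan decomposition $A=A_+-A_-$, Theorem~\ref{one-to-one1} applied to each part, and Lemma~\ref{aconv} applied to $x_k=\lambda(k,A)-\mu(k,A_+)+\mu(k,A_-)$. The only difference is that where the paper simply cites an estimate from the literature (namely $\bigl|\sum_{k=0}^n \lambda(k,A)-\mu(k,A_+)+\mu(k,A_-)\bigr|\le 2(n+1)\mu(n,A)$ for compact self-adjoint $A$, \cite[Lemma 5.2.7]{LSZ}), you prove that bound directly, and your counting argument for it is valid.
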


\begin{proof}
 By Theorem~\ref{one-to-one1} and the linearity of the trace we have
\begin{equation}\label{t1}
\tau(A) = \tau(A_+) -\tau(A_-)
=\theta \left( \frac1{\log 2} \left\{ \sum_{k=2^n-1}^{2^{n+1}-2} \mu(k,A_+)-\mu(k,A_-) \right\}_{n \ge 0} \right). 
\end{equation}

By~\cite[Lemma 5.2.7]{LSZ} for every compact self-adjoint operator $A$ the following estimate holds
$$ \left| \sum_{k=0}^n \lambda(k,A)-\mu(k,A_+)+\mu(k,A_-) \right| \le 2 (n+1) \mu(n,A), \ n\ge0.$$
Hence, if $A=A^* \in \mathcal L_{1,\infty}$, then the right-hand side of the latter inequality is majorized by $2 \|A\|_{\mathcal L_{1,\infty}}$.
So, by Lemma~\ref{aconv} every $S$-invariant functional on $l_\infty$ equals zero on the sequence
$$\left\{ \sum_{k=2^n-1}^{2^{n+1}-2} \lambda(k,A)-\mu(k,A_+)+\mu(k,A_-) \right\}_{n \ge 0}$$
Therefore, 
$$\theta \left(\left\{ \sum_{k=2^n-1}^{2^{n+1}-2} \lambda(k,A) \right\}_{n \ge 0} \right)=
\theta \left(\left\{ \sum_{k=2^n-1}^{2^{n+1}-2} \mu(k,A_+)-\mu(k,A_-) \right\}_{n \ge 0} \right)$$
for every $S$-invariant functional $\theta$ on $l_\infty$.
Combining the latter equality with~\eqref{t1} we obtain the required assertion.
\end{proof}

The following theorem is a Lidskii formula for all traces on the ideal $\mathcal L_{1,\infty}$.
This result extends and complements the corresponding results from~\cite{AS,CPS2,CRSS,SSZ,SUZ3}.

\begin{thm}\label{Lidskii}
For every $A \in \mathcal L_{1,\infty}$ and every trace $\tau$ on $\mathcal L_{1,\infty}$
 with the corresponding (by Theorem~\ref{one-to-one1}) $S$-invariant functional $\theta$ on $l_\infty$ the following identity holds
 $$\tau(A) = \theta \left( \frac1{\log 2} \left\{ \sum_{k=2^n-1}^{2^{n+1}-2} \lambda(k,A) \right\}_{n \ge 0} \right).$$
\end{thm}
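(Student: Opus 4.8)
The plan is to reduce the general case to the self-adjoint Lidskii formula of Theorem~\ref{self-adjoint} and then to absorb the discrepancy between the eigenvalues of $A$ and those of its self-adjoint parts by means of Lemma~\ref{aconv}. Since $\mathcal L_{1,\infty}$ is a $*$-ideal, both $\Re A:=\frac12(A+A^*)$ and $\Im A:=\frac1{2i}(A-A^*)$ are self-adjoint elements of $\mathcal L_{1,\infty}$, and $A=\Re A+i\,\Im A$. As $\tau$ and $\theta$ are linear (with $\theta$ extended complex-linearly to complex sequences), applying Theorem~\ref{self-adjoint} to $\Re A$ and to $\Im A$ gives
\begin{equation*}
\tau(A)=\theta\!\left(\frac1{\log2}\left\{\sum_{k=2^n-1}^{2^{n+1}-2}\lambda(k,\Re A)\right\}_{n\ge0}\right)+i\,\theta\!\left(\frac1{\log2}\left\{\sum_{k=2^n-1}^{2^{n+1}-2}\lambda(k,\Im A)\right\}_{n\ge0}\right).
\end{equation*}
Thus it remains to show that the right-hand side coincides with $\theta$ applied to the block sums of the complex sequence $\{\lambda(k,A)\}_{k\ge0}$; that is, that inside $\theta$ one may replace $\lambda(k,\Re A)$ by $\Re\lambda(k,A)$ and $\lambda(k,\Im A)$ by $\Im\lambda(k,A)$, after which the two summands recombine into the block sums of $\lambda(k,A)$.

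By linearity of $\theta$ and Lemma~\ref{aconv}, this replacement is legitimate once the partial sums of the difference sequences are uniformly bounded; concretely, it suffices to prove
\begin{equation*}
\left|\sum_{k=0}^{n}\big(\lambda(k,\Re A)-\Re\lambda(k,A)\big)\right|\le C\,\|A\|_{\mathcal L_{1,\infty}},\qquad n\ge0,
\end{equation*}
together with its imaginary analogue. Indeed, setting $x_k:=\Re\lambda(k,A)-\lambda(k,\Re A)$ we then have $\sum_{k=0}^n x_k=O(1)$, so Lemma~\ref{aconv} shows that every $S$-invariant $\theta$ annihilates the block sums of $x$, which is precisely the desired identity for the real part; the imaginary part is treated identically.

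The main obstacle is this partial-sum estimate, and here Ringrose's representation~\cite{R} enters. By the superdiagonal (Schur) form of a compact operator, $A$ is unitarily equivalent to an upper-triangular operator $T$ whose diagonal entries, listed along a maximal nest, are exactly $\{\lambda(k,A)\}_{k\ge0}$, ordered by decreasing modulus. Consequently $\Re A$ is unitarily equivalent to the self-adjoint operator $\Re T$, whose diagonal entries in the same orthonormal basis are $\{\Re\lambda(k,A)\}_{k\ge0}$, while its eigenvalue list is $\{\lambda(k,\Re A)\}_{k\ge0}$. Writing $P_n$ for the projection onto the first $n+1$ vectors of this basis, we have $\sum_{k=0}^n\Re\lambda(k,A)=\mathrm{Tr}(P_n\,\Re T)$, and the Ky Fan maximum principle (equivalently, Schur--Horn majorization) compares this quantity with the partial sums of the eigenvalues of $\Re T$; the difference between the two orderings is controlled by the tail and is of order $(n+1)\mu(n,\Re A)=O(\|A\|_{\mathcal L_{1,\infty}})$. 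The delicate point — the technical heart of the argument — is this majorization step together with the careful passage from the signed-decreasing ordering delivered by Ky Fan to the modulus ordering used in the formula. Once the estimate is in place, the two displays above combine to give
\begin{equation*}
\tau(A)=\theta\!\left(\frac1{\log2}\left\{\sum_{k=2^n-1}^{2^{n+1}-2}\lambda(k,A)\right\}_{n\ge0}\right),
\end{equation*}
which is the asserted Lidskii formula for arbitrary $A\in\mathcal L_{1,\infty}$.
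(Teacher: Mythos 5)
Your overall reduction (split $A$ into real and imaginary parts, apply Theorem~\ref{self-adjoint} to each, absorb the discrepancy through Lemma~\ref{aconv}) mirrors the skeleton of the paper's argument, but the step you yourself flag as ``the technical heart'' is a genuine gap, and Ky Fan/Schur--Horn majorization cannot close it. The estimate you need,
$$\left|\sum_{k=0}^{n}\bigl(\lambda(k,\Re A)-\Re\lambda(k,A)\bigr)\right|=O(1),\qquad n\ge0,$$
is a deep statement for non-normal $A$. Test it on a quasi-nilpotent $A=Q\in\mathcal L_{1,\infty}$: then $\lambda(k,A)=0$ for all $k$, and the claim becomes that the \emph{modulus-ordered} eigenvalue partial sums of the self-adjoint operator $\Re Q$ stay bounded. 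At this point your argument knows only two things: that $\Re T$ has zero diagonal in the Ringrose basis, and that the diagonal of a self-adjoint operator is majorized by its eigenvalue sequence. That information is insufficient: majorization gives one-sided inequalities in the signed-decreasing ordering and puts no upper bound whatever on modulus-ordered sums. Concretely, a self-adjoint operator in $\mathcal L_{1,\infty}$ with zero diagonal can have modulus-ordered partial sums growing like $\log n$ --- take eigenvalues $2/(k+1)$ and $-1/(k+1)$, $k\ge0$; since its positive and negative parts are both non-trace-class, it admits a zero-diagonal representation, yet the merged partial sums are $\log n+O(1)$. So the ``difference between the two orderings'' is not $O((n+1)\mu(n,\Re A))$; the possible discrepancy is of order $\log n$, which is exactly the critical scale in $\mathcal L_{1,\infty}$.

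What forbids this behaviour when the zero-diagonal operator is the real part of a compact upper-triangular operator belonging to $\mathcal L_{1,\infty}$ is precisely Kalton's theorem that every trace on $\mathcal L_{1,\infty}$ vanishes on quasi-nilpotent operators (\cite{K}, \cite[Theorem 5.5.1]{LSZ}); your estimate, specialised to quasi-nilpotent $A$, is essentially equivalent to that theorem, so it is not provable by soft linear algebra. (Your estimate \emph{is} true, but only as a consequence of the full Lidskii formula.) This is why the paper never compares $\lambda(k,\Re A)$ with $\Re\lambda(k,A)$ for general $A$. Instead it first uses Ringrose's decomposition \cite{R} to write $A=N+Q$ with $N$ normal, $Q$ quasi-nilpotent and $\lambda(A)=\lambda(N)$, uses the Weyl inequality to see that $N,Q\in\mathcal L_{1,\infty}$, kills $\tau(Q)$ by Kalton's theorem, and only then performs the real/imaginary splitting --- on the \emph{normal} operator $N$, where the comparison $|\sum_{k=0}^n(\lambda(k,N)-\lambda(k,\Re N)-i\lambda(k,\Im N))|\le 5n\mu(n,N)$ of \cite[Lemma 5.2.10]{LSZ} is elementary because $N$, $\Re N$, $\Im N$ are simultaneously diagonalizable. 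To repair your proof you must either import Kalton's theorem (after which the paper's route is the natural one) or prove it independently, and majorization alone will not do that.
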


\begin{proof}
For every compact operator $A$ there exist a compact normal operator $N$ 
and a compact quasi-nilpotent operator $Q$ such that $A=N+Q$ and $\lambda(A)=\lambda(N)$ ~\cite[Theorems 1,6,7]{R} (in particular, $\mu(N)=|\lambda(N)|=|\lambda(A)|$).
By the Weyl theorem (see e.g.~\cite[Theorem 3.1]{GK}), the sequence $|\lambda(A)|$ is logarithmically majorized by the sequence $\mu(A)$. Recall that (see Proposition 3.2 in \cite{K}) the quasi-norm in $\mathcal L_{1,\infty}$ is monotone with respect to the logarithmic majorization. Thus, $\|\lambda(A)\|_{1,\infty}\leq{\rm const}\cdot\|A\|_{1,\infty}.$ Since $\mu(N)=|\lambda(A)|,$ it follows that $N\in \mathcal L_{1,\infty}$ and, therefore, $Q\in \mathcal L_{1,\infty}.$ By~\cite[Theorem 5.5.1]{LSZ} (see also~\cite{K}), we have $\tau(Q)=0$ for every quasi-nilpotent operator $Q$ and for every trace on $\mathcal L_{1,\infty}$.

Hence,
$$\tau(A)=\tau(N)=\tau(\Re(N))+i\tau(\Im(N)),$$
(where $\Re(N)$ and $\Im(N)$ are real and imaginary parts of the operator $N$, respectively)
and by Theorem~\ref{self-adjoint} we obtain
\begin{equation}\label{t2}
\tau(A) 
=\theta \left( \frac1{\log 2} \left\{ \sum_{k=2^n-1}^{2^{n+1}-2} \lambda(k,\Re(N))+i\lambda(k,\Im(N)) \right\}_{n \ge 0} \right). 
\end{equation}

By~\cite[Lemma 5.2.10]{LSZ} for every compact normal operator $N$ the following estimate holds
$$ \left| \sum_{k=0}^n \lambda(k,N)-\lambda(k,\Re(N))-i\lambda(k,\Im(N)) \right| \le 5 n \mu(n,N).$$

Hence, for $N \in \mathcal L_{1,\infty}$ the right-hand side is majorized by $5 \|N\|_{\mathcal L_{1,\infty}}$.
Lemma~\ref{aconv} now yields that every $S$-invariant functional on $l_\infty$ equals zero on the sequence
$$\left\{ \sum_{k=2^n-1}^{2^{n+1}-2} \lambda(k,N)-\lambda(k,\Re(N))-i\lambda(k,\Im(N)) \right\}_{n \ge 0}.$$
Therefore, for every $S$-invariant functional $\theta$ on $l_\infty$ we have
$$\theta \left(\left\{ \sum_{k=2^n-1}^{2^{n+1}-2} \lambda(k,N) \right\}_{n \ge 0} \right)=
\theta \left(\left\{ \sum_{k=2^n-1}^{2^{n+1}-2} \lambda(k,\Re(N))+i\lambda(k,\Im(N)) \right\}_{n \ge 0} \right).$$

Combining the latter equality with~\eqref{t2} we obtain the claim.

\end{proof}

% \begin{rem}
%  The result of Theorem~\ref{Lidskii} is stated for positive traces on $\mathcal L_{1,\infty}$ normalised in the sense, that $\tau( {\rm diag} (\{\frac1{n+1}\}_{n\ge0}))=1$.
%  Evidently, it holds for every every positive trace on $\mathcal L_{1,\infty}$. 
%  We only need to scale the Banach limit $B$ by a scalar $\tau( {\rm diag} (\{\frac1{n+1}\}_{n\ge0}))$.
% \end{rem}

\section{Measurability}\label{Me}

Using the results of Corollary~\ref{BL1} together with those of Theorems~\ref{FS1}, \ref{CD}, \ref{M-inv} and the Lidskii formula
from the preceeding section (Theorem~\ref{Lidskii}), we can easily infer criteria for measurability of operators within $\mathcal L_{1,\infty}$ with respect to various subclasses 
of normalised traces on $\mathcal L_{1,\infty}$. We recall the following definition from~\cite{C,LS}.

\begin{dfn}
Let $\mathcal A$ be a subset of all traces on $\mathcal L_{1,\infty}$.
An operator $A \in \mathcal L_{1,\infty}$ is called $\mathcal A$-measurable if
the values of all traces from $\mathcal A$ coincide on $A$.
\end{dfn}

Propositions~\ref{K_S_meas},~\ref{D_meas},~\ref{CD_meas},~\ref{D_M_meas} provide definitive results, in terms of eigenvalue sequences, concerning 
measurability with respect to the classes of all positive normalised traces ($\mathcal {PT}$), 
all Dixmier traces ($\mathcal D$), all Connes-Dixmier traces ($\mathcal C$) 
and all Dixmier traces generated by $M$-invariant extended limits ($\mathcal D_M$).
For the ideal $\mathcal L_{1,\infty}$ these results strengthen and complete corresponding results from~\cite{LSS,SS_JFA,SUZ1,SUZ2}.

The following theorem resolves (in the class of positive normalized traces) an open problem discussed in~\cite[p. 1061]{CS}. 
In fact, it appears that the class $\mathcal {PT}$ is the largest class of traces for which the meaningful description 
of the corresponding measurable elements is possible.
\begin{prop}\label{K_S_meas}
 An operator $A \in \mathcal L_{1,\infty}$ is  $\mathcal {PT}$-measurable if and only if 
 the sequence 
 $$\left\{ \sum_{k=2^n-1}^{2^{n+1}-2} \lambda(k,A) \right\}_{n \ge 0}$$ is almost convergent.
Here $\{ \lambda(n,A) \}_{n \ge 0}$ is any eigenvalue sequence of $A$.
\end{prop}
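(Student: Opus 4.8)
The plan is to obtain Proposition~\ref{K_S_meas} as a direct consequence of the Lidskii formula (Theorem~\ref{Lidskii}) combined with the bijection between $\mathcal{PT}$ and the set $\mathfrak B$ of all Banach limits furnished by Corollary~\ref{BL1}. The essential observation is that, once everything is transported across these two results, ``$\mathcal{PT}$-measurability of $A$'' and ``almost convergence of the block-sum sequence'' become literally the same condition, stated on opposite sides of the bijection.

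First I would fix an arbitrary eigenvalue sequence $\{\lambda(k,A)\}_{k\ge0}$ and abbreviate
$$\xi := \left\{\sum_{k=2^n-1}^{2^{n+1}-2}\lambda(k,A)\right\}_{n\ge0}.$$
I would record at the outset that $\xi\in l_\infty$, so that the notion of almost convergence of $\xi$ is meaningful. This is exactly the boundedness noted in the introduction: the estimate $\|\lambda(A)\|_{l_{1,\infty}}\le \mathrm{const}\cdot\|A\|_{l_{1,\infty}}$ established inside the proof of Theorem~\ref{Lidskii} (via Weyl's majorisation) gives $|\lambda(k,A)|\le \mathrm{const}\cdot\|A\|_{l_{1,\infty}}/(k+1)$, whence each dyadic block sum is bounded by a fixed multiple of $\|A\|_{l_{1,\infty}}$, just as in Lemma~\ref{aux}(i).

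Next I would invoke Theorem~\ref{Lidskii}: for every $\tau\in\mathcal{PT}$, writing $B:=\tau\circ {\rm diag}\circ D$ for its corresponding functional, one has
$$\tau(A) = B\Bigl(\tfrac1{\log 2}\,\xi\Bigr) = \tfrac1{\log 2}\,B(\xi).$$
By Corollary~\ref{BL1} the assignment $\tau\mapsto B$ is a bijection of $\mathcal{PT}$ onto $\mathfrak B$, so as $\tau$ ranges over all positive normalised traces, $B$ ranges over \emph{all} Banach limits. Reading off the equivalence then finishes the argument: $A$ is $\mathcal{PT}$-measurable precisely when $\tau(A)$ assumes a single common value over $\tau\in\mathcal{PT}$, which by the displayed identity and the surjectivity of the bijection happens exactly when $B(\xi)$ assumes a single common value over $B\in\mathfrak B$, i.e.\ when there is a scalar $a$ with $B(\xi)=a$ for every Banach limit $B$; by Definition~\ref{ac} this is precisely the statement that $\xi$ is almost convergent.

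I do not expect a genuine obstacle, since the substance is carried entirely by Theorem~\ref{Lidskii} and Corollary~\ref{BL1}. The only points demanding care are the two just flagged: verifying $\xi\in l_\infty$ so that ``almost convergent'' is well posed, and using \emph{both} directions of Corollary~\ref{BL1} (every trace yields a Banach limit \emph{and} every Banach limit arises from a trace), because it is surjectivity that lets me quantify over all of $\mathfrak B$ rather than over a proper subset. As a by-product, since $\tau(A)$ does not depend on the chosen eigenvalue enumeration, neither does $B(\xi)=\log 2\cdot\tau(A)$ for any $B$; hence the almost convergence of $\xi$, and its limit, are independent of which eigenvalue sequence of $A$ is used, which is what justifies the phrase ``any eigenvalue sequence'' in the statement.
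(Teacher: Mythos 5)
Your proof is correct and follows essentially the same route as the paper's: both combine Corollary~\ref{BL1}, Theorem~\ref{Lidskii} and Definition~\ref{ac} to translate constancy of $\tau(A)$ over $\mathcal{PT}$ into constancy of $B(\xi)$ over all Banach limits. Your added remarks (boundedness of $\xi$, explicit use of surjectivity of the bijection, independence of the eigenvalue enumeration) are sound points of care that the paper leaves implicit.
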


\begin{proof}
 An operator $A \in \mathcal L_{1,\infty}$ is  $\mathcal {PT}$-measurable if and only if 
 $\tau(A)=a$ for every positive normalised trace $\tau$ on $\mathcal L_{1,\infty}$. 
 By Corollary~\ref{BL1} and Theorem~\ref{Lidskii}, the previous statement is equivalent to the fact that 
 $$B\left( \frac1{\log 2} \left\{ \sum_{k=2^n-1}^{2^{n+1}-2} \lambda(k,A) \right\}_{n \ge 0} \right) =a$$
 for every Banach limit $B$. The assertion follows now from Definition~\ref{ac}.
\end{proof}

It is shown in Corollary~\ref{fs_cor1} that the classes of Dixmier traces (on $\mathcal L_{1,\infty}$) 
and normalised fully symmetric functionals on $\mathcal L_{1,\infty}$ coincide.
Hence, the following theorem also resolves (for the class $\mathcal L_{1,\infty}$) an open problem (iii) stated~\cite[p. 1061]{CS},
concerning the measurability with respect to the class of all normalised fully symmetric functionals.

\begin{prop}\label{D_meas}
 An operator $A \in \mathcal L_{1,\infty}$ is  Dixmier-measurable if and only if 
 the sequence 
 $$C\left\{ \sum_{k=2^n-1}^{2^{n+1}-2} \lambda(k,A) \right\}_{n \ge 0}$$ is convergent.
Here $\{ \lambda(n,A) \}_{n \ge 0}$ is any eigenvalue sequence of $A$.
\end{prop}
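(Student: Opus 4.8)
The plan is to combine the characterisation of Dixmier traces via factorisable Banach limits (Theorem~\ref{FS1}) with the Lidskii formula (Theorem~\ref{Lidskii}) to reduce the measurability statement to a transparent assertion about extended limits on $l_\infty$, which is then settled by Remark~\ref{extended_limits}. First I would fix an eigenvalue sequence $\{\lambda(n,A)\}_{n\ge0}$ and set
$$
x := \left\{ \sum_{k=2^n-1}^{2^{n+1}-2} \lambda(k,A) \right\}_{n \ge 0}.
$$
This sequence lies in $l_\infty$: by the majorisation argument used in the proof of Theorem~\ref{Lidskii} one has $\lambda(A)\in\mathcal L_{1,\infty}$, and since $\{|\lambda(n,A)|\}_{n\ge0}$ is decreasing, the estimate underlying Lemma~\ref{aux}(i) gives $|x_n|\le\|\lambda(A)\|_{l_{1,\infty}}$ for all $n$.

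Next I would translate the two ingredients into a single identity. By Theorem~\ref{FS1}, the Dixmier traces on $\mathcal L_{1,\infty}$ are \emph{exactly} those traces $\tau$ whose associated $S$-invariant functional $\theta$ has the form $\theta=\gamma\circ C$ for an extended limit $\gamma$ on $l_\infty$, and conversely every such $\gamma$ yields a Dixmier trace. By the Lidskii formula (Theorem~\ref{Lidskii}), the value of such a trace on $A$ is
$$
\tau(A)=\theta\!\left(\tfrac1{\log 2}\,x\right)=\tfrac1{\log 2}\,\gamma(Cx).
$$
Letting $\gamma$ range over all extended limits (which, by both parts of Theorem~\ref{FS1}, is the same as letting $\tau$ range over all Dixmier traces) therefore gives
$$
\{\tau(A):\tau\in\mathcal D\}=\tfrac1{\log 2}\,\{\gamma(Cx):\gamma\text{ is an extended limit}\}.
$$

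Finally I would invoke Remark~\ref{extended_limits}, which identifies the right-hand set with the interval $\tfrac1{\log2}\,[\liminf_n (Cx)_n,\ \limsup_n (Cx)_n]$. Thus $A$ is Dixmier-measurable, i.e.\ $\{\tau(A):\tau\in\mathcal D\}$ is a single point, if and only if $\liminf_n(Cx)_n=\limsup_n(Cx)_n$, which is precisely the convergence of $Cx=C\{\sum_{k=2^n-1}^{2^{n+1}-2}\lambda(k,A)\}_{n\ge0}$, as claimed.

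The hard part here is essentially bookkeeping rather than a genuine obstacle, since the two cited theorems carry all the weight; the one point demanding care is that a general $A$ has complex eigenvalues, so $x$ and $Cx$ are complex sequences. I would handle this by writing $\gamma(Cx)=\gamma(\Re Cx)+i\,\gamma(\Im Cx)$ and applying Remark~\ref{extended_limits} to the real and imaginary parts separately: the value $\gamma(Cx)$ is independent of $\gamma$ exactly when both $\Re(Cx)$ and $\Im(Cx)$ converge, which is exactly convergence of $Cx$. I would also remark that independence of the proposition's conclusion from the chosen ordering of the eigenvalues is automatic, since $\tau(A)$ does not depend on that choice.
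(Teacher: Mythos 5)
Your proof is correct and follows essentially the same route as the paper: combine Theorem~\ref{FS1} with the Lidskii formula (Theorem~\ref{Lidskii}) to identify $\{\tau(A):\tau\in\mathcal D\}$ with $\frac1{\log 2}\{\gamma(Cx):\gamma \text{ an extended limit}\}$, then apply Remark~\ref{extended_limits}. Your additional care with the real and imaginary parts of the (generally complex) eigenvalue sequence is a point the paper's proof passes over silently, and it is handled correctly.
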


\begin{proof}
 An operator $A \in \mathcal L_{1,\infty}$ is  $\mathcal D$-measurable if and only if 
 $\tau(A)=a$ for every Dixmier trace $\tau$ on $\mathcal L_{1,\infty}$. 
 By Theorems~\ref{FS1} and~\ref{Lidskii}, the previous statement is equivalent to the fact that 
 $$(\gamma \circ C)\left( \frac1{\log 2} \left\{ \sum_{k=2^n-1}^{2^{n+1}-2} \lambda(k,A) \right\}_{n \ge 0} \right) =a$$
 for every extended limit $\gamma$. The assertion follows from the Remark~\ref{extended_limits}.
\end{proof}

The following result shows that the concepts of Dixmier and $\mathcal {PT}$-measura\-bility 
differ even on the positive cone of $\mathcal L_{1,\infty}$.

\begin{thm}\label{dif1}
 The class of $\mathcal D$-measurable operators is strictly wider than the class of $\mathcal {PT}$-measurable operators.
\end{thm}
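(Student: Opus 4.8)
The plan is to read the statement off directly from the two measurability criteria already established, namely Proposition~\ref{K_S_meas} and Proposition~\ref{D_meas}, and to reduce the whole assertion to a single explicit construction on the sequence side. The inclusion $\mathcal L_{1,\infty}^{\mathcal{PT}}\subseteq \mathcal L_{1,\infty}^{\mathcal D}$ is automatic: since $\mathcal D\subseteq\mathcal{PT}$, any operator on which all positive normalised traces agree is, a fortiori, one on which all Dixmier traces agree. Equivalently, at the level of sequences, if the block-sum sequence $y:=\{\sum_{k=2^n-1}^{2^{n+1}-2}\lambda(k,A)\}_{n\ge0}$ is almost convergent then, by Theorem~\ref{lorentz} applied with starting index $m=0$, the Ces\`aro means $Cy$ converge. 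Thus the entire content of the theorem is the \emph{strictness} of the inclusion, and for this it suffices to exhibit a single operator $A\in\mathcal L_{1,\infty}$ whose block-sum sequence is Ces\`aro convergent but not almost convergent.

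First I would build the target sequence $y$ on the sequence side, subject to the one structural restriction dictated by positivity (explained below): it must satisfy $1\le y_n\le 2$. Define $y$ by listing, for $j=1,2,3,\dots$, a run of $j$ entries equal to $1$ followed by a run of $j$ entries equal to $2$. A direct computation shows that after the $j$-th such pair the partial sums of $y$ equal $\tfrac32 j(j+1)$ over $j(j+1)$ terms; since the intermediate deviation from $\tfrac32 N$ is $O(j)=O(\sqrt N)$, the means $(Cy)_N$ converge to $\tfrac32$. On the other hand, averaging over a window lying entirely inside a long run of $1$'s gives a value close to $1$, while a window inside a long run of $2$'s gives a value close to $2$; hence the limit in Theorem~\ref{lorentz} fails to be uniform in $m$, and $y$ is \emph{not} almost convergent.

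It remains to realise $y$ as the dyadic block-sum sequence of an operator in $\mathcal L_{1,\infty}$. I would take the positive diagonal operator $A={\rm diag}(z)$, where $z$ is the piecewise constant sequence defined by $z_k:=y_n/2^n$ for $2^n-1\le k\le 2^{n+1}-2$. Since each dyadic block has exactly $2^n$ entries, one has $\sum_{k=2^n-1}^{2^{n+1}-2}z_k=y_n$, and from $1\le y_n\le 2$ it follows that $(k+1)z_k\le 2^{n+1}\cdot y_n/2^n\le 4$, so $z\in l_{1,\infty}$. The crucial point is the monotonicity of $z$: at a block boundary the inequality $z_{2^{n+1}-2}\ge z_{2^{n+1}-1}$ reads $2y_n\ge y_{n+1}$, which holds precisely because $y_n\ge1$ and $y_{n+1}\le2$. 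Thus $z$ is a genuine decreasing singular value sequence, $\lambda(k,A)=\mu(k,A)=z_k$, and the block-sum sequence of $A$ is exactly $y$. Proposition~\ref{D_meas} then renders $A$ Dixmier-measurable (as $Cy$ converges), while Proposition~\ref{K_S_meas} shows that $A$ is not $\mathcal{PT}$-measurable (as $y$ is not almost convergent), which completes the proof.

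The main obstacle, and the reason the construction is slightly delicate rather than a one-line alternation, is exactly this realisability constraint. For a positive operator the eigenvalues coincide with the decreasing singular values, and a decreasing $z\in l_{1,\infty}$ can only produce block sums satisfying $y_{n+1}\le 2y_n$; in particular one cannot let $y$ jump up from $0$ to a positive value. This rules out the naive $\{0,1\}$-valued oscillation and forces the sequence to oscillate between two strictly positive levels whose ratio does not exceed $2$, which is what the choice of the values $1$ and $2$ guarantees.
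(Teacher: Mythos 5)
Your proof is correct and takes essentially the same route as the paper: the paper also reduces everything to Propositions~\ref{K_S_meas} and~\ref{D_meas}, picks a $[1,2]$-valued sequence that is Ces\`aro convergent but not almost convergent (there $y=\sum_{n\ge1}\chi_{[2^n,2^n+n]}+\emm$), and realises it as the dyadic block-sum sequence of the diagonal operator ${\rm diag}(Dy)$, verifying the same ratio condition $y_{n+1}\le 2y_n$ to get a decreasing sequence. Your alternating-runs sequence and your hand-built $z$ (which is just $Dy$ without the $\log 2$ factor) are cosmetic variants of the paper's choices.
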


\begin{proof}
 Consider the sequence 
 $$y=\sum_{n=1}^\infty \chi_{[2^n,2^n+n]} +\emm \in l_\infty.$$
 
 It is easy to check that $(Cy)_n \mathop{\longrightarrow}\limits_{n\to\infty} 1$ and $y$ is not almost convergent.
Since $y_n \ge \frac{y_{n+1}}2$ for every $n\ge0$, it follows that $\frac{y_n}{2^n}\ge \frac{y_{n+1}}{2^{n+1}}$ and $(Dy)^* = Dy$.
 
 For $A = {\rm diag} (Dy) \in \mathcal L_{1,\infty}$ we clearly have that $\lambda(A) = (Dy)$.
 
Using the definition of the operator $D$, we obtain
 $$\sum_{k=2^n-1}^{2^{n+1}-2} \lambda(k,A) = \sum_{k=2^n-1}^{2^{n+1}-2} (Dy)_k = y_n.$$
 
 By Propositions~\ref{K_S_meas} and~\ref{D_meas} we obtain that the operator $A$ is $\mathcal D$-measurable, 
 but $A$ is not $\mathcal {PT}$-measurable.
\end{proof}

The following proposition characterises Connes-Dixmier measurability of an operator $A \in \mathcal L_{1,\infty}$ 
in terms of its eigenvalue sequence.

\begin{prop}\label{CD_meas}
 An operator $A \in \mathcal L_{1,\infty}$ is Connes-Dixmier measurable if and only if 
 the sequence 
 $$C^2\left\{ \sum_{k=2^n-1}^{2^{n+1}-2} \lambda(k,A) \right\}_{n \ge 0}$$ is convergent.
Here $\{ \lambda(n,A) \}_{n \ge 0}$ is any eigenvalue sequence of $A$.
\end{prop}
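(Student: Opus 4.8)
The plan is to run exactly the argument used for Propositions~\ref{K_S_meas} and~\ref{D_meas}, but now feeding in the characterisation of Connes-Dixmier traces from Theorem~\ref{CD}. First I would unwind the definition: the operator $A\in\mathcal L_{1,\infty}$ is Connes-Dixmier measurable precisely when there is a scalar $a$ with $\tau(A)=a$ for every Connes-Dixmier trace $\tau$ on $\mathcal L_{1,\infty}$. By Theorem~\ref{CD} (together with the bijection of Corollary~\ref{BL1}), as $\tau$ ranges over all Connes-Dixmier traces the associated Banach limit $B=\tau\circ{\rm diag}\circ D$ ranges over \emph{exactly} the set $\{\theta\circ C^2:\theta\text{ an extended limit on }l_\infty\}$; it is important that I use both directions of Theorem~\ref{CD} here, so that the quantifier ``for every Connes-Dixmier trace'' translates precisely into ``for every extended limit $\theta$'' with no traces lost or gained.

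Next I would apply the Lidskii formula (Theorem~\ref{Lidskii}), which lets me evaluate each such trace on eigenvalues rather than singular values. Writing
$$
x:=\left\{ \sum_{k=2^n-1}^{2^{n+1}-2} \lambda(k,A) \right\}_{n \ge 0},
$$
which lies in $l_\infty$ by Lemma~\ref{aux}(i), Theorem~\ref{Lidskii} gives $\tau(A)=\theta\bigl(C^2(\tfrac1{\log 2}x)\bigr)$ for the extended limit $\theta$ corresponding to $\tau$. Hence $A$ is Connes-Dixmier measurable if and only if there is a scalar $a$ with
$$
\theta\left( \frac1{\log 2}\, C^2 x \right) = a \quad\text{for every extended limit }\theta\text{ on }l_\infty.
$$

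Finally I would invoke Remark~\ref{extended_limits}: since $C$ is bounded on $l_\infty$ we have $C^2x\in l_\infty$, and the set of values $\{\theta(C^2x):\theta\text{ an extended limit}\}$ equals the whole interval $[\liminf_n (C^2x)_n,\ \limsup_n (C^2x)_n]$. This interval degenerates to a single point if and only if $\liminf$ and $\limsup$ coincide, i.e.\ if and only if $C^2x$ is convergent, in which case the common value of all Connes-Dixmier traces is $\tfrac1{\log 2}\lim_n (C^2x)_n$. This yields the stated equivalence. I expect no genuine obstacle here: the content is entirely carried by the earlier Theorems~\ref{CD} and~\ref{Lidskii}, and the only point requiring care is the clean matching of quantifiers via the bijection of Corollary~\ref{BL1}, ensuring that ranging over Connes-Dixmier traces is the same as ranging over all factors $\theta\circ C^2$.
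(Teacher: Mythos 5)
Your proposal is correct and follows essentially the same route as the paper's own proof: Theorem~\ref{CD} (used as a genuine bijection) plus the Lidskii formula of Theorem~\ref{Lidskii} reduce Connes-Dixmier measurability to the statement that $\gamma\bigl(C^2 x\bigr)$ takes a single value over all extended limits $\gamma$, and Remark~\ref{extended_limits} converts this into convergence of $C^2 x$. Your extra care about matching quantifiers via Corollary~\ref{BL1} is exactly the (implicit) content of the paper's one-line invocation of Theorem~\ref{CD}, so there is nothing to add.
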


\begin{proof}
 An operator $A \in \mathcal L_{1,\infty}$ is $\mathcal C$-measurable if and only if 
 $\tau(A)=a$ for every Connes-Dixmier trace $\tau$ on $\mathcal L_{1,\infty}$. 
 By Theorem~\ref{CD} and Theorem~\ref{Lidskii}, the previous statement is equivalent to the fact that 
 $$(\gamma \circ C^2)\left( \frac1{\log 2} \left\{ \sum_{k=2^n-1}^{2^{n+1}-2} \lambda(k,A) \right\}_{n \ge 0} \right) =a$$
 for every extended limit $\gamma$. The assertion follows from Remark~\ref{extended_limits}.
\end{proof}

To prove the main result of this section we need Hardy's Tauberian theorem for Ces\`aro summability (see, e.g.~\cite[Chapter 6.8]{H}).
\begin{thm}\label{taub}
If $x \in l_\infty$ is such that the sequence $\{n(x_n -x_{n-1})\}_{n\ge 1}$ is bounded from below,
then the sequence $Cx$ is convergent if and only if the sequence $x$ is convergent.
\end{thm}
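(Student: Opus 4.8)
The plan is to treat the two implications separately, with essentially all the work in the Tauberian (``only if'') direction. The ``if'' direction is the classical fact that Ces\`aro means preserve limits: if $x_n \to a$, then splitting the average $\frac{1}{n+1}\sum_{k=0}^n x_k$ at an index $N$ beyond which $|x_k - a| < \varepsilon$ shows $(Cx)_n \to a$, and I would dispose of it in a line. So assume from now on that $Cx$ converges to some scalar $a$, and fix a constant $H \ge 0$ witnessing the Tauberian hypothesis, i.e.\ $x_n - x_{n-1} \ge -H/n$ for every $n \ge 1$.

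The engine of the argument is the block-average identity
$$\sum_{k=n+1}^{m} x_k = (m+1)(Cx)_m - (n+1)(Cx)_n, \qquad m > n,$$
which follows at once from $(n+1)(Cx)_n = \sum_{k=0}^n x_k$. First I would show that averages over \emph{multiplicative} blocks converge to $a$: fixing $\lambda > 1$ and setting $m = \lfloor \lambda n \rfloor$, one has $m - n \sim (\lambda - 1) n$, and since $(Cx)_m, (Cx)_n \to a$,
$$\frac{1}{m-n}\sum_{k=n+1}^{m} x_k = \frac{(m+1)(Cx)_m - (n+1)(Cx)_n}{m-n} \longrightarrow \frac{\lambda a - a}{\lambda - 1} = a .$$

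Next I would use the one-sided Tauberian bound to control the oscillation of $x$ inside such a block. For $n < k \le m$,
$$x_k - x_n = \sum_{j=n+1}^{k}(x_j - x_{j-1}) \ge -H \sum_{j=n+1}^{k} \frac1j \ge -H\log\frac{m}{n} = -H\log\lambda + o(1),$$
uniformly in $k$. Averaging this over $k \in (n,m]$ and combining with the previous step gives $x_n \le a + H\log\lambda + o(1)$, hence $\limsup_n x_n \le a + H\log\lambda$. The symmetric argument on the backward block $(m', n]$ with $m' = \lceil n/\lambda \rceil$ yields, for $m' < k \le n$, the estimate $x_n - x_k \ge -H\log\lambda + o(1)$, and averaging gives $\liminf_n x_n \ge a - H\log\lambda$. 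Letting $\lambda \to 1^{+}$ collapses both bounds to $\limsup_n x_n \le a \le \liminf_n x_n$, so $x_n \to a$.

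The delicate point---and the reason the hypothesis is exactly the right one---is that the oscillation of $x$ across a block is governed by a harmonic sum, which is only \emph{logarithmically} large; this forces the blocks to be multiplicative rather than additive, so that the error $H\log\lambda$ can be driven to zero by shrinking $\lambda$ towards $1$. The main care is in keeping the $o(1)$ terms uniform as $n \to \infty$ for each fixed $\lambda$, and in exploiting that only a \emph{lower} bound on the increments $x_n - x_{n-1}$ is available: in the forward block this bounds $x_k$ from below (giving the $\limsup$ estimate) and in the backward block it bounds $x_k$ from above (giving the $\liminf$ estimate), so each one-sided inequality is used exactly once.
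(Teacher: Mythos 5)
Your proof is correct. Note that the paper itself contains no proof of Theorem~\ref{taub}: it is quoted as a known classical result with a citation to Hardy's \emph{Divergent Series}~\cite[Chapter 6.8]{H}, so there is no internal argument to compare against. What you have written is essentially the standard proof of the one-sided (Hardy--Landau) Tauberian theorem, and all the steps check out: the identity $\sum_{k=n+1}^{m} x_k = (m+1)(Cx)_m - (n+1)(Cx)_n$ together with $m=\lfloor \lambda n\rfloor$ gives block averages converging to $a$, because for fixed $\lambda>1$ the ratios $\frac{m+1}{m-n}$ and $\frac{n+1}{m-n}$ stay bounded while $(Cx)_m,(Cx)_n\to a$; the harmonic estimate $\sum_{j=n+1}^{k}\frac1j\le\log\frac{m}{n}\le\log\lambda$ is uniform in $k\in(n,m]$, so the oscillation inside a block is at most $H\log\lambda$ in the one direction you need; the forward block yields $\limsup_n x_n\le a+H\log\lambda$, the backward block with $m'=\lceil n/\lambda\rceil$ yields $\liminf_n x_n\ge a-H\log\lambda$, and $\lambda\to1^{+}$ finishes the argument. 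Two minor observations. First, your argument never uses the hypothesis $x\in l_\infty$, so you have in fact proved the stronger classical statement in which boundedness of $x$ is not assumed; this is consistent with the form of the theorem in the literature, and of course does no harm here. Second, the $o(1)$ in your bound $-H\log\frac{m}{n}=-H\log\lambda+o(1)$ is not even needed, since $m/n\le\lambda$ gives $-H\log\frac{m}{n}\ge -H\log\lambda$ exactly; keeping the inequalities one-sided throughout makes the bookkeeping slightly cleaner.
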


The result of the following theorem complements Theorem 3.7 from~\cite{LSS}. The cited
theorem showed the coincidence of the sets of positive Dixmier- and Connes-Dixmier measurable operators from $\mathcal M_{1,\infty}$. 
On the smaller ideal $\mathcal L_{1,\infty}$ the condition of positivity can be dropped.

The following result resolves in the affirmative the problem (i) stated in~\cite[p. 1061]{CS} (in the ideal $\mathcal L_{1,\infty}$).
\begin{thm}\label{D=CD}
An operator $A \in \mathcal L_{1,\infty}$ is  Connes-Dixmier measurable if and only if it is Dixmier-measurable.
\end{thm}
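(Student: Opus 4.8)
The plan is to translate the statement, via the eigenvalue criteria of Propositions~\ref{D_meas} and~\ref{CD_meas}, into a purely sequential assertion and then dispatch it with Hardy's Tauberian theorem (Theorem~\ref{taub}). Write
$$
x=\left\{ \sum_{k=2^n-1}^{2^{n+1}-2} \lambda(k,A) \right\}_{n \ge 0},
$$
which is a bounded sequence: indeed $|\lambda(A)|=\mu(N)$ for the normal part $N\in\mathcal L_{1,\infty}$ appearing in the proof of Theorem~\ref{Lidskii}, so $|\lambda(A)|=(|\lambda(A)|)^*\in l_{1,\infty}$ and Lemma~\ref{aux}(i) bounds the moduli of the blocks. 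By Proposition~\ref{D_meas} the operator $A$ is Dixmier-measurable precisely when $Cx$ converges, and by Proposition~\ref{CD_meas} it is Connes-Dixmier measurable precisely when $C^2x$ converges. Thus the theorem is equivalent to the assertion that $Cx$ converges if and only if $C^2x$ converges.

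One implication is immediate. If $Cx$ converges, then $C^2x=C(Cx)$ converges to the same limit, since the Ces\`aro operator sends convergent sequences to convergent sequences (this is also the content of the inclusion $\mathcal C\subseteq\mathcal D$). Hence Dixmier-measurability forces Connes-Dixmier measurability, with no Tauberian input required. The substance of the theorem is therefore the reverse direction.

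For the converse I would set $y:=Cx$, so that $C^2x=Cy$, and apply Theorem~\ref{taub} to $y$. The key is to verify the Tauberian hypothesis, namely that $\{n(y_n-y_{n-1})\}_{n\ge1}$ is bounded from below. Writing $S_n=\sum_{k=0}^n x_k$, so that $y_n=S_n/(n+1)$, a short computation gives
$$
n(y_n-y_{n-1})=\frac{n\,x_n-S_{n-1}}{n+1}, \qquad n\ge 1.
$$
Since $x$ is bounded, $|n\,x_n/(n+1)|\le\|x\|_{l_\infty}$, and since $|S_{n-1}/(n+1)|\le|y_{n-1}|\le\|x\|_{l_\infty}$, the right-hand side is bounded in modulus by $2\|x\|_{l_\infty}$; in particular it is bounded from below. (As $A$ need not be self-adjoint the $\lambda(k,A)$ may be complex, so one applies Theorem~\ref{taub} to $\mathrm{Re}\,y$ and $\mathrm{Im}\,y$ separately, each of which inherits the boundedness-from-below condition.) Theorem~\ref{taub} then yields that $Cy$ converges if and only if $y$ converges, i.e.\ $C^2x$ converges if and only if $Cx$ converges, which is exactly the remaining implication.

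The only genuinely technical point is the difference identity for $y=Cx$ together with the observation that mere boundedness of $x$ already forces $\{n(y_n-y_{n-1})\}$ to be bounded, so that Hardy's theorem applies without any oscillation estimate. I expect this elementary computation to be the crux of the argument; everything else is bookkeeping resting on the earlier measurability criteria and on Theorem~\ref{taub}.
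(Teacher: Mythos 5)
Your proof is correct and takes essentially the same route as the paper: both reduce the theorem, via Propositions~\ref{D_meas} and~\ref{CD_meas}, to the assertion that $Cx$ converges iff $C^2x$ converges, and both settle the nontrivial implication by applying Hardy's Tauberian theorem (Theorem~\ref{taub}) to $Cx$, checking its hypothesis through the same elementary identity $n\bigl((Cx)_n-(Cx)_{n-1}\bigr)=x_n-(Cx)_n$, bounded in modulus by $2\|x\|_{l_\infty}$. Your explicit treatment of complex eigenvalues (applying the Tauberian theorem separately to $\mathrm{Re}\,Cx$ and $\mathrm{Im}\,Cx$) is a detail the paper leaves implicit.
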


\begin{proof}
The condition of $\mathcal D$-measurability evidently implies $\mathcal C$-measurability. 

Let an operator $A \in \mathcal L_{1,\infty}$ be $\mathcal C$-measurable, that is by Proposition~\ref{CD_meas}
the sequence 
 $$C^2\left\{ \sum_{k=2^n-1}^{2^{n+1}-2} \lambda(k,A) \right\}_{n \ge 0}$$ is convergent.
 We have to show that the operator $A \in \mathcal L_{1,\infty}$ is $\mathcal D$-measurable, or equivalently by Proposition~\ref{D_meas}, that
the sequence 
 $$C\left\{ \sum_{k=2^n-1}^{2^{n+1}-2} \lambda(k,A) \right\}_{n \ge 0}$$ is convergent.
 
To this end, we need to show that for every $A \in \mathcal L_{1,\infty}$ the sequence 
$$C\left\{ \sum_{k=2^n-1}^{2^{n+1}-2} \lambda(k,A) \right\}_{n \ge 0}$$
satisfies the condition of Theorem~\ref{taub}.

Indeed, a direct verification shows that for every $y \in l_\infty$ the following estimate holds
$$n((Cy)_n -(Cy)_{n-1}) = y_n -(Cy)_n \ge -2 \|y\|_{l_\infty}.$$

% Consequently, we have obtained that the sequence 
% $$C\left\{ \sum_{k=2^n-1}^{2^{n+1}-2} \lambda(k,A) \right\}_{n \ge 0}$$
% satisfies the condition of Theorem~\ref{taub} and that the Connes-Dixmier-measurability 
% implies Dixmier-measurability for every operator $A \in \mathcal L_{1,\infty}$. 
\end{proof}

The following result should be compared with~\cite[Corollary 21]{SS_JFA}, describing the set of 
$\mathcal D_M$-measurable operators from $\mathcal M_{1,\infty}$.

\begin{prop}\label{D_M_meas}
 An operator $A \in \mathcal L_{1,\infty}$ is $\mathcal D_M$-measurable if and only if
 $$\lim_{m\to\infty} \liminf_{n\to\infty}C^m\left\{ \sum_{k=2^n-1}^{2^{n+1}-2} \lambda(k,A) \right\}_{n \ge 0} =
 \lim_{m\to\infty} \limsup_{n\to\infty}C^m\left\{ \sum_{k=2^n-1}^{2^{n+1}-2} \lambda(k,A) \right\}_{n \ge 0}.$$
Here $\{ \lambda(n,A) \}_{n \ge 0}$ is any eigenvalue sequence of $A$.
\end{prop}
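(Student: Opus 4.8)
The plan is to reduce the statement to a purely sequential question about Cesàro invariant Banach limits and then to identify the range of values that such functionals take on a fixed bounded sequence. Write $x=\left\{\sum_{k=2^n-1}^{2^{n+1}-2}\lambda(k,A)\right\}_{n\ge0}$, which lies in $l_\infty$ by the boundedness noted in the introduction (applied to the real and imaginary parts of the eigenvalue sequence). By Theorem~\ref{M-inv} together with Corollary~\ref{BL1}, the traces in $\mathcal D_M$ correspond bijectively to the Cesàro invariant Banach limits $B$ (those with $B=B\circ C$), and by the Lidskii formula (Theorem~\ref{Lidskii}) the value of the associated trace on $A$ equals $\frac1{\log2}B(x)$. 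Hence $A$ is $\mathcal D_M$-measurable if and only if $B(x)$ takes one and the same value over all Cesàro invariant Banach limits $B$. Since it suffices to argue for real-valued sequences (treating $\Re x$ and $\Im x$ separately), I will from now on assume $x$ is real.

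Next I would record that the two limits on the right-hand side exist. Setting $L_m:=\limsup_n (C^m x)_n$ and $\ell_m:=\liminf_n(C^m x)_n$, the elementary fact that the Cesàro operator does not increase the limit superior nor decrease the limit inferior gives $L_{m+1}\le L_m$ and $\ell_{m+1}\ge\ell_m$; as $\ell_m\le L_m$ for every $m$, both monotone sequences converge and $\lim_m\ell_m\le\lim_m L_m$. The heart of the matter is then the identity
\[
\{B(x): B\in\mathfrak B,\ B=B\circ C\}=\Big[\lim_m\ell_m,\ \lim_m L_m\Big].
\]
Granting this, the set of values is a single point precisely when $\lim_m\ell_m=\lim_m L_m$, which is exactly the asserted criterion, so the proposition follows.

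One inclusion is easy. For any Banach limit $B$ one has $\liminf_n y_n\le B(y)\le\limsup_n y_n$, since a Banach limit is $T$-invariant (Proposition~\ref{bf}) and bounded, hence vanishes on $c_0$. If moreover $B=B\circ C$, then $B(x)=B(C^m x)$ for every $m$, so $\ell_m\le B(x)\le L_m$; letting $m\to\infty$ places $B(x)$ in the claimed interval. In particular, if the interval degenerates then all Cesàro invariant Banach limits agree on $x$, which settles the ``if'' direction.

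The main obstacle is the reverse inclusion: one must exhibit Cesàro invariant Banach limits whose values on $x$ approach each endpoint. The natural device is the sublinear functional $p(y):=\lim_m\limsup_n(C^m y)_n$, which is subadditive and positively homogeneous, satisfies $p(\emm)=1$, and obeys $p\circ C=p$ and $p\circ S=p$ (the latter because $CS-C$ maps $l_\infty$ into $c_0$, as used in the proof of Theorem~\ref{FS}). A Hahn--Banach functional dominated by $p$ and agreeing with it on $\mathbb R x$ reproduces the value $B(x)=p(x)=\lim_m L_m$ and is positive and normalised; however, and this is the delicate point, domination by $p$ does not by itself force the exact invariance $B=B\circ C$, because $p(y-Cy)$ need not vanish. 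Genuine Cesàro invariance must instead be extracted from the invariant form of the Hahn--Banach theorem applied to the family generated by $C$ and $S$ (which commute modulo $c_0$) — precisely the construction carried out in~\cite{SS_JFA}, from which the displayed identity and the attainment of both extreme values are quoted. With that identity in hand the proof closes as indicated.
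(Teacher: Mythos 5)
Your proof is correct and takes essentially the same route as the paper: you reduce, via Theorem~\ref{M-inv}, Corollary~\ref{BL1} and the Lidskii formula (Theorem~\ref{Lidskii}), to the statement that $A$ is $\mathcal D_M$-measurable if and only if all Ces\`aro invariant Banach limits agree on the sequence, and then invoke~\cite[Theorem 5, Corollary 13]{SS_JFA} for the sequential characterisation, which is exactly the paper's argument. Your extra material (the monotonicity of $\ell_m$ and $L_m$, the elementary proof of the easy inclusion, and the warning that Hahn--Banach domination by $p(y)=\lim_m\limsup_n(C^my)_n$ does not by itself yield Ces\`aro invariance) is sound but does not change the fact that the crux, namely attainment of both endpoints by Ces\`aro invariant Banach limits, is quoted from the same reference the paper cites.
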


\begin{proof}
From Theorem~\ref{M-inv} and Theorem~\ref{Lidskii} we obtain that the operator $A \in \mathcal L_{1,\infty}$ is $\mathcal D_M$-measurable
if and only if all Ces\`aro invariant Banach limits (that is $B=B\circ C$) take the same value on the sequence $\left\{ \sum_{k=2^n-1}^{2^{n+1}-2} \lambda(k,A) \right\}_{n \ge 0}$.
By~\cite[Theorem 5, Corollary 13]{SS_JFA} all Ces\`aro invariant Banach limits take the same value on the sequence $x\in l_\infty$ if and only
 $$\lim_{m\to\infty} \liminf_{n\to\infty}(C^m x)_n =  \lim_{m\to\infty} \limsup_{n\to\infty}(C^m x)_n,$$
which proves the assertion.
\end{proof}

The following theorem shows that Dixmier-measurability  
differs from $\mathcal D_M$-measura\-bility even on the positive cone of $\mathcal L_{1,\infty}$.
It improves the corresponding result for the ideal $\mathcal M_{1,\infty}$ from~\cite[Theorem 24]{SS_JFA}.

\begin{thm}\label{dif2}
The class of $\mathcal D$-measurable operators from $\mathcal L_{1,\infty}$ 
is strictly contained in the class of $\mathcal D_M$-measurable operators.
\end{thm}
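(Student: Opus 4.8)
The plan is to dispose of the inclusion first (which is essentially free) and then to exhibit a single positive operator witnessing strictness. For the inclusion, recall from the introduction that $\mathcal D_M \subseteq \mathcal D$ as sets of traces; hence if every Dixmier trace agrees on $A$, then in particular every trace in the smaller class $\mathcal D_M$ agrees on $A$, so $\mathcal L_{1,\infty}^{\mathcal D} \subseteq \mathcal L_{1,\infty}^{\mathcal D_M}$. Equivalently, one reads this off Propositions~\ref{D_meas} and~\ref{D_M_meas}: if $C\{\sum_{k=2^n-1}^{2^{n+1}-2}\lambda(k,A)\}$ converges to $a$, then since $C$ fixes constants and maps $c_0$ into $c_0$, every iterate $C^m\{\sum\lambda\}$ converges to $a$ as well, so the two double limits in Proposition~\ref{D_M_meas} both equal $a$. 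Thus it remains to produce a positive $A \in \mathcal L_{1,\infty}$ that is $\mathcal D_M$-measurable but not $\mathcal D$-measurable.

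By Propositions~\ref{D_meas} and~\ref{D_M_meas} this reduces to finding a bounded sequence $x = \{\sum_{k=2^n-1}^{2^{n+1}-2}\lambda(k,A)\}_{n\ge0}$ for which $Cx$ diverges while $\lim_m\liminf_n(C^m x)_n = \lim_m \limsup_n(C^m x)_n$. To realise such an $x$ by a positive operator I would follow the recipe used in Theorem~\ref{dif1}: fix $\omega \ne 0$, set $y_n := 3 + \cos(\omega\log(n+1))$, and put $A := {\rm diag}(Dy)$. Since $y_n \in [2,4]$ one has $2y_n \ge 4 \ge y_{n+1}$, so $\{(Dy)_k\}$ is non-increasing and non-negative; hence $\lambda(A) = Dy = \mu(A)$, and the defining computation of $D$ gives $\sum_{k=2^n-1}^{2^{n+1}-2}\lambda(k,A) = \log 2\cdot y_n$. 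As the measurability criteria are insensitive both to the positive scalar $\log 2$ and to the additive constant $3$ (which is fixed by every $C^m$), everything comes down to the behaviour of the iterated Cesàro means of the oscillating part $u_n := \cos(\omega\log(n+1))$.

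The analytic heart, and the step I expect to be the main obstacle, is to show that $C^m$ acts on the logarithmic mode asymptotically as a contraction by $(1+i\omega)^{-m}$. Writing $x_n := (n+1)^{i\omega}$ so that $u = \Re x$, I would first establish the base estimate $Cx - (1+i\omega)^{-1}x \in c_0$: by Euler--Maclaurin, $\sum_{j=1}^{N} j^{i\omega} = \frac{N^{1+i\omega}}{1+i\omega} + O(\log N)$, so dividing by $N = n+1$ yields $(Cx)_n = \frac{(n+1)^{i\omega}}{1+i\omega} + o(1)$. Since $C$ is bounded, sends this mode to itself up to the same kind of error, and maps $c_0$ into $c_0$, an induction then gives $C^m x - (1+i\omega)^{-m} x \in c_0$ for every $m\ge1$. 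Taking real parts, $C^m u$ differs by a null sequence from $(1+\omega^2)^{-m/2}\cos(\omega\log(n+1) - m\arg(1+i\omega))$; because $\omega\log(n+1)$ increases to $+\infty$ with increments tending to $0$, its values are dense modulo $2\pi$, whence $\limsup_n(C^m u)_n = (1+\omega^2)^{-m/2}$ and $\liminf_n(C^m u)_n = -(1+\omega^2)^{-m/2}$.

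These two quantities tend to $0$ as $m\to\infty$, so the double limits of Proposition~\ref{D_M_meas} coincide (both equal $3\log 2$ after restoring the constant and the scalar), proving that $A$ is $\mathcal D_M$-measurable; on the other hand the band of $Cu$ has positive width $2(1+\omega^2)^{-1/2}$, so $C\{\sum_{k=2^n-1}^{2^{n+1}-2}\lambda(k,A)\}$ does not converge and $A$ fails to be $\mathcal D$-measurable by Proposition~\ref{D_meas}. Together with the inclusion this yields the strict containment, and since $A\ge 0$ the separation already occurs on the positive cone. The only delicate point is the uniform control of the error terms through the induction; this is routine once the base estimate is in hand, the remaining assertions being bookkeeping about the $\liminf$ and $\limsup$ of a slowly oscillating sequence.
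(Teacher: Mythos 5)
Your proof is correct, and its overall skeleton matches the paper's: both arguments realise a chosen bounded sequence $y$ as the block sums $\sum_{k=2^n-1}^{2^{n+1}-2}\lambda(k,A)$ of a positive operator $A={\rm diag}(Dy)$ (using $2y_n\ge y_{n+1}$ to ensure $Dy$ is decreasing, hence $\lambda(A)=Dy$), and then apply Propositions~\ref{D_meas} and~\ref{D_M_meas}. Where you genuinely diverge is in the witness and in how $\mathcal D_M$-measurability is certified. The paper takes the dyadic block sequence $y=\sum_{n\ge1}\chi_{(2^{2n},2^{2n+1}]}+\emm$, notes that $Cy$ diverges, and then quotes \cite[Theorem 15]{SS_JFA} to conclude that every Ces\`aro invariant Banach limit takes the value $3/2$ on $y$; this is short but rests entirely on that external result. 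You instead take the log-periodic sequence $y_n=3+\cos(\omega\log(n+1))$ and compute the action of $C$ on the mode $x_n=(n+1)^{i\omega}$ directly: the Euler--Maclaurin estimate gives $Cx-(1+i\omega)^{-1}x\in c_0$, induction (using that $C$ is bounded and preserves $c_0$) gives $C^mx-(1+i\omega)^{-m}x\in c_0$, and the density of $\omega\log(n+1)$ modulo $2\pi$ then yields $\limsup_n(C^mu)_n=-\liminf_n(C^mu)_n=(1+\omega^2)^{-m/2}$ for $u=\Re x$. This makes your argument self-contained and quantitative: it exhibits the geometric decay of the oscillation under Ces\`aro iteration, which is precisely what the double-limit criterion of Proposition~\ref{D_M_meas} asks for, while the nonzero width $2(1+\omega^2)^{-1/2}$ of the band of $Cu$ kills $\mathcal D$-measurability. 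The price is the extra analysis (the sum-versus-integral estimate and the density argument), both of which you handle correctly; a side benefit is that your bookkeeping of the $\log 2$ factor coming from $D$, and your explicit verification of the inclusion $\mathcal L_{1,\infty}^{\mathcal D}\subseteq\mathcal L_{1,\infty}^{\mathcal D_M}$ (left implicit in the paper, since $\mathcal D_M\subseteq\mathcal D$), are more careful than the original.
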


\begin{proof}
 Consider the sequence 
 $$y=\sum_{n=1}^\infty \chi_{(2^{2n},2^{2n+1}]} +\emm \in l_\infty.$$
 
 It is easy to check that the sequence $Cy$ is not convergent. 
 By~\cite[Theorem 15]{SS_JFA} for the sequence 
 $$x_k=(-1)^n, \ 2^n< k \le 2^{n+1}, \ n \in \mathbb N$$
 we have that $B(x)=0$ for every Ces\`aro invariant Banach limit $B$.
 Since $y \in  x/2 + 3/2 \cdot \emm +c_0$, it follows that 
  $B(y)=3/2$ for every Ces\`aro invariant Banach limit $B$.
  
Since $y_n \ge \frac{y_{n+1}}2$ for every $n\ge0$, it follows that $\frac{y_n}{2^n}\ge \frac{y_{n+1}}{2^{n+1}}$ and $(Dy)^* = Dy$.
 
 For $A = {\rm diag} (Dy) \in \mathcal L_{1,\infty}$ we clearly have that $\lambda(A) = (Dy)$.
Using the definition of the operator $D$, we obtain
 $$\sum_{k=2^n-1}^{2^{n+1}-2} \lambda(k,A) = \sum_{k=2^n-1}^{2^{n+1}-2} (Dy)_k = y_n.$$
 
 By Propositions~\ref{D_M_meas} and~\ref{D_meas} we obtain that the operator $A$ is $\mathcal D_M$-measu\-rable
 and $A$ is not Dixmier-measurable. 
\end{proof}

\section{Application to pseudo-differential operators}\label{NCG}

Connes' trace theorem,~\cite[Theorem 1]{C3}, states that a Dixmier trace applied to a compactly supported classical pseudo-differential operator of order $-d$ 
yields the Wodzicki's residue up to a constant. 
This enables the Dixmier trace of any compactly supported classical pseudo-differential operator of order $-d$ to be calculated from its symbol.

In this section, with the aid of the results established, we provide a version of Connes' trace theorem for positive normalised traces.
Following the ideas of~\cite{KLPS} (see also~\cite{LSZ}) we introduce the class of so-called Laplacian modulated operators 
and the residue mapping ${\rm Res}$, which extends the Wodzicki's residue.

Let us first give a definition of a pseudo-differential operator, see e.g.~\cite[Definition 10.2.6]{LSZ}.

\begin{dfn}
Let $m \in \mathbb R$.
A function $p \in C^\infty(\mathbb R^d,\mathbb R^d)$ satisfying the condition
$$\sup_{x,s} |\partial_x^\alpha  \partial_s^\beta p(x,s)| (1+|s|^2)^\frac{|\beta|-m}2 < \infty$$
for every multi-indices $\alpha, \beta \in (\mathbb N \cup \{0\})^d$ is called
a symbol of order $m$.
\end{dfn}
In general terminology, we have just defined the uniform symbol of \\H\"{o}rmander type $(1,0)$, see e.g.~\cite{Hormander} and~\cite[Chapter 2]{RT}.

% \begin{dfn}
% Let $m \in \mathbb R$.
% A class of function $p \in C^\infty(\mathbb R^d,\mathbb R^d)$ satisfying the condition
% $$\sup_{x,s} |\partial_x^\alpha  \partial_s^\beta p(x,s)| (1+|x|^2+|s|^2)^\frac{|\alpha|+|\beta|-m}2 < \infty$$
% for every multiindices $\alpha, \beta \in \mathbb N^d$ is denoted by $S^m$.
% \end{dfn}

By $\mathcal S(\mathbb R^d)$ we denote the space of Schwartz functions (the smooth functions of rapid decay).

% By $\mathcal S'(\mathbb R^d)$ we denote the space of all tempered distributions on $\mathbb R^d$.

\begin{dfn}
Let $m \in \mathbb R$ and let $p$ be a symbol of order $m$.
 The operator $A : \mathcal S(\mathbb R^d) \to \mathcal S(\mathbb R^d)$ given by the formula
 $$(Au)(x) := \int_{\mathbb R^d} \int_{\mathbb R^d} e^{i \langle x-y, s\rangle} p(x,s) u(y) \ dy \ ds, \ u \in \mathcal S(\mathbb R^d)$$
 is called a pseudo-differential operator of order $m$.
\end{dfn}

\begin{dfn}
A pseudo-differential operator $A$ of order $m$ is called classical if its symbol has an asymptotic expansion
$$p \sim \sum_{j=0}^\infty p_{m-j},$$
where each $p_{m-j}:=p_{m-j}(x,s)$ is a symbol of order $m-j$ and is a homogeneous function of order $m-j$ in the variable $s \in \mathbb R^d$ except near zero.
\end{dfn}

Next we introduce a pseudo-differential operator of a particular type.
For a smooth function with compact support $\phi \in C_c^\infty(\mathbb R^d)$ we define the multiplication operator 
$(M_\phi f)(x) = \phi(x) f(x)$, $f \in \mathcal S(\mathbb R^d)$.

\begin{dfn}\label{def:compbase}
A pseudo-differential operator $A : \mathcal S(\mathbb R^d) \to \mathcal S(\mathbb R^d)$ is said to be compactly supported if 
$M_\phi A M_\psi= A$ 
for some $\phi, \psi \in C_c^\infty(\mathbb R^d)$.%, and compactly supported if $M_\phi A M_\phi = A$.
\end{dfn}

Pseudo-differential operators $A : \mathcal S(\mathbb R^d) \to \mathcal S(\mathbb R^d)$ associated to the class of symbols of order $m<0$
generally do not extend to a compact linear operators $A : L_2(\mathbb R^d) \to  L_2(\mathbb R^d)$.
However, by~\cite[Theorem 10.2.22]{LSZ} a compactly supported pseudo-differential operator $A$ of order $m<0$ extends to a compact linear operator $A : L_2(\mathbb R^d) \to  L_2(\mathbb R^d)$
and a compactly supported pseudo-differential operator $A$ of order $m<-d$ extends to a trace class operator.

Let $\mathcal L_2$ denote the class of Hilbert-Schmidt operators on the Hilbert space $L_2(\mathbb R^d)$.
% 
% 
% \begin{dfn}\label{def:mod}
% Suppose $V : L_2(\mathbb R^d) \to  L_2(\mathbb R^d)$ is a positive bounded operator.
% An operator $A : L_2(\mathbb R^d) \to L_2(\mathbb R^d)$ is  called $V$-modulated if
% \begin{equation}\label{modulated}
% \|A(1+tV)^{-1}\|_{\mathcal L_2} = O(t^{-1/2}) .
% \end{equation}
% \end{dfn}
% 
% % In~\cite[Theorem 5.2]{KLPS} it was proved that for a positive operator $V \in \mathcal L_{1,\infty}$
% % any $V$-modulated operator $A$ also belongs to $\mathcal L_{1,\infty}$.
% 
% Next we introduce the notion of Laplacian modulated operators.
Let $\Delta=\sum_{i=1}^d \frac{\partial^2}{\partial x_i^2}$ be the Laplacian on $\mathbb R^d$.
The following definitions were introduced in~\cite{KLPS}.
\begin{dfn} \label{def:Lapmod}
Let $d\in \mathbb N$. A bounded operator $A:L_2(\mathbb R^d)\to L_2(\mathbb R^d)$ 
is called Laplacian modulated if 
$$\sup_{t>0}t^{1/2} \|A(1+t(1-\Delta)^{-d/2})^{-1}\|_{\mathcal L_2} < \infty.$$
\end{dfn}

%By Proposition~\cite[Proposition 5.4]{KLPS} a 
If follows from the definition that every Laplacian modulated operator $A$ is Hilbert-Schmidt,
so it has a unique symbol in $L_2(\mathbb R^d,\mathbb R^d)$ denoted by $p_A$.

By~\cite[Theorem 11.3.17]{LSZ} for every compactly supported pseudo-differential operator $A : \mathcal S(\mathbb R^d) \to \mathcal S(\mathbb R^d)$
of order $-d$ its extension to a compact linear operator $A : L_2(\mathbb R^d) \to  L_2(\mathbb R^d)$ is Laplacian modulated.

According to~\cite[Remark 11.3.14]{LSZ} an operator $A$ on $L_2(\mathbb R^d)$ is Laplacian modulated if and only its symbol $p_A$
satisfies the condition
\begin{equation}\label{mod}
\sup_{t>0} (1+t)^{d/2} \left( \int_{|s|>t} \int_{\mathbb R^d} |p_A(x,s)|^2 \ dx ds\right)^{1/2} < \infty. 
\end{equation}

% operators that are modulated with respect to
% the operator $(1-\Delta)^{-d/2}$ where $\Delta=\sum_{i=1}^d \frac{\partial^2}{\partial x_i^2}$ is the Laplacian on $\mathbb R^d$. 
% These operators were termed Laplacian modulated operators in~\cite{KLPS}.  
% It was shown that the Laplacian modulated operators
% include the class of pseudo-differential operators of order $-d$, and that
% the Laplacian modulated operators admit a residue map which extends the noncommutative residue~\cite{}.  
% \textbf{Finally we show, with the aid of the results established,
% that singular traces applied to Laplacian modulated operators calculate
% the residue.}

It was shown in~\cite[Lemma 6.12]{KLPS} that, for every compactly supported Laplacian modulated operator $A$ with symbol $p_A$,
the sequence 
$$\left\{ \frac1{\log(2+n)} \int_{\mathbb R^d} \int_{|s|\le n^{1/d}} p_A(x,s) ds\,dx \right\}_{n\ge0}$$
is bounded. Therefore, the following definition makes sense.

\begin{dfn}\label{def:res}
%Let $A$ be a compactly supported Laplacian modulated operator with symbol $p_A$.
The linear map
$$
A \mapsto {\rm Res}(A):= \left[ \frac1{\log(2+n)} \int_{\mathbb R^d} \int_{|s|\le n^{1/d}} p_A(x,s) ds\,dx \right]
$$
from the set of all compactly supported Laplacian modulated operators to $\ell_\infty / c_0$
is called the residue, where $[ \cdot ]$ denotes the equivalence class in $\ell_\infty / c_0$.
\end{dfn}
Note that any sequence $\{{\rm Res}_n(A)\}_{n \ge 0} \in  l_\infty$ such that
\begin{equation}\label{eq:res}
\int_{\mathbb R^d} \int_{|s|\le n^{1/d}} p_A(x,s) ds\,dx = \mathrm{Res}_n(A)  \log n + o(\log n)
\end{equation}
defines the residue ${\rm Res}(A) = [{\rm Res}_n(A)] \in \ell_\infty / c_0$.
In this section, by \lq\lq scalars \rq\rq in $\ell_\infty / c_0$ we mean the classes of convergent sequences. That is if $\alpha \in \mathbb C$, then  $\alpha \equiv [ a_n ]$ where $\lim_{n \to \infty} a_n = \alpha$.

The following result shows that the residue ${\rm Res}$ is the extension of Wodzicki's  residue ${\rm Res}_W$, introduced in~\cite{Wodzicki}.
It was proved in~\cite[Proposition 6.16]{KLPS} (see also~\cite[Proposition 11.3.21]{LSZ}).

\begin{prop}\label{prop:wodres}
Let $P$ be a compactly supported classical pseudo-differential operator of order $-d$. We have that 
${\rm Res}(P)$ is the scalar
$$
{\rm Res}(P) =  {\rm Res}_W(P) := \int_{\mathbb R^d} \int_{\mathbb S^{d-1}} p_{-d}(x,s) ds \, dx$$
where ${\rm Res}_W$ denotes the Wodzicki's residue and $p_{-d}$ denotes the principal symbol of $P$.
\end{prop}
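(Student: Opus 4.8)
The plan is to reduce the computation of ${\rm Res}(P)$ to the contribution of the principal symbol $p_{-d}$ alone, and to extract the governing logarithm by passing to polar coordinates in the frequency variable $s$. Since $P$ is classical of order $-d$, its symbol has the asymptotic expansion $p\sim\sum_{j\ge0}p_{-d-j}$, where each $p_{-d-j}$ is positively homogeneous of degree $-d-j$ in $s$ for $|s|$ bounded away from $0$. For a fixed $N\ge1$ I would write $p=p_{-d}+\sum_{j=1}^{N-1}p_{-d-j}+r_N$, with $r_N$ a symbol of order $-d-N$, and analyse the three groups separately inside $\int_{\mathbb R^d}\int_{|s|\le n^{1/d}}p_P(x,s)\,ds\,dx$. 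Because $P$ is compactly supported, the $x$–integration runs over a fixed compact set throughout, so no question of integrability in $x$ ever arises.

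First I would discard everything except the principal part. A function homogeneous of degree $-d-j$ with $j\ge1$ is integrable at infinity on $\mathbb R^d$, since in polar coordinates its radial profile is $\rho^{-d-j}\rho^{d-1}=\rho^{-j-1}$ and $\int_1^\infty\rho^{-j-1}\,d\rho<\infty$; together with the smoothness (hence local integrability) of the symbol near $s=0$ and the compact $x$–support, this shows that $\int_{\mathbb R^d}\int_{|s|\le R}p_{-d-j}(x,s)\,ds\,dx$ stays bounded as $R\to\infty$. The same estimate applies to the genuine remainder $r_N$, which is of order $-d-N\le-d-1$. Consequently each of these terms contributes only an $O(1)$ quantity, which after division by $\log(2+n)$ lands in $c_0$ and is therefore invisible in the class ${\rm Res}(P)\in\ell_\infty/c_0$.

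It then remains to treat $p_{-d}$. Passing to polar coordinates $s=\rho\omega$, $\rho=|s|$, $\omega\in\mathbb S^{d-1}$, and using homogeneity $p_{-d}(x,\rho\omega)=\rho^{-d}p_{-d}(x,\omega)$ for $\rho\ge1$, the radial integral $\int_1^R\rho^{-d}\rho^{d-1}\,d\rho=\int_1^R\rho^{-1}\,d\rho=\log R$ produces the logarithmic growth, while the angular integral, followed by the integration over the compact support in $x$, reproduces $\int_{\mathbb R^d}\int_{\mathbb S^{d-1}}p_{-d}(x,\omega)\,d\omega\,dx={\rm Res}_W(P)$. Thus
$$
\int_{\mathbb R^d}\int_{|s|\le R}p_{-d}(x,s)\,ds\,dx=(\log R)\,{\rm Res}_W(P)+O(1).
$$
Inserting the cutoff radius $R=n^{1/d}$ dictated by Weyl's law and combining with the previous paragraph, one sees that the sequence defining ${\rm Res}(P)$ actually converges, so its class in $\ell_\infty/c_0$ reduces to a genuine scalar; a bookkeeping of the constant produced by the radial integration, normalised as in Definition~\ref{def:res}, identifies this scalar with ${\rm Res}_W(P)$, which is the assertion.

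I expect the main obstacle to be the rigorous control of the remainder of the asymptotic expansion, that is, the justification that the full symbol $p_P$, stripped of its principal homogeneous term, contributes only $O(1)$ to the divergent integral. This rests on the H\"{o}rmander-type symbol estimates for $r_N$ (uniform decay of order $-d-N$ in $s$), on the compact $x$–support that neutralises the spatial integration, and on the elementary but essential fact that homogeneity of degree strictly below $-d$ forces integrability at infinity. The homogeneous principal part is the only source of the $\log R$ divergence, and once the remaining terms are quarantined as $O(1)$ the polar-coordinate computation finishes the proof.
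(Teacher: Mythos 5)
Your overall strategy is the right one, and it is in fact the proof of the result this paper relies on: the paper itself offers no argument for Proposition~\ref{prop:wodres}, only the citation to \cite[Proposition 6.16]{KLPS} (see also \cite[Proposition 11.3.21]{LSZ}), and that proof proceeds exactly as you propose --- split off the principal symbol, show that each homogeneous term of degree $-d-j$ with $j\ge1$ and the symbol-class remainder $r_N$ contribute only $O(1)$ to $\int_{\mathbb R^d}\int_{|s|\le R}$ (integrability at infinity plus compact $x$-support), and extract the logarithm from $p_{-d}$ in polar coordinates. Those steps are carried out correctly in your write-up.

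The gap is in your final sentence, the ``bookkeeping of the constant'', which is precisely the step you were not entitled to defer. Your own display gives
$$
\int_{\mathbb R^d}\int_{|s|\le R}p_{-d}(x,s)\,ds\,dx=(\log R)\,{\rm Res}_W(P)+O(1),
$$
and with $R=n^{1/d}$ this is $\tfrac1d(\log n)\,{\rm Res}_W(P)+O(1)$. Dividing by $\log(2+n)$ as Definition~\ref{def:res} instructs, the sequence converges to $\tfrac1d\,{\rm Res}_W(P)$, not to ${\rm Res}_W(P)$: relative to the definition exactly as printed in this paper, your computation proves ${\rm Res}(P)=\tfrac1d\,{\rm Res}_W(P)$, so the claimed identity is off by a factor of $d$. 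The resolution is that the residue of \cite{KLPS} and \cite{LSZ} carries an extra factor $d$, i.e. it is
$\bigl[\tfrac{d}{\log(2+n)}\int_{\mathbb R^d}\int_{|s|\le n^{1/d}}p_A(x,s)\,ds\,dx\bigr]$;
this paper's Definition~\ref{def:res} and~\eqref{eq:res} drop that factor, whereas the paper's other statements ---~\eqref{classical} in Theorem~\ref{CTTv1}(iii), and the target value $\tfrac1d\log 2\cdot{\rm Res}(A)$ in Theorem~\ref{CTT}(ii) --- are written for the normalisation that includes the $d$. So you should either state your conclusion as ${\rm Res}(P)=\tfrac1d\,{\rm Res}_W(P)$ with respect to Definition~\ref{def:res}, or restore the factor $d$ in the definition and then your argument gives the proposition verbatim. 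As written, the one constant the proof had to track is exactly the one the argument, carried out honestly, contradicts.
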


The following generalisation of Connes' trace theorem was proved in~\cite[Theorem 6.32]{KLPS}.

\begin{thm}~\label{CTTv1}
Let $A$ be a compactly supported Laplacian modulated operator with symbol $p_A$. We have $A \in \mathcal L_{1,\infty}(L_2(\mathbb R^d))$.  Moreover,
\begin{enumerate}
\item[(i)] for a Dixmier trace $\mathrm{Tr}_\omega$,
$$
\mathrm{Tr}_\omega(A) = \frac1{d(2\pi)^d} \omega({\rm Res}(A))
$$
where ${\rm Res}(A) \in l_\infty / c_0$ is the residue of $A$;
\item[(ii)]
$$
\mathrm{Tr}_\omega(A) = \frac{1}{d(2\pi)^d} {\rm Res}(A)
$$
for every Dixmier trace ${\rm Tr}_\omega$ if and only if the residue ${\rm Res}(A)$ is scalar;
\item[(iii)]
$$
\tau(A)= \frac1{d(2\pi)^d} {\rm Res}(A) 
$$
for every normalised trace $\tau$ on  $\mathcal L_{1,\infty}(L_2(\mathbb R^d))$
if and only if the residue ${\rm Res}(A)$ is a scalar and
\begin{equation}\label{classical}
\int_{\mathbb R^d} \int_{|s|\le n^{1/d}} p_A(x,s) ds\,dx = \frac1d {\rm Res}(A) \, \log n + O(1).
\end{equation}
\end{enumerate}
\end{thm}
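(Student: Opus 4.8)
The plan is to deduce everything from the bijection $\mathcal{PT}\leftrightarrow\mathfrak B$ of Corollary~\ref{BL1} and the Lidskii formula of Theorem~\ref{Lidskii}, so that the only genuinely analytic ingredient is a comparison between partial sums of eigenvalues of $A$ and integrals of its symbol $p_A$. The membership $A\in\mathcal L_{1,\infty}(L_2(\mathbb R^d))$ is the first assertion of Theorem~\ref{CTTv1} and may simply be quoted. For the displayed identity I would fix a positive normalised trace $\tau$ and let $B=\tau\circ{\rm diag}\circ D\in\mathfrak B$ be the unique Banach limit attached to it by Corollary~\ref{BL1}. Theorem~\ref{Lidskii} then gives
$$\tau(A)=\frac1{\log2}\,B\!\left(\left\{\sum_{k=2^n-1}^{2^{n+1}-2}\lambda(k,A)\right\}_{n\ge0}\right),$$
so writing $a_n:=\sum_{k=2^n-1}^{2^{n+1}-2}\lambda(k,A)$ and $b_n:=\int_{\mathbb R^d}\int_{2^{n/d}<|s|\le2^{(n+1)/d}}p_A(x,s)\,ds\,dx$, the whole formula reduces to the single claim that $B(\{a_n\})=\frac1{(2\pi)^d}B(\{b_n\})$ for every $B\in\mathfrak B$.

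The core step is to show that $\{a_n-\frac1{(2\pi)^d}b_n\}_{n\ge0}$ is annihilated by every Banach limit, and here I would invoke Lemma~\ref{aconv}. Put $R_M:=\int_{\mathbb R^d}\int_{|s|\le M^{1/d}}p_A(x,s)\,ds\,dx$ and $x_k:=\lambda(k,A)-\frac1{(2\pi)^d}(R_{k+1}-R_k)$; the modulation condition~\eqref{mod} together with Cauchy--Schwarz bounds each thin-shell increment $R_{k+1}-R_k$ by $O(k^{-1/2})$, so $\{x_k\}\in l_\infty$, and
$$\sum_{k=0}^{n}x_k=\sum_{k=0}^{n}\lambda(k,A)-\frac1{(2\pi)^d}R_{n+1}.$$
The sequence-level estimate from~\cite{KLPS} underlying Theorem~\ref{CTTv1} — that for a compactly supported Laplacian modulated operator these two quantities agree up to a bounded term — makes this $O(1)$, so $\{x_k\}$ satisfies the hypothesis of Lemma~\ref{aconv}. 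Consequently $\{\sum_{k=2^n-1}^{2^{n+1}-2}x_k\}_n$ has the form $y-Sy$ and is killed by every $S$-invariant functional, in particular by every Banach limit. Since
$$\sum_{k=2^n-1}^{2^{n+1}-2}x_k=a_n-\frac1{(2\pi)^d}\bigl(R_{2^{n+1}-1}-R_{2^n-1}\bigr),$$
and $R_{2^{n+1}-1}-R_{2^n-1}$ differs from $b_n$ only by the integral of $p_A$ over two thin spherical shells (again $o(1)$ by~\eqref{mod}), I obtain $B(\{a_n\})=\frac1{(2\pi)^d}B(\{b_n\})$, which is the desired formula with its unique Banach limit.

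For the final equivalence I would argue directly from the formula, treating the scalarity of ${\rm Res}(A)$ as inherent in both sides (it is forced by the scalar equation $\tau(A)=\frac1{d(2\pi)^d}{\rm Res}(A)$). Set $L:=\frac1d\log2\cdot{\rm Res}(A)$. If $\tau(A)=\frac1{d(2\pi)^d}{\rm Res}(A)$ for every $\tau\in\mathcal{PT}$, then by the formula $B(\{b_n\})=(2\pi)^d\log2\cdot\tau(A)=L$ for every Banach limit $B$, whence $\{b_n\}$ is almost convergent to $L$ by Definition~\ref{ac}. Conversely, almost convergence of $\{b_n\}$ to $L$ gives $B(\{b_n\})=L$ for all $B\in\mathfrak B$, and the formula returns $\tau(A)=\frac{L}{(2\pi)^d\log2}=\frac1{d(2\pi)^d}{\rm Res}(A)$ for every positive normalised $\tau$.

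The hard part will be the comparison of $\sum_{k=0}^n\lambda(k,A)$ with $\frac1{(2\pi)^d}R_{n+1}$ with a \emph{bounded} error rather than a mere $o(\log n)$ error. This strengthening is exactly what the present theorem needs and what distinguishes it from Theorem~\ref{CTTv1}: the factorisable Banach limits governing Dixmier traces perform a logarithmic Ces\`aro average and so tolerate $o(\log n)$ discrepancies, whereas a general Banach limit is applied to the raw block sums $a_n$ and is annihilated on the error only when Lemma~\ref{aconv} applies, i.e.\ only under the $O(1)$ bound. Extracting this sharp form from the trace estimates of~\cite{KLPS} is the crux; the thin-shell comparison between the index-based annuli and the radii $2^{n/d},2^{(n+1)/d}$, and the verification of the constant, are then routine consequences of~\eqref{mod}.
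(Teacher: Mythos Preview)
The statement you were asked to prove is Theorem~\ref{CTTv1}, but the paper does not prove this theorem: it is quoted verbatim from \cite[Theorem~6.32]{KLPS} as background. Your proposal is not a proof of Theorem~\ref{CTTv1}; it is a proof of the paper's own Theorem~\ref{CTT} (announced as Theorem~\ref{CTTv2} in the introduction). You say so yourself in the last paragraph, where you contrast ``the present theorem'' with Theorem~\ref{CTTv1}.

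The two results are materially different. Part~(i) of Theorem~\ref{CTTv1} is a formula for a Dixmier trace ${\rm Tr}_\omega$ in terms of a dilation-invariant extended limit $\omega$ applied to ${\rm Res}(A)\in l_\infty/c_0$; you never write down such a formula, working instead with a Banach limit $B$ applied to dyadic shell integrals. Part~(ii) you do not address at all. Part~(iii) characterises when \emph{every} normalised trace (positive or not) gives the same value, and the criterion is the $O(1)$ remainder~\eqref{classical}; your ``final equivalence'' is for positive normalised traces only, with the weaker almost-convergence criterion. Had you run your argument through Theorem~\ref{one-to-one1} (all traces $\leftrightarrow$ all $S$-invariant functionals) rather than Corollary~\ref{BL1}, Lemma~\ref{aconv} would yield the $O(1)$ characterisation of part~(iii), but that is not what you wrote, and parts~(i)--(ii) would still be missing.

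Read as a proof of Theorem~\ref{CTT}, your argument is essentially identical to the paper's: the $O(1)$ eigenvalue--symbol comparison you invoke from \cite{KLPS} is exactly Theorem~\ref{eigen} as restated in the paper, Lemma~\ref{aconv} kills the block-summed difference under any $S$-invariant functional, and Corollary~\ref{BL1} together with Theorem~\ref{Lidskii} finishes both parts.
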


The following result proved in~\cite[Theorem 6.23]{KLPS} lies at the heart of Connes' trace theorem.

\begin{thm}\label{eigen}  Suppose $A:L_2(\mathbb R^d)\to L_2(\mathbb R^d)$ is compactly supported and Laplacian modulated.  
We have that $A\in\mathcal L_{1,\infty}(L_2(\mathbb R^d))$ and 
\begin{equation}\label{1000} 
 \sum_{k=0}^n\lambda(k,A)-\frac{1}{(2\pi)^d}\int_{\mathbb R^d}\int_{|s|\le n^{1/d}} p_A(x,s)ds dx = O(1)
\end{equation}
where $\{ \lambda(k,A) \}_{k=1}^\infty$ is an eigenvalue sequence of $A$ and $p_A$ is the symbol of $A$.
\end{thm}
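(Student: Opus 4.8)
The statement splits into the membership $A\in\mathcal L_{1,\infty}(L_2(\mathbb R^d))$ and the asymptotic~\eqref{1000}, and the single tool driving both is the Hilbert--Schmidt decay built into the hypothesis: by Definition~\ref{def:Lapmod}, equivalently by~\eqref{mod}, the high-frequency part $|s|>t$ of the symbol $p_A$ carries $\mathcal L_2$-norm only $O((1+t)^{-d/2})$. The plan is to deduce membership by comparing $A$ with the reference operator $V:=(1-\Delta)^{-d/2}$, and then to prove~\eqref{1000} by comparing the ordered eigenvalue sum of $A$ with the trace of $A$ against a suitable frequency truncation, keeping every error $O(1)$ uniformly in $n$ by means of this decay.

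For membership I would invoke the theory of $V$-modulated operators. Since $A$ is compactly supported (Definition~\ref{def:compbase}), it is modulated relative to a compactly supported version of $V=(1-\Delta)^{-d/2}$, and this localised $V$ lies in $\mathcal L_{1,\infty}$ with $\mu(k,V)\sim c/k$ by Cwikel's estimate (the Weyl law). Splitting $A$ through the resolvents $(1+tV)^{-1}$ and optimising in $t$ then transfers the singular-value order of $V$ to $A$, giving $\mu(k,A)=O(1/k)$, i.e.\ $A\in\mathcal L_{1,\infty}$.

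The heart of the matter, and the step I expect to be the main obstacle, is the comparison
$$\sum_{k=0}^n\lambda(k,A)=\mathrm{Tr}(Aq_n)+O(1),$$
where $q_n$ is the finite-rank spectral projection of the localised $V$ onto its $n+1$ largest eigenvalues. This is the eigenvalue identity of the modulated-operator calculus: expanding the eigenvalue sum in a basis of eigenvectors of $V$ and estimating the off-diagonal part by a dyadic decomposition in the resolvent parameter $t$, the modulated bound $t^{1/2}\|A(1+tV)^{-1}\|_{\mathcal L_2}=O(1)$ is precisely what forces the resulting correction series to converge with an $n$-independent bound. Essentially all of the analytic difficulty of the theorem is concentrated here.

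It remains to identify $\mathrm{Tr}(Aq_n)=\sum_{k=0}^n\langle Ae_k,e_k\rangle$ with the symbol integral. Because $\mu(n,V)\sim c/n$, the range of $q_n$ coincides, up to a controllable error, with the frequencies $|s|\le r_n$ for some $r_n\sim c'n^{1/d}$, and the standard trace formula for a pseudo-differential operator against a frequency cutoff gives $\mathrm{Tr}(Aq_n)=\frac1{(2\pi)^d}\int_{\mathbb R^d}\int_{|s|\le r_n}p_A(x,s)\,ds\,dx+O(1)$. Finally I would pass from $r_n$ to the clean scale $n^{1/d}$: as $r_n/n^{1/d}$ is bounded, the intervening shell has $s$-volume $O(n)$ while, by~\eqref{mod}, $\int_{|s|>\min(r_n,n^{1/d})}\int|p_A|^2=O(1/n)$, so Cauchy--Schwarz over the compact $x$-support $K$ of $p_A$ bounds $\int_K\int_{r_n<|s|\le n^{1/d}}p_A$ by $\big(|K|\cdot O(n)\big)^{1/2}\big(O(1/n)\big)^{1/2}=O(1)$. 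Chaining the three $O(1)$ estimates, and recalling $\int_{\mathbb R^d}\,dx=\int_K\,dx$, yields exactly~\eqref{1000}.
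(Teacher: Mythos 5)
This theorem is not proved in the paper at all: it is imported verbatim from \cite[Theorem 6.23]{KLPS} (the paper says so explicitly just before the statement), so the only proof available for comparison is the cited one, reproduced also in \cite[Chapter 11]{LSZ}. Your outline reconstructs the architecture of that proof correctly: membership in $\mathcal L_{1,\infty}$ and the eigenvalue control come from the theory of $V$-modulated operators with $V$ a localised $(1-\Delta)^{-d/2}$; the diagonal sum $\mathrm{Tr}(Aq_n)=\sum_{k\le n}\langle Ae_k,e_k\rangle$ over the Fourier-type eigenbasis of that $V$ is then identified with the symbol integral; and the passage from the Weyl-law scale $r_n$ to the clean scale $n^{1/d}$ is handled by Cauchy--Schwarz against \eqref{mod} --- this last estimate of yours is correct and is essentially the one used in \cite{KLPS}.

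However, as a self-contained argument your proposal has a genuine gap, and you point at it yourself: the claim $\sum_{k=0}^n\lambda(k,A)=\mathrm{Tr}(Aq_n)+O(1)$ is named but not proved. This is precisely the deep eigenvalue/diagonal comparison theorem for $V$-modulated operators in \cite{KLPS} (see \cite[Theorem 11.2.3]{LSZ}); its proof is long, and it must handle non-normal $A$ (the $\lambda(k,A)$ are eigenvalues, not singular values), which requires Ringrose's upper-triangular decomposition and submajorization arguments --- a one-sentence appeal to ``a dyadic decomposition in the resolvent parameter'' does not reproduce it. A smaller instance of the same issue occurs in your ``standard trace formula'' step: for a merely Laplacian modulated operator, with no smoothness or classicality of $p_A$, identifying $\mathrm{Tr}(Aq_n)$ with $(2\pi)^{-d}\int_{\mathbb R^d}\int_{|s|\le r_n}p_A(x,s)\,ds\,dx+O(1)$ is itself a lemma of \cite{KLPS} whose proof again uses the modulation condition, this time to compare a lattice sum of diagonal matrix elements with an integral. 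If these two ingredients are admitted as citations, your proof coincides with the cited one; if they are to be established, essentially all of the analytic content of the theorem is still missing.
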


A consequence of Theorem~\ref{CTTv1} part (iii) is that all traces on $\mathcal L_{1,\infty}$ 
applied to a classical pseudo-differential operator yield the same value, 
\cite[Corollary 6.35]{KLPS}. 
One of the reasons for generalising Connes' trace theorem is to understand traces of pseudo-differential operators of order $-d$ 
that are not classical pseudo-differential operators.  
Theorem~\ref{Lidskii} provides an explicit formula for the positive trace of a compactly supported Laplacian modulated operator in terms of eigenvalues, 
and therefore it stands to reason given Theorem~\ref{eigen} that we can obtain a formula for calculating 
\emph{any} positive trace of a pseudo-differential operator of order $-d$ using its symbol. 
The following theorem is the main result of this section, it extends part (i) in Theorem~\ref{CTTv1} above 
and complements parts (ii) and (iii). It should be compared with Theorem 11.5.1 in \cite{LSZ}.

\begin{thm}\label{CTT}
Let $A$ be a compactly supported Laplacian modulated operator with symbol $p_A$. We have $A \in \mathcal L_{1,\infty}(L_2(\mathbb R^d))$.  Moreover,
\begin{enumerate}
\item[(i)] for any normalised positive trace $\tau$,
$$
\tau(A) = \frac1{(2\pi)^d \log 2}  B \left( \left\{ \int_{\mathbb R^d}\int_{2^{n/d}<|s|\le 2^{(n+1)/d}} p_A(x,s)ds dx \right\}_{n \ge 0} \right),
$$
where $B$ is the Banach limit corresponding to $\tau$ (given by Corollary~\ref{BL1});
\item[(ii)] the equality 
$$\tau(A)= \frac1{d(2\pi)^d} {\rm Res}(A)$$
holds for every positive normalised trace $\tau$ on $\mathcal L_{1,\infty}(L_2(\mathbb R^d))$
if and only if the residue ${\rm Res}(A)$ is a scalar and the sequence
\begin{equation}\label{res}
\left\{ \int_{\mathbb R^d} \int_{2^\frac{n}d <|s|\le 2^\frac{n+1}d} p_A(x,s) ds\,dx \right\}_{n\ge0} 
\end{equation}
is almost convergent to the number $\frac1d \log 2 \cdot {\rm Res}(A)$.
\end{enumerate}
\end{thm}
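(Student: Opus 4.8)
The plan is to reduce both parts to the Lidskii formula of Theorem~\ref{Lidskii} combined with the eigenvalue asymptotics of Theorem~\ref{eigen}, and then to read off the conclusions through the bijection of Corollary~\ref{BL1}. Throughout write $\lambda(k)=\lambda(k,A)$, $a_m=\sum_{k=0}^m\lambda(k)$ and $b_m=\frac1{(2\pi)^d}\int_{\mathbb R^d}\int_{|s|\le m^{1/d}}p_A(x,s)\,ds\,dx$; Theorem~\ref{eigen} supplies $A\in\mathcal L_{1,\infty}(L_2(\mathbb R^d))$ together with the estimate $a_m-b_m=O(1)$. Let $c_n$ denote the $n$-th entry of the sequence~\eqref{res}, so that $b_{2^{n+1}}-b_{2^n}=\frac1{(2\pi)^d}c_n$. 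The one genuinely analytic input I will need is an \emph{annulus estimate}: because $A$ is compactly supported (so $p_A$ has bounded $x$-support) and the shell $2^{n/d}<|s|\le 2^{(n+1)/d}$ has $s$-volume comparable to $2^n$, a Cauchy--Schwarz argument against the modulation bound~\eqref{mod} shows that $\int_{\mathbb R^d}\int p_A$ over \emph{any} shell whose radii correspond to an index shift of $O(1)$ near $2^n$ is $O(1)$ (in fact $O(2^{-n/2})$).

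For part (i), I first apply Theorem~\ref{Lidskii} and Corollary~\ref{BL1} to get $\tau(A)=B\bigl(\frac1{\log 2}\{\sum_{k=2^n-1}^{2^{n+1}-2}\lambda(k)\}_{n\ge0}\bigr)$, where $B$ is the Banach limit attached to $\tau$. Setting $h_n:=a_{2^n-2}-b_{2^n}$, the identity $a_m-b_m=O(1)$ together with the annulus estimate (applied to $b_{2^n-2}-b_{2^n}$) shows $h\in l_\infty$, and a short rearrangement yields
$$\sum_{k=2^n-1}^{2^{n+1}-2}\lambda(k)=(h_{n+1}-h_n)+\frac1{(2\pi)^d}c_n.$$
Since $B$ is translation invariant (Proposition~\ref{bf}) and $h$ is bounded, $B$ annihilates the telescoping sequence $\{h_{n+1}-h_n\}$ — this is exactly the mechanism of Lemma~\ref{aconv} — so $B(\{\sum\lambda\})=\frac1{(2\pi)^d}B(\{c_n\})$, which is the asserted formula.

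For part (ii), part (i) turns the identity $\tau(A)=\frac1{d(2\pi)^d}{\rm Res}(A)$, required to hold for every positive normalised trace $\tau$, into the statement that $B(\{c_n\})=\frac{\log 2}{d}{\rm Res}(A)$ for every Banach limit $B$; by Definition~\ref{ac} this is precisely almost convergence of $\{c_n\}$ to $\frac1d\log 2\cdot{\rm Res}(A)$. The backward implication is then immediate from part (i), so the content is to verify that almost convergence of $\{c_n\}$ forces ${\rm Res}(A)$ to be a genuine scalar with the right value. Using $\sum_{j=0}^{n-1}c_j=(2\pi)^d b_{2^n}+O(1)$ and Lorentz's criterion (Theorem~\ref{lorentz}), almost convergence gives Ces\`aro convergence of $\{c_n\}$ and hence convergence of $b_{2^n}/n$; the annulus estimate then lets me interpolate $b_m$ for $2^n\le m<2^{n+1}$ and conclude that $\frac1{\log(2+m)}\int_{\mathbb R^d}\int_{|s|\le m^{1/d}}p_A$ converges, i.e. ${\rm Res}(A)$ is scalar (Definition~\ref{def:res}), matching the stated normalisation.

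The main obstacle is the annulus estimate itself: bounding the integral of $p_A$ over a dyadic shell, and over a thin shell near a dyadic radius, by $O(1)$ using only that $A$ is Laplacian modulated and compactly supported. This single estimate is what makes the telescoping sequence $h$ bounded in part (i) and what powers the interpolation in part (ii); once it is in place, everything else is bookkeeping with partial sums and an appeal to the already-established dictionary among traces, Banach limits, and almost convergence.
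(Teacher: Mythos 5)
Your proposal is correct and, at its core, runs on the same engine as the paper's proof: Theorem~\ref{eigen} gives boundedness of $a_m-b_m$, translation invariance of Banach limits kills telescoping differences of bounded sequences, and Theorem~\ref{Lidskii} with Corollary~\ref{BL1} converts statements about traces into statements about Banach limits, after which part (ii) is read off through Definition~\ref{ac}. The difference is in the bookkeeping. The paper never needs your annulus estimate: it rewrites Theorem~\ref{eigen} for the sequence $x_k=\lambda(k,A)-\frac{1}{(2\pi)^d}\int_{\mathbb R^d}\int_{(k-1)^{1/d}<|s|\le k^{1/d}}p_A(x,s)\,ds\,dx$, whose partial sums are bounded, and applies Lemma~\ref{aconv} to $x$; the unit shells inside a dyadic block then telescope \emph{exactly} to $c_n$, and the $O(1)$ index mismatch lands entirely on the eigenvalue side, where it is harmless because $|\lambda(k,A)|=O(1/k)$. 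You instead keep the ball integrals at exactly dyadic radii and pay for the mismatch with the Cauchy--Schwarz bound on thin shells (compact $x$-support, shell volume, tail bound from~\eqref{mod}). That estimate is true --- it is essentially the mechanism behind \cite[Lemma 6.12]{KLPS}, quoted just before Definition~\ref{def:res} --- so your part (i) is sound, just with an avoidable analytic ingredient.

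On part (ii), two caveats. First, the paper's proof is shorter than yours because it treats ``${\rm Res}(A)$ is a scalar'' as a meaningfulness assumption built into both sides of the equivalence; given part (i), all of part (ii) is then the observation that ``$B(\{c_n\})=\frac{\log 2}{d}{\rm Res}(A)$ for every Banach limit $B$'' is, by Definition~\ref{ac}, precisely almost convergence --- your backward direction. Your extra step, deriving from almost convergence of $\{c_n\}$ alone that ${\rm Res}(A)$ is a scalar, is a genuine (correct) strengthening, but note that the interpolation of $b_m$ for $2^n\le m<2^{n+1}$ uses shells of index width up to $2^n$ (volume $O(2^n)$, still $O(1)$ after Cauchy--Schwarz), not the $O(1)$-width shells your stated estimate covers; state the estimate in that generality. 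Second, if you push the value computation through Definition~\ref{def:res} as printed, you obtain $\lim_m\frac1{\log(2+m)}\int_{\mathbb R^d}\int_{|s|\le m^{1/d}}p_A(x,s)\,ds\,dx=L/\log 2$, where $L$ is the almost-convergence limit, i.e.\ $L=\log 2\cdot{\rm Res}(A)$ rather than the stated $L=\frac1d\log 2\cdot{\rm Res}(A)$. This factor of $d$ is not your error: Definition~\ref{def:res} has dropped the factor $d$ present in the definition of \cite{KLPS} (as printed it is also inconsistent with Proposition~\ref{prop:wodres} and with~\eqref{classical}), so your phrase ``matching the stated normalisation'' should be verified against the KLPS normalisation, under which it does match.
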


\begin{proof}
By Theorem~\ref{eigen} we have
$$\sum_{k=0}^n \left(\lambda(k,A)-\frac{1}{(2\pi)^d}\int_{\mathbb R^d}\int_{(k-1)^{1/d}<|s|\le k^{1/d}} p_A(x,s)ds dx \right)= O(1).$$

Hence, by Lemma~\ref{aconv} the sequence
$$\left\{\sum_{k=2^n+1}^{2^{n+1}} \left(\lambda(k,A)-\frac{1}{(2\pi)^d}\int_{\mathbb R^d}\int_{(k-1)^{1/d}<|s|\le k^{1/d}} p_A(x,s)ds dx \right)\right\}_{n\ge0}$$
belongs to the space $ac_0$ of almost convergent sequences, or equivalently, the sequence
\begin{equation}\label{t3}
\left\{\sum_{k=2^n-2}^{2^{n+1}-1}\lambda(k,A)-\frac{1}{(2\pi)^d}\int_{\mathbb R^d}\int_{2^{n/d}<|s|\le 2^{(n+1)/d}} p_A(x,s)ds dx \right\}_{n\ge0} \in ac_0. 
\end{equation}
By Corollary~\ref{BL1} and Theorem~\ref{Lidskii} we have that for every positive normalised trace $\tau$ on $\mathcal L_{1,\infty}$ 
there exists a Banach limit $B$ such that
$$\tau(A) = B \left( \frac1{\log 2} \left\{\sum_{k=2^n-2}^{2^{n+1}-1}\lambda(k,A) \right\}_{n\ge0} \right).$$
We have now proved the assertion of (i), since every Banach limit vanishes on $ac_0$ (see~\eqref{t3}).

Now, the equality 
$$\tau(A)= \frac1{d(2\pi)^d} {\rm Res}(A)$$
holds for every positive normalised trace $\tau$ if and only if for every Banach limit $B$ we have
$$B \left( \frac1{(2\pi)^d} \left\{ \int_{\mathbb R^d} \int_{2^\frac{n}d <|s|\le 2^\frac{n+1}d} p_A(x,s) ds\,dx \right\}_{n\ge0} \right) = \frac{\log 2}{d(2\pi)^d} {\rm Res}(A).$$
That is if and only if the sequence
$$ \left\{ \int_{2^\frac{n}d <|s|\le 2^\frac{n+1}d} p_A(x,s) ds\,dx \right\}_{n\ge0}$$
is almost convergent to the number $\frac1d \log 2 \cdot {\rm Res}(A).$
\end{proof}

To show that Theorem~\ref{CTT}(i) is truly an extension of Theorem~\ref{CTTv1}(i)  
we need to show that there are pseudo-differential operators whose value 
for a positive trace cannot be calculated by the formula for a Dixmier trace. 
An example is given by Theorem~\ref{psdo} below. 
Before we state it we need some technical preparations.

\begin{lem}\label{lemma1}
 We have
 $$\int_3^s \sin \left(\frac{t}{\log t}\right)dt =O(\log s),\quad s>3.$$
\end{lem}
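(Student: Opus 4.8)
The plan is to exploit the oscillation of the integrand through a single integration by parts, trading the bounded oscillatory factor $\sin(t/\log t)$ for a boundary term of size $O(\log s)$ together with a remainder whose size is controlled by the total variation of $1/\phi'$. First I would set $\phi(t):=t/\log t$ and record that on $[3,\infty)$ the function $\phi$ is smooth and strictly increasing, since
$$
\phi'(t)=\frac{\log t-1}{(\log t)^2}>0,\qquad t\ge 3 .
$$
In particular $\phi'$ never vanishes on $[3,\infty)$, so $1/\phi'$ is well defined and smooth there, and from $1/\phi'(t)=(\log t)^2/(\log t-1)$ one reads off the crucial growth estimate $1/\phi'(t)=O(\log t)$.

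Next I would use the identity $\sin\phi(t)=-\dfrac{1}{\phi'(t)}\dfrac{d}{dt}\cos\phi(t)$ and integrate by parts to obtain
$$
\int_3^s\sin\!\left(\frac{t}{\log t}\right)dt
=\left[-\frac{\cos\phi(t)}{\phi'(t)}\right]_3^s
+\int_3^s\cos\phi(t)\,\frac{d}{dt}\!\left(\frac{1}{\phi'(t)}\right)dt .
$$
The boundary term is immediately $O(\log s)$ because $1/\phi'(s)=O(\log s)$ while $1/\phi'(3)=O(1)$ and $|\cos\phi|\le 1$. This is where the $O(\log s)$ bound genuinely originates, and it shows the bound cannot be improved by this method.

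For the remaining integral I would bound $|\cos\phi(t)|\le 1$ and pass to the total variation of $1/\phi'$, namely
$$
\left|\int_3^s\cos\phi(t)\,\frac{d}{dt}\!\left(\frac{1}{\phi'(t)}\right)dt\right|
\le\int_3^s\left|\frac{d}{dt}\frac{1}{\phi'(t)}\right|dt .
$$
A direct computation, writing $L=\log t$, gives $\dfrac{d}{dt}\dfrac{1}{\phi'(t)}=\dfrac{L(L-2)}{(L-1)^2}\cdot\dfrac1t$, so this derivative changes sign only at $t=e^2$. Hence the total variation over $[3,s]$ is finite and equals $g(3)+g(s)-2g(e^2)$ with $g:=1/\phi'$, which is again $O(\log s)$ by the growth estimate above. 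Combining the two contributions yields the claim.

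The routine parts are the two derivative computations; the only point requiring care is the non-monotonicity of $1/\phi'$, which I would handle precisely by splitting the total-variation integral at $t=e^2$ (where $\frac{d}{dt}(1/\phi')$ vanishes), so that the main obstacle is merely bookkeeping rather than any analytic difficulty.
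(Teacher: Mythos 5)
Your proof is correct and follows essentially the same route as the paper's: a single integration by parts against $\phi(t)=t/\log t$, with the boundary term $\cos\phi(t)/\phi'(t)\big\vert_3^s=O(\log s)$ and the remainder integral controlled because $\frac{d}{dt}\bigl(1/\phi'(t)\bigr)=\frac{\log t(\log t-2)}{(\log t-1)^2}\cdot\frac1t$. The only (cosmetic) difference is that the paper bounds the remainder pointwise by $O(1/t)$ and integrates, whereas you compute the total variation of $1/\phi'$ exactly by splitting at $t=e^2$; both give $O(\log s)$.
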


\begin{proof}
 Set 
 $$I:= \int_3^s \sin \left(\frac{t}{\log t}\right)dt = \int_3^s \sin \left(\frac{t}{\log t}\right)\frac{\log t-1}{\log^2 t}\frac{\log^2 t}{\log t-1}dt.$$
 
 Integrating by parts we obtain
 
$$I= -\cos \left(\frac{t}{\log t}\right)\frac{\log^2 t}{\log t-1}\Big\vert_3^s + 
\int_3^s \cos \left(\frac{t}{\log t}\right) \cdot \frac1t \frac{\log^2 t - 2 \log t}{(\log t-1)^2} dt.$$

We clearly have
$$\cos \left(\frac{t}{\log t}\right)\frac{\log^2 t}{\log t-1}\Big\vert_3^s =O(\log s)$$ 
and 
$$\int_3^s \cos \left(\frac{t}{\log t}\right) \cdot \frac1t \frac{\log^2 t - 2 \log t}{(\log t-1)^2} dt
= O\left( \int_3^s \frac{dt}t \right) =  O(\log s).$$

Hence, $I= O(\log s)$.
\end{proof}

\begin{lem}\label{lemma2}
The sequence
\begin{equation}\label{e32}
 \left\{ \int_{\log2^{n/d}}^{\log 2^{(n+1)/d}} \sin \frac{z}{\log z} dz \right\}_{n\ge0}
\end{equation}
is not almost convergent to zero.
\end{lem}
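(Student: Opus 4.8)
The plan is to argue directly through Lorentz's criterion (Theorem~\ref{lorentz}): to show that the sequence in~\eqref{e32} is \emph{not} almost convergent to zero, it is enough to exhibit blocks whose length tends to infinity but on which the Ces\`aro average stays bounded away from $0$. Write $c:=\frac{\log 2}{d}$ and $g(z):=\frac{z}{\log z}$, so that, since $\log 2^{n/d}=\frac{n}{d}\log 2$, the $n$-th term of~\eqref{e32} is $a_n=\int_{nc}^{(n+1)c}\sin g(z)\,dz$. The decisive structural observation is that consecutive terms telescope,
\begin{equation*}
\sum_{k=m}^{m+N-1}a_k=\int_{mc}^{(m+N)c}\sin g(z)\,dz,
\end{equation*}
so each block average equals $c$ times the mean value of $\sin g$ over the interval $[mc,(m+N)c]$.

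Next I would align the blocks with the positive half-periods of $\sin g$. For large $\ell$ the function $g$ is increasing, so there are unique $\alpha_\ell<\beta_\ell$ with $g(\alpha_\ell)=2\pi\ell$ and $g(\beta_\ell)=2\pi\ell+\pi$; on $[\alpha_\ell,\beta_\ell]$ one has $\sin g\ge 0$. Setting $m_\ell:=\lceil\alpha_\ell/c\rceil$ and $N_\ell:=\lfloor\beta_\ell/c\rfloor-m_\ell$, the interval of summation $[m_\ell c,(m_\ell+N_\ell)c]$ differs from $[\alpha_\ell,\beta_\ell]$ only by pieces of length at most $c$ on which the (nonnegative) integrand contributes at most $c$; hence the block integral differs from $\int_{\alpha_\ell}^{\beta_\ell}\sin g$ by an $O(1)$ amount, and likewise $N_\ell c$ differs from $\beta_\ell-\alpha_\ell$ by $O(1)$. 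These bounded rounding errors will be negligible against the main terms.

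The heart of the matter is the asymptotic evaluation of $\int_{\alpha_\ell}^{\beta_\ell}\sin g$ and of $\beta_\ell-\alpha_\ell$. Substituting $u=g(z)$ expresses the mean of $\sin g$ over $[\alpha_\ell,\beta_\ell]$ as a weighted average of $\sin u$ over a single half-period,
\begin{equation*}
\frac{1}{\beta_\ell-\alpha_\ell}\int_{\alpha_\ell}^{\beta_\ell}\sin g
=\frac{\displaystyle\int_{2\pi\ell}^{2\pi\ell+\pi}\sin u\,\frac{du}{g'(g^{-1}(u))}}{\displaystyle\int_{2\pi\ell}^{2\pi\ell+\pi}\frac{du}{g'(g^{-1}(u))}},
\end{equation*}
with strictly positive weight $1/g'(z)=\frac{\log^2 z}{\log z-1}$, since $g'(z)=\frac{\log z-1}{\log^2 z}$. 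Elementary two-sided bounds on $g'$ force $\beta_\ell-\alpha_\ell=(\pi+o(1))\log\alpha_\ell$, so in particular $\beta_\ell-\alpha_\ell=o(\alpha_\ell)$; consequently $\log z$, and hence the weight, varies only by a factor $1+o(1)$ across $[\alpha_\ell,\beta_\ell]$, and the weighted average converges to the unweighted one $\frac1\pi\int_{2\pi\ell}^{2\pi\ell+\pi}\sin u\,du=\frac2\pi$. Combining this with the telescoping identity and the negligible rounding errors gives
\begin{equation*}
\frac1{N_\ell}\sum_{k=m_\ell}^{m_\ell+N_\ell-1}a_k\longrightarrow \frac{2c}{\pi}=\frac{2\log 2}{\pi d}>0,
\end{equation*}
while $N_\ell\sim\frac{\pi}{c}\log\alpha_\ell\to\infty$. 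By Theorem~\ref{lorentz} this is incompatible with the block averages of growing length tending uniformly to $0$, so the sequence~\eqref{e32} is not almost convergent to zero.

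The only genuinely technical point is the bootstrapped estimate $\beta_\ell-\alpha_\ell=(\pi+o(1))\log\alpha_\ell$ and the resulting near-constancy of the weight $1/g'$ over one half-period; once these are in hand, the weighted-average computation and the control of the integer-rounding errors are routine. Lemma~\ref{lemma1} supplies the complementary fact that $\int_3^s\sin g=O(\log s)$, confirming that the oscillations of the primitive are of exactly logarithmic size, so that the constant $\frac{2\log 2}{\pi d}$ above is of the correct order and no block can yield a larger average.
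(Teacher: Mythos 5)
Your proof is correct, but it takes a genuinely different route from the paper's, even though both ultimately apply Lorentz's criterion (Theorem~\ref{lorentz}) by exhibiting blocks of growing length whose Ces\`aro averages stay bounded away from zero. The paper chooses points $z_n$ with $\frac{z_n}{\log z_n}=2^{n^2}\pi+\pi/2$, i.e.\ peaks of the sine at which $\log z_n\gtrsim n^2$, and uses the Mean Value Theorem to show that $\sin\frac{z}{\log z}=1+O(1/n)$ on a block of length $\frac{n}{d}\log 2$ starting at $z_n$: the phase moves so slowly there (derivative $O(1/n^2)$) that the integrand is essentially constant equal to $1$, so the block averages tend to $\frac{\log 2}{d}$. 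You instead take as blocks entire positive half-periods $[\alpha_\ell,\beta_\ell]$ of the phase, and show via the substitution $u=g(z)$ and the near-constancy of the Jacobian weight $1/g'$ that the average of the integrand over such a block tends to the mean $\frac{2}{\pi}$ of $\sin$ over a half-period, yielding block averages tending to $\frac{2\log 2}{\pi d}$ with lengths $N_\ell\sim\frac{\pi d}{\log 2}\log\alpha_\ell\to\infty$. Your construction is more robust: it needs no clever double-exponential choice like $2^{n^2}$ and would work for any increasing phase with slowly varying derivative, at the cost of the bootstrap estimate $\beta_\ell-\alpha_\ell=(\pi+o(1))\log\alpha_\ell$, which you rightly flag as the technical crux and which does go through (monotonicity of $g$ forces $\beta_\ell<2\alpha_\ell$ eventually, whence $\log z$, and hence $1/g'$, is nearly constant on the block). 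The paper's construction buys the sharper constant $\frac{\log 2}{d}$, which is asymptotically optimal since every term of the sequence is at most $\frac{\log 2}{d}$; but either positive constant suffices for the lemma.
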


\begin{proof}
We shall show that 
\begin{align*}
 \lim_{n\to\infty} \sup_{m\ge0} \frac1n\sum_{k=m}^{m+n-1} \int_{\frac{k}d \log2}^{\frac{k+1}d \log 2} \sin \frac{z}{\log z} dz
 &= \lim_{n\to\infty} \sup_{m\ge0} \frac1n \int_{\frac{m}d \log2}^{\frac{n+m}d \log 2} \sin \frac{z}{\log z} dz\\
 &\ge\frac{\log 2}d,
\end{align*}
which means, in view of Theorem~\ref{lorentz}, that the sequence~\eqref{e32} is not almost convergent to zero.

For every $n \in \mathbb N$ there exists $z_n \in \mathbb R$ such that $\frac{z_n}{\log z_n} = 2^{n^2}\pi +\pi/2$.
Note that
% , by formula~\eqref{e42} we obtain
% $$z_n= -(2^{n^4}\pi +\pi/2) W_{-1}\left(\frac{-1}{2^{n^4}\pi +\pi/2}\right)$$
% and by~\eqref{e52} $z_n = O(2^{n^4} \cdot n^4)$, so 
$\frac1{\log z_n}=O(\frac1{n^2})$.

Set $m = \lfloor z_n \frac{d}{\log 2} \rfloor$ (the integral part of $ z_n\frac{d}{\log 2}$). We have
$$J := \int_{\frac{m}d \log2}^{\frac{n+m}d \log 2} \sin \frac{z}{\log z} dz = \int_{z_n}^{z_n+\frac{n}d \log 2} \sin \frac{z}{\log z} dz +O(1).$$

By the Mean Value Theorem, for every $z \in [z_n, z_n+\frac{n}d \log 2]$ we obtain
$$
\sin \frac{z}{\log z} - \sin \frac{z_n}{\log z_n} = (z-z_n) \cdot \left(\frac{d}{dt} \left( t \mapsto \sin \frac{t}{\log t}\right) \right)\Big\vert_{t=\xi},
$$ 
for some $\xi\in [z_n, z]$. Therefore,
\begin{align*}
 \sin \frac{z}{\log z} - \sin \frac{z_n}{\log z_n} =O(n) \cdot O(1) \cdot \frac{\log \xi-1}{\log^2 \xi} = O(n)\cdot O(\frac1{n^2}) = O(\frac1{n}).
\end{align*}

Hence,
\begin{align*}
J&= \int_{z_n}^{z_n+\frac{n}d \log 2} \left(\sin \frac{z_n}{\log z_n} + O(\frac1{n}) \right)dz +O(1) \\
&= \int_{z_n}^{z_n+\frac{n}d \log 2} dz +O(1) = n \frac{\log 2}d  +O(1). 
\end{align*}

Consequently, 
$$
 \lim_{n\to\infty} \sup_{m\ge0} \frac1n\sum_{k=m}^{m+n-1} \int_{\frac{k}d \log2}^{\frac{k+1}d \log 2} \sin \frac{z}{\log z} dz \ge \frac{\log 2}d ,$$
 which proves the assertion. 
\end{proof}

The following theorem provides the example of a Dixmier measurable pseudo-differential operator 
such that the class of all positive normalised traces does not coincide on this operator.
 
\begin{thm} \label{psdo}
 There exists a compactly supported pseudo-differential operator $Q$ of order $-d$ such that
 $Q$ is Dixmier-measurable but $Q$ is not $\mathcal{PT}$-measurable.
\end{thm}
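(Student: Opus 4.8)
The plan is to construct $Q$ as a \emph{non-classical} pseudo-differential operator of order $-d$ whose symbol oscillates exactly like the function $\sin(z/\log z)$ analysed in Lemmas~\ref{lemma1} and~\ref{lemma2}, and then to read off its measurability from Propositions~\ref{D_meas} and~\ref{K_S_meas}. Concretely, I would fix $\phi\in C_c^\infty(\mathbb R^d)$ with $\int_{\mathbb R^d}\phi(x)\,dx=(2\pi)^d$ and a smooth radial cut-off $\chi$ that vanishes on $\{|s|\le R_0\}$ (with $R_0$ large enough that $\log\log|s|$ is smooth and bounded away from $0$) and equals $1$ for large $|s|$, and set
$$
q(s):=\chi(s)\,|s|^{-d}\sin\!\left(\frac{\log|s|}{\log\log|s|}\right),\qquad p_Q(x,s):=\phi(x)\,q(s).
$$
A direct check of the H\"ormander estimates shows $q$ is a symbol of order $-d$: the factor $|s|^{-d}$ carries the order while the derivatives of the oscillatory factor are $O(|s|^{-1})$ and only improve the decay. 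To obtain a genuinely compactly supported operator I would take $Q:=M_\phi\,\mathrm{Op}(q)\,M_\psi$ with $\psi\in C_c^\infty$, $\psi\equiv1$ on $\mathrm{supp}\,\phi$; by the symbol calculus its symbol equals $\phi(x)q(s)$ modulo a remainder of order $\le -d-1$, and such a remainder is absolutely integrable in $s$ over $\mathbb R^d$, hence contributes only an $O(1)$ term to every partial integral below.

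Next I would invoke Theorem~\ref{eigen}: $Q\in\mathcal L_{1,\infty}$ and $\sum_{k=0}^n\lambda(k,Q)=\frac1{(2\pi)^d}\int_{\mathbb R^d}\int_{|s|\le n^{1/d}}p_Q(x,s)\,ds\,dx+O(1)$. Passing to polar coordinates and substituting $z=\log|s|$ turns the annular integral into exactly the quantity occurring in Lemma~\ref{lemma2}, namely
$$
w_n:=\int_{\frac nd\log2}^{\frac{n+1}d\log2}\sin\frac{z}{\log z}\,dz,
$$
and the argument from the proof of Theorem~\ref{CTT} (via Lemma~\ref{aconv}) then shows that the block sequence $\left\{\sum_{k=2^n-1}^{2^{n+1}-2}\lambda(k,Q)\right\}_{n\ge0}$ differs from $\{w_n\}_{n\ge0}$ only by a sequence in $ac_0$ (here the harmless order $\le -d-1$ remainder, the $O(1)$ terms, and the index shift in the blocks all enter as $c_0\subset ac_0$ perturbations).

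It remains to analyse $\{w_n\}$. Telescoping together with Lemma~\ref{lemma1} gives $\sum_{k=0}^N w_k=\int^{\frac{N+1}d\log2}\sin\frac z{\log z}\,dz+O(1)=O(\log N)$, so $C\{w_n\}\to0$; since $ac_0$ sequences have vanishing Ces\`aro means, $C\left\{\sum_{k=2^n-1}^{2^{n+1}-2}\lambda(k,Q)\right\}$ converges and $Q$ is Dixmier-measurable by Proposition~\ref{D_meas}. On the other hand, Lemma~\ref{lemma2} asserts that $\{w_n\}$ is not almost convergent to $0$; since any almost-convergent sequence is Ces\`aro convergent to its almost-limit (Theorem~\ref{lorentz} with $m=0$), the only possible almost-limit of $\{w_n\}$ is its Ces\`aro limit $0$, whence $\{w_n\}$ — and therefore the block sequence, which agrees with it modulo $ac_0$ — is not almost convergent at all, so $Q$ fails to be $\mathcal{PT}$-measurable by Proposition~\ref{K_S_meas}. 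I expect the only real obstacle to be the bookkeeping that isolates $\{w_n\}$ from the eigenvalue block sums: verifying that $q$ is an order $-d$ symbol despite the oscillation, and that the cut-offs together with the lower-order remainder of $M_\phi\mathrm{Op}(q)M_\psi$ cost only an $ac_0$ perturbation; once these are in place the two measurability conclusions are immediate from the cited lemmas.
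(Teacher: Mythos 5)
Your proposal is correct and follows essentially the same route as the paper: the identical oscillating symbol $|s|^{-d}\sin\bigl(\log|s|/\log\log|s|\bigr)$ with compactly supported cut-offs, the same reduction of the eigenvalue block sums to the dyadic integrals $w_n$ via Theorem~\ref{eigen} and Lemma~\ref{aconv}, and the same two key facts, Lemma~\ref{lemma1} (Ces\`aro convergence to $0$) and Lemma~\ref{lemma2} (failure of almost convergence to $0$). The only cosmetic differences are that the paper routes Dixmier-measurability through $\mathrm{Res}(Q)=0$ and Theorem~\ref{CTTv1}(ii) rather than directly through Proposition~\ref{D_meas}, and pins the would-be almost-limit to $0$ by noting that all Dixmier traces (which lie in $\mathcal{PT}$) vanish on $Q$, whereas you use that almost convergence implies Ces\`aro convergence to the same limit via Theorem~\ref{lorentz} --- both arguments are valid.
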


\begin{proof}
 The construction of the operator $Q$ is similar, at least in spirit, to that of \cite[Proposition 6.19]{KLPS} (see also~\cite[Proposition 11.3.22]{LSZ}).
Consider the function
$$q(s):= |s|^{-d} \sin \left(\frac{\log |s|}{\log \log |s|}\right), \ s \in \mathbb R^d, \ |s| \ge 4.$$

Similarly to~\cite[Proposition 10.2.10]{LSZ} it can be proved that $q$ is a symbol of some pseudo-differential operator, say $Q'$.
Let $\phi \in C_c^\infty (\mathbb R^d)$ be such that $\|\phi\|_2= ({\rm Vol} \ \mathbb S^{d-1})^{-1/2}$, 
where $\mathbb S^{d-1}:= \{ s \in \mathbb R^d: |s|=1\}$ is $(d-1)$-sphere. 
The operator $Q=M_\phi Q' M_\phi^*$ is compactly supported. 
By~\cite[Lemma 6.18]{KLPS} the principal symbol of $Q$ is
$$(x,s) \mapsto  |\phi(x)|^2 q(s), \ x,s \in \mathbb R^d.$$

To show that $Q$ is Laplacian modulated we check the condition~\eqref{mod}.
We have for $t>4$
$$\int_{|s|>t} \int_{\mathbb R^d} |p_A(x,s)|^2 \ dx ds 
 = \int_{\mathbb R^d} |\phi(x)|^2 dx \int_{|s|> t} q^2(s) ds.$$
The transformation from spherical coordinates to Cartesian gives
\begin{align*}
 \int_{|s|>t} \int_{\mathbb R^d} |p_A(x,s)|^2 \ dx ds 
 = \int_t^\infty r^{-2d} \sin^2 \left(\frac{\log r}{\log \log r}\right) r^{d-1} \ dr
 = O(t^{-d}).
\end{align*}
Hence,
$$\sup_{t>4} (1+t)^{d/2} \left( \int_{|s|>t} \int{\mathbb R^d} |p_A(x,s)|^2 \ dx ds\right)^{1/2}<\infty$$
and by~\eqref{mod} the operator $Q$ is Laplacian modulated.

For every $n \ge 4^d$ we have
$$\int_{\mathbb R^d}\int_{|s|\le n^{1/d}} |\phi(x)|^2 q(s) ds\,dx 
= \int_{\mathbb R^d} |\phi(x)|^2 dx \int_{4 \leq |s|\le n^{1/d}} q(s) ds + O(1).$$
The transformation from spherical coordinates to Cartesian gives
\begin{equation}\label{e12}
\begin{aligned}
\int_{\mathbb R^d}\int_{|s|\le n^{1/d}} |\phi(x)|^2 q(s) ds\,dx
&=  \int_4^{n^{1/d}} \sin \left(\frac{\log r}{\log \log r}\right)r^{-d}r^{d-1}dr + O(1) \\
&= \int_4^{n^{1/d}}\sin \left(\frac{\log r}{\log \log r}\right) \frac{dr}r + O(1)\\
&= \int_{\log4}^{\log n^{1/d}} \sin \left(\frac{z}{\log z}\right)dz\\
&=O(\log \log n) =o(\log n),
\end{aligned} 
\end{equation}
where the penultimate equality is provided by Lemma~\ref{lemma1}.

% Let us evaluate the integral
% $$I:=\int_4^{n^{1/d}} \sin \left(\frac{\log r}{\log \log r}\right) \frac{dr}r = \int_{\log4}^{\log n^{1/d}} \sin \left(\frac{z}{\log z}\right)dz.$$

Combining this observation with Definition~\ref{def:res}, we obtain
\begin{align*}
{\rm Res}(Q)&= \left[ \frac1{\log(2+n)} \int_{\mathbb R^d} \int_{|s|\le n^{1/d}} p_Q(x,s) ds\,dx \right] \\
&=\left[ \frac1{\log(2+n)}\int_4^{n^{1/d}}\sin \left(\frac{\log r}{\log \log r}\right) \frac{dr}r \right]=0.
\end{align*}

So, ${\rm Res}(Q)$ is a scalar and, by Theorem~\ref{CTTv1}(ii), ${\rm Tr}_\omega(Q)=0$ for every Dixmier trace ${\rm Tr}_\omega$.

Combining the result of Theorem~\ref{CTT} with~\eqref{e12}, we conclude that 
the pseudo-differential operator $Q$ is $\mathcal{PT}$-measurable if and only if the sequence
% \begin{equation}\label{e22}
% \left\{\int_{\mathbb R^d}\int_{2^{n/d}<|s|\le 2^{(n+1)/d}} p_Q(x,s)ds dx \right\}_{n\ge0}  
% \end{equation}
% is almost convergent (to zero, since all Dixmier traces are zero on $Q$), or equivalently, by~\eqref{e12}, if and only if the sequence
\begin{equation}
 \left\{ \int_{\log2^{n/d}}^{\log 2^{(n+1)/d}} \sin \frac{z}{\log z} dz \right\}_{n\ge0}
\end{equation}
is almost convergent (to zero, since all Dixmier traces vanish on $Q$).
However, this is not the case due to Lemma~\ref{lemma2}.

\end{proof} 

\begin{rem}\label{last_rem}
The example of the operator $Q$ given in Theorem~\ref{psdo} is interesting, 
because it shows how different traces of compactly supported pseudo-differential operators of order $-d$ 
are from traces of classical compactly supported pseudo-differential operators of order $-d$. 
On the classical operators there is one trace and one Wodzicki's residue. 
Even the natural vector generalisation ${\rm Res}$ of the Wodzicki's residue, 
whilst it does capture the behaviour of Dixmier traces on the non-classical operators, 
still does not capture the full behaviour of positive traces on the non-classical operators. 
Indeed, the proof of Theorem~\ref{psdo} shows that ${\rm Res}(Q)$ 
may be zero but still there are positive normalised traces which yield a non-zero value on $Q$. 
\end{rem}

\textbf{Acknowledgements:} 
 We thank S.~Lord and G.~Levitina for the detailed reading and useful discussions of the manuscript and suggesting a number of improvements.

% \bibliographystyle{plain} %alpha
% \bibliography{Bibliography} 
% \addcontentsline{toc}{section}{References}

\end{document}